\numberwithin{equation}{section}
\theoremstyle{plain}
\newtheorem{thm}{Theorem}[section]
\newtheorem{lemma}[thm]{Lemma}
\newtheorem{prop}[thm]{Proposition}
\newtheorem{cor}[thm]{Corollary}
\newtheorem{claim}[thm]{Claim}
\theoremstyle{definition}
\newtheorem{defn}[thm]{Definition}
\newtheorem{question}[thm]{Question}
\newtheorem{remark}[thm]{Remark}
\newtheorem{example}[thm]{Example}
\newcommand{\R}{\mathbf{R}}
\newcommand{\Z}{\mathbf{Z}}
\newcommand{\ehr}{\mathrm{Ehr}}
\DeclareMathOperator{\conv}{Conv}
\DeclareMathOperator{\sign}{sign}
\definecolor{light-gray}{gray}{0.8}
\definecolor{v}{rgb}{0.28,0,0.72}
\definecolor{e}{rgb}{0,1,0.2}
\definecolor{r}{rgb}{1,0,0}
\begin{document}

\title[Jaeger dissections for directed graphs]{Root polytopes and Jaeger-type dissections for directed graphs}

\author{Tam\'as K\'alm\'an}
\address{Department of Mathematics\\
Tokyo Institute of Technology\\
H-214, 2-12-1 Ookayama, Meguro-ku, Tokyo 152-8551, Japan}
\email{kalman@math.titech.ac.jp}

\author{Lilla T\'othm\'er\'esz}
\address{MTA-ELTE Egerv\'ary Research Group, P\'azm\'any P\'eter s\'et\'any 1/C, Budapest, Hungary}
\email{tmlilla@caesar.elte.hu}

\keywords{root polytope, directed graph, $h^*$-vector, interior polynomial, ribbon structure, dissection, shelling order, greedoid\\
MSC[2020]: 52B20; 52B22; 05C31; 05C62; 05C22}

\date{}

\begin{abstract}
We associate root polytopes 
to directed graphs and study them by using ribbon structures. Most attention is paid to what we call the semi-balanced case, i.e., when 
each cycle has the same number of edges pointing in the two directions. Given a ribbon structure, we identify a natural class of spanning trees and show that, in the semi-balanced case, they induce a shellable dissection of the root polytope into maximal simplices. This allows for a computation of the $h^*$-vector of the polytope and for showing some properties of this new graph invariant, such as a product formula and that 
in the planar 
case, 
the $h^*$-vector is equivalent to the greedoid polynomial of the dual graph.
We obtain a general recursion relation as well.
We also work out the case of layer-complete directed graphs, where our method recovers a previously known triangulation.
Indeed our dissection is often but not always a triangulation; we address this with a series of examples.

\end{abstract}

\maketitle

\section{Introduction}
\label{sec:intro}


This paper grew out of efforts to establish `signed versions' of our recent results \cite{hyperBernardi} regarding the interplay between root polytopes and ribbon structures of bipartite graphs (indeed, certain arguments extend almost word-by-word), but it ended up achieving quite a bit more.
We found significant simplifications and new connections, rounding eventually into a theory of a wide class of signed bipartite graphs and their `interior'  polynomial invariants.

It will be more convenient to work with directed graphs instead of signed ones --- in bipartite cases, the two ideas are equivalent.
The root polytope $\mathcal Q_G$ is naturally associated to any directed graph $G$, see Definition \ref{def:root_polytope}. If $G$ is connected then $\dim\mathcal Q_G=|V(G)|-1$, except for so-called \emph{semi-balanced graphs}, when we have $\dim\mathcal Q_G=|V(G)|-2$ (compare Figures \ref{fig:K4} and \ref{fig:K34}). 
We give several equivalent descriptions of this condition, one of which is that an integer-valued potential can be assigned to the vertices in such a way that for all edges, the potential is one higher at the head than at the tail. Such directed graphs and their root polytopes will be our central objects.

Important motivation is provided by the fact, due to Higashitani, Jochemko, and Micha{\l}ek \cite{arithm_symedgepoly}, that all facets of the so called symmetric edge polytope of any graph $H$ take the form $\mathcal Q_G$, where $G$ is a spanning subgraph of $H$ equipped with a semi-balanced orientation. We exploit this connection in the sequel \cite{symmetric}.

All semi-balanced graphs are bipartite and any bipartite graph has many semi-balanced orientations (cf.\ Example \ref{ex:K34}), for example the one in which all edges point from one color class to the other (i.e., when one color class is on potential $0$ and the other on potential $1$). The latter was the focus of our previous work \cite{hyperBernardi} and here we will refer to it as the \emph{standard orientation}.

Our method to study $\mathcal Q_G$  
is to endow $G$ with an arbitrary \emph{ribbon structure}, much like Bernardi did in his work on the Tutte polynomial \cite{Bernardi_first,Bernardi_Tutte}.
The key idea then is that if we also fix a base node and an incident base edge, then we can single out those spanning trees of $G$ that are such that, when we `walk around' them in a natural sense, then all non-edges are first encountered at their tail. We call these \emph{Jaeger trees} because the condition is adopted from a paper of F. Jaeger \cite{Jaeger} in knot theory.

Root polytopes of spanning trees of $G$ are unimodular simplices within $\mathcal Q_G$, which are also maximal if $G$ is semi-balanced. One of our main results claims for all semi-balanced graphs $G$ that the collection of simplices, obtained from Jaeger trees, is a \emph{dissection} of $\mathcal Q_G$.
(That is, the simplices are interior-disjoint and their union is $\mathcal Q_G$.)

It follows that the number of Jaeger trees of a semi-balanced graph is the normalized volume of the graph's root polytope. In particular, the number of Jaeger trees of a semi-balanced graph is independent of the base node and base edge; in fact, it is even independent of the ribbon structure.

We establish the dissection in a new way which is rather easier than our previous proof in the case of the standard orientation  \cite[Section 4]{hyperBernardi}. It involves a natural order on the set of all spanning trees of a directed ribbon graph.
Furthermore, we prove that the restriction of our tree-order to the set of Jaeger trees induces a \emph{shelling} of the dissection (see Section \ref{sec:shelling} for the definition), in such a way that the `$h$-vector statistic' of the shelling can be expressed in elementary graph-theoretical terms. 
This in turn allows for a computation of the $h^*$-vector of the root polytope.

An important takeaway here is that a shellable dissection into unimodular simplices is just as useful for Ehrhart theory as a triangulation; that is, it is not always necessary to assume that the intersection of any two simplices is a common face.

In the case of the standard orientation, one is able to express the $h^*$-vector as a generating function of hypertree activities \cite{KP_Ehrhart}, which is also known as the interior polynomial. We do not quite take this step in this paper, due to the lack of an appropriate notion of hypertree in the general (semi-balanced) case. 
Put another way, the best we can currently do is to \emph{define} the interior polynomial of a semi-balanced graph as the $h^*$-vector of its root polytope. We indicate that this is a meaningful graph invariant by establishing a product formula for 
graphs fused at a vertex or along one edge, as well as a recursion relation, cf.\ Theorem \ref{thm:rekurzio}.

We 
also find that in the case of a plane semi-balanced graph, the $h^*$-vector is equivalent to the \emph{greedoid polynomial} of the branching greedoid associated to the dual graph. 
Here the duals in question are exactly the plane Eulerian digraphs\footnote{In light of this connection, A. Frank suggested that semi-balanced graphs could also be called \emph{co-Eulerian}.}.

We work out in detail another class of examples, one that 
includes all semi-balanced complete bipartite graphs. We call these \emph{layer-complete} (see Figure \ref{fig:complete_bipartite_embedding} for an example) and equip them with a natural ribbon structure, for which we describe all Jaeger trees in a simple manner. This involves a non-crossing condition that first appeared in a triangulation of a product of two simplices \cite{GGP}. 
For all layer-complete directed graphs, these trees do induce triangulations.
In 
the complete bipartite case,
the same triangulation was derived by Higashitani, Jochemko, and Micha\l{}ek \cite{arithm_symedgepoly}, who relied on Gr\"obner basis techniques. 

As a matter of fact, in the aforementioned planar case too, our dissection turns out to be a triangulation (for any base node and base edge). 
This leads us to examine the landscape of whether Jaeger trees induce a triangulation in various cases. We find bipartite graphs 
for which each choice of ribbon structure, base node, and base edge results in a triangulation, as well as graphs for which none of them does. 
Despite these phenomena, it is easy to see that whether our method yields a triangulation typically depends not only on the graph but also on its ribbon structure.
Furthermore, 
in general the answer depends on the base node and base edge as well.
In another direction, we conjecture that Jaeger trees induce a triangulation if and only if their complements constitute the bases of a greedoid.

The paper is organized as follows. In Section \ref{sec:basics} we 
give several descriptions of semi-balanced graphs. Section \ref{sec:dimension} covers some basics of vector representations of graphs, including dimension counts for root polytopes. We define the interior polynomial of a semi-balanced graph in Section \ref{sec:int_poly}, where we also sketch out the Ehrhart theoretical underpinnings of our approach to computing it. In Section \ref{sec:Jaeger_trees} we introduce Jaeger trees and establish our dissection; the conclusion is reached in a way that is significantly easier than in \cite{hyperBernardi}. In Section \ref{sec:shelling} we show that the dissection is shellable and describe the corresponding $h$-vector, i.e., the interior polynomial. Section \ref{sec:computing_the_Jaeger_tree} takes a look at some algorithmic aspects of the dissection and simplifies the original proof of \cite[Theorem 4.5]{hyperBernardi}. In Section \ref{sec:planar} we examine semi-balanced plane graphs and draw a connection to greedoid polynomials, and in Section \ref{sec:layer_complete} we discuss layer-complete directed graphs.
Section \ref{sec:diss_vs_triang} concentrates on the distinction between dissections and triangulations. Finally, in Section \ref{sec:formula}, we  treat disconnected cases and establish our product and recursion formulas for the interior polynomial.

Acknowledgements: This paper would not exist without the  
many illuminating conversations that TK had over the years with Alex Postnikov about, among other things, root polytopes associated to bipartite graphs and signed graphs. We would like to thank Seunghun Lee and Thomas Zaslavsky for suggesting the question that led to Theorem \ref{thm:exist_graph_with_no_nice_ribbon_str}. TK was supported by a Japan Society for the Promotion of Science (JSPS) Grant-in-Aid for Scientific Research C (no.\ 17K05244). LT was supported by the National Research, Development and Innovation Office of Hungary -- NKFIH, grants no.\ 132488 and 128673, by the János Bolyai Research Scholarship of the Hungarian Academy of Sciences, and by the ÚNKP-21-5 New National Excellence Program of the Ministry for Innovation and Technology, Hungary. 
LT was also partially supported by the Counting in Sparse Graphs Lendület Research Group of Rényi Institute. 

\section{Directed, signed, and bipartite graphs}
\label{sec:basics}

In this paper, we will consider graphs and directed graphs without loops and multiple edges. We will always suppose that our graphs are (weakly) connected. 

A \emph{directed graph} is one in which the two endpoints of each edge are designated as \emph{tail} and \emph{head}. We draw the edge as an arrow pointing from tail to head, and write it as
$\overrightarrow{xy}$, where $x$ is the tail and $y$ is the head.
We may write $xy$ if we do not want to specify the head and the tail, or use a single Greek character when we wish to suppress the endpoints altogether.

Most of what happens in this paper can also be told in the language of signed graphs.
A \emph{signed graph} is an undirected graph where 
each edge has either a positive or a negative sign associated to it. For an edge $\varepsilon$, we take $\sign(\varepsilon)=1$ if the sign is positive and $\sign(\varepsilon)=-1$ if the sign is negative.

We will mostly be interested in 
\emph{bipartite graphs}, that is graphs with
two vertex classes \emph{black} and \emph{white} so that each edge has one black and one white endpoint. We denote the set of black vertices by $U$ and the set of white ones by $W$. 

In the bipartite case, signed graphs and directed graphs can be identified in the following way: Given a signed graph, orient each positive edge from its black endpoint to its white endpoint, and orient each negative edge from its white endpoint to its black endpoint.
We will prefer the terminology of directed graphs as it leads to simpler notation.

In the predecessor \cite{hyperBernardi} of this paper, we worked with
undirected bipartite graphs. The setting of that paper corresponds to signed graphs with all positive signs, or, in the directed language, to bipartite graphs where each edge is oriented from $U$ to $W$. We will call this the \emph{standard orientation}.

Let $G$ be a  
graph on the vertex set $V$. For a vertex $v\in V$, we will write $\mathbf v$ for the corresponding standard basis vector of the vector space $\mathbf{R}^V$.
We define the root polytope of a directed graph in the following way, which is an obvious extension of Postnikov's definition \cite{alex} in the case of a bipartite graph with its standard orientation. 

\begin{defn}\label{def:root_polytope}
The \emph{root polytope} of a directed graph $G=(V,E)$ is the convex hull
\[\mathcal{Q}_G=\conv(\{\,\mathbf{h} - \mathbf{t}\mid \overrightarrow{th}\in E\,\}).\]
\end{defn}

There is an obvious rule $\mathcal Q_{-G}=-\mathcal Q_G$, where the \emph{reverse graph} $-G$ means $G$ with the orientation of every edge reversed.
As a shorthand, we also use the notation $\mathbf x_\varepsilon$ for $\mathbf{h} - \mathbf{t}$ where $\varepsilon=\overrightarrow{th}$. 
Note that the generators given above are all vertices (extremal points) of $\mathcal{Q}_G$. One way to see this is to consider the linear functional defined over $\mathbf{R}^V$ by $\mathbf{h}\mapsto1$, $\mathbf{t}\mapsto-1$, and all other vertices mapping to $0$. Let us also note that with respect to the natural basis of $\R^V$, the sum of the coordinates is $0$ for all vertices of $\mathcal Q_G$ (i.e., $\mathcal Q_G\subset\langle\mathbf1\rangle^\perp$, where $\mathbf1=\sum_{v\in V}\mathbf v$) and thus $\dim\mathcal Q_G\le|V|-1$.

\begin{example}
\label{ex:K4}
The complete graph $K_4$, up to isomorphism and reversal, has three inequivalent orientations. The corresponding root polytopes, shown in Figure \ref{fig:K4}, are all three-dimensional but mutually non-isometric, in fact not even combinatorially equivalent.
\end{example}

\begin{figure}
    \centering
    \begin{tikzpicture}[scale=.09]
		\tikzstyle{o}=[circle,draw,minimum width = 17pt,scale=0.8]
		\node [o] (1) at (5,38) {\small $a$};
		\node [o] (2) at (17,38) {\small $b$};
		\node [o] (3) at (17,50) {\small $c$};
		\node [o] (4) at (5,50) {\small $d$};
		\path [thick,->,>=stealth] (1) edge node {} (2);
		\path [thick,<-,>=stealth] (1) edge node {} (3);
		\path [thick,<-,>=stealth] (1) edge node {} (4);
		\path [thick,<-,>=stealth] (2) edge node {} (3);
		\path [thick,<-,>=stealth] (2) edge node {} (4);
		\path [thick,<-,>=stealth] (3) edge node {} (4);
		\draw [fill] (8,0) circle [radius=.5];
		\node [below] (5) at (8,0) {\tiny $\mathbf b-\mathbf c$};
		\draw [fill] (22,2) circle [radius=.5];
		\node [below right] (6) at (22,2) {\tiny $\mathbf b-\mathbf a$};
		\draw [fill] (0,11) circle [radius=.5];
		\node [left] (7) at (0,11) {\tiny $\mathbf a-\mathbf c$};
		\draw [fill] (16,9) circle [radius=.5];
		\node [left] (8) at (16,9) {\tiny $\mathbf b-\mathbf d$};
		\draw [fill] (8,20) circle [radius=.5];
		\node [above left] (10) at (8,20) {\tiny $\mathbf a-\mathbf d$};
		\draw [fill] (22,22) circle [radius=.5];
		\node [right] (11) at (22,22) {\tiny $\mathbf c-\mathbf d$};
		\draw [thick] (8,0) -- (22,2);
		\draw [thick] (8,20) -- (22,22);
		\draw [thick] (22,2) -- (22,22);
		\draw [thick] (16,9) -- (8,0);
		\draw [thick] (16,9) -- (22,2);
		\draw [thick] (16,9) -- (22,22);
		\draw [thick] (16,9) -- (8,20);
		\draw [thick] (0,11) -- (8,20);
		\draw [thick] (0,11) -- (8,0);
		\draw [thick] (0,11) -- (9.6,15.8);
		\draw [thick] (11.6,16.8) -- (22,22);
		\draw [thick] (0,11) -- (12.1,6.05);
		\draw [thick] (14.3,5.15) -- (22,2);
	\end{tikzpicture}
	\hspace{.5cm}
	\begin{tikzpicture}[scale=.09]
    	\tikzstyle{o}=[circle,draw,minimum width = 17pt,scale=0.8]
		\node [o] (1) at (5,38) {\small $a$};
		\node [o] (2) at (17,38) {\small $b$};
		\node [o] (3) at (17,50) {\small $c$};
		\node [o] (4) at (5,50) {\small $d$};
		\path [thick,->,>=stealth] (1) edge node {} (2);
		\path [thick,<-,>=stealth] (1) edge node {} (3);
		\path [thick,<-,>=stealth] (1) edge node {} (4);
		\path [thick,->,>=stealth] (2) edge node {} (3);
		\path [thick,<-,>=stealth] (2) edge node {} (4);
		\path [thick,<-,>=stealth] (3) edge node {} (4);
		\draw [fill] (22,2) circle [radius=.5];
		\node [below right] (6) at (22,2) {\tiny $\mathbf b-\mathbf a$};
		\draw [fill] (0,11) circle [radius=.5];
		\node [left] (7) at (0,11) {\tiny $\mathbf a-\mathbf c$};
		\draw [fill] (16,9) circle [radius=.5];
		\node [below] (8) at (16,9) {\tiny $\mathbf b-\mathbf d$};
		\draw [fill] (8,20) circle [radius=.5];
		\node [above left] (10) at (8,20) {\tiny $\mathbf a-\mathbf d$};
		\draw [fill] (22,22) circle [radius=.5];
		\node [right] (11) at (22,22) {\tiny $\mathbf c-\mathbf d$};
		\draw [fill] (20,26) circle [radius=.5];
		\node [above] (12) at (20,26) {\tiny $\mathbf c-\mathbf b$};
		\draw [thick] (20,26) -- (20.3,22.4);
		\draw [thick] (20.4,21.2) -- (20.5,20);
		\draw [thick] (20.7,17.6) -- (22,2);
		\draw [thick] (8,20) -- (22,22);
		\draw [thick] (22,2) -- (22,22);
		\draw [thick] (22,22) -- (20,26);
		\draw [thick] (16,9) -- (22,2);
		\draw [thick] (16,9) -- (22,22);
		\draw [thick] (16,9) -- (8,20);
		\draw [thick] (0,11) -- (8,20);
		\draw [thick] (0,11) -- (8.8,17.6);
		\draw [thick] (10,18.5) -- (12,20);
		\draw [thick] (14,21.5) -- (20,26);
		\draw [thick] (20,26)  -- (8,20);
		\draw [thick] (0,11) -- (22,2);
		\draw [thick] (0,11) -- (16,9);
	\end{tikzpicture}
	\hspace{.5cm}
	\begin{tikzpicture}[scale=.09]
		\tikzstyle{o}=[circle,draw,minimum width = 17pt,scale=0.8]
		\node [o] (1) at (5,38) {\small $a$};
		\node [o] (2) at (17,38) {\small $b$};
		\node [o] (3) at (17,50) {\small $c$};
		\node [o] (4) at (5,50) {\small $d$};
		\path [thick,->,>=stealth] (1) edge node {} (2);
		\path [thick,<-,>=stealth] (1) edge node {} (3);
		\path [thick,<-,>=stealth] (1) edge node {} (4);
		\path [thick,<-,>=stealth] (2) edge node {} (3);
		\path [thick,->,>=stealth] (2) edge node {} (4);
		\path [thick,<-,>=stealth] (3) edge node {} (4);
		\draw [fill] (8,0) circle [radius=.5];
		\node [below] (5) at (8,0) {\tiny $\mathbf b-\mathbf c$};
		\draw [fill] (22,2) circle [radius=.5];
		\node [below right] (6) at (22,2) {\tiny $\mathbf b-\mathbf a$};
		\draw [fill] (0,11) circle [radius=.5];
		\node [left] (7) at (0,11) {\tiny $\mathbf a-\mathbf c$};
		\draw [fill] (12,17) circle [radius=.5];
		\node [right] (9) at (12,17) {\tiny $\mathbf d-\mathbf b$};
		\draw [fill] (8,20) circle [radius=.5];
		\node [above left] (10) at (8,20) {\tiny $\mathbf a-\mathbf d$};
		\draw [fill] (22,22) circle [radius=.5];
		\node [right] (11) at (22,22) {\tiny $\mathbf c-\mathbf d$};
		\draw [thick] (8,0) -- (22,2);
		\draw [thick] (8,20) -- (22,22);
		\draw [thick] (22,2) -- (22,22);
		\draw [thick] (8,0) -- (8,20);
		\draw [thick] (12,17) -- (8,0);
		\draw [thick] (12,17) -- (22,2);
		\draw [thick] (12,17) -- (22,22);
		\draw [thick] (12,17) -- (8,20);
		\draw [thick] (0,11) -- (8,20);
		\draw [thick] (0,11) -- (8,0);
		\draw [thick] (0,11) -- (7,14.5);
		\draw[thick] (9,15.5) -- (12,17);
	\end{tikzpicture}
    \caption{Orientations of $K_4$ with their root polytopes.}
    \label{fig:K4}
\end{figure}
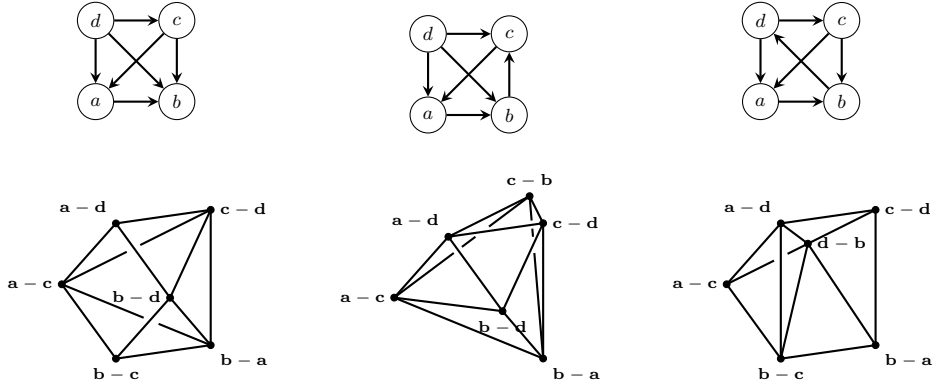

\begin{remark} \label{rem:signed_root_poly_def}
	For a signed graph $G$ one could define, taking a clue  from Ohsugi and Hibi's notion of \emph{edge polytope} \cite{hibi}, the root polytope of $G$ as $\conv(\{\sign(uw)\cdot(\mathbf{u} + \mathbf{w})\mid uw\in E(G)\})$. For signed bipartite graphs $G$ with $V(G)=U\sqcup W$, this is 
	isometric to the root polytope of Definition \ref{def:root_polytope} by sending $\mathbf u$ to $-\mathbf u$ for each $u\in U$. 
    On the other hand, the signed non-bipartite case and the directed non-bipartite case are unrelated.
    In this paper, we will focus on bipartite graphs. 
    Note that Zaslavsky, in his classic paper 
    \cite{Zaslavsky}, also introduces a vector representation for signed graphs, that is, he assigns a vector to each signed edge. This vector representation is different from the way we assign points of $\R^V$ to signed edges, and the properties of the resulting structures seem very different.
\end{remark}

An important special case of Definition \ref{def:root_polytope} is a bipartite graph with its
standard orientation, 
where we obtain 
the root polytope of the underlying undirected (bipartite) graph as defined in \cite{alex}. This case was examined in our previous paper \cite{hyperBernardi} (note that in \cite{hyperBernardi} we associated the point $\mathbf u + \mathbf v$ to the edge $uv$ instead of $\mathbf u - \mathbf v$, but the resulting polytopes are isometric because of bipartiteness).
In this case, as already noted in \cite{alex}, connectedness of the graph implies that the root polytope has dimension $|V|-2$.

We will see in the next section that for general directed graphs, even for bipartite ones, the root polytope typically has
dimension $|V|-1$, cf.\ Example \ref{ex:K4}. We will however be mostly interested in cases when the dimension remains $|V|-2$. 
Let us introduce what will turn out to be a necessary and sufficient condition for this.

\begin{defn}
In a directed graph, we say that a cycle is \emph{semi-balanced} if it has the same number of edges going in the two directions around the cycle. We call a cycle \emph{un-semibalanced} if it is not semi-balanced. 
We call a directed graph \emph{semi-balanced} if all of its cycles are semi-balanced. 
\end{defn}

Clearly, a semi-balanced graph is acyclic and bipartite.
We use the name semi-balanced because we wish to emphasize the connection of this notion to semi-activities, see also Theorem \ref{thm:shelling}. 

Let us discuss two more equivalent descriptions of semi-balancedness for a directed graph. Recall that a \emph{cut} in a graph is a set of edges obtained by partitioning the vertex set into two classes and taking all edges between the classes. A cut in a directed graph is called \emph{directed} if it arises from a partition so that each edge of the cut has its head in the same class.

\begin{defn}
	For a cycle $C$ of a directed graph $G$, we say that two edges 
	of $C$ are \emph{opposite} if their orientations in $G$ are opposite with respect to $C$.
\end{defn}

\begin{thm}\label{thm:char_semibalanced}
	The following statements are equivalent for a bipartite directed graph $G=(U\sqcup W,E)$:
	\begin{enumerate}
	\item\label{egy} $G$ is semi-balanced. 
	\item\label{kettoesfel} $G$ can be obtained from the standard orientation 
	by successively reversing directed cuts.
	\item\label{ketto} $G$ can be obtained from the standard orientation by reversing (at once) some disjoint directed cuts.
	\item\label{harom} There is a function $l\colon U\sqcup W \to \Z$ such that we have $l(h)-l(t)=1$ for each edge $\overrightarrow{th}$ of $G$.
	\end{enumerate}
\end{thm}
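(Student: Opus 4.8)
The plan is to prove the cycle of implications
\[
\eqref{egy}\Rightarrow\eqref{harom}\Rightarrow\eqref{ketto}\Rightarrow\eqref{kettoesfel}\Rightarrow\eqref{egy},
\]
with a small detour for one of the steps. First I would set up notation: fix the bipartition $V(G)=U\sqcup W$ and, for a closed walk or cycle $C$ with a chosen traversal direction, call an edge \emph{forward} or \emph{backward} according to whether its orientation in $G$ agrees with the traversal; then ``semi-balanced'' means $\#\text{forward}=\#\text{backward}$ around every cycle. The key observation to record at the outset is that, around any cycle, consecutive vertices alternate between $U$ and $W$ (bipartiteness), so if we define the ``discrete derivative'' $\delta(\overrightarrow{th})=l(h)-l(t)$ it is forced to equal $\pm1$ on every edge, and being semi-balanced on a cycle is exactly the statement that these $\pm1$'s sum to zero around the cycle.

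For \eqref{egy}$\Rightarrow$\eqref{harom}: I would build $l$ by a potential/telescoping argument. Pick a base vertex $v_0$ and set $l(v_0)=0$; for any other vertex $v$, choose a walk from $v_0$ to $v$ and let $l(v)$ be the signed count of forward-minus-backward edges along it (equivalently, the sum of $\delta$ over the walk). Well-definedness is exactly the semi-balanced condition: two walks from $v_0$ to $v$ differ by a closed walk, which decomposes (via the cycle space over $\Z$) into a $\Z$-combination of cycles, on each of which the signed count vanishes; hence the two walks give the same value. One then checks $l(h)-l(t)=1$ on each edge directly from the definition, using that every edge is forward along itself. (Here I am using connectedness, which the paper assumes throughout.) The implication \eqref{harom}$\Rightarrow$\eqref{egy} is the easy converse: summing $l(h)-l(t)=1$ around a cycle telescopes to $0$, so $\#\text{forward}-\#\text{backward}=0$.

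For \eqref{harom}$\Rightarrow$\eqref{ketto}: given $l$, I would first normalize. Since every edge goes from level $k$ to level $k+1$ and the graph is bipartite and connected, all of $U$ lies on even levels and all of $W$ on odd levels (or vice versa); after adding a constant we may assume $\min l=0$ and that $U$ sits on even levels, $W$ on odd. Now compare with the standard orientation $G_0$ (all edges $U\to W$): an edge $\overrightarrow{th}$ of $G$ agrees with $G_0$ iff $l(t)$ is even, i.e. iff $l(t)\equiv 0\pmod 2$ among the levels; the edges where $G$ and $G_0$ disagree are precisely those with $t\in W$, $h\in U$. For each even $j\ge 2$, let $V_{\ge j}=\{v: l(v)\ge j\}$ and let $D_j$ be the cut between $V_{\ge j}$ and its complement. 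Since every edge changes level by exactly $1$, $D_j$ consists exactly of the edges from level $j-1$ to level $j$; with $j$ even these are the $W\to U$ edges at that height, so $D_j$ is a directed cut (all heads in $V_{\ge j}$), and the $D_j$ for distinct even $j$ are pairwise disjoint (they live between different consecutive level-pairs). Reversing all of them at once turns $G_0$ into $G$: an edge of $G_0$ from level $j-1$ (even, so a $U\to W$ edge of $G_0$ viewed after previous reversals — one must be slightly careful, but since the $D_j$ are edge-disjoint the reversals are independent) gets reversed iff it lies in some $D_j$ iff it is one of the ``wrong-way'' edges of $G$. The bookkeeping of ``which orientation an edge has after reversing a set of disjoint cuts'' is routine because disjointness makes the reversals commute and independent.

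The remaining implications are soft. \eqref{ketto}$\Rightarrow$\eqref{kettoesfel} is immediate: disjoint cuts can be reversed one at a time, and each is still a directed cut at the moment we reverse it precisely because it is disjoint from the others (so the earlier reversals do not touch its edges). For \eqref{kettoesfel}$\Rightarrow$\eqref{egy}: the standard orientation is semi-balanced (every cycle alternates $U$–$W$, hence has equal numbers of $U\to W$ and $W\to U$ edges), and I claim reversing a directed cut preserves semi-balancedness. Indeed, any cycle $C$ meets any cut in an even number of edges, and if the cut is directed then exactly half of those (those crossing ``into'' the chosen side) are forward along $C$ at the crossing and half are backward — reversing the cut swaps forward$\leftrightarrow$backward on exactly the edges of $C\cap(\text{cut})$, and since that is a balanced swap (equal numbers changed in each direction, because a cycle crosses a cut the same number of times in each direction) the forward/backward count of $C$ is unchanged. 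So semi-balancedness is an invariant of the operation, and since we start from the semi-balanced standard orientation we stay semi-balanced.

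I expect the main obstacle to be the precise combinatorial bookkeeping in \eqref{harom}$\Rightarrow$\eqref{ketto}: making the choice of the disjoint directed cuts canonical (the level sets $V_{\ge j}$ for even $j$) and verifying cleanly that simultaneously reversing them carries the standard orientation exactly onto $G$, with no off-by-one error in which parity of levels indexes the cuts. Everything else — the potential construction in \eqref{egy}$\Rightarrow$\eqref{harom} and the invariance arguments for the two ``$\Rightarrow\eqref{egy}$'' directions — is a straightforward telescoping/parity computation once the alternation of colors around cycles is in hand.
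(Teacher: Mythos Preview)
Your proposal is correct and follows essentially the same approach as the paper: the same cycle of implications, the same potential-via-walks construction for \eqref{egy}$\Rightarrow$\eqref{harom}, the same level-based disjoint directed cuts for \eqref{harom}$\Rightarrow$\eqref{ketto} (your cuts $D_j$ for even $j$ are exactly the paper's cuts between $l^{-1}(2k-1)$ and $l^{-1}(2k)$), and the same invariance argument for \eqref{kettoesfel}$\Rightarrow$\eqref{egy}. The only differences are cosmetic: you describe the cuts via sublevel sets $V_{\ge j}$ rather than layer pairs, and you add the (unneeded but harmless) converse \eqref{harom}$\Rightarrow$\eqref{egy}.
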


When \eqref{harom} holds, we say that $l$ and 
$G$ are \emph{compatible}.
We will refer to the function $l$ of \eqref{harom} as a \emph{layering} or a \emph{potential} on the vertex set. See Figure \ref{fig:layers} for an illustration. We note that $l$ has a unique extension to a linear function $l\colon\R^V\to\R$ which evaluates to $l(\mathbf h-\mathbf t)=1$ on all vertices of $\mathcal Q_G$. Thus the root polytope of a semi-balanced 
graph always lies in a hyperplane, given as
\begin{equation}\label{eq:inhomogen}
\Pi_l=\{\mathbf x\in\R^V\mid l(\mathbf x)=1\},
\end{equation}
that avoids the origin. Together with the homogeneous relation mentioned after Definition \ref{def:root_polytope}, this ensures that $\dim\mathcal Q_G\le|V|-2$ in semi-balanced cases.

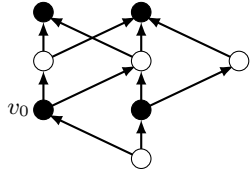
\begin{figure}
\begin{tikzpicture}[scale=.65]
\tikzset{>=latex}

\draw [->, thick] (0, 0) -- (0,0.8);
\draw [->, thick] (0, 0) -- (1.9,0.9);
\draw [->, thick] (2, 0) -- (2,0.8);
\draw [->, thick] (2, 0) -- (3.9,0.9);
\draw [->, thick] (0, 1) -- (0,1.8);
\draw [->, thick] (0, 1) -- (1.9,1.9);
\draw [->, thick] (2, 1) -- (0.1,1.9);
\draw [->, thick] (2, 1) -- (2,1.8);
\draw [->, thick] (4, 1) -- (2.1,1.9);
\draw [->, thick] (2, -1) -- (0.1,-0.1);
\draw [->, thick] (2, -1) -- (2,-0.2);
\draw [fill=white] (2, -1) circle [radius=0.2];
\draw [fill=white] (0, 1) circle [radius=0.2];
\draw [fill=white] (2, 1) circle [radius=0.2];
\draw [fill=white] (4, 1) circle [radius=0.2];
\draw [fill] (0, 0) circle [radius=0.2];
\draw [fill] (2, 0) circle [radius=0.2];
\draw [fill] (0, 2) circle [radius=0.2];
\draw [fill] (2, 2) circle [radius=0.2];

\node at (-.5, 0) {{\small{$v_0$}}};
\end{tikzpicture}
\caption{
A semi-balanced graph of four layers, each arranged on a horizontal line.
}
\label{fig:layers}
\end{figure}

\begin{proof}
	Clearly all three conditions hold if and only if they hold for each connected component. Hence we may suppose that $G$ is connected.
	
	\eqref{harom} $\Rightarrow$ \eqref{ketto}. If there is a layering $l$ then by bipartiteness, $\bigcup_{i \text{ even}} l^{-1}(i)$ gives one of the color classes, say $U$, while odd layers give the other color class $W$. 
	Then $G$ can be obtained from the orientation with each edge oriented toward $W$ by reversing the cuts between $l^{-1}(2k-1)$ and $l^{-1}(2k)$ for each $k$ (note that these are disjoint directed cuts).
	
	The implication \eqref{ketto}$  \Rightarrow  $\eqref{kettoesfel} is clear.
	
	\eqref{kettoesfel} $\Rightarrow$ \eqref{egy}. It is enough to see that the standard orientation 
	is semi-balanced (which is obvious), and that for a semi-balanced graph $G$, the reversal of a directed cut preserves the semi-balanced property. For the latter, note that any cut intersects any cycle in an even number of edges, and if the cut is directed then half of those edges are opposite to the other half. Hence after reversing the edges of a directed cut, still half of the edges of the cycle will go in each direction.
	
	\eqref{egy} $\Rightarrow$ \eqref{harom}.
	Take an arbitrary vertex $v_0$. As $G$ is connected, each vertex $v$ has a walk (a potentially self-intersecting path) leading to it from $v_0$. Choose any walk from $v_0$ to $v$ and let 
\begin{multline*}
	l(v)=\sharp\{\text{edges of the walk pointing toward }v\} \\
	- \sharp\{\text{edges of the walk pointing toward }v_0\} 
\end{multline*}
	This is well defined, since if we take a different walk from $v_0$ to $v$, the symmetric difference of the two walks consists of cycles, and by the semi-balanced property, the above counts cancel each other on cycles. Also, for each edge of $G$, when defining $l$ for one of its endpoints, we can choose a walk that uses the given edge. This implies that $l(h)-l(t)=1$ for each $\overrightarrow{th}\in E(G)$.
	\end{proof}
	
\begin{example}
\label{ex:K34}
The complete bipartite graph $K_{3,4}$ has seven semi-balanced orientations up to isomorphism. 
Figure \ref{fig:K34} shows four of them and we can get the remaining three 
from the first three orientations of the figure by reversing each edge. (The fourth 
orientation is symmetric.) 
Their root polytopes are five-dimensional.

The same enumeration for $K_{2,3}$ finds five 
semi-balanced orientations, with three-dimensional root polytopes. Three of them are shown in Figure \ref{fig:K23}. The first two polytopes are combinatorially equivalent, although not isometric.
(The latter claim is meant with respect to the standard metric on $\R^V$ which, admittedly, is not too relevant to our investigation.)
\end{example}

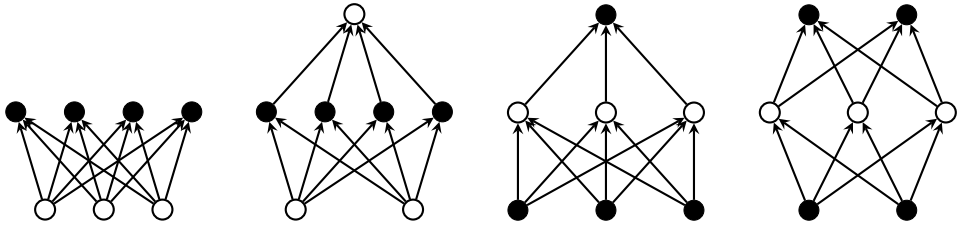
\begin{figure} 
	\begin{center}
	\begin{tikzpicture}[scale=.13]
		\node [circle,fill,scale=.8,draw] (1) at (0,10) {};
		\node [circle,fill,scale=.8,draw] (2) at (6,10) {};
		\node [circle,fill,scale=.8,draw] (3) at (12,10) {};
		\node [circle,fill,scale=.8,draw] (4) at (18,10) {};
		\node [thick,circle,scale=.8,draw] (5) at (3,0) {};		
		\node [thick,circle,scale=.8,draw] (6) at (9,0) {};		
		\node [thick,circle,scale=.8,draw] (7) at (15,0) {};	
		\path [thick,->,>=stealth] (5) edge node {} (1);
		\path [thick,->,>=stealth] (5) edge node {} (2);
		\path [thick,->,>=stealth] (5) edge node {} (3);
		\path [thick,->,>=stealth] (5) edge node {} (4);
		\path [thick,->,>=stealth] (6) edge node {} (1);
		\path [thick,->,>=stealth] (6) edge node {} (2);
		\path [thick,->,>=stealth] (6) edge node {} (3);
		\path [thick,->,>=stealth] (6) edge node {} (4);
		\path [thick,->,>=stealth] (7) edge node {} (1);
		\path [thick,->,>=stealth] (7) edge node {} (2);
		\path [thick,->,>=stealth] (7) edge node {} (3);
		\path [thick,->,>=stealth] (7) edge node {} (4);
		\end{tikzpicture}
\hspace{.5cm}
\begin{tikzpicture}[scale=.13]
\node [circle,fill,scale=.8,draw] (1) at (0,10) {};
		\node [circle,fill,scale=.8,draw] (2) at (6,10) {};
		\node [circle,fill,scale=.8,draw] (3) at (12,10) {};
		\node [circle,fill,scale=.8,draw] (4) at (18,10) {};
		\node [thick,circle,scale=.8,draw] (5) at (3,0) {};		
		\node [thick,circle,scale=.8,draw] (6) at (9,20) {};		
		\node [thick,circle,scale=.8,draw] (7) at (15,0) {};
		\path [thick,->,>=stealth] (5) edge node {} (1);
		\path [thick,->,>=stealth] (5) edge node {} (2);
		\path [thick,->,>=stealth] (5) edge node {} (3);
		\path [thick,->,>=stealth] (5) edge node {} (4);
		\path [thick,<-,>=stealth] (6) edge node {} (1);
		\path [thick,<-,>=stealth] (6) edge node {} (2);
		\path [thick,<-,>=stealth] (6) edge node {} (3);
		\path [thick,<-,>=stealth] (6) edge node {} (4);
		\path [thick,->,>=stealth] (7) edge node {} (1);
		\path [thick,->,>=stealth] (7) edge node {} (2);
		\path [thick,->,>=stealth] (7) edge node {} (3);
		\path [thick,->,>=stealth] (7) edge node {} (4);
		\end{tikzpicture}
\hspace{.5cm}
\begin{tikzpicture}[scale=.13]
\node [circle,fill,scale=.8,draw] (1) at (9,20) {};
		\node [circle,fill,scale=.8,draw] (2) at (0,0) {};
		\node [circle,fill,scale=.8,draw] (3) at (9,0) {};
		\node [circle,fill,scale=.8,draw] (4) at (18,0) {};
		\node [thick,circle,scale=.8,draw] (5) at (0,10) {};		
		\node [thick,circle,scale=.8,draw] (6) at (9,10) {};		
		\node [thick,circle,scale=.8,draw] (7) at (18,10) {};	
		\path [thick,->,>=stealth] (5) edge node {} (1);
		\path [thick,<-,>=stealth] (5) edge node {} (2);
		\path [thick,<-,>=stealth] (5) edge node {} (3);
		\path [thick,<-,>=stealth] (5) edge node {} (4);
		\path [thick,->,>=stealth] (6) edge node {} (1);
		\path [thick,<-,>=stealth] (6) edge node {} (2);
		\path [thick,<-,>=stealth] (6) edge node {} (3);
		\path [thick,<-,>=stealth] (6) edge node {} (4);
		\path [thick,->,>=stealth] (7) edge node {} (1);
		\path [thick,<-,>=stealth] (7) edge node {} (2);
		\path [thick,<-,>=stealth] (7) edge node {} (3);
		\path [thick,<-,>=stealth] (7) edge node {} (4);
		\end{tikzpicture}
\hspace{.5cm}
\begin{tikzpicture}[scale=.13]
\node [circle,fill,scale=.8,draw] (1) at (4,0) {};
		\node [circle,fill,scale=.8,draw] (2) at (14,0) {};
		\node [circle,fill,scale=.8,draw] (3) at (4,20) {};
		\node [circle,fill,scale=.8,draw] (4) at (14,20) {};
		\node [thick,circle,scale=.8,draw] (5) at (0,10) {};		
		\node [thick,circle,scale=.8,draw] (6) at (9,10) {};		
		\node [thick,circle,scale=.8,draw] (7) at (18,10) {};	
		\path [thick,<-,>=stealth] (5) edge node {} (1);
		\path [thick,<-,>=stealth] (5) edge node {} (2);
		\path [thick,->,>=stealth] (5) edge node {} (3);
		\path [thick,->,>=stealth] (5) edge node {} (4);
		\path [thick,<-,>=stealth] (6) edge node {} (1);
		\path [thick,<-,>=stealth] (6) edge node {} (2);
		\path [thick,->,>=stealth] (6) edge node {} (3);
		\path [thick,->,>=stealth] (6) edge node {} (4);
		\path [thick,<-,>=stealth] (7) edge node {} (1);
		\path [thick,<-,>=stealth] (7) edge node {} (2);
		\path [thick,->,>=stealth] (7) edge node {} (3);
		\path [thick,->,>=stealth] (7) edge node {} (4);
\end{tikzpicture}
	\end{center}
	\caption{Some semi-balanced orientations of $K_{3,4}$, starting from the standard one. The vertical coordinate serves as potential.
	}
	\label{fig:K34}
\end{figure} 

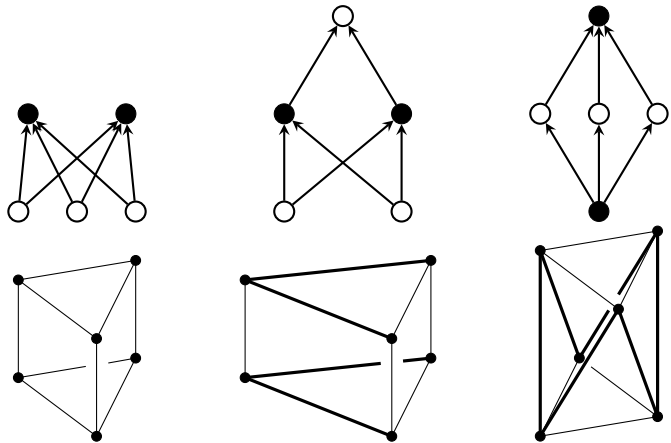
\begin{figure}
    \centering
		\begin{tikzpicture}[scale=.13]
		\node [circle,fill,scale=.8,draw] (2) at (4,10) {};
		\node [circle,fill,scale=.8,draw] (3) at (14,10) {};
		\node [thick,circle,scale=.8,draw] (5) at (3,0) {};		
		\node [thick,circle,scale=.8,draw] (6) at (9,0) {};		
		\node [thick,circle,scale=.8,draw] (7) at (15,0) {};	
		\path [thick,->,>=stealth] (5) edge node {} (2);
		\path [thick,->,>=stealth] (5) edge node {} (3);
		\path [thick,->,>=stealth] (6) edge node {} (2);
		\path [thick,->,>=stealth] (6) edge node {} (3);
		\path [thick,->,>=stealth] (7) edge node {} (2);
		\path [thick,->,>=stealth] (7) edge node {} (3);
		\draw [fill] (3,-7) circle [radius=.5];
		\draw [fill] (15,-5) circle [radius=.5];
		\draw [fill] (11,-13) circle [radius=.5];
		\draw [fill] (3,-17) circle [radius=.5];
		\draw [fill] (15,-15) circle [radius=.5];
		\draw [fill] (11,-23) circle [radius=.5];
		\draw (3,-7) -- (15,-5);
		\draw (3,-7) -- (11,-13);
		\draw (11,-13) -- (15,-5);
		\draw (3,-7) -- (3,-17);
		\draw (15,-15) -- (15,-5);
		\draw (11,-23) -- (11,-13);
		\draw (3,-17) -- (11,-23);
		\draw (11,-23) -- (15,-15);
		\draw (3,-17) -- (9.9,-15.85);
		\draw (12.2,-15.5) -- (15,-15);
		\end{tikzpicture}
\hspace{1cm}
\begin{tikzpicture}[scale=.13]
\node [circle,fill,scale=.8,draw] (1) at (3,10) {};
		\node [circle,fill,scale=.8,draw] (4) at (15,10) {};
		\node [thick,circle,scale=.8,draw] (5) at (3,0) {};		
		\node [thick,circle,scale=.8,draw] (6) at (9,20) {};		
		\node [thick,circle,scale=.8,draw] (7) at (15,0) {};
		\path [thick,->,>=stealth] (5) edge node {} (1);
		\path [thick,->,>=stealth] (5) edge node {} (4);
		\path [thick,<-,>=stealth] (6) edge node {} (1);
		\path [thick,<-,>=stealth] (6) edge node {} (4);
		\path [thick,->,>=stealth] (7) edge node {} (1);
		\path [thick,->,>=stealth] (7) edge node {} (4);
		\draw [fill] (-1,-7) circle [radius=.5];
		\draw [fill] (18,-5) circle [radius=.5];
		\draw [fill] (14,-13) circle [radius=.5];
		\draw [fill] (-1,-17) circle [radius=.5];
		\draw [fill] (18,-15) circle [radius=.5];
		\draw [fill] (14,-23) circle [radius=.5];
		\draw [very thick] (-1,-7) -- (18,-5);
		\draw [very thick] (-1,-7) -- (14,-13);
		\draw (14,-13) -- (18,-5);
		\draw (-1,-7) -- (-1,-17);
		\draw (18,-15) -- (18,-5);
		\draw (14,-23) -- (14,-13);
		\draw [very thick] (-1,-17) -- (14,-23);
		\draw (14,-23) -- (18,-15);
		\draw [very thick] (-1,-17) -- (12.87,-15.54);
		\draw [very thick] (15.15,-15.3) -- (18,-15);
		\end{tikzpicture}
\hspace{1cm}
\begin{tikzpicture}[scale=.13]
\node [circle,fill,scale=.8,draw] (1) at (9,19) {};
		\node [circle,fill,scale=.8,draw] (3) at (9,-1) {};
		\node [thick,circle,scale=.8,draw] (5) at (3,9) {};		
		\node [thick,circle,scale=.8,draw] (6) at (9,9) {};		
		\node [thick,circle,scale=.8,draw] (7) at (15,9) {};	
		\path [thick,->,>=stealth] (5) edge node {} (1);
		\path [thick,<-,>=stealth] (5) edge node {} (3);
		\path [thick,->,>=stealth] (6) edge node {} (1);
		\path [thick,<-,>=stealth] (6) edge node {} (3);
		\path [thick,->,>=stealth] (7) edge node {} (1);
		\path [thick,<-,>=stealth] (7) edge node {} (3);
		\draw [fill] (3,-5) circle [radius=.5];
		\draw [fill] (15,-3) circle [radius=.5];
		\draw [fill] (11,-11) circle [radius=.5];
		\draw [fill] (3,-24) circle [radius=.5];
		\draw [fill] (15,-22) circle [radius=.5];
		\draw [fill] (7,-16) circle [radius=.5];
		\draw (3,-5) -- (15,-3);
		\draw (3,-5) -- (11,-11);
		\draw (11,-11) -- (15,-3);
		\draw (3,-24) -- (7,-16);
		\draw (3,-24) -- (15,-22);
		\draw (8.2,-16.9) -- (15,-22);
		\draw [very thick] (3,-24) -- (11,-11);
		\draw [very thick] (7,-16) -- (10,-11.125);
		\draw [very thick] (11,-9.5) -- (15,-3);
		\draw [very thick] (3,-24) -- (3,-5);
		\draw [very thick] (15,-22) -- (11,-11);
		\draw [very thick] (15,-22) -- (15,-3);
		\draw [very thick] (7,-16) -- (3,-5);
		\end{tikzpicture}
    \caption{Some semi-balanced orientations of $K_{2,3}$ with their root polytopes. In the latter, thicker edges are of length $2$ and thinner ones are of length $\sqrt2$.}
    \label{fig:K23}
\end{figure}

\begin{remark}
Semi-balanced directed graphs and their root polytopes have recently turned up independently in the literature.

Namely, Setiabrata's paper \cite{Setiabrata}, which appeared as a preprint shortly after the first version of ours, investigates faces of root polytopes of type $\tilde{\mathcal{Q}}_G=\conv(\{\,\mathbf0, \,\mathbf{h} - \mathbf{t}\mid \overrightarrow{th}\in E\,\})$ for acyclic digraphs $G$. Semi-balanced subgraphs of $G$ (called \emph{path consistent} in \cite{Setiabrata}) also play a role in this setting: it is proved that some of the faces of $\tilde{\mathcal{Q}}_G$ are of the form $\mathcal{Q}_H$ where $H$ is a semi-balanced subgraph of $G$ satisfying a certain extra condition. 

We also mention that in \cite{Zaslavsky_biased_graphs}, Zaslavsky investigates \emph{biased graphs} and their matroids for certain abstract notions of bias. One example he gives is what he calls \emph{poise bias} for directed graphs. A cycle in a directed graph is poise balanced if and only if it is semi-balanced. Hence semi-balanced graphs are exactly the unbiased graphs for poise bias.
\end{remark}

\section{Dimension count for root polytopes}
\label{sec:dimension}

We present the following basic observation on affine relations between the vertices of the root polytope $\mathcal{Q}_G$ of the 
directed graph $G=(V,E)$.
For now, $G$ is not assumed to be bipartite.

\begin{prop}\label{prop:aff_indep_in_root_polytope}
The vertices of $\mathcal{Q}_G$ corresponding to an edge-set $A\subset E$ form an affine independent set if and only if $A$ contains at most one 
cycle and no semi-balanced cycle.
\end{prop}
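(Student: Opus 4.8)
The plan is to translate the affine-independence question into a statement about circulations on the subgraph $H=(V,A)$, and then to split into cases according to the cycle rank $r=|A|-|V(H)|+k(H)$ of $H$ (here $k(H)$ denotes the number of connected components). Recall that a finite family of points is affinely dependent exactly when there are coefficients $(c_\varepsilon)_{\varepsilon\in A}$, not all zero, with $\sum_{\varepsilon\in A}c_\varepsilon=0$ and $\sum_{\varepsilon\in A}c_\varepsilon\mathbf{x}_\varepsilon=\mathbf{0}$. Since $\mathbf{x}_\varepsilon=\mathbf{h}-\mathbf{t}$ for $\varepsilon=\overrightarrow{th}$, the second equation says precisely that $(c_\varepsilon)$ is a \emph{circulation} on $H$: at every vertex the weighted in-degree equals the weighted out-degree. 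It is classical that the space of circulations on $H$ (the kernel of the oriented incidence matrix, i.e.\ the cycle space of $H$) has dimension $r$ and is spanned by the signed indicator vectors $f^{C}$ of the cycles $C$ of $H$, where $f^{C}_{\varepsilon}=\pm1$ according as $\varepsilon$ agrees with a fixed direction of traversal of $C$, and $f^{C}_{\varepsilon}=0$ off $C$. Finally observe that $\sum_{\varepsilon}f^{C}_{\varepsilon}$ equals the number of edges of $C$ pointing one way around $C$ minus the number pointing the other way, so it vanishes if and only if $C$ is semi-balanced.

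With this in hand I carry out a three-way case analysis. If $r=0$ (so $A$ is a forest) the only circulation is $0$, there is no nontrivial affine dependence, and the points are affinely independent; correspondingly $A$ contains no cycle. If $r=1$, then $A$ contains exactly one cycle $C$, every circulation equals $\lambda f^{C}$, and the added requirement $\lambda\sum_{\varepsilon}f^{C}_{\varepsilon}=0$ forces $\lambda=0$ precisely when $C$ is not semi-balanced; hence the points are affinely independent iff $C$ is un-semibalanced, i.e.\ iff $A$ has no semi-balanced cycle. If $r\ge2$, the circulation space has dimension at least two while $\sum_{\varepsilon}c_{\varepsilon}=0$ imposes only one linear condition on it, so there is a nonzero circulation with zero coefficient-sum and the points are affinely dependent; and here $A$ contains more than one cycle. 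Assembling the three cases yields the stated equivalence: the vertices are affinely independent exactly when $r\le1$ and, when $r=1$, the single cycle is un-semibalanced --- that is, exactly when $A$ contains at most one cycle and no semi-balanced cycle.

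I do not expect a real obstacle; the only points needing a little care are the standard identification of the circulation space with the cycle space together with its dimension count, and the bookkeeping remark that ``cycle rank at most $1$'' is the same as ``at most one cycle'' (a subgraph with $r\ge2$ always contains two distinct cycles). Should one wish to avoid invoking the dimension formula in the case $r\ge2$, it can be handled by hand: if some cycle of $A$ is semi-balanced its signed indicator is already a circulation with zero coefficient-sum, and otherwise one picks two distinct cycles $C_1,C_2\subseteq A$ --- so that $f^{C_1},f^{C_2}$ are linearly independent --- and sets $c=\big(\sum_{\varepsilon}f^{C_2}_{\varepsilon}\big)f^{C_1}-\big(\sum_{\varepsilon}f^{C_1}_{\varepsilon}\big)f^{C_2}$, a nonzero circulation with $\sum_{\varepsilon}c_{\varepsilon}=0$, giving an explicit affine dependence.
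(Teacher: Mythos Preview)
Your proof is correct and reaches the same conclusion as the paper, but via a somewhat more algebraic route. The paper argues by hand: it first works out that the only linear relations supported on a single cycle $C$ are scalar multiples of the signed indicator $f^C$ (by propagating constraints around consecutive edges), then handles the case of at most one un-semibalanced cycle by peeling off leaves (an edge incident to a degree-one vertex must carry coefficient $0$, and one iterates until only the cycle remains), and finally produces an explicit dependence from two un-semibalanced cycles by combining their relations with suitable scalars --- essentially your alternative construction at the end. You instead invoke the standard identification of the kernel of the incidence matrix with the cycle space of dimension $r=|A|-|V(H)|+k(H)$ and dispatch all three cases by a dimension count against the single affine constraint $\sum_\varepsilon c_\varepsilon=0$. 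Your approach is shorter and makes the linear algebra transparent; the paper's is more self-contained (no black-box appeal to the cycle-space dimension formula) and yields along the way the explicit form of the unique affine dependence on a semi-balanced cycle, which the paper records separately and uses repeatedly in later sections.
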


\begin{proof}
	We start by showing that a cycle $C$ yields an affine independent vertex set if and only if it is not semi-balanced. 
	Suppose there is a linear relation $\sum_{\varepsilon\in C}\lambda_\varepsilon \mathbf x_\varepsilon=\mathbf0$.
	For consecutive edges $\varepsilon, \varepsilon'$ we have to have $\lambda_\varepsilon+\lambda_{\varepsilon'}=0$ 
	if their orientations in $C$ agree, and $\lambda_\varepsilon-\lambda_{\varepsilon'}=0$ if they are oppositely oriented in $C$. (Otherwise the generator of $\R^V$ that corresponds to their common endpoint does not cancel.) Let $C^+$ be the set of edges oriented consistently with some fixed edge, and $C^-$ the set of edges oriented opposite to the fixed edge.
	By the above, the non-trivial linear relations of $\{\mathbf x_\varepsilon\mid\varepsilon\in C\}$ are exactly 
	\begin{equation}\label{eq:kombinacio}
	\sum_{\varepsilon\in C^+}\lambda \mathbf x_\varepsilon-\sum_{\varepsilon\in C^-}\lambda \mathbf x_\varepsilon=\mathbf0\quad(\lambda\ne0).
	\end{equation}
	Since the sum of the coefficients is $\lambda\cdot (|C^+|-|C^-|)$, we have a non-trivial affine relation if and only if $C$ is semi-balanced.
	
	Now it is easy to show that subgraphs containing at most one un-semibalanced cycle and no semi-balanced cycle correspond to affine independent sets. Indeed, if such a subgraph is not an un-semibalanced cycle, then there will be a degree one node in it. In any affine combination giving $\mathbf{0}$, an edge attached to a degree one node has to have coefficient $0$, for otherwise the component of the degree one node would not be $0$. Successively applying this argument, we get that only the edges of the cycle (if it exists) might have nonzero coefficients, but that is also impossible by the previous paragraph.
	
	Also, since semi-balanced cycles yield affine dependent sets, any subgraph containing a semi-balanced cycle does so, too. 
	Finally, we show that subgraphs containing at least two un-semibalanced cycles give affine dependent sets. Take the two un-semibalanced cycles. They might have edges in common, but there is at least one edge that only occurs in one of them. Consider the linear relations \eqref{eq:kombinacio} for both cycles. By choosing the two $\lambda$ values appropriately, the coefficients will sum to zero, that is by adding the two conditions we get an 
	affine relation. In 
	it, the coefficient of the edge occurring in only one of the cycles will be nonzero.
\end{proof}

\begin{cor}\label{cor:max_simplices:of_root_polytope}
	If $G$ is connected and semi-balanced, then the maximal affine independent vertex sets in the root polytope correspond to spanning trees, and the dimension of the root polytope is $|V|-2$. If $G$ is connected and  un-semibalanced, then the dimension is $|V|-1$. If $G$ is semi-balanced and has $c(G)$ connected components, then the dimension of its root polytope is $|V|-1-c(G)$.
\end{cor}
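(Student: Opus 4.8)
The plan is to use the standard fact that the dimension of a polytope equals the maximum cardinality of an affine independent subset of its vertex set, minus one. By Proposition \ref{prop:aff_indep_in_root_polytope}, computing $\dim\mathcal Q_G$ therefore amounts to determining the largest cardinality of an edge set $A\subseteq E$ that contains at most one cycle and no semi-balanced cycle, and identifying the maximal affine independent vertex sets amounts to describing the inclusion-maximal such $A$.

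First I would dispose of the semi-balanced cases (the first and third assertions at once). If $G$ is semi-balanced then every cycle is semi-balanced, so the condition in Proposition \ref{prop:aff_indep_in_root_polytope} forces $A$ to be acyclic, i.e., the edge set of a forest in $G$. Such a forest has at most $|V|-c(G)$ edges, with equality exactly for spanning forests, and spanning forests exist; moreover, if a forest is not spanning then, each component of $G$ being connected, there is an edge of $G$ joining two of the forest's components, and adjoining it preserves the forest property, hence preserves affine independence. Thus the inclusion-maximal affine independent vertex sets are precisely those corresponding to spanning forests of $G$ --- spanning trees when $G$ is connected --- and $\dim\mathcal Q_G=(|V|-c(G))-1=|V|-1-c(G)$, which specializes to $|V|-2$ when $c(G)=1$.

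Next, for the second assertion, assume $G$ is connected and un-semibalanced, so it contains an un-semibalanced cycle $C$. Now $A$ is permitted to contain one cycle, which is then necessarily un-semibalanced. I would take $A=C\cup T'$, where $T'$ is the preimage of a spanning tree of the contraction $G/C$; then $(V,A)$ is connected and spanning with first Betti number $1$, so it is unicyclic with unique cycle $C$, and $|A|=|V(C)|+(|V|-|V(C)|)=|V|$. By Proposition \ref{prop:aff_indep_in_root_polytope} the corresponding vertex set is affine independent, and it is maximal because adjoining any further edge of $G$ raises the first Betti number to at least $2$, hence produces at least two distinct cycles (for instance two distinct fundamental cycles with respect to a spanning subtree), making the enlarged set affine dependent. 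Therefore $\dim\mathcal Q_G=|V|-1$, in agreement with the a priori bound noted after Definition \ref{def:root_polytope}.

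The argument is essentially bookkeeping once Proposition \ref{prop:aff_indep_in_root_polytope} is available; the only points requiring a little care are the two maximality claims --- that a non-spanning forest can always be enlarged inside $G$, and that a subgraph with first Betti number at least $2$ really contains two distinct cycles --- and both follow routinely from connectivity and from counting fundamental cycles relative to a spanning tree. I expect no genuine obstacle beyond these small verifications.
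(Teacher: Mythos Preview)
Your argument is correct and follows exactly the route the paper intends: the corollary is stated without proof in the paper, being treated as an immediate consequence of Proposition~\ref{prop:aff_indep_in_root_polytope}, and your write-up simply spells out the elementary graph-theoretic bookkeeping (forests versus unicyclic subgraphs) that makes this precise.
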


This certainly matches Examples \ref{ex:K34} and \ref{ex:K4}.
    Note that in the above statement, and throughout this paper, a \emph{spanning tree} means a subgraph that is a spanning tree in the undirected sense (i.e., connected and cycle-free), regardless of how the edges of the tree are oriented. 

Since we will frequently use the affine dependence of the vectors corresponding to the edges of a semi-balanced cycle, let us state that result as a separate lemma.

\begin{lemma}\label{l:dependence_of_semibalanced_cycle}
	For any semi-balanced cycle $C$, 
	the relation 
	\[\sum_{\varepsilon\in C^+} \mathbf x_\varepsilon - \sum_{\varepsilon\in C^-} \mathbf x_\varepsilon  = \mathbf 0,\] 
	cf.\ \eqref{eq:kombinacio}, is an affine dependence.
\end{lemma}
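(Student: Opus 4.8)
The plan is to verify directly that the displayed combination has the two defining features of an affine dependence: the weighted sum of the vertices $\mathbf x_\varepsilon$ is $\mathbf 0$, and the weights sum to $0$; and then to note it is non-trivial. This is, in essence, a repackaging of the computation around \eqref{eq:kombinacio} in the proof of Proposition \ref{prop:aff_indep_in_root_polytope}, so one could simply cite that; but a self-contained argument is short.

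First I would fix a cyclic traversal of the cycle, say $v_0,\varepsilon_1,v_1,\varepsilon_2,\dots,\varepsilon_k,v_k=v_0$, and recall that, by definition, $C^+$ consists of those $\varepsilon_i$ whose orientation in $G$ agrees with this traversal while $C^-$ consists of those whose orientation is opposite to it. If $\varepsilon_i\in C^+$ then $\varepsilon_i=\overrightarrow{v_{i-1}v_i}$, so $\mathbf x_{\varepsilon_i}=\mathbf v_i-\mathbf v_{i-1}$; if $\varepsilon_i\in C^-$ then $\varepsilon_i=\overrightarrow{v_iv_{i-1}}$, so $-\mathbf x_{\varepsilon_i}=\mathbf v_i-\mathbf v_{i-1}$. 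Hence in either case the signed contribution of $\varepsilon_i$ to $\sum_{\varepsilon\in C^+}\mathbf x_\varepsilon-\sum_{\varepsilon\in C^-}\mathbf x_\varepsilon$ equals $\mathbf v_i-\mathbf v_{i-1}$, and summing over $i=1,\dots,k$ the sum telescopes to $\mathbf v_k-\mathbf v_0=\mathbf 0$. This shows the relation is a genuine linear dependence.

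It remains to observe that the sum of the coefficients in the relation is $|C^+|-|C^-|$, which vanishes precisely because $C$ is semi-balanced (that is exactly the statement that $C$ has equally many edges running in the two directions); so the dependence is affine. Non-triviality is immediate since $C$ has at least one edge, so not all coefficients are $0$. There is no real obstacle here: the only point requiring a little care is the sign bookkeeping for edges of $G$ that run against the chosen cyclic direction, and the (essentially tautological) translation of the semi-balanced condition into $|C^+|=|C^-|$.
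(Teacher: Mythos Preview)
Your proof is correct and follows essentially the same route as the paper: the lemma is stated there as an immediate extraction from the proof of Proposition~\ref{prop:aff_indep_in_root_polytope}, where the relation \eqref{eq:kombinacio} is derived and the coefficient sum $|C^+|-|C^-|$ is observed to vanish exactly in the semi-balanced case. Your telescoping verification is just a slightly more explicit rendering of that same computation.
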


\begin{remark}
    We may consider analogous questions 
    for root polytopes of signed graphs, as defined in Remark \ref{rem:signed_root_poly_def}. The bipartite case is equivalent to the above. In the non-bipartite case, it is easy to check that an odd cycle yields an affine independent vertex set in this alternative root polytope. Thus the polytope of a connected, non-bipartite signed graph $G$ always has dimension $|V(G)|-1$.
\end{remark}

Trees form an important special class of bipartite graphs, and all directed trees are automatically semi-balanced. We have seen that any directed tree $T$ on the vertex set $V$ yields a root polytope $\mathcal Q_T$ that is a simplex of dimension $|V|-2$. By adding the origin as a vertex, we get simplices $\tilde{\mathcal Q}_T$ of dimension $|V|-1$.
Some standard arguments, very similar to Proposition \ref{prop:aff_indep_in_root_polytope}, yield the following.

\begin{lemma}\label{lem:unimodularitas}
For any directed tree $T$, the simplices $\mathcal Q_T$ and $\tilde{\mathcal Q}_T$ are unimodular. In particular, 
for all (spanning) trees $T$ on a fixed vertex set $V$, the simplices $\tilde{\mathcal Q}_T$ have the same $(|V|-1)$-dimensional volume.
\end{lemma}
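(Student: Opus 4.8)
The plan is to peel leaves, in the same spirit as the proof of Proposition~\ref{prop:aff_indep_in_root_polytope}. Fix the vertex set $V$, write $n=|V|$, and let $L=\Z^V\cap\langle\mathbf1\rangle^\perp$ denote the rank-$(n-1)$ lattice with respect to which we measure; note that $L$ depends only on $V$ and not on any tree. The core claim is that for every directed tree $T$ on $V$, \emph{the vectors $\{\mathbf x_\varepsilon\mid\varepsilon\in T\}$ form a $\Z$-basis of $L$}. I would prove this by induction on $n$, the case $n=1$ being vacuous. For $n\ge2$, choose a leaf $\ell$ of $T$ and let $\varepsilon$ be the unique edge of $T$ at $\ell$; put $V'=V\setminus\{\ell\}$, $T'=T-\ell$, and $L'=\Z^{V'}\cap\langle\mathbf1\rangle^\perp$. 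Among the edge vectors of $T$, only $\mathbf x_\varepsilon$ has a nonzero $\ell$-coordinate, and that coordinate is $\pm1$; hence subtracting the appropriate integer multiple of $\mathbf x_\varepsilon$ to cancel the $\ell$-coordinate defines a $\Z$-linear isomorphism $L\to\Z\oplus L'$ under which $\mathbf x_\varepsilon\mapsto(\pm1,\mathbf0)$ while $\mathbf x_\delta\mapsto(0,\mathbf x_\delta)$ for every $\delta\in T'$. Since $\{\mathbf x_\delta\mid\delta\in T'\}$ is a $\Z$-basis of $L'$ by the inductive hypothesis, the claim follows. (Alternatively one may induct on the reduced incidence determinant of $T$, expanding along a leaf row to extract a factor $\pm1$.)

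Granting the claim, $\tilde{\mathcal Q}_T=\conv(\{\mathbf0\}\cup\{\mathbf x_\varepsilon\mid\varepsilon\in T\})$ is the convex hull of the origin together with a $\Z$-basis of $L$; its $n$ vertices span $\langle\mathbf1\rangle^\perp$, so it is a full-dimensional unimodular simplex in the lattice $L$. Turning to $\mathcal Q_T=\conv\{\mathbf x_\varepsilon\mid\varepsilon\in T\}$, which is the facet of $\tilde{\mathcal Q}_T$ opposite the vertex $\mathbf0$: fixing one edge $\varepsilon_0\in T$ as base vertex, the difference vectors $\{-\mathbf x_{\varepsilon_0}\}\cup\{\mathbf x_\varepsilon-\mathbf x_{\varepsilon_0}\mid\varepsilon\in T\setminus\{\varepsilon_0\}\}$ are obtained from the $\Z$-basis of the claim by a unimodular change of coordinates, hence form another $\Z$-basis of $L$. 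A subset of a $\Z$-basis spans a saturated sublattice, so $\{\mathbf x_\varepsilon-\mathbf x_{\varepsilon_0}\mid\varepsilon\in T\setminus\{\varepsilon_0\}\}$ is a $\Z$-basis of the rank-$(n-2)$ lattice $L\cap W_0$, where $W_0$ is the linear span of $\mathcal Q_T-\mathbf x_{\varepsilon_0}$; that is, $\mathcal Q_T$ is a unimodular simplex in the affine lattice of its own affine hull.

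For the ``in particular'' assertion, observe that for a fixed $V$ all the simplices $\tilde{\mathcal Q}_T$ lie in the single $(n-1)$-dimensional subspace $\langle\mathbf1\rangle^\perp$ equipped with the single lattice $L$, and a unimodular $(n-1)$-simplex there has Euclidean volume $\tfrac1{(n-1)!}$ times the covolume of $L$; since this value is independent of $T$, all of the $\tilde{\mathcal Q}_T$ have the same $(n-1)$-dimensional volume.

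I do not anticipate a serious obstacle; the whole statement reduces to a routine leaf induction. The one place that calls for care is the lattice bookkeeping in the facet step --- namely that a subset of a $\Z$-basis is primitive/saturated, which is what upgrades $\mathcal Q_T$ from ``a simplex of relative volume $\tfrac1{(n-2)!}$ of something'' to a genuinely unimodular simplex --- together with the (minor but real) point that the equal-volume conclusion is stated for $\tilde{\mathcal Q}_T$ rather than $\mathcal Q_T$: different orientations of a given tree $T$ place $\mathcal Q_T$ in different hyperplanes $\Pi_l$, whereas every $\tilde{\mathcal Q}_T$ contains $\mathbf0$ and spans the common subspace $\langle\mathbf1\rangle^\perp$ with the common lattice $L$.
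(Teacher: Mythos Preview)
Your proof is correct and follows essentially the same approach as the paper: a leaf-peeling induction to show that $\{\mathbf x_\varepsilon\mid\varepsilon\in T\}$ is a $\Z$-basis of $\langle\mathbf1\rangle^\perp\cap\Z^V$, from which unimodularity of both simplices and equality of volumes follow. The only cosmetic difference is that the paper handles $\mathcal Q_T$ in one line by noting that a face of a unimodular simplex is unimodular, whereas you work this out explicitly via saturated sublattices; both are fine.
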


Here unimodularity is meant with respect to the lattices $\langle\mathbf1\rangle^\perp\cap\Pi_l\cap\Z^V$ and $\langle\mathbf1\rangle^\perp\cap\Z^V$, respectively, where $l\colon V\to\Z$ is a potential compatible with $T$, cf.\ \eqref{eq:inhomogen}. Recall that a \emph{unimodular simplex} is one whose vertices form an affine basis of the lattice, which in turn means that the vectors, pointing from an arbitrarily fixed vertex to the other vertices, constitute a linear basis of the lattice over $\Z$. We note that the root polytope of an un-semibalanced cycle is a simplex that fails to be unimodular.

\begin{proof}
In the case of $\tilde{\mathcal Q}_T$, it suffices to prove that the collection $\{\mathbf x_\varepsilon\mid\varepsilon\in T\}$ generates $\langle\mathbf1\rangle^\perp\cap\Z^V$ using integer coefficients. Indeed, one inductively finds the (unique, integer) solution to any such system of equations by working inward from the leaf edges of $T$. As $\mathcal Q_T$ is a face of $\tilde{\mathcal Q}_T$, it is itself unimodular. Finally, as unimodular simplices are transformed into one another by translations and invertible $\Z$-linear transformations (thus of determinant $\pm1$), they have equal volume.
\end{proof}

\begin{cor}
\label{cor:unimod}
All cycle-free sets of edges $S$ give rise to unimodular simplices $\mathcal Q_S$ and $\tilde{\mathcal Q}_S$.
\end{cor}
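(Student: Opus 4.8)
The plan is to realize $\mathcal{Q}_S$ and $\tilde{\mathcal{Q}}_S$ as faces of the simplices attached to a suitable spanning tree, and then to invoke the fact that a face of a unimodular simplex is again unimodular.

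First I would observe that a cycle-free set $S$ is a forest, and since $G$ is connected we can adjoin further edges of $G$, with arbitrary orientations, so as to extend $S$ to a spanning tree $T$ of $G$. (If one prefers to treat $S$ as a free-standing directed graph on a vertex set $V$, adjoin arbitrarily oriented new edges on $V$ instead; the rest of the argument is identical.) By Proposition \ref{prop:aff_indep_in_root_polytope} the points $\{\mathbf{x}_\varepsilon\mid\varepsilon\in T\}$ are affinely independent, so $\mathcal{Q}_T$ is a simplex with exactly these vertices and $\tilde{\mathcal{Q}}_T$ is a simplex with vertex set $\{\mathbf{0}\}\cup\{\mathbf{x}_\varepsilon\mid\varepsilon\in T\}$. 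Since the convex hull of any subset of the vertices of a simplex is a face of it, $\mathcal{Q}_S$ is a face of $\mathcal{Q}_T$ and $\tilde{\mathcal{Q}}_S$ is a face of $\tilde{\mathcal{Q}}_T$.

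By Lemma \ref{lem:unimodularitas} the simplices $\mathcal{Q}_T$ and $\tilde{\mathcal{Q}}_T$ are unimodular, so it remains only to prove the general principle that a face of a unimodular simplex is unimodular with respect to the lattice it inherits, namely the intersection of the ambient lattice with the affine span of the face. This comes down to the standard observation that if $b_1,\dots,b_k$ is a $\Z$-basis of a lattice $\Lambda$, then for any $I\subseteq\{1,\dots,k\}$ the vectors $\{b_i\mid i\in I\}$ form a $\Z$-basis of $\Lambda\cap\langle b_i\mid i\in I\rangle_{\R}$ (by uniqueness of coordinates in a basis, the lattice on the right is saturated and is generated by the $b_i$, $i\in I$). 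One applies this with the $b_i$ taken to be the edge vectors of $T$ based at one vertex; recall that for a unimodular simplex the difference vectors based at \emph{any} vertex again form a $\Z$-basis, the transition matrix having determinant $\pm1$, so one may assume the base vertex lies among those indexed by $S$. The sub-collection indexed by $S$ is then a $\Z$-basis of the induced sublattice, which is exactly the assertion that $\mathcal{Q}_S$, respectively $\tilde{\mathcal{Q}}_S$, is unimodular.

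The only point that needs any care is this last one: that a subset of a lattice basis generates a saturated sublattice, so that the face is genuinely unimodular for the naturally induced lattice and not merely of the ``correct'' volume. Everything else is bookkeeping; in particular, this also recovers, at the level of forests, the equal-volume statement of Lemma \ref{lem:unimodularitas}.
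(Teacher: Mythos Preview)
Your proof is correct and follows essentially the same route as the paper: extend $S$ to a tree $T$, observe that $\mathcal Q_S$ and $\tilde{\mathcal Q}_S$ are faces of $\mathcal Q_T$ and $\tilde{\mathcal Q}_T$, and invoke Lemma~\ref{lem:unimodularitas}. The paper compresses this into one sentence and leaves the ``face of a unimodular simplex is unimodular'' step implicit, whereas you spell it out via the saturated-sublattice argument; that elaboration is sound and adds clarity but is not a different method.
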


\begin{proof}
Any such set $S$ of edges can be extended to a tree $T$ so that $\mathcal Q_S$ and $\tilde{\mathcal Q}_S$ are faces of the unimodular simplices $\mathcal Q_T$ and $\tilde{\mathcal Q}_T$, respectively.
\end{proof}

\begin{cor}\label{cor:azonos_terfogat}
For all directed trees $T$ on the vertex set $V$, compatible with a fixed layering $l\colon V\to\Z$ (such as the spanning trees of a fixed semi-balanced graph), the simplices $\mathcal Q_T$ have the same $(|V|-2)$-dimensional volume.
\end{cor}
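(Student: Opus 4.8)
The plan is to derive this as an immediate consequence of Lemma \ref{lem:unimodularitas}, which already records that the $(|V|-1)$-dimensional volumes of the simplices $\tilde{\mathcal Q}_T$ are all equal. The bridge is that $\mathcal Q_T$ is a facet of $\tilde{\mathcal Q}_T$, so the two volumes differ by a factor that does not depend on $T$.

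Concretely, write $d=|V|-1$. Since $T$ has $d$ edges, $\tilde{\mathcal Q}_T=\conv(\{\mathbf 0\}\cup\{\mathbf x_\varepsilon\mid\varepsilon\in T\})$ is a $d$-dimensional simplex sitting inside the $d$-dimensional space $\langle\mathbf1\rangle^\perp\subset\R^V$, and its facet opposite the vertex $\mathbf 0$ is exactly $\mathcal Q_T=\conv(\{\mathbf x_\varepsilon\mid\varepsilon\in T\})$. By Corollary \ref{cor:max_simplices:of_root_polytope} this facet has dimension $d-1=|V|-2$, and its affine hull must therefore be the affine hyperplane $\langle\mathbf1\rangle^\perp\cap\Pi_l$ of $\langle\mathbf1\rangle^\perp$, where $\Pi_l$ is as in \eqref{eq:inhomogen}. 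Since $\Pi_l$ avoids the origin, $\mathbf 0\notin\operatorname{aff}(\mathcal Q_T)$, so $\tilde{\mathcal Q}_T$ is a pyramid with base $\mathcal Q_T$ and apex $\mathbf 0$. The elementary pyramid-volume formula gives
\[
\operatorname{vol}_{|V|-1}(\tilde{\mathcal Q}_T)\;=\;\frac{1}{|V|-1}\,\distance\bigl(\mathbf 0,\,\langle\mathbf1\rangle^\perp\cap\Pi_l\bigr)\cdot\operatorname{vol}_{|V|-2}(\mathcal Q_T),
\]
the distance being taken inside $\langle\mathbf1\rangle^\perp$. Neither $\mathbf 0$ nor the hyperplane $\langle\mathbf1\rangle^\perp\cap\Pi_l$ depends on the particular tree --- only on the fixed layering $l$ --- so the scalar multiplying $\operatorname{vol}_{|V|-2}(\mathcal Q_T)$ is a positive constant. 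Combined with the equality of the numbers $\operatorname{vol}_{|V|-1}(\tilde{\mathcal Q}_T)$ supplied by Lemma \ref{lem:unimodularitas}, this forces all the $\operatorname{vol}_{|V|-2}(\mathcal Q_T)$ to agree, which is the claim.

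I do not expect a genuine obstacle here; the statement is essentially a corollary of the preceding lemma. The only point that needs a word of care is that the base hyperplane $\langle\mathbf1\rangle^\perp\cap\Pi_l$ really is one and the same for every spanning tree compatible with the fixed $l$: although a given tree admits a one-parameter family of compatible potentials (differing by an additive constant, as the tree is connected), adding a constant to $l$ changes its linear extension only by a multiple of $\mathbf x\mapsto\sum_{v}x_v$, which vanishes on $\langle\mathbf1\rangle^\perp$, so the affine hyperplane $\langle\mathbf1\rangle^\perp\cap\Pi_l$ is unchanged. An alternative route avoiding the pyramid computation altogether: Lemma \ref{lem:unimodularitas} says each $\mathcal Q_T$ is a unimodular simplex for the single lattice $\langle\mathbf1\rangle^\perp\cap\Pi_l\cap\Z^V$, and any two unimodular simplices for a common affine lattice have the same volume, being carried into one another by a volume-preserving affine lattice automorphism (of determinant $\pm1$), exactly as in the last step of the proof of Lemma \ref{lem:unimodularitas}.
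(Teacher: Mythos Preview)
Your proof is correct and matches the paper's own argument essentially verbatim: the paper's proof reads ``This follows either by unimodularity (with respect to the same lattice) directly, or by noting that the corresponding simplices $\tilde{\mathcal Q}_T$ have identical volumes as well as heights (with respect to the origin),'' and you have spelled out both of these routes in detail. Your remark about why the hyperplane $\langle\mathbf1\rangle^\perp\cap\Pi_l$ is independent of the additive normalization of $l$ is a nice extra bit of care.
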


\begin{proof}
This follows either by unimodularity (with respect to the same lattice) directly, or by noting that the corresponding simplices $\tilde{\mathcal Q}_T$ have identical volumes as well as heights (with respect to the origin).
\end{proof}


The contents of this section were not meant to surprise the experts. For example,
instances of 
Lemma \ref{lem:unimodularitas} and Corollary \ref{cor:azonos_terfogat} play a big role in Postnikov's treatment \cite{alex} of 
bipartite graphs with a standard orientation.

\section{Ehrhart theory and the interior polynomial}
\label{sec:int_poly}

The interior polynomial first surfaced as an invariant of hypergraphs \cite{hiperTutte}. It is an elementary concept, inspired directly by the Tutte polynomial of graphs. Any bipartite graph induces two hypergraphs and it turns out that their interior polynomials coincide, in fact they both agree with the so called $h^*$-vector or $h^*$-polynomial of the root polytope of the standard orientation of the graph \cite{KP_Ehrhart}.

The purpose of this section is to recall the $h^*$-vector and some surrounding notions, and to introduce the following obvious extension.

\begin{defn}[Interior polynomial]
Let $G$ be a  connected 
semi-balanced (in particular, directed and bipartite) graph. Its \emph{interior polynomial} is defined to be the $h^*$-polynomial of the root polytope of $G$, and it is denoted by $I_G$. When $G$ is single isolated point (so that $\mathcal Q_G=\emptyset$), then we let its interior polynomial be $1$.
\end{defn}

Most of the rest of the paper is devoted to the computation and properties of the interior polynomial $I$.
We will discuss how to extend $I$ to disconnected semi-balanced graphs 
at the end of the paper, cf.\ \eqref{eq:disconnect}.
The only reason why we do not make the above definition for all directed graphs is that we do not have any claims to make at that level of generality. We do have a surprisingly strong tool though in the semi-balanced case: ribbon structures, which we will start discussing in the next section.

\begin{remark}
Kato \cite{kato_signed} defined the \emph{signed interior polynomial} $I^+$ for signed bipartite graphs, which is also a generalization of the interior polynomial of (unsigned/standardly oriented) bipartite graphs. His polynomial is different from our interior polynomial (even if we switch to the language of signed graphs).
\end{remark}

Let us now recall 
$h^*$-polynomials and 
a particular way of computing them, using a shellable dissection into unimodular simplices.

For any $d$-dimensional 
polytope $Q\subset\R^n$ with vertices in $\Z^n$, its \emph{$h^*$-polynomial} $\sum_{i=0}^d h^*_i t^i$, also commonly called the \emph{$h^*$-vector}, is defined by Ehr\-hart's identity 
\begin{equation}
\label{eq:h-csillag}
\sum_{i=0}^d h^*_i t^i = (1-t)^{d+1} \ehr_Q(t),
\quad\text{where}\quad 
\ehr_Q(t)=\sum_{k=0}^\infty|(k\cdot Q)\cap\Z^n|\,t^k
\end{equation}
is known as the \emph{Ehrhart series} of $Q$.

Now suppose that 
$Q$ is subdivided into unimodular simplices of the lattice.
If the subdivision is a \emph{triangulation}, i.e., any two of its simplices intersect in a common face, then the $h^*$-polynomial is related to the $h$-vector of the triangulation via
\begin{equation}
\label{eq:h-star}
\sum_{i=0}^d h^*_i t^i =t^{d+1} h(1/t).
\end{equation}
Here the \emph{$h$-vector} or \emph{$h$-polynomial} is defined as
\begin{equation}
\label{eq:h-vektor}
h(x)=f(x-1),\quad\text{where}\quad f(y)=y^{d+1}
+\sum_{\begin{subarray}{c}
F:\text{ a non-empty face}\\
\text{in the triangulation}\end{subarray}}y^{d-\dim F}.
\end{equation}
The $h$- and $f$-polynomials can be, and often are, considered for abstract simplicial complexes as well, but the $h^*$-polynomial is specific to lattice polytopes. 

We have already used the technique in \cite{hyperBernardi}, but it is worth repeating (and making explicit in Proposition \ref{prop:h-dissect} below), that the $h$-vector of a 
shellable dissection fits into \eqref{eq:h-star}
equally well.

A \emph{dissection} of a polytope is a set of mutually interior-disjoint maximal dimensional simplices whose union is the polytope.
Furthermore, a dissection is \emph{shellable} if it admits a \emph{shelling order}. That in turn means that the simplices of the dissection are listed as $\sigma_1,\sigma_2,\ldots,\sigma_N$, in such a way that for each $i=2,3,\ldots,N$, the intersection of $\sigma_i$ with the `earlier' simplices,
\[ \sigma_i\cap\left(\bigcup_{j=1}^{i-1}\sigma_j\right), \]
coincides with the union of a positive number of facets (codimension $1$ faces) of $\sigma_i$. 

Let us denote the number of said facets by $r_i$ for $i=2,3,\ldots,N$ and let us also put $r_1=0$. Then the \emph{$h$-vector} associated to the shelling order is the distribution of the statistic $r_i$. 
We may write it as the finite sequence $(h_0,h_1,\ldots)$, where 
\begin{equation}
\label{eq:h-egyutthato}    
\text{$h_k$ is the number of simplices $\sigma_i$ with $r_i=k$.}
\end{equation}
Note that $h_0=1$ and since each maximal simplex has $d+1$ facets, the subscript of the last non-zero term is at most $d+1$. In fact, it is at most $d$ by a topological argument: each $\sigma_i$ with $r_i=d+1$ (i.e., a $d$-cell attached along its entire boundary)
would add an infinite cyclic summand to the $d$-dimensional homology group of the polytope, but that group is $0$.

Alternatively, we may express the $h$-vector in the polynomial form
\begin{equation}
\label{eq:h-hejazott}
h(x)=h_{d}\,x+h_{d-1}\,x^2+\cdots+h_1\,x^{d}+h_0\,x^{d+1}.
\end{equation}
Mind the flip of order and that the constant term is $0$; the latter choice is related to the fact that for a triangulated polytope, \eqref{eq:h-vektor} provides $h(0)=f(-1)=0$ for Euler characteristic reasons.
Indeed, when the dissection is a triangulation, \eqref{eq:h-hejazott} is consistent with 
\eqref{eq:h-vektor}; in fact, our definition \eqref{eq:h-hejazott} is a direct extension of a well known description of the $h$-vector for shellable triangulations. In particular, the $h$-vector is independent of the shelling order in the case of triangulations.

Now in the special case of unimodular simplices, all formulas of this section so far are essentially the same.

\begin{prop}
\label{prop:h-dissect}
For any shellable dissection of a lattice polytope into unimodular simplices, and for any shelling order, the $h$-vector \eqref{eq:h-hejazott} is equivalent, via \eqref{eq:h-star}, to the $h^*$-vector \eqref{eq:h-csillag} of the polytope.
\end{prop}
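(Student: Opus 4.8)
The plan is to verify Ehrhart's identity \eqref{eq:h-csillag} term by term for a shellable dissection into unimodular simplices, by computing the Ehrhart series $\ehr_Q(t)$ directly from the shelling and matching it against the $h$-vector \eqref{eq:h-hejazott}. The starting point is the observation that for a unimodular $d$-simplex $\sigma$, half-open or not, the lattice point count is governed by binomial coefficients: a relatively open unimodular $j$-simplex $\tau^\circ$ contributes $|(k\cdot\overline\tau)\cap\Z^n \text{ in the relative interior}| = \binom{k-1}{j}$ for $k\ge 1$, so that $\sum_{k\ge 0}\binom{k-1}{j}t^k = t^{j+1}/(1-t)^{j+1}$ (with the $j=-1$, i.e. empty-face, convention giving $1$). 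This is where unimodularity is essential: the only lattice points in $k\cdot\sigma$ are the ``obvious'' ones coming from barycentric-type coordinates, because the vertices of $\sigma$ form an affine lattice basis.

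Next I would use the shelling order $\sigma_1,\dots,\sigma_N$ to write $Q$ as a disjoint union of relatively open cells. Concretely, for each $i$ let $Q_i=\sigma_1\cup\dots\cup\sigma_i$; then $\sigma_i\setminus Q_{i-1}$ is $\sigma_i$ minus $r_i$ of its facets, which by the shelling property and standard shelling combinatorics decomposes as a disjoint union of relatively open faces of $\sigma_i$ — precisely one relatively open face of each dimension-profile counted by the ``restriction'' of $\sigma_i$ (the minimal new face). The key bookkeeping fact, exactly as in the triangulation case, is that a $d$-simplex with $r$ marked facets, when one removes the marked facets, is partitioned into relatively open faces whose generating function $\sum_{\text{open faces }F} t^{\dim F + 1}$ equals $t^{r}(1-t)^{\,r}\cdot$ — more precisely, summing $t^{\dim F+1}$ over the open faces not contained in any marked facet gives $t^r/(1-t)^{-(d+1-r)}$ type contributions; the clean statement is that the total over $\sigma_i$ of $t^{\dim F+1}/(1-t)^{\dim F+1}$ equals $t^{r_i}/(1-t)^{d+1}$. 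Summing over $i$ and using \eqref{eq:h-egyutthato} gives
\begin{equation}
\ehr_Q(t)=\sum_{k\ge 0}|(k\cdot Q)\cap\Z^n|\,t^k=\sum_{i=1}^N\frac{t^{r_i}}{(1-t)^{d+1}}=\frac{\sum_{k=0}^{d+1}h_k\,t^k}{(1-t)^{d+1}},
\end{equation}
because the relatively open cells partition $Q$ and lattice point counts are additive over such a partition (here the interior-disjointness of a dissection, as opposed to a triangulation, is all that is needed — we never require that pairwise intersections be faces). Multiplying through by $(1-t)^{d+1}$ yields $\sum h^*_i t^i=\sum_{k=0}^{d+1}h_k t^k$, and then the order-reversal $t\mapsto 1/t$ with the factor $t^{d+1}$ recovers exactly \eqref{eq:h-star} from \eqref{eq:h-hejazott}; the vanishing of the constant term in \eqref{eq:h-hejazott} matches $h^*_{d+1}=0$ for a $d$-polytope that is not a cone, consistent with the topological remark preceding the proposition.

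The main obstacle is the second step: making rigorous that removing $r_i$ facets from $\sigma_i$ and collecting the remaining relatively open faces produces a contribution of exactly $t^{r_i}/(1-t)^{d+1}$ to the Ehrhart series, and that these contributions are genuinely disjoint across $i$ so that counts add. For the disjointness one checks inductively that $Q_{i}\setminus Q_{i-1}$ really is $\sigma_i$ with its ``old'' facets (those meeting $Q_{i-1}$) removed and that no lattice point is counted twice — this is where the shelling hypothesis (the intersection being a nonempty union of facets) does its work. For the single-simplex generating-function identity, I would reduce to the purely combinatorial statement about the half-open simplex $\Delta^d$ with $r$ facets deleted, which is classical and can be proved by a short induction on $r$ (deleting one more facet multiplies the ``missing'' faces appropriately), or cited from the standard theory of half-open decompositions (Stanley; Köppe–Verdoolaege; Beck–Sottile–Delaera-type arguments). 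Everything else — unimodularity giving the binomial counts, additivity of lattice-point enumeration over a partition into relatively open polytopes, and the elementary algebra converting between $h$, $h^*$, and Ehrhart series — is routine and can be stated without detailed computation, especially since the triangulation case is already assumed known to the reader via \eqref{eq:h-vektor}–\eqref{eq:h-star}.
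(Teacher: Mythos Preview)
Your approach is correct and essentially coincides with the paper's: both partition $Q$ into the half-open pieces $\sigma_i\setminus Q_{i-1}$ coming from the shelling, compute the Ehrhart contribution of each piece as $t^{r_i}/(1-t)^{d+1}$, and sum. The only difference is that the paper is more direct: rather than further decomposing each half-open simplex into relatively open faces and summing $t^{\dim F+1}/(1-t)^{\dim F+1}$, it simply states as the ``key ingredient'' that a $k$-dilated unimodular $d$-simplex with $r$ facets removed contains exactly $\binom{k-r+d}{d}$ lattice points, and then computes
\[
(1-t)^{d+1}\sum_{k\ge0}\sum_{i=1}^N\binom{k-r_i+d}{d}t^k
=(1-t)^{d+1}\sum_{j}h_j\,t^j\sum_{p\ge0}\binom{p+d}{d}t^p
=\sum_j h_j\,t^j.
\]
Your detour through open faces would work (and yields the same $t^{r_i}/(1-t)^{d+1}$), but the garbled passage with ``$t^r(1-t)^r\cdot$'' and ``$t^r/(1-t)^{-(d+1-r)}$'' should be replaced by the clean identity $\sum_{k\ge0}\binom{k-r+d}{d}t^k=t^r/(1-t)^{d+1}$, which is all that is needed.
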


\begin{proof}
This will be a straightforward adaptation of the standard proof of \eqref{eq:h-star} for shellable triangulations. The key ingredient is that a $k$ times dilated, $d$-dimensional unimodular simplex contains ${k+d \choose d}$ lattice points, and if the points along $r$ of its facets are not to be counted, then this drops to $k-r+d \choose d$. Another detail is that by $d$ times differentiating the identity $\frac{1}{1-t}=\sum_{p=0}^\infty t^p$, we obtain $\frac{1}{(1-t)^{d+1}}=\sum_{p=0}^\infty{p+d \choose d}t^p$. With that, we do get
\begin{multline*}
h^*(t)=(1-t)^{d+1}\sum_{k=0}^\infty|(k\cdot Q)\cap\Z^n|\,t^k
=(1-t)^{d+1}\sum_{k=0}^\infty\left[\sum_{i=1}^N{k-r_i+d \choose d}\right]t^k\\
=(1-t)^{d+1}\sum_{k=0}^\infty\left[\sum_{j=0}^d h_j{k-j+d \choose d}\right]t^k
=\sum_{j=0}^d h_j\left[(1-d)^{d+1}\sum_{k=j}^\infty{k-j+d \choose d}t^k\right]\\
=\sum_{j=0}^d h_j\,t^j\left[(1-d)^{d+1}\sum_{p=0}^\infty{p+d \choose d}t^p\right]=\sum_{j=0}^d h_j\,t^j=t^{d+1}h(1/t),
\end{multline*}
just as we claimed.
\end{proof}

\begin{remark}
We set up the $f$-vector as we did in order to make its relation \eqref{eq:h-vektor} to the $h$-vector as simple as possible. This choice determined the form of \eqref{eq:h-hejazott} and also resulted in the presence of $1/t$ in \eqref{eq:h-star}. In the end, however, the two reversals of order cancel and we end up with the completely natural formula \begin{equation}
\label{eq:generator}
h^*(t)=\sum_{j=0}^d h_j\,t^j,
\end{equation}
where $h_j$ is as in \eqref{eq:h-egyutthato}, for the $h^*$-vector of a lattice polytope with a shellable dissection into unimodular simplices.
\end{remark}

\begin{example}
\label{ex:trees}
Any orientation of a tree $T$ is semi-balanced and, according to Lemma \ref{lem:unimodularitas}, in this case $\mathcal Q_T$ is a single unimodular simplex. In particular, trees have the interior polynomial $I_T(x)=1$.
\end{example}

In the next two sections, we implement the strategy above in the case of the root polytope of an arbitrary semi-balanced graph.

\section{Jaeger trees and dissections}
\label{sec:Jaeger_trees}

We have seen in Proposition \ref{prop:aff_indep_in_root_polytope} that for semi-balanced graphs, the maximal simplices in the root polytope correspond to the spanning trees. We have also just pointed out that when analyzing polytopes, it is very helpful to dissect them into simplices. Hence it would be useful to identify sets of spanning trees whose simplices dissect the root polytope. 

In the case of the standard orientation, our previous work showed that the so-called Jaeger trees induce a dissection of the root polytope, moreover, this dissection is shellable \cite{hyperBernardi}. 
Here we generalize Jaeger trees for directed graphs, and show that the analogous statements hold in the semi-balanced case, too. Another goal of this section is to simplify some of the proofs of \cite{hyperBernardi}.

To define Jaeger trees, we need to fix a ribbon structure. For a (not necessarily directed) 
graph $G$, a \emph{ribbon structure} is a family of cyclic permutations: namely for each vertex $x$ of $G$, a cyclic permutation of the edges incident to $x$ is given.
For an edge $xy$ of $G$, we use the following notations: 
\begin{itemize}
	\item $yx^+_G$: the edge following $yx$ at $y$
	\item $xy^+_G$: the edge following $xy$ at $x$.
\end{itemize}
If $G$ is clear from the context, we omit the subscript.

In addition to a ribbon structure, we also need to fix a \emph{basis} $(b_0,b_0b_1)$, where $b_0$ is an arbitrary node of the graph and $b_0b_1$ is an arbitrary edge incident to $b_0$. 
Even if $G$ is directed, it does not matter whether $b_0$ is the head or the tail of $b_0b_1$.

Suppose that a ribbon structure and a basis are fixed. Then any spanning tree $T$ of $G$ gives us a natural ``walk'' in the graph. This was defined by Bernardi \cite{Bernardi_first}, and following him we call it the \emph{tour} of $T$.

\begin{defn}[Tour of a spanning tree] \label{def:tour_of_a_tree}
Let $G$ be a ribbon graph with a basis $(b_0,b_0b_1)$, and let $T$ be a spanning tree of $G$.
The \emph{tour} of $T$ is a sequence of node-edge pairs, starting with $(b_0, b_0b_1)$. If the current node-edge pair is $(x,xy)$ and $xy\notin T$, then the current node-edge pair of the next step is $(x,xy^+)$. If the current node-edge pair is $(x,xy)$ and $xy\in T$, then the current node-edge pair of the next step is $(y,yx^+)$. In the first case we say that the tour \emph{skips} $xy$ and in the second case we say that the tour \emph{traverses} $xy$. The tour stops right before when $(b_0,b_0b_1)$ would once again become the current node-edge pair. 
\end{defn}

\begin{figure} 
	\begin{center}
		\begin{tikzpicture}[-,>=stealth',auto,scale=0.4,
		thick]
		\tikzstyle{o}=[circle,draw]
		\node[o] (1) at (8, 0) {{\small $v_1$}};
		\node[o] (2) at (4, 1.5) {{\small $v_2$}};
		\node[o] (3) at (0, 0) {{\small $v_3$}};
		\node[o] (4) at (4, -1.5) {{\small $v_4$}};
		\path[->,every node/.style={font=\sffamily\small}, line width=0.8mm]
		(1) edge node [above] {$\varepsilon_1$} (2)
		(3) edge node [below] {$\varepsilon_3$} (4)
		(2) edge node {$\varepsilon_5$} (4);
		\path[->,every node/.style={font=\sffamily\small},dashed]
		(4) edge node [below] {$\varepsilon_4$} (1)
		(2) edge node [above] {$\varepsilon_2$} (3);
		\end{tikzpicture}
	\end{center}
	\caption{The tour of a spanning tree. Let the ribbon structure be the one induced by the positive orientation of the plane. The edges of the tree are drawn by thick lines, the non-edges by dashed lines. With $b_0=v_1, b_1=v_2$, we get the tour $(v_1,\varepsilon_1)$, $(v_2,\varepsilon_2)$, $(v_2,\varepsilon_5)$, $(v_4,\varepsilon_3)$, $(v_3,\varepsilon_2)$, $(v_3,\varepsilon_3)$, $(v_4,\varepsilon_4)$, $(v_4,\varepsilon_5)$, $(v_2,\varepsilon_1)$, $(v_1,\varepsilon_4)$. 
	This is a Jaeger tree because $(v_2,\varepsilon_2)$ precedes $(v_3,\varepsilon_2)$ and $(v_4,\varepsilon_4)$ precedes $(v_1,\varepsilon_4)$.}
	\label{fig:tour_of_a_tree}
\end{figure}
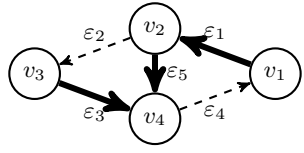 

See Figure \ref{fig:tour_of_a_tree} for an example. 
Bernardi proved {\cite[Lemma 5]{Bernardi_first}} that in the tour of a spanning tree $T$, each edge $xy$ of $G$ becomes current edge twice, in one case with $x$ as current vertex, and in the other case with $y$ as current vertex.
This naturally induces the following orders.

\begin{defn}[$T$-order of the edges of $G$] \label{def:T-order}
In a ribbon directed graph with a basis, we associate an ordering of the edges to any spanning tree $T$, by saying that $\overrightarrow{th}$ is smaller than $\overrightarrow{t'h'}$ if in the tour of $T$, the node-edge pair $(t,th)$ becomes current before the node-edge pair $(t',t'h')$.
We call this the \emph{$T$-order}, and denote it by $\leq_T$. 
\end{defn}

We will often think of the tour of a spanning tree $T$ either as a way of `cutting $T$ out from the graph,' or as a way of `constructing $T$.' In particular, a non-edge $xy$ of $T$ (one that the tour skips) is thought of as `cut from $G$,' and if we have $(x,xy)<_T(y,xy)$, then we say that \emph{$xy$ is cut from the direction of $x$}.

With this we arrive at 
the following notion, which plays a crucial role in this paper. We trace it back to Fran\c cois Jaeger's work \cite{Jaeger} on the Homfly polynomial.

\begin{defn}[Jaeger tree]
	In a directed graph with a ribbon structure and basis, we call a spanning tree $T$ a \emph{Jaeger tree} if for each edge $\overrightarrow{th}\notin T$, the tour of $T$ has $(t,th)$ as a current node-edge pair before $(h,th)$. In other words, in the tour of a Jaeger tree, each non-edge is first seen at, and cut from the direction of, its tail.
\end{defn}

For examples, see Figures \ref{fig:tour_of_a_tree}, \ref{fig:dissect}, and \ref{f:Jaeger_tree_and_duality}. (The graph in Figure \ref{fig:tour_of_a_tree} is un-semibalanced, but Jaeger trees make sense in that case, too.)
We explored Jaeger trees of bipartite graphs equipped with the standard orientation in our earlier work \cite{hyperBernardi}. 
Apart from having desirable combinatorial and geometric properties, Jaeger trees also generalize some well-known constructions.
For instance, 
the so called non-crossing trees of complete bipartite graphs are
exactly the
Jaeger trees for a natural ribbon structure; see Section \ref{sec:layer_complete} for more on this point. 

The typical situation is that we fix a ribbon structure and a basis, and look at all Jaeger trees with this data. For different bases, the set of Jaeger trees can be very different. We hardly care about the relationship between Jaeger trees with different bases. We do however sometimes like to fix the color of the base point. In these cases, the following observation is useful.

\begin{lemma}
\label{lem:atcsuszo_kezdopont}
Suppose that in our ribbon directed graph, the base node $b_0$ is the head of the base edge $\overrightarrow{b_1b_0}$. Then the Jaeger trees for this basis agree with those for the basis $(b_1,b_1b_0^+)$.
\end{lemma}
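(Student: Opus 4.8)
The plan is to show that the two tours --- the one starting at $(b_0, \overrightarrow{b_1b_0})$ and the one starting at $(b_1, b_1b_0^+)$ --- are essentially the same cyclic sequence of node-edge pairs, just with different starting points, and that this forces the two notions of Jaeger tree to coincide. First I would fix an arbitrary spanning tree $T$ and trace the two tours side by side. The key observation is that in the tour based at $(b_0, \overrightarrow{b_1b_0})$, the base edge $b_1b_0$ is always \emph{traversed} (this is automatic: the base edge is an edge at $b_0$, and whether it lies in $T$ or not determines whether the tour leaves $b_0$ along it or eventually comes back; but here one should be careful about the case $b_1b_0 \notin T$). Actually the cleaner route is: recall Bernardi's fact that each edge becomes current exactly twice in a tour, once at each endpoint, and that the tour is a closed walk on node-edge pairs (the step map $(x,xy)\mapsto(x,xy^+)$ if $xy\notin T$, $(x,xy)\mapsto(y,yx^+)$ if $xy\in T$) has a single orbit of length $2|E|$ containing $(b_0,b_0b_1)$. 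This step map does not depend on the basis at all --- only on $T$ and the ribbon structure. So the full tour is a cyclic sequence that is intrinsic to $(T, \text{ribbon structure})$; choosing a basis only selects where to cut this cycle open (and where to stop).

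The heart of the argument, then, is to check that the pair $(b_1, b_1b_0^+)$ is exactly the node-edge pair that \emph{immediately follows} $(b_0, \overrightarrow{b_1b_0})$ in the step map, \emph{when $b_0b_1\in T$}, so that the $(b_1,b_1b_0^+)$-tour is the $(b_0,b_0b_1)$-tour shifted by one step; and to handle the case $b_0b_1\notin T$ separately. If $b_1b_0 \in T$: starting at $(b_0, \overrightarrow{b_1b_0})$, since the edge is in $T$ the next pair is $(b_1, b_1b_0^+_G)$ --- wait, the step rule for an edge $xy \in T$ with current pair $(x,xy)$ gives next pair $(y, yx^+)$; with $x = b_0$, $y = b_1$, $xy = b_0 b_1$, this is $(b_1, b_1 b_0^+)$, exactly the claimed new basis edge. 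Hence the $(b_1, b_1b_0^+)$-tour and the $(b_0, \overrightarrow{b_1b_0})$-tour are the same closed sequence, cut open one step apart. Therefore they visit the same node-edge pairs, and for any non-edge $\overrightarrow{th}\notin T$, the relative order of $(t,th)$ and $(h,th)$ is the same --- \emph{unless} the one-step shift separates them, i.e.\ unless $\{(t,th),(h,th)\} = \{(b_0,\overrightarrow{b_1b_0}), (b_1,b_1b_0^+)\}$, which cannot happen since $b_0b_1\in T$ is not a non-edge, or unless one of the two pairs is the very first pair of one tour and the last of the other; but $b_0b_1\in T$ is traversed, not skipped, so this exceptional pair is not of the form $(t,th)$ or $(h,th)$ for a non-edge either. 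So the Jaeger condition is literally unchanged. If instead $b_0b_1\notin T$: then $T$ can be a Jaeger tree for the basis $(b_0,\overrightarrow{b_1b_0})$ only if the base edge is first seen at its tail $b_1$, but the $(b_0,\overrightarrow{b_1b_0})$-tour opens at $(b_0, \overrightarrow{b_1b_0})$, i.e.\ sees $b_0b_1$ first at the head $b_0$ --- contradiction. So no Jaeger trees for $(b_0,\overrightarrow{b_1b_0})$ have $b_0b_1\notin T$. Symmetrically, the $(b_1, b_1b_0^+)$-tour: if $b_0b_1 \notin T$, then starting at $(b_1, b_1 b_0^+)$, the tour skips forward at $b_1$ through the cyclic order until it eventually reaches $b_1 b_0$ again, i.e.\ it never traverses $b_0 b_1$; one checks the pair $(b_0, \overrightarrow{b_1 b_0})$ — where $b_0$ is the head — comes only when $b_0 b_1$ is seen from $b_0$, and that by the cyclic structure $(b_1, b_1 b_0)$ is seen first; so again $b_0b_1$ cannot satisfy the tail-first condition, ruling out such $T$ as a Jaeger tree for the second basis too. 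Hence on both sides the Jaeger trees all contain $b_0 b_1$, and for those the two tours agree up to a cyclic shift by one, giving the equality of the two sets.

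I expect the main obstacle to be bookkeeping in the ``unless'' clauses above: namely carefully confirming that the single one-step cyclic shift between the two tours never flips the relative order of $(t,th)$ and $(h,th)$ for a genuine non-edge $\overrightarrow{th}$. This reduces to the claim that the pair of node-edge pairs that ``straddle the cut'' --- $(b_0,\overrightarrow{b_1b_0})$ at the end/start of one tour and $(b_1,b_1b_0^+)$ at the start of the other --- are both ``halves'' of the \emph{same} edge $b_0b_1$, which is in $T$ and hence irrelevant to the Jaeger condition. Once that is nailed down, the rest is a direct unwinding of the definition of tour together with Bernardi's two-appearances lemma, and no real computation is involved; I would also double-check the degenerate situation where $b_0b_1$ is a loop or $b_0$ has degree one, but since the paper assumes simple connected graphs the only thing to verify is that $b_1b_0^+$ makes sense, which it does as $\deg b_1 \geq 1$.
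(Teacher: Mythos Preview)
Your approach is essentially the same as the paper's: both arguments show that $b_0b_1$ must belong to every Jaeger tree for either basis, and then observe that for trees containing $b_0b_1$ the two tours are the same cyclic sequence shifted by one step, so the relative order of the two appearances of any non-edge is preserved.

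There is one slip to fix. In the case $b_0b_1\notin T$ with basis $(b_1,b_1b_0^+)$, you write that ``by the cyclic structure $(b_1,b_1b_0)$ is seen first,'' and then conclude that the tail-first condition fails. These two claims contradict each other: $b_1$ \emph{is} the tail, so if $(b_1,b_1b_0)$ were seen first, the Jaeger condition would hold. The correct observation is the opposite one. Since $b_0b_1\notin T$, the step map sends $(b_1,b_1b_0)$ to $(b_1,b_1b_0^+)$, so $(b_1,b_1b_0)$ is the \emph{last} pair in the tour based at $(b_1,b_1b_0^+)$; hence $(b_0,b_0b_1)$ necessarily appears earlier, the head is seen first, and the Jaeger condition fails. (The paper phrases this as: if the Jaeger condition held, the tour would terminate immediately after $(b_1,b_1b_0)$ without ever visiting $(b_0,b_0b_1)$, contradicting Bernardi's lemma.) With this correction your argument goes through.
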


\begin{proof}
With the original basis, the Jaeger condition forces $b_0b_1$ to be part of all Jaeger trees. The same holds for the other basis, for if $b_0b_1$ was a non-edge of the Jaeger tree $T$, then by the Jaeger condition it would be cut at its tail $b_1$, whereby the next current pair in the tour of $T$ would be the initial $(b_1,b_1b_0^+)$. That means that the tour would finish without including the pair $(b_0,b_0b_1)$, contradicting {\cite[Lemma 5]{Bernardi_first}}.

Now for spanning trees that contain $b_0b_1$, it is easy to see that any non-edge
becomes current in conjunction with its head and with its tail in the same order, regardless of which of the two tours of the tree we consider. 
\end{proof}

For any spanning tree $T$ of $G$ and edge $\varepsilon\in T$, the graph $T-\varepsilon$ has two connected components, and the edges of $G$ connecting them form the \emph{fundamental cut} $C^*(T,\varepsilon)$. We call the component of $T-\varepsilon$ containing $b_0$ the \emph{base component}. 
The following property of fundamental cuts of Jaeger trees (an updated version of \cite[Lemma 6.14]{hyperBernardi}) makes them very useful to us.

\begin{lemma}
	\label{l:char_Jaeger_cuts}
	Let $T$ be a Jaeger tree of a directed ribbon graph, $\varepsilon \in T$,
	and let $\varepsilon_1, \varepsilon_2$ be edges in the fundamental cut $C^*(T,\varepsilon)$. If $\varepsilon_1$ has its tail in the base component of $T-\varepsilon$ and $\varepsilon_2$ has its head in the base component of $T-\varepsilon$, then we have
	\begin{enumerate}[label=(\roman*)]
		\item \label{egyegy} $\varepsilon_1 <_{T} \varepsilon_2$, and
		\item \label{kettoketto} $\varepsilon_1 \leq_{T} \varepsilon$.
	\end{enumerate}
	In particular, if $\varepsilon$ has its tail in the base component, then $\varepsilon_1\leq_T \varepsilon<_T \varepsilon_2$. On the other hand, if $\varepsilon$ has its head in the base component, then $\varepsilon_2\leq_T \varepsilon$.
\end{lemma}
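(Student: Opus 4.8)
The statement is about how the $T$-order interacts with fundamental cuts of a Jaeger tree, so my plan is to exploit the ``tour as a way of cutting $T$ out of $G$'' picture directly, tracking where the two special edges $\varepsilon_1,\varepsilon_2$ and the tree edge $\varepsilon$ become current. First I would set up notation: let $T_b$ be the base component of $T-\varepsilon$ (the one containing $b_0$) and $T_\varepsilon$ the other component, so every edge of the fundamental cut $C^*(T,\varepsilon)$ joins $T_b$ to $T_\varepsilon$. The key structural fact I want is that the tour of $T$, restricted to the period during which current node-edge pairs live in $T_\varepsilon$, behaves essentially like the tour of $T_\varepsilon$ as a spanning tree of the subgraph it induces, entered and exited through $\varepsilon$; I would make this precise using Bernardi's lemma that each edge becomes current exactly twice, once at each endpoint, and the recursive structure of the tour. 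In particular, the tour traverses $\varepsilon$ exactly once into $T_\varepsilon$ and once back into $T_b$, and between those two moments exactly the pairs $(x,xy)$ with $x\in T_\varepsilon$ occur (including the first, tail-side occurrences of the cut-edges $\varepsilon_2$ — those with head in $T_b$, so tail in $T_\varepsilon$ — and the second, head-side occurrences of $\varepsilon_1$).

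For part \ref{egyegy}: the edge $\varepsilon_1$ has its tail in $T_b$, so its defining occurrence $(t_1, \varepsilon_1)$ in the $T$-order happens while the tour is in $T_b$. The edge $\varepsilon_2$ has its tail in $T_\varepsilon$, so its defining occurrence $(t_2,\varepsilon_2)$ happens while the tour is inside $T_\varepsilon$, i.e., strictly between the two traversals of $\varepsilon$. Since $\varepsilon_1$ is a non-edge of $T$ and $T$ is a Jaeger tree, $\varepsilon_1$ is first seen from its tail $t_1\in T_b$; if $t_1$'s occurrence came after the tour had already descended into $T_\varepsilon$ and returned, then — here is the one point needing care — I must rule out that $\varepsilon_1$'s tail-occurrence happens in the ``$T_b$ after returning from $T_\varepsilon$'' phase while $\varepsilon_2$'s tail-occurrence already happened; but that cannot reverse the claimed inequality because $\varepsilon_2$'s tail occurrence is inside $T_\varepsilon$, which is strictly earlier than any $T_b$-occurrence that follows the return. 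So I need: $\varepsilon_1$'s tail-occurrence precedes the first traversal of $\varepsilon$. This is exactly the Jaeger condition applied together with the observation that along the cut edge $\varepsilon_1$, both of whose occurrences straddle the $T_b$/$T_\varepsilon$ boundary, the tail-side (hence first, by Jaeger) occurrence must be the one on the $T_b$ side — and all of $T_b$'s activity that precedes the descent into $T_\varepsilon$ precedes $(t_2,\varepsilon_2)$. Conversely one checks $(t_2,\varepsilon_2)$ cannot precede the descent. Assembling: $\varepsilon_1<_T\varepsilon_2$.

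For part \ref{kettoketto}: I want $\varepsilon_1\le_T\varepsilon$, i.e., the tail-occurrence of $\varepsilon_1$ is no later than the tail-occurrence of $\varepsilon$. Both endpoints behave according to which side holds the tail of $\varepsilon$. If $\varepsilon$ has its tail in $T_b$, then $(t,\varepsilon)$ with $t\in T_b$ is the occurrence of $\varepsilon$ recorded in the $T$-order, and it is precisely the traversal of $\varepsilon$ that sends the tour from $T_b$ into $T_\varepsilon$ (tree edges are traversed from tail-current to head-current in Bernardi's walk); by the previous paragraph $\varepsilon_1$'s tail-occurrence precedes this descent, giving $\varepsilon_1\le_T\varepsilon$ and also $\varepsilon<_T\varepsilon_2$ since $\varepsilon_2$'s tail-occurrence is inside $T_\varepsilon$, strictly after the descent. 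If instead $\varepsilon$ has its head in $T_b$, then the $T$-order occurrence of $\varepsilon$ is the traversal out of $T_\varepsilon$ back into $T_b$ (tail side, $t\in T_\varepsilon$), which comes after everything in the first $T_b$-phase, hence after $\varepsilon_1$'s tail-occurrence, so again $\varepsilon_1\le_T\varepsilon$; moreover $\varepsilon_2$'s tail-occurrence (in $T_\varepsilon$) precedes this return, so $\varepsilon_2\le_T\varepsilon$, which is the last sentence of the lemma.

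\textbf{Main obstacle.} The delicate step is making rigorous the ``the tour restricted to $T_\varepsilon$ is sandwiched between the two traversals of $\varepsilon$'' claim and, relatedly, pinning down that for a cut-edge the Jaeger-guaranteed first (tail) occurrence lands on the correct side of the $T_b$/$T_\varepsilon$ partition. I expect this to follow from a clean induction on the tour using Bernardi's Lemma 5 and the self-similar recursive description of tours under edge deletion/contraction, and it is surely the reworking of \cite[Lemma 6.14]{hyperBernardi} that the authors allude to; everything after that is bookkeeping comparing positions in a single linear order.
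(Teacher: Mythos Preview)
Your approach is essentially the paper's: establish that the tour has three phases (in $T_b$, then in $T_\varepsilon$, then back in $T_b$) separated by the two traversals of $\varepsilon$, then use the Jaeger condition to force the tail-occurrence of each $\varepsilon_1$ into phase one. Two small points worth cleaning up: the three-phase structure does not need induction or anything from \cite{hyperBernardi} --- it is immediate from the fact that $\varepsilon$ is the \emph{only} tree edge joining $T_b$ and $T_\varepsilon$, so the tour can only change component by traversing $\varepsilon$; and your parenthetical ``tree edges are traversed from tail-current to head-current in Bernardi's walk'' is false in general (the first traversal is from whichever endpoint the tour reaches first), though your actual conclusion there is correct for the right reason, namely that the tour starts in $T_b$.
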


\begin{proof}
	Let us call $T_0$ the component of $T-\varepsilon$ containing $b_0$ and let $T_1$ be the other component. 
	Until the tour of $T$ reaches $\varepsilon$, the current node is in $T_0$. After the first and before the second traversal of $\varepsilon$, the current node is in $T_1$. Finally, after the second traversal of $\varepsilon$ it is once again in $T_0$. Hence each edge of $C^*(T,\varepsilon)$ becomes current with its endpoint in $T_1$ as current node between the two traversals of $\varepsilon$. Since $T$ is a Jaeger tree, each non-tree edge first becomes current with its tail as current node. This means that those edges that have their head in $T_1$ become current with their tail as current node before the first traversal of $\varepsilon$. This implies both statements \ref{egyegy} and \ref{kettoketto} of the Lemma. Finally, if $\varepsilon$ has its head in $T_0$, then it only becomes current with its tail as current node at the time of its second traversal, which is after all other edges of the fundamental cut.
\end{proof}

\begin{prop}\label{p:trees_interior_disjoint}
	Let $G$ be a directed graph and fix a ribbon structure with basis $(b_0,b_0b_1)$.
	Then the corresponding Jaeger trees $T$ give rise to simplices $\mathcal Q_T$ whose (relative) interiors are disjoint in $\mathcal Q_G$. 
\end{prop}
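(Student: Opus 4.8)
The plan is to show that any point in the relative interior of both $\mathcal Q_{T}$ and $\mathcal Q_{T'}$, for distinct Jaeger trees $T \ne T'$, yields a contradiction. Since $\mathcal Q_T$ and $\mathcal Q_{T'}$ are unimodular maximal-dimensional simplices (Lemma~\ref{lem:unimodularitas}, Corollary~\ref{cor:max_simplices:of_root_polytope}), a common interior point forces the two vertex sets $\{\mathbf x_\varepsilon \mid \varepsilon \in T\}$ and $\{\mathbf x_\varepsilon \mid \varepsilon \in T'\}$ to have overlapping open simplices; in particular there is a point $\mathbf p$ expressible as a strictly positive convex combination $\sum_{\varepsilon \in T}\lambda_\varepsilon \mathbf x_\varepsilon = \mathbf p = \sum_{\varepsilon \in T'}\mu_\varepsilon \mathbf x_\varepsilon$ with all $\lambda_\varepsilon, \mu_\varepsilon > 0$. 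The standard device is to look at an edge that distinguishes the two trees and use the fundamental-cut structure to produce a sign-coherent affine dependence that cannot hold.

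First I would pick the edge $\varepsilon^* \in T \setminus T'$ that is $\leq_T$-\emph{largest} among edges of $T$ not in $T'$ (or symmetrically, some canonically chosen distinguishing edge). Consider the fundamental cut $C^*(T,\varepsilon^*)$: since $\varepsilon^* \notin T'$, the tree $T'$ must contain some other edge $\eta$ of this cut, and $\eta \notin T$. Now invoke Lemma~\ref{l:dependence_of_semibalanced_cycle} applied to the fundamental cycle of $\eta$ with respect to $T$: we get an affine dependence among $\mathbf x_{\varepsilon^*}$, $\mathbf x_\eta$, and other $\mathbf x_\varepsilon$ with $\varepsilon \in T$. The key point is that Lemma~\ref{l:char_Jaeger_cuts} controls \emph{which side of the cut each edge's tail/head lies on}, hence pins down the \emph{signs} $C^+$ vs.\ $C^-$ in that affine relation: all tree-edges of the fundamental cycle other than $\varepsilon^*$ are strictly $<_T \varepsilon^*$ or have a prescribed relation to the base component, and $\eta$ (cut at its tail, being a non-edge of the Jaeger tree $T$) is forced onto a definite side. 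Feeding this sign information into the two convex combinations above, subtracting, and using that $\varepsilon^*$ was chosen maximal (so the ``later'' tree-edges are not available to cancel it), one isolates a coefficient that must be both positive and zero — the contradiction. This is essentially the strategy of \cite[Section~4]{hyperBernardi} but streamlined by the tree-order.

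The main obstacle I expect is the bookkeeping of signs: translating the combinatorial statement ``$\varepsilon_1$ has its tail in the base component and $\varepsilon_1 <_T \varepsilon_2$'' from Lemma~\ref{l:char_Jaeger_cuts} into the correct placement of each generator in $C^+$ or $C^-$ of the relevant semi-balanced cycle, and then checking that these signs are \emph{compatible} across the dependence so that no accidental cancellation rescues the bad coefficient. A secondary subtlety is making sure the chosen distinguishing edge $\varepsilon^*$ interacts correctly with \emph{both} tours (that of $T$ and that of $T'$) — here one may need to symmetrize the argument, or first reduce to the case where $\varepsilon^*$ has its tail in the base component via the ``In particular'' clause of Lemma~\ref{l:char_Jaeger_cuts}. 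Once the sign analysis is set up correctly, the contradiction is immediate, so the proof should be short modulo this combinatorial core.
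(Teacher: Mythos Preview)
Your approach has a genuine gap, and it also diverges substantially from the paper's much simpler argument.

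First, the gap: the Proposition is stated for an \emph{arbitrary} directed graph, yet your plan invokes Lemma~\ref{l:dependence_of_semibalanced_cycle}, which only applies when the fundamental cycle in question is semi-balanced. In a general directed graph this need not hold, so your argument would at best establish a special case. Relatedly, you assert that $\mathcal Q_T$ and $\mathcal Q_{T'}$ are maximal-dimensional simplices; as the paper notes immediately after the Proposition, in un-semibalanced cases they are codimension~$1$ in $\mathcal Q_G$, so ``common interior point'' is already the wrong framing there.

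Second, even restricted to the semi-balanced case, your choice of distinguishing edge is not the right one. Picking the $\leq_T$-largest element of $T\setminus T'$ gives you no control over how $T'$ interacts with the resulting fundamental cut: Lemma~\ref{l:char_Jaeger_cuts} tells you about the $T$-tour, but you need information about which cut-edges lie in $T'$, and the Jaeger property of $T'$ is phrased in terms of the $T'$-tour. The paper instead takes $\varepsilon$ to be the \emph{first difference} between the two tours---the last edge where the tours of $T$ and $T'$ still coincide---so that, up to $\varepsilon$, the two tours have made identical cut/keep decisions. This is exactly what lets Lemma~\ref{l:char_Jaeger_cuts} (applied to $T_2$) determine the status of cut-edges in $T_1$ as well.

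With that choice of $\varepsilon$, the paper does not chase affine dependences at all. It simply builds a linear functional $\kappa$ from the fundamental cut $C^*(T_2,\varepsilon)$ (value $0$ on the base side, $1$ on the other) and checks directly that $\kappa\le 0$ on the vertices of $\mathcal Q_{T_1}$ and $\kappa\ge 0$ on the vertices of $\mathcal Q_{T_2}$, with $\kappa(\mathbf x_\varepsilon)=1$. This is a separating hyperplane, and the interior-disjointness is immediate---no sign bookkeeping on cycles, no semi-balancedness hypothesis.
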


Note that in un-semibalanced cases, the simplices in question are of codimension $1$ in $\mathcal Q_G$. When the graph is semi-balanced, they are of codimension $0$. The proof of Proposition \ref{p:trees_interior_disjoint} is essentially the same as that of \cite[Lemma 7.7]{hyperBernardi}.


\begin{proof}
	Take two Jaeger trees $T_1$ and $T_2$, and suppose that their tours coincide until reaching the edge $\varepsilon$, where $\varepsilon\notin T_1$ and $\varepsilon\in T_2$. It suffices to show that the simplices that correspond to $T_1$ and $T_2$ are separated by a hyperplane. Such a separating hyperplane can be constructed using the fundamental cut $C^*(T_2,\varepsilon)$ of $G$. 
	
	The cut $C^*(T_2,\varepsilon)$ divides $G$ into two components. Let $N_0$ be the set of nodes in the component containing $b_0$, and $N_1$ the nodes of the other component.
    Let us associate the real number $0$ to basis vectors corresponding to elements of $N_0$, and let us associate $1$ to elements of $N_1$. This has a unique linear extension $\kappa=\kappa_{T_2,\varepsilon}\colon\R^V\to\R$. The value of $\kappa$ at the vertices of $\mathcal Q_G$ (described in terms of the corresponding edges of $G$) is then
	\begin{itemize}
		\item $0$ for edges outside the fundamental cut $C^*(T_2,\varepsilon)$: These edges $\overrightarrow{th}$ connect vertices with the same $\kappa$ value, giving $\kappa(\mathbf{h}-\mathbf{t})=0$.
		\item $1$ for $\varepsilon$: As $T_1$ is a Jaeger tree and $\varepsilon\notin T_1$, the edge $\varepsilon$ has to have its tail in $N_0$ and its head in $N_1$. Hence $\kappa(\mathbf x_\varepsilon)=1-0=1$.
		\item $-1$ for edges of $C^*(T_2,\varepsilon)\cap T_1$: By Lemma \ref{l:char_Jaeger_cuts} applied to $T_2$, by the time $\varepsilon$ is traversed in the tour of $T_2$, all the edges of $C^*(T_2,\varepsilon)$ having their tail in $N_0$ are cut. As the tours of $T_1$ and $T_2$ coincide until reaching $\varepsilon$, these edges are also cut in the tour of $T_1$. Hence all the edges in $C^*(T_2,\varepsilon)\cap T_1$ have their heads in $N_0$ and their tails in $N_1$, and the value of $\kappa$ on such edges is indeed $-1$.
	\end{itemize}
	Thus, all the vertices of $\mathcal Q_{T_1}$ lie either in the hyperplane $\kappa = 0$ or on the side where $\kappa$ is negative, and all the vertices of $\mathcal Q_{T_2}$ lie either in the hyperplane $\kappa=0$ or on the side where $\kappa$ is positive. Hence indeed, the hyperplane $\kappa=0$ shows that the simplices corresponding to $T_1$ and $T_2$ are interior disjoint.
\end{proof}

\begin{remark}
\label{rem:hyperplane_from_cut}
More generally, for any cut in any directed graph, one may consider the linear functional $\kappa$ that takes the values $0$ and $1$ on generators corresponding to vertices on the two `sides' of the cut. The kernel of $\kappa$ then contains all those vertices of the root polytope that correspond to edges \emph{not} in the cut. The vectors representing the edges of the cut lie on either side of the kernel, depending on the direction in which the edge `crosses the cut.'
\end{remark}

The main claim of this section is the following.

\begin{thm}\label{t:J-trees_form_quasitr}
	For a connected, semi-balanced directed graph, with arbitrarily fixed ribbon structure and basis, the simplices that correspond to Jaeger trees dissect the root polytope.
\end{thm}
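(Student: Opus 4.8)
The plan is to combine Proposition \ref{p:trees_interior_disjoint}, which already gives that the simplices $\mathcal Q_T$ attached to Jaeger trees have pairwise disjoint interiors, with a volume count. By Corollary \ref{cor:azonos_terfogat}, every spanning tree $T$ of the semi-balanced graph $G$ gives a simplex $\mathcal Q_T$ of the same $(|V|-2)$-dimensional volume $v_0$, and $\mathcal Q_G$ is $(|V|-2)$-dimensional by Corollary \ref{cor:max_simplices:of_root_polytope}. Since the Jaeger-tree simplices are interior-disjoint and all contained in $\mathcal Q_G$, their union has volume exactly (number of Jaeger trees)$\cdot v_0$, and this is at most $\mathrm{vol}(\mathcal Q_G)$. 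So it suffices to prove the reverse inequality, i.e., that the Jaeger-tree simplices \emph{cover} $\mathcal Q_G$; equivalently, that $\mathrm{vol}(\mathcal Q_G) \le (\#\text{Jaeger trees})\cdot v_0$. Since the root polytope is covered by \emph{all} the spanning-tree simplices (they are the maximal faces of the induced subdivision structure, by Proposition \ref{prop:aff_indep_in_root_polytope}), it would be enough to exhibit, for each point $p$ in the interior of $\mathcal Q_G$, at least one Jaeger tree $T$ with $p \in \mathcal Q_T$ — but rather than argue pointwise, I would run an inductive/exchange argument on spanning trees.

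The cleaner route, and the one I would actually carry out, is an induction that builds Jaeger trees step by step along tours, mirroring the structure already visible in Lemma \ref{l:char_Jaeger_cuts} and the proof of Proposition \ref{p:trees_interior_disjoint}. Fix the ribbon structure and basis. Consider the partial order (the tree-order alluded to in the introduction) under which we compare spanning trees by the first place their tours diverge. Take an arbitrary spanning tree $S$ that is \emph{not} a Jaeger tree; I want to show $\mathcal Q_S$ is covered by the simplices of spanning trees that are ``closer'' to being Jaeger, so that by induction $\mathcal Q_S$ lies in the union of Jaeger-tree simplices. Let $\varepsilon = \overrightarrow{th} \notin S$ be the first non-edge in the tour of $S$ that is encountered at its head $h$ rather than its tail; up to that point $S$ looks like a Jaeger tree. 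Then $\varepsilon$ together with the tree path in $S$ between $t$ and $h$ forms a cycle, which is semi-balanced since $G$ is; by Lemma \ref{l:dependence_of_semibalanced_cycle} the vectors $\{\mathbf x_\delta : \delta \in C^+\} \cup \{\mathbf x_\delta : \delta \in C^-\}$ satisfy the affine dependence $\sum_{C^+}\mathbf x_\delta - \sum_{C^-}\mathbf x_\delta = \mathbf 0$. This dependence is exactly what lets one ``pivot'': replacing $\varepsilon$ for an appropriate tree-edge $\delta$ of the cycle produces a new spanning tree $S'$, and the affine relation shows that $\mathcal Q_S$ and $\mathcal Q_{S'}$ (together with the simplices of the other trees obtained by the various single pivots within the cycle) cover a common region — concretely, $\mathcal Q_S \subseteq \bigcup \mathcal Q_{S_i}$ over the trees $S_i = S - \delta_i + \varepsilon$ with $\delta_i$ ranging over suitable cycle edges. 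One then checks that each such $S_i$ is strictly larger than $S$ in the tree-order (its tour agrees with that of $S$ longer), so the induction terminates, and that the terminal trees — those admitting no such pivot — are precisely the Jaeger trees.

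The key technical point, and where I expect the real work to lie, is the pivot/covering lemma: showing that the correct choice of cycle edge(s) $\delta_i$ to swap out gives trees $S_i$ that are (a) genuinely higher in the tree-order and (b) collectively cover $\mathcal Q_S$. Part (b) is a standard fact about a single affine dependence among the vertices of a polytope — if $\sum_i \lambda_i \mathbf x_i = 0$ with $\sum_i \lambda_i = 0$ (a Radon-type partition), then the simplex on any one ``side'' is covered by the simplices obtained by swapping a positive-coefficient vertex for a negative-coefficient one — but one must match this up carefully with the combinatorics of fundamental cuts, using Lemma \ref{l:char_Jaeger_cuts} to control \emph{which} edges of the fundamental cut $C^*(S,\delta)$ have heads versus tails on the base side, and hence which swaps move up in the tree-order. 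Part (a) amounts to checking that after the swap the tour of $S_i$ reproduces the tour of $S$ at least through the step where $\varepsilon$ was (previously) mishandled, now handling it correctly at its tail; this is a direct but fiddly tour-bookkeeping argument of the same flavor as the proof of Lemma \ref{lem:atcsuszo_kezdopont}. Once the pivot lemma is in place, interior-disjointness from Proposition \ref{p:trees_interior_disjoint} plus the equal-volume Corollary \ref{cor:azonos_terfogat} upgrade ``covering'' to ``dissection,'' completing the proof.
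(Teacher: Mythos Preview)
Your approach is essentially that of the paper: pivot a non-Jaeger tree $S$ via the fundamental cycle of a non-edge $\varepsilon$ first seen at its head, using the affine dependence of Lemma~\ref{l:dependence_of_semibalanced_cycle}, and check that each resulting tree $S'=S-\varepsilon'+\varepsilon$ with $\varepsilon'\in C^-$ precedes $S$ in the tree-order $\prec$ of Definition~\ref{def:treeorder}. The paper runs this pointwise rather than simplex-wise --- given $\mathbf p\in\mathcal Q_S$, it picks the specific $\varepsilon'\in C^-$ realizing $\min_{\eta\in C^-}\lambda_\eta$, so that $\mathbf p\in\mathcal Q_{S'}$, and then iterates by finiteness of $\prec$ --- which sidesteps the need to state your covering claim (b) as a separate lemma, but the underlying mechanism is identical.

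Two corrections to your sketch. First, in step (a) the tours of $S$ and $S_i=S-\delta_i+\varepsilon$ need \emph{not} agree all the way to $\varepsilon$: if $\delta_i$ is first reached in the tour of $S$ before the pair $(h,\varepsilon)$, then $\delta_i$ is the first difference. One must then argue that $\delta_i$ is first reached at its \emph{tail} (so that skipping it in $S_i$ respects the Jaeger rule and $S_i\prec S$); the paper does this by tracing the tour along the cycle from $\delta_i$ toward $h$ and using $\delta_i\in C^-$. Note also that the pivot makes the tree \emph{smaller} in $\prec$, not larger --- Jaeger trees sit at the bottom. Second, Lemma~\ref{l:char_Jaeger_cuts} is not the right reference here: it governs fundamental \emph{cuts} of \emph{Jaeger} trees, whereas $S$ is not Jaeger and the object in play is the fundamental \emph{cycle} $C(S,\varepsilon)$.
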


\begin{example}
\label{ex:dissect}
Let us consider the complete bipartite graph $K_{2,3}$ with one of its semi-balanced orientations from Example \ref{ex:K34}. Referring to Figure \ref{fig:dissect}, we let the ribbon structure be given by counterclockwise rotations around each vertex. 
Using the notation of the Figure, we choose $p$ to be the base node and $pc$ to be the base edge. Then, there exist four Jaeger trees with this data. We list them and the corresponding maximal simplices of the root polytope, which do indeed form a dissection. It is in fact a triangulation, by four tetrahedra that are incident to a fixed main diagonal of the octahedron.
\end{example}

\begin{figure}
    \centering
	\begin{tikzpicture}[scale=.13]
\node [circle,fill,scale=.8,draw] (1) at (9,17) {\color{white}$q$};
		\node [circle,fill,scale=.8,draw] (3) at (9,3) {\color{white}$p$};
		\node [thick,circle,scale=.8,draw] (5) at (3,10) {$a$};	
		\node [thick,circle,scale=.8,draw] (6) at (9,10) {$b$};	
		\node [thick,circle,scale=.8,draw] (7) at (15,10) {$c$};	
		\path [thick,->,>=stealth] (5) edge node {} (1);
		\path [thick,<-,>=stealth] (5) edge node {} (3);
		\path [thick,->,>=stealth] (6) edge node {} (1);
		\path [thick,<-,>=stealth] (6) edge node {} (3);
		\path [thick,->,>=stealth] (7) edge node {} (1);
		\path [thick,<-,>=stealth] (7) edge node {} (3);
		\draw [<-,>=stealth] (9,0) arc [radius=3, start angle=270, end angle=210];
		\draw [fill] (3,-5) circle [radius=.5];
		\node [above] at (3,-5) {\small $\mathbf q-\mathbf a$};
		\draw [fill] (15,-3) circle [radius=.5];
		\node [above] at (15,-3) {\small $\mathbf q-\mathbf c$};
		\draw [fill] (11,-11) circle [radius=.5];
		\node [right] at (11,-11) {\small $\mathbf q-\mathbf b$};
		\draw [fill] (3,-24) circle [radius=.5];
		\node [below] at (3,-24) {\small $\mathbf c-\mathbf p$};
		\draw [fill] (15,-22) circle [radius=.5];
		\node [below] at (15,-22) {\small $\mathbf a-\mathbf p$};
		\draw [fill] (7,-16) circle [radius=.5];
		\node [left] at (7,-16) {\small $\mathbf b-\mathbf p$};
		\draw (3,-5) -- (15,-3);
		\draw (3,-5) -- (11,-11);
		\draw (11,-11) -- (15,-3);
		\draw (3,-24) -- (7,-16);
		\draw (3,-24) -- (15,-22);
		\draw (8.2,-16.9) -- (15,-22);
		\draw (3,-24) -- (11,-11);
		\draw (7,-16) -- (10,-11.125);
		\draw (11,-9.5) -- (15,-3);
		\draw (3,-24) -- (3,-5);
		\draw (15,-22) -- (11,-11);
		\draw (15,-22) -- (15,-3);
		\draw (7,-16) -- (3,-5);
		\end{tikzpicture}
		\hspace{.5cm}
		\raisebox{.4cm}{
		\begin{tikzpicture}[scale=.13]
\node [circle,fill,scale=.8,draw] (1) at (9,17) {};
		\node [circle,fill,scale=.8,draw] (3) at (9,3) {};
		\node [thick,circle,scale=.8,draw] (5) at (3,10) {};	
		\node [thick,circle,scale=.8,draw] (6) at (9,10) {};	
		\node [thick,circle,scale=.8,draw] (7) at (15,10) {};	
		\path [thick,->,>=stealth] (5) edge node {} (1);
		\path [thick,<-,>=stealth] (5) edge node {} (3);
		\path [thick,->,>=stealth] (6) edge node {} (1);
		\path [thick,->,>=stealth] (7) edge node {} (1);
		\draw [fill] (3,-5) circle [radius=.5];
		\draw [fill] (15,-3) circle [radius=.5];
		\draw [fill] (11,-11) circle [radius=.5];
		\draw [lightgray,fill] (3,-24) circle [radius=.5];
		\draw [fill] (15,-22) circle [radius=.5];
		\draw [lightgray,fill] (7,-16) circle [radius=.5];
		\draw [very thick] (3,-5) -- (15,-3);
		\draw [very thick] (3,-5) -- (11,-11);
		\draw [very thick] (11,-11) -- (15,-3);
		\draw [lightgray] (3,-24) -- (7,-16);
		\draw [lightgray] (3,-24) -- (15,-22);
		\draw [lightgray] (8.2,-16.9) -- (15,-22);
		\draw [lightgray]
		(3,-24) -- (11,-11);
		\draw [lightgray]
		(7,-16) -- (8.36,-13.79);
		\draw [lightgray] (9.24,-12.36) -- (9.88,-11.32);
		\draw [lightgray]
		(10.76,-9.89) -- (15,-3);
		\draw [lightgray]
		(3,-24) -- (3,-5);
		\draw [very thick] 
		(15,-22) -- (11,-11);
		\draw [very thick] 
		(15,-22) -- (15,-3);
		\draw [lightgray]
		(7,-16) -- (3,-5);
		\draw [very thick] (3,-5) -- (9,-13.5);
		\draw [very thick] (9.6,-14.35) -- (15,-22);
	\end{tikzpicture}
	\hspace{.3cm}
	\begin{tikzpicture}[scale=.13]
\node [circle,fill,scale=.8,draw] (1) at (9,17) {};
		\node [circle,fill,scale=.8,draw] (3) at (9,3) {};
		\node [thick,circle,scale=.8,draw] (5) at (3,10) {};	
		\node [thick,circle,scale=.8,draw] (6) at (9,10) {};	
		\node [thick,circle,scale=.8,draw] (7) at (15,10) {};	
		\path [thick,->,>=stealth] (5) edge node {} (1);
		\path [thick,<-,>=stealth] (5) edge node {} (3);
		\path [ultra thick,<-,>=stealth] (6) edge node {} (3);
		\path [thick,->,>=stealth] (7) edge node {} (1);
		\draw [fill] (3,-5) circle [radius=.5];
		\draw [fill] (15,-3) circle [radius=.5];
		\draw [lightgray,fill] (11,-11) circle [radius=.5];
		\draw [lightgray,fill] (3,-24) circle [radius=.5];
		\draw [fill] (15,-22) circle [radius=.5];
		\draw [fill] (7,-16) circle [radius=.5];
		\draw [very thick] (3,-5) -- (15,-3);
		\draw [lightgray] (3,-5) -- (11,-11);
		\draw [lightgray] (11,-11) -- (15,-3);
		\draw [lightgray] (3,-24) -- (7,-16);
		\draw [lightgray] (3,-24) -- (15,-22);
		\draw [very thick] (8.2,-16.9) -- (15,-22);
		\draw [lightgray]
		(3,-24) -- (11,-11);
		\draw [very thick] 
		(7,-16) -- (8.36,-13.79);
		\draw [very thick] (9.24,-12.36) -- (9.88,-11.32);
		\draw [very thick] 
		(10.76,-9.89) -- (15,-3);
		\draw [lightgray]
		(3,-24) -- (3,-5);
		\draw [lightgray]
		(15,-22) -- (11,-11);
		\draw [very thick] 
		(15,-22) -- (15,-3);
		\draw [very thick] 
		(7,-16) -- (3,-5);
		\draw [very thick] (3,-5) -- (9,-13.5);
		\draw [very thick] (9.6,-14.35) -- (15,-22);
	\end{tikzpicture}
	\hspace{.3cm}
	\begin{tikzpicture}[scale=.13]
\node [circle,fill,scale=.8,draw] (1) at (9,17) {};
		\node [circle,fill,scale=.8,draw] (3) at (9,3) {};
		\node [thick,circle,scale=.8,draw] (5) at (3,10) {};	
		\node [thick,circle,scale=.8,draw] (6) at (9,10) {};	
		\node [thick,circle,scale=.8,draw] (7) at (15,10) {};	
		\path [thick,->,>=stealth] (5) edge node {} (1);
		\path [thick,<-,>=stealth] (5) edge node {} (3);
		\path [thick,->,>=stealth] (6) edge node {} (1);
		\path [ultra thick,<-,>=stealth] (7) edge node {} (3);
		\draw [fill] (3,-5) circle [radius=.5];
		\draw [lightgray,fill] (15,-3) circle [radius=.5];
		\draw [fill] (11,-11) circle [radius=.5];
		\draw [fill] (3,-24) circle [radius=.5];
		\draw [fill] (15,-22) circle [radius=.5];
		\draw [lightgray,fill] (7,-16) circle [radius=.5];
		\draw [lightgray] (3,-5) -- (15,-3);
		\draw [very thick] (3,-5) -- (11,-11);
		\draw [lightgray] (11,-11) -- (15,-3);
		\draw [lightgray] (3,-24) -- (7,-16);
		\draw [very thick] (3,-24) -- (15,-22);
		\draw [lightgray] (8.2,-16.9) -- (15,-22);
		\draw [very thick] 
		(3,-24) -- (11,-11);
		\draw [lightgray]
		(7,-16) -- (8.36,-13.79);
		\draw [lightgray] (9.24,-12.36) -- (9.88,-11.32);
		\draw [lightgray]
		(10.76,-9.89) -- (15,-3);
		\draw [very thick] 
		(3,-24) -- (3,-5);
		\draw [very thick] 
		(15,-22) -- (11,-11);
		\draw [lightgray]
		(15,-22) -- (15,-3);
		\draw [lightgray]
		(7,-16) -- (3,-5);
		\draw [very thick] (3,-5) -- (9,-13.5);
		\draw [very thick] (9.6,-14.35) -- (15,-22);
	\end{tikzpicture}
	\hspace{.3cm}
	\begin{tikzpicture}[scale=.13]
\node [circle,fill,scale=.8,draw] (1) at (9,17) {};
		\node [circle,fill,scale=.8,draw] (3) at (9,3) {};
		\node [thick,circle,scale=.8,draw] (5) at (3,10) {};	
		\node [thick,circle,scale=.8,draw] (6) at (9,10) {};	
		\node [thick,circle,scale=.8,draw] (7) at (15,10) {};	
		\path [thick,->,>=stealth] (5) edge node {} (1);
		\path [thick,<-,>=stealth] (5) edge node {} (3);
		\path [ultra thick,<-,>=stealth] (6) edge node {} (3);
		\path [ultra thick,<-,>=stealth] (7) edge node {} (3);
		\draw [fill] (3,-5) circle [radius=.5];
		\draw [lightgray,fill] (15,-3) circle [radius=.5];
		\draw [lightgray,fill] (11,-11) circle [radius=.5];
		\draw [fill] (3,-24) circle [radius=.5];
		\draw [fill] (15,-22) circle [radius=.5];
		\draw [fill] (7,-16) circle [radius=.5];
		\draw [lightgray] (3,-5) -- (15,-3);
		\draw [lightgray] (3,-5) -- (11,-11);
		\draw [lightgray] (11,-11) -- (15,-3);
		\draw [very thick] (3,-24) -- (7,-16);
		\draw [very thick] (3,-24) -- (15,-22);
		\draw [very thick] (8.2,-16.9) -- (15,-22);
		\draw [lightgray]
		(3,-24) -- (11,-11);
		\draw [lightgray]
		(7,-16) -- (8.36,-13.79);
		\draw [lightgray] (9.24,-12.36) -- (9.88,-11.32);
		\draw [lightgray]
		(10.76,-9.89) -- (15,-3);
		\draw [very thick] 
		(3,-24) -- (3,-5);
		\draw [lightgray]
		(15,-22) -- (11,-11);
		\draw [lightgray]
		(15,-22) -- (15,-3);
		\draw [very thick] 
		(7,-16) -- (3,-5);
		\draw [very thick] (3,-5) -- (9,-13.5);
		\draw [very thick] (9.6,-14.35) -- (15,-22);
	\end{tikzpicture}}
    \caption{Dissection induced by a ribbon structure.}
    \label{fig:dissect}
\end{figure}
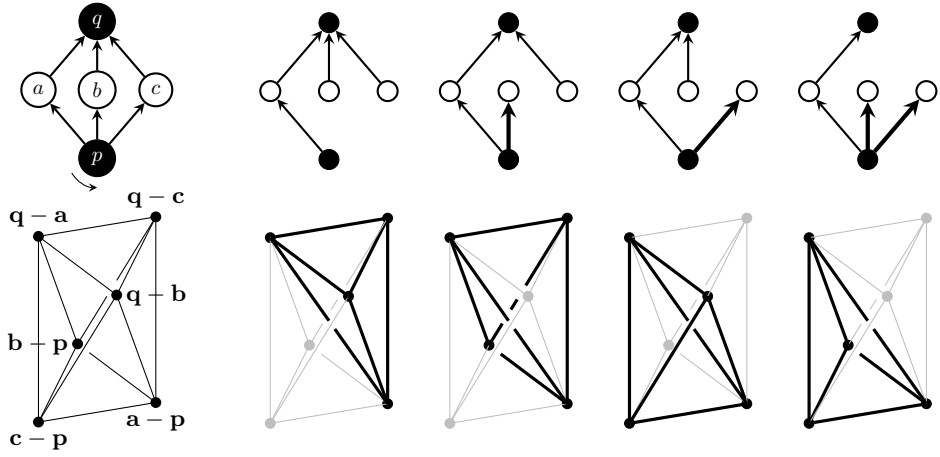

In order to prove Theorem \ref{t:J-trees_form_quasitr}, we define a linear order on spanning trees of a directed ribbon graph (that depends on the basis). The restriction of this order to Jaeger trees will turn out to be a shelling order of the dissection (see Theorem \ref{thm:shelling}).

The idea behind the order has already appeared in the proof of Proposition \ref{p:trees_interior_disjoint}. 
For any two spanning trees $T$ and $T'$ of a ribbon graph, 
their respective tours have maximal initial segments that coincide. The last element of this common segment is a pair $(x,\varepsilon)$, where the edge $\varepsilon$ is part of one tree but not part of the other. We will refer to $\varepsilon$ as the \emph{first difference} between the trees.

\begin{defn}\label{def:treeorder}
	For two spanning trees $T$ and $T'$ of a directed ribbon graph, equipped with a basis, we write $T' \prec T$ if their first difference $\varepsilon$
	satisfies one of the following:
	\begin{itemize}
		\item 
		$\varepsilon$ is skipped in the tour of $T'$ in accordance with the Jaeger rule (from the tail direction), but $\varepsilon \in T$,
		\item 
		$\varepsilon$ is skipped in the tour of $T$ violating the rule (from the head direction), but $\varepsilon \in T'$.
	\end{itemize}
\end{defn}


\begin{lemma}
The relation $\prec$ is a complete ordering of the spanning trees. 
\end{lemma}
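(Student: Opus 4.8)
The goal is to show that $\prec$ is a strict total order on the set of spanning trees of the directed ribbon graph, with basis fixed. The three things to check are: (i) $\prec$ is \emph{total}, i.e.\ any two distinct spanning trees are comparable; (ii) $\prec$ is \emph{antisymmetric/irreflexive}, i.e.\ we never have both $T'\prec T$ and $T\prec T'$ (and never $T\prec T$); and (iii) $\prec$ is \emph{transitive}. The main structural tool throughout is Bernardi's lemma (\cite[Lemma 5]{Bernardi_first}), which guarantees that in the tour of a spanning tree every edge becomes current exactly twice, once at each endpoint, so that the notions ``$\varepsilon$ is skipped from the tail direction'' and ``$\varepsilon$ is skipped from the head direction'' are well-defined and exhaustive for a non-tree edge.

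\textbf{Totality.} Given distinct spanning trees $T,T'$, I would first argue that they \emph{have} a first difference. The tours of $T$ and $T'$ both start with $(b_0,b_0b_1)$; compare them step by step. As long as the current node-edge pair $(x,\varepsilon)$ is processed identically by both tours, the next pair agrees as well --- the only way the two tours can diverge at a pair $(x,\varepsilon)$ is if $\varepsilon$ lies in exactly one of $T,T'$ (if $\varepsilon$ is in both they both traverse it to the same next pair; if $\varepsilon$ is in neither they both skip to $(x,\varepsilon^+)$). Since $T\ne T'$ as edge sets, the tours cannot agree forever (a tree is recoverable from which edges its tour traverses), so a first difference $\varepsilon$ exists, and at that moment one tour traverses $\varepsilon$ (the tree containing it) while the other skips it from one of the two directions. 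Exactly one of the two bullets in Definition~\ref{def:treeorder} then applies --- if the skipping tree cuts $\varepsilon$ from the tail, the first bullet gives $T'\prec T$ with $T'$ the skipping tree; if from the head, the second bullet gives $T'\prec T$ with $T'$ the \emph{containing} tree --- so $T$ and $T'$ are comparable. Note each bullet is triggered by a mutually exclusive event (tail-skip vs.\ head-skip of the first difference), which simultaneously delivers irreflexivity (a tree is never its own first difference with itself) and antisymmetry: swapping $T\leftrightarrow T'$ does not change $\varepsilon$ or the direction from which it is skipped, so only one of $T'\prec T$, $T\prec T'$ can hold.

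\textbf{Transitivity.} Suppose $T_1\prec T_2$ and $T_2\prec T_3$; let $\varepsilon_{12}$ be the first difference of $T_1,T_2$ and $\varepsilon_{23}$ that of $T_2,T_3$. The key point is to compare the initial segments at which each pair of tours agree. Write $P$ for the common initial segment of the tours of $T_1$ and $T_2$ (ending just before the pair involving $\varepsilon_{12}$), and $Q$ for that of $T_2$ and $T_3$. I would split into cases according to whether $P$ is shorter than, equal to, or longer than $Q$ --- equivalently, whether $\varepsilon_{12}$ occurs strictly before, at the same time as, or strictly after $\varepsilon_{23}$ in the tour of $T_2$. In the case that $\varepsilon_{12}$ occurs strictly before $\varepsilon_{23}$: the tours of $T_1$ and $T_3$ agree with that of $T_2$ at least up to just before $\varepsilon_{12}$ (since both $P$ and $Q$ contain this stretch), and at $\varepsilon_{12}$ the tour of $T_3$ behaves like that of $T_2$ (we are still inside $Q$), hence the first difference of $T_1,T_3$ is exactly $\varepsilon_{12}$, handled in exactly the same direction as in $T_1\prec T_2$, so $T_1\prec T_3$. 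The symmetric case ($\varepsilon_{23}$ strictly before $\varepsilon_{12}$) gives first difference $\varepsilon_{23}$ and $T_1\prec T_3$ likewise. The remaining case is $\varepsilon_{12}=\varepsilon_{23}=:\varepsilon$ occurring at the same tour position: here I would check that the ``orientation'' of the two relations is consistent --- from $T_1\prec T_2$ the edge $\varepsilon$ is either a tail-skip in $T_1$'s tour (first bullet) or a head-skip in $T_2$'s tour (second bullet), and similarly for $T_2\prec T_3$ --- and verify that in each sub-case $\varepsilon$ is either traversed by $T_3$ (so $T_1\prec T_3$ via the first bullet, since $\varepsilon$ is a tail-skip in $T_1$) or is a head-skip in $T_3$'s tour (so $T_1\prec T_3$ via the second bullet). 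One has to be a little careful that $\varepsilon$ might coincide in $T_3$ with $T_1$ after all, in which case $T_1$ and $T_3$ are not even distinct along this prefix and one must locate a genuinely later first difference; but since $T_1\ne T_3$ (as $T_1\prec T_2\prec T_3$ are distinct), totality still forces comparability, and a short argument tracking which direction the later first difference is skipped from pins down $T_1\prec T_3$.

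\textbf{Main obstacle.} The routine parts are totality and antisymmetry; the delicate step is transitivity in the ``coincident first difference'' case, because Definition~\ref{def:treeorder} is asymmetric in an annoying way --- the first bullet pins the behaviour of $\varepsilon$ in the \emph{smaller} tree's tour while the second pins it in the \emph{larger} tree's tour, so chasing the relation through three trees requires carefully bookkeeping in whose tour $\varepsilon$ is a tail- versus head-skip, and using Bernardi's two-visits lemma to rule out the inconsistent combinations. I expect the cleanest write-up reformulates $\prec$ via a single statistic: to each ordered pair (tour position, direction-of-skip) attach the tree-membership data, and observe that $T'\prec T$ iff at the first position where the tours differ, the ``Jaeger-preferred'' choice (traverse if the skip would be from the head, skip if it would be from the tail) is the one made by... --- making the order manifestly a lexicographic comparison, from which transitivity is immediate. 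Setting up that reformulation carefully is the real content.
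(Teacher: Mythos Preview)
Your treatment of totality, antisymmetry, and the two ``non-coincident'' sub-cases of transitivity matches the paper's essentially word for word. Where you diverge is in the coincident case $\varepsilon_{12}=\varepsilon_{23}=:\varepsilon$, which you flag as the ``main obstacle'' and propose to handle by a sub-case grind or a lexicographic reformulation. The paper instead dispatches this case in one line: \emph{it is impossible}. To see why, note that all three tours agree up to the common pair $(x,\varepsilon)$. From $T_1\prec T_2$ we have either (a) $\varepsilon\notin T_1$, $\varepsilon\in T_2$, and $x$ is the tail of $\varepsilon$, or (b) $\varepsilon\in T_1$, $\varepsilon\notin T_2$, and $x$ is the head. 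From $T_2\prec T_3$ we have either (c) $\varepsilon\notin T_2$, $\varepsilon\in T_3$, $x$ is the tail, or (d) $\varepsilon\in T_2$, $\varepsilon\notin T_3$, $x$ is the head. The combinations (a)+(c) and (b)+(d) contradict each other on membership of $\varepsilon$ in $T_2$; the combinations (a)+(d) and (b)+(c) force $x$ to be simultaneously the head and the tail of $\varepsilon$. So no sub-case survives, and the coincident case never occurs.

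In short, your proof is correct in spirit but you have misidentified where the difficulty lies; the ``delicate step'' you worry about evaporates once you run the four sub-cases, and no lexicographic repackaging is needed.
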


\begin{proof} 
As any two trees are comparable, we only need to prove transitivity. Suppose that $T_1\prec T_2$ and $T_2\prec T_3$. Assume that the first difference between $T_1$ and $T_2$ is the edge $\varepsilon$ and the first difference between $T_2$ and $T_3$ is the edge $\varepsilon'$. 
It is easy to see that $\varepsilon=\varepsilon'$ is impossible. 
If $\varepsilon$ is reached before $\varepsilon'$ in the tour of $T_2$, then the first difference between $T_1$ and $T_3$ is $\varepsilon$, and this orders them as $T_1$ and $T_2$ are ordered, i.e., $T_1\prec T_3$. If $\varepsilon'$ is reached before $\varepsilon$ in the tour of $T_2$, then the first difference between $T_1$ and $T_3$ is $\varepsilon'$, and this implies that they are ordered as $T_2$ and $T_3$ are, that is, again, $T_1\prec T_3$.
\end{proof}

\begin{proof}[Proof of Theorem \ref{t:J-trees_form_quasitr}]
Corollary \ref{cor:max_simplices:of_root_polytope} implies that Jaeger trees correspond to maximal 
simplices in the root polytope.
Proposition \ref{p:trees_interior_disjoint} tells us that simplices corresponding to Jaeger trees are interior disjoint. Hence we only need to prove that any point in the root polytope is inside the simplex of some Jaeger tree.

Let $G$ be a connected semi-balanced graph and let us take a point $\mathbf{p}\in \mathcal Q_G=\conv\{\mathbf{h} - \mathbf{t}\mid \overrightarrow{th}\in 
E(G)\}$. As the dimension of $\mathcal Q_G$ is $|V(G)|-2$, by Caratheodory's theorem, $\mathbf{p}$ is in the convex hull of at most $|V(G)|-1$ affine independent vertices, that is, there is a spanning tree $T$ of $G$ such that $\mathbf{p}=\sum_{\overrightarrow{th}\in T} \lambda_{th} (\mathbf{h} - \mathbf{t})$ with some $0\leq \lambda_{th}\leq 1$ for each edge $th\in T$, so that $\sum_{th\in T}\lambda_{th}=1$. Let us also put $\lambda_\eta=0$ for edges $\eta\notin T$. 

If $T$ is a Jaeger tree, we are done. Suppose that $T$ is not a Jaeger tree.
We show that in this case we can find another spanning tree $T' \prec T$ with $\mathbf{p}\in\mathcal Q_{T'}$.
As $T$ is not a Jaeger tree, in the tour of $T$ there is a non-tree edge that is first encountered at its head. 
Suppose that 
$\varepsilon=\overrightarrow{xy}$ is one such edge. 
The subgraph $T\cup\{\varepsilon\}$ has exactly one cycle $C$ (the so called fundamental cycle $C=C(T,\varepsilon)$), which is semi-balanced since $G$ is semi-balanced. We may assume that $\varepsilon\in C^+$. 

Let $\mu=\min_{\eta\in C^-} \lambda_{\eta}$ and let $\varepsilon'\in C^-$ be one of the edges where the minimum $\mu$ is realized.
If we define
$$\lambda'_\eta = \left\{\begin{array}{cl} 
\lambda_\eta & \text{if $\eta\notin C$}  \\
\lambda_\eta - \mu  & \text{if $\eta\in C^-$}\\
\lambda_\eta + \mu  & \text{if $\eta\in C^+$}
\end{array} \right.,
$$
then by Lemma \ref{l:dependence_of_semibalanced_cycle}, we have $\mathbf{p}=\sum_{\eta\in E}\lambda_\eta \mathbf x_\eta = \sum_{\eta\in E}\lambda'_\eta \mathbf x_\eta$.

By the choice of $\mu$ we have $\lambda'_{\eta}\geq 0$ for each edge of $G$, and $\lambda'_{\varepsilon'} = 0$. Hence for $T'=T\cup\{\varepsilon\}\setminus\{\varepsilon'\}$, we have $\{\eta\mid\lambda_\eta > 0\}\subset T'$, meaning $\mathbf p\in\mathcal Q_{T'}$. Also, $T'$ is once again a spanning tree since $\varepsilon'\in C(T,\varepsilon)$. 

We now prove that $T' \prec T$. 
If $(y,\varepsilon)$ comes before $\varepsilon'$ (paired with either endpoint) in the tour of $T$, then $\varepsilon$ is the first difference between $T$ and $T'$. As we have $\varepsilon\in T'$,
while on the other hand $\varepsilon$ is skipped in the tour of $T$ violating the Jaeger rule, we do have $T' \prec T$.

Else if $\varepsilon'$, with its endpoint $r$ as current node, appears before $\varepsilon$ in the tour of $T$, then $\varepsilon'$ becomes the first difference between $T$ and $T'$. After traversing $\varepsilon'$, the tour of $T$ proceeds along (with possible detours that do not involve vertices along $C$) the unique path in $T$ that connects the other endpoint $q$ of $\varepsilon'$ with the node $y$. As that path is part of $C$, from the assumption that $\varepsilon'\in C^-$, we see that $q$ must be the head and $r$ must be the tail of $\varepsilon'$.
This means that $\varepsilon'$ is removed from $T'$ in accordance with the Jaeger rule. Hence we have $T'\prec T$ in this case as well.

Now from the finiteness of the set of spanning trees, it is clear that 
there exists a Jaeger tree $T$ such that $\mathbf p$ is in the simplex corresponding to $T$.
\end{proof}

The idea of the above proof enables us to give a much shorter justification of the `correctness' of the Bernardi algorithm from \cite{hyperBernardi}. We do this in Subsection \ref{ss:Bernardi_alg_well_def}.

Let us also 
mention the following corollary
of the previous argument.

\begin{lemma}\label{l:smallest_tree_is_Jaeger}
	For each point $\mathbf p\in\mathcal Q_G$, among the spanning trees $\{T\mid \mathbf p\in\mathcal Q_T\}$, the minimal one according to $\prec$ is a Jaeger tree.
\end{lemma}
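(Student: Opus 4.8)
The plan is to derive this as an immediate consequence of the proof of Theorem \ref{t:J-trees_form_quasitr}. First I would observe that the set $\mathcal S=\{T\mid \mathbf p\in\mathcal Q_T\}$ is nonempty: since $\dim\mathcal Q_G=|V(G)|-2$, Carath\'eodory's theorem places $\mathbf p$ in the convex hull of at most $|V(G)|-1$ affinely independent vertices of $\mathcal Q_G$, and by Proposition \ref{prop:aff_indep_in_root_polytope} together with Corollary \ref{cor:max_simplices:of_root_polytope} these vertices form a subset of the edge set of some spanning tree $T$, whence $\mathbf p\in\mathcal Q_T$. There are only finitely many spanning trees of $G$, and by the lemma establishing that $\prec$ is a complete ordering of the spanning trees, $\mathcal S$ has a well-defined $\prec$-minimal element $T_{\min}$.

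Next I would argue by contradiction. Suppose $T_{\min}$ is not a Jaeger tree. The inductive step inside the proof of Theorem \ref{t:J-trees_form_quasitr} shows precisely that for any spanning tree $T$ with $\mathbf p\in\mathcal Q_T$ that fails the Jaeger condition, one can construct a spanning tree $T'\prec T$ with $\mathbf p\in\mathcal Q_{T'}$: namely, pick a non-tree edge $\varepsilon$ first seen at its head, form the fundamental cycle $C=C(T,\varepsilon)$ (semi-balanced since $G$ is), take $\varepsilon'\in C^-$ realizing $\mu=\min_{\eta\in C^-}\lambda_\eta$ among the barycentric coordinates of $\mathbf p$ in $\mathcal Q_T$, and set $T'=T\cup\{\varepsilon\}\setminus\{\varepsilon'\}$; Lemma \ref{l:dependence_of_semibalanced_cycle} gives $\mathbf p\in\mathcal Q_{T'}$ and the tour analysis gives $T'\prec T$. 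Applying this with $T=T_{\min}$ produces $T'\in\mathcal S$ with $T'\prec T_{\min}$, contradicting the minimality of $T_{\min}$. Therefore $T_{\min}$ is a Jaeger tree, which is the assertion.

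I do not expect a real obstacle, as the substance has already been carried out in the proof of Theorem \ref{t:J-trees_form_quasitr}; the only points deserving a sentence of care are that the coefficients $(\lambda_{th})_{\overrightarrow{th}\in T_{\min}}$ expressing $\mathbf p$ are uniquely determined (because the vertices of $\mathcal Q_{T_{\min}}$ are affinely independent by Corollary \ref{cor:max_simplices:of_root_polytope}), so the construction of $T'$ applies verbatim, and that $\prec$ is genuinely a total order, so that the minimum of $\mathcal S$ exists and is unique.
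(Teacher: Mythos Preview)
Your proposal is correct and is exactly the argument the paper has in mind: the lemma is stated in the paper without proof, merely as a ``corollary of the previous argument,'' and what you have written is the natural way to make that corollary explicit from the proof of Theorem~\ref{t:J-trees_form_quasitr}.
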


The order $\prec$ has interesting symmetries. For example, one could re-define Jaeger trees ``symmetrically,'' demanding that in the tour of $T$ each non-edge be first seen at its \emph{head}. Then, for each point $\mathbf p\in\mathcal Q_G$, the \emph{maximal} tree in $\{T\mid \mathbf p\in\mathcal Q_T\}$, according to $\prec$, would be one of these ``head-cut Jaeger trees.'' 

Another type of symmetry for Jaeger trees is the following extension of \cite[Lemma 6.5]{hyperBernardi}.

\begin{prop}
\label{p:reverse_structure} Let $G$ be a connected directed graph with a ribbon structure and basis $(b,\varepsilon)$. Let $\delta$ be the edge that precedes $\varepsilon$ in the cyclic order around $b$. Now let us also consider the graph $-G$, that is $G$ with all edge orientations reversed, and equip it with the ribbon structure that is the reverse of the previous, i.e., the cyclic orders around all edges are inverted. Then, the Jaeger trees of $-G$ with respect to the basis $(b,\delta)$ are exactly the same as the Jaeger trees of $G$ with respect to $(b,\varepsilon)$.
\end{prop}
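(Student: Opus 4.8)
The plan is to show that the tour of a spanning tree $T$ in $G$ with basis $(b,\varepsilon)$ and the tour of the same edge set $T$ (viewed as a spanning tree of $-G$) with basis $(b,\delta)$ and the reversed ribbon structure produce, step by step, the \emph{same} sequence of node-edge pairs, and that the Jaeger condition is preserved under this correspondence. The key observation is the local one: at a vertex $x$, if the cyclic order of $G$ reads $\ldots, \alpha, \beta, \ldots$ (so $\beta = x\alpha^+_G$ at $x$ in the appropriate sense), then in the reversed ribbon structure of $-G$ the cyclic order reads $\ldots, \beta, \alpha,\ldots$, so that ``the edge following $\beta$'' in $-G$ is $\alpha$. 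Thus, informally, running the tour in $-G$ with the reversed structure amounts to running the tour of $G$ \emph{backwards}.

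First I would set up this ``time-reversal'' bijection carefully. Recall from Bernardi's Lemma 5 that in the tour of $T$ each edge becomes current exactly twice, once with each endpoint as current node, and the tour visits a cyclic sequence of node-edge pairs. I claim that if the tour of $T$ in $(G; b, \varepsilon)$ is the cyclic sequence $(x_1, \eta_1), (x_2,\eta_2), \ldots, (x_m, \eta_m)$ with $(x_1,\eta_1) = (b,\varepsilon)$, then the tour of $T$ in $(-G;\, b, \delta)$ with the reversed ribbon structure is the reversed cyclic sequence, suitably reindexed to start at $(b,\delta)$. To verify this, one checks the transition rule: if $\eta_i \notin T$ (a skip), the tour of $G$ goes from $(x_i, \eta_i)$ to $(x_i, x_i\eta_i^+)$; reading this backwards, from $(x_i, x_i\eta_i^+)$ we should land at $(x_i, \eta_i)$, and indeed in the reversed cyclic order at $x_i$ the edge \emph{following} $x_i\eta_i^+$ is $\eta_i$, and $x_i\eta_i^+$ is still a non-tree edge, so $-G$'s tour skips it and moves to $(x_i,\eta_i)$. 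If $\eta_i \in T$ (a traversal), the tour of $G$ goes from $(x_i,\eta_i)$ with $\eta_i = x_iy$ to $(y, yx_i^+_G)$; reading backwards, we must check that $-G$'s tour sends $(y, yx_i^+_G)$ to $(x_i, \eta_i)$. Here one uses that $yx_i^+_G$ follows $yx_i$ at $y$, so in the reversed order at $y$ the edge $yx_i = \eta_i$ follows $yx_i^+_G$; since $\eta_i \in T$ it is traversed, moving the tour to $(x_i, x_i y^+_{-G})$ — and $x_i y^+_{-G}$ is the edge \emph{preceding} $\eta_i = x_i y$ at $x_i$ in $G$, which is exactly the edge $\eta_{i-1}$ through which the $G$-tour \emph{entered} $x_i$. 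One small but genuine point is to confirm the base case: the edge $\delta$ preceding $\varepsilon$ at $b$ is precisely the last edge $\eta_m$ of the $G$-tour (by the structure of Bernardi's tour returning to $b$ along $\delta$ before it would re-encounter $\varepsilon$), so starting the reversed tour at $(b,\delta)$ correctly aligns the two cyclic sequences. This base-case bookkeeping, and getting the skip/traverse transitions to match at the indexing boundary, is the step I expect to be the main obstacle — the idea is transparent but the endpoint-chasing is fiddly.

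Once the time-reversal bijection is established, the conclusion is short. Fix an edge $\eta = \overrightarrow{th}$ of $G$, so in $-G$ it is $\overrightarrow{ht}$. In the $G$-tour, $T$ is Jaeger iff for every non-tree edge $\eta$, the pair $(t,\eta)$ comes before $(h,\eta)$. Under time-reversal the order of these two occurrences is \emph{flipped}, so in the $-G$-tour $(h,\eta)$ comes before $(t,\eta)$; but $h$ is exactly the tail of $\eta$ in $-G$, so this says $\eta$ is first encountered at its tail in the $-G$-tour. Hence $T$ is Jaeger for $(G;b,\varepsilon)$ if and only if $T$ is Jaeger for $(-G; b,\delta)$ with the reversed ribbon structure, which is the claim. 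Note also that the set of non-tree edges is the same in both pictures (the tree is the same edge set), so there is nothing further to check. This recovers \cite[Lemma 6.5]{hyperBernardi} in the standard-orientation special case, since there the reversal of the standard orientation together with the swap of the two color classes is an isomorphism.
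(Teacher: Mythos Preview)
Your central claim---that the $-G$-tour with basis $(b,\delta)$ is the reversed sequence of node-edge pairs of the $G$-tour---is false, and the error is in your transition check. In the skip case you write ``$x_i\eta_i^+$ is still a non-tree edge, so $-G$'s tour skips it,'' but $x_i\eta_i^+=\eta_{i+1}$ need not be a non-tree edge just because $\eta_i$ is. Concretely, take a star with center $o$, leaves $a,b,c$, an extra non-tree edge $ab$, cyclic order $oa,ob,oc$ at $o$, and basis $(o,oa)$: the $G$-tour is $(o,oa),(a,ab),(a,ao),(o,ob),(b,ba),(b,bo),(o,oc),(c,co)$, while the $-G$-tour (reversed ribbon, basis $(o,oc)$) is $(o,oc),(c,co),(o,ob),(b,ba),(b,bo),(o,oa),(a,ab),(a,ao)$. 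These are not reverses of each other; for instance the $-G$-step out of $(a,ao)$ goes to $(o,oc)$, not to $(a,ab)$ as your argument predicts. The same mismatch occurs in your traverse case, where the backward transition depends on whether $\eta_{i+1}\in T$, not whether $\eta_i\in T$.

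What is true---and what the paper's one-line proof relies on---is subtler: the cyclic sequence of \emph{current edges} (forgetting the current node) in the $-G$-tour is the reverse of that in the $G$-tour, but the current node attached to each occurrence is \emph{swapped} for tree edges and \emph{preserved} for non-tree edges. (You can verify this in the example above.) From this the conclusion follows immediately: for a non-tree edge $\eta$ the two occurrences $(t,\eta)$ and $(h,\eta)$ appear in opposite orders in the two tours, and since the orientation of $\eta$ is also reversed in $-G$, the Jaeger condition carries over. Your final paragraph is fine once this corrected bijection is in place; it is only the ``literal reversal of pairs'' step that needs to be replaced.
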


\begin{proof}
When we compare the tours of a spanning tree $T$ in $G$ and in $-G$, we see that for any non-edge $xy$ of $T$, the orders in which $(x,xy)$ and $(y,xy)$ appear are opposite. But since the orientation of $xy$ also gets reversed, the two Jaeger conditions are equivalent.
\end{proof}

Let $\prec'$ be the order induced on spanning trees of $-G$, with respect to the reversed ribbon structure as in Proposition \ref{p:reverse_structure}.
Then it seems that (apart from having the same set of Jaeger trees) $\prec'$ has little connection to $\prec$.

\begin{question}
    Does the ordering $\prec$ have other interesting symmetries? Do these have meaningful corollaries? What is the relationship of $\prec$ and $\prec'$?
\end{question}

Recall that by Lemma \ref{lem:unimodularitas}, the simplex $\mathcal Q_T$ has the same volume for any spanning tree $T$. Therefore Theorem \ref{t:J-trees_form_quasitr} implies the following.

\begin{cor}
\label{c:invariance}
	The number of Jaeger trees of a semi-balanced directed graph is independent of the ribbon structure and the basis. 
\end{cor}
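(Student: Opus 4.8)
The plan is to read this off immediately from Theorem~\ref{t:J-trees_form_quasitr} together with the volume computation in Corollary~\ref{cor:azonos_terfogat} (equivalently Lemma~\ref{lem:unimodularitas}): a dissection partitions volume additively, and here all the pieces have the same volume, so their number is forced.

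Concretely, I would fix a ribbon structure and a basis and let $\mathcal T$ be the resulting set of Jaeger trees. By Theorem~\ref{t:J-trees_form_quasitr} the simplices $\{\mathcal Q_T\mid T\in\mathcal T\}$ have pairwise disjoint relative interiors and their union is $\mathcal Q_G$; since $\dim\mathcal Q_G=|V(G)|-2$ by Corollary~\ref{cor:max_simplices:of_root_polytope} and the $\mathcal Q_T$ are maximal-dimensional, taking $(|V(G)|-2)$-dimensional volume inside the hyperplane $\Pi_l$ of \eqref{eq:inhomogen} gives
\[
\operatorname{vol}(\mathcal Q_G)=\sum_{T\in\mathcal T}\operatorname{vol}(\mathcal Q_T).
\]
By Corollary~\ref{cor:azonos_terfogat} the quantity $\operatorname{vol}(\mathcal Q_T)$ is the same for every spanning tree $T$ of $G$ (they are all compatible with a common layering $l$), say equal to $v_0$; indeed, by Lemma~\ref{lem:unimodularitas} each $\mathcal Q_T$ is a unimodular simplex of the lattice $\langle\mathbf 1\rangle^\perp\cap\Pi_l\cap\Z^V$, so $v_0$ is $1/(|V(G)|-2)!$ times the covolume of that lattice. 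Hence $|\mathcal T|=\operatorname{vol}(\mathcal Q_G)/v_0$, i.e. $|\mathcal T|$ is the normalized volume of $\mathcal Q_G$ with respect to this lattice. Now the right-hand side involves only $G$ — through its root polytope and through the hyperplane $\Pi_l$, which is determined by $G$ via \eqref{eq:inhomogen} — and makes no reference at all to the ribbon structure or the basis; therefore $|\mathcal T|$ is independent of those choices, which is the claim. (If one wishes to avoid even mentioning the intrinsic lattice, it suffices to observe that both $\operatorname{vol}(\mathcal Q_G)$ and $v_0$ are manifestly ribbon- and basis-independent, so their ratio is too.)

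I do not anticipate any genuine obstacle here: all the content has been front-loaded into Theorem~\ref{t:J-trees_form_quasitr} and the unimodularity/equal-volume lemmas. The only point deserving a sentence of care is the one just indicated, namely that ``the normalized volume of $\mathcal Q_G$'' is unambiguous because the relevant lattice $\langle\mathbf 1\rangle^\perp\cap\Pi_l\cap\Z^V$ is canonically attached to $G$; once that is granted the argument is a one-line volume count.
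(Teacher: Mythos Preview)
Your proof is correct and follows essentially the same approach as the paper: the paper also derives the corollary immediately from Theorem~\ref{t:J-trees_form_quasitr} together with Lemma~\ref{lem:unimodularitas}, observing that since all $\mathcal Q_T$ have the same volume, the number of Jaeger trees is determined by the volume of $\mathcal Q_G$. Your write-up is somewhat more explicit about the lattice and normalization, but the underlying argument is identical.
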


A somewhat imprecise phrasing of the reason is that the number of Jaeger trees of $G$ is proportional to the volume of $\mathcal Q_G$. 
We will soon also see that the number of Jaeger trees of $G$ is $I_G(1)$, i.e., the sum of the coefficients of the interior polynomial.

We note that 
Corollary \ref{c:invariance} fails without semi-balancedness. For example, take a properly directed cycle. This has a unique ribbon structure. If the base node is the head of the base edge, we do not have Jaeger trees. If the base node is the tail of the base edge, all spanning trees are Jaeger.

\begin{question}
	Is there a good analogue of the dissection by Jaeger trees for un-semibalanced directed graphs? In other words, is there a similar way to dissect the root polytope of an un-semibalanced directed graph into simplices? What about the case of (not necessarily bipartite) signed graphs?
\end{question}

\section{Shelling}
\label{sec:shelling}

In this section we show that the dissection, induced by Jaeger trees, of the root polytope of a semi-balanced (thus bipartite and directed) graph is shellable.
(See Section \ref{sec:int_poly} for the relevant definitions.)
Specifically, the restriction of $\prec$ (Definition \ref{def:treeorder}) to Jaeger trees yields a shelling order.

This implies that an $h$-vector, cf.\ \eqref{eq:h-hejazott}, may be considered.
To be able to describe it combinatorially, we need the following definition.

\begin{defn}[Internal semi-passivity with respect to a fixed order]
	Let $G$ be a directed graph and let us assume that an order is given on its edge set. With respect to a spanning tree $T$ of $G$, we call an edge $\varepsilon\in T$ \emph{internally semi-passive} if it stands opposite to the largest edge $\varepsilon'$ in the fundamental cut $C^*(T,\varepsilon)$ (that is, $\varepsilon$ and $\varepsilon'$ have their tails in different components of $T-\varepsilon$).
	
	We say that an edge is \emph{internally semi-active} in $T$ if it is not internally semi-passive.
\end{defn}

If we are given a ribbon structure and a basis for our directed graph, we can associate the $T$-order $\leq_T$ of Definition \ref{def:T-order} to any tree $T$. 
Unless specified otherwise, we will always consider internal semi-passivity  of an edge $\varepsilon$ in a tree $T$ with respect to this order.
That is, the order is not fixed once and for all; instead, the ribbon structure and the basis are. This point of view is adopted from Bernardi's work \cite{Bernardi_first,Bernardi_Tutte} on the Tutte polynomial.

We note that in the prequel \cite{hyperBernardi}, internal semi-passivity was defined using the \emph{smallest} instead of the \emph{largest} edge, hence the corresponding theorems in that paper are slightly different.

The main theorem of this section is as follows.

\begin{thm}\label{thm:shelling}
Let $G$ be a semi-balanced 
graph, with an arbitrarily fixed ribbon structure and basis.
	The restriction of the order $\prec$ of Definition \ref{def:treeorder} to the Jaeger trees of $G$ induces a shelling order of the dissection of Theorem \ref{t:J-trees_form_quasitr}. For each Jaeger tree $T$, the value of the $h$-vector statistic (i.e., the number of facets of the corresponding simplex $\mathcal Q_{T}$ that lie in the union of the earlier simplices of the shelling) equals the number of internally semi-passive edges in $T$ with respect to the $T$-order.
\end{thm}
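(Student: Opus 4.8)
\textbf{Plan for the proof of Theorem \ref{thm:shelling}.}
The plan is to verify directly that the enumeration of Jaeger trees in increasing $\prec$-order is a shelling order, and simultaneously to identify the newly-formed boundary facets with internally semi-passive edges. First I would set up the combinatorial bookkeeping: given a Jaeger tree $T$ and an edge $\varepsilon\in T$, deleting $\varepsilon$ from $\mathcal Q_T$ gives a facet $\mathcal Q_{T-\varepsilon}$, which (being a codimension-one unimodular simplex lying on the hyperplane $\kappa_{T,\varepsilon}=0$ of Remark \ref{rem:hyperplane_from_cut}, built from the fundamental cut $C^*(T,\varepsilon)$) is contained in the boundary of $\mathcal Q_G$ precisely when the cut is ``one-sided'', i.e.\ when all edges of $C^*(T,\varepsilon)$ cross it in the same direction as $\varepsilon$ (equivalently, $\varepsilon$ stands opposite to no edge of its fundamental cut). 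So the first step is to show that the facets of $\mathcal Q_T$ split into \emph{boundary facets} of $\mathcal Q_G$ and \emph{interior facets}; the latter are exactly those $\mathcal Q_{T-\varepsilon}$ for which $C^*(T,\varepsilon)$ contains at least one edge opposite to $\varepsilon$.

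The heart of the matter is to analyze, for each interior facet $\mathcal Q_{T-\varepsilon}$ of a Jaeger tree $T$, whether it is covered by an earlier simplex in the $\prec$-order. The natural candidate is the unique other spanning tree $T'$ with $\mathcal Q_{T'}\supset\mathcal Q_{T-\varepsilon}$, obtained by the same cycle-exchange move as in the proof of Theorem \ref{t:J-trees_form_quasitr}: write $C=C(T,\varepsilon')$ for the fundamental cycle of the \emph{largest} edge $\varepsilon'$ of $C^*(T,\varepsilon)$ that is opposite to $\varepsilon$, and set $T'=T\cup\{\varepsilon'\}\setminus\{\varepsilon\}$. I would then argue (a) $T'$ is again a Jaeger tree, and (b) $T'\prec T$ if and only if $\varepsilon$ is internally semi-passive, i.e.\ iff $\varepsilon$ is opposite to the $\leq_T$-largest edge of $C^*(T,\varepsilon)$. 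The ``only if'' and ``if'' here are both read off from Definition \ref{def:treeorder} by locating the first difference between $T$ and $T'$ in the tour and checking whether the skipped edge is skipped according to or against the Jaeger rule — Lemma \ref{l:char_Jaeger_cuts} controls exactly the $T$-order positions of the cut edges relative to $\varepsilon$, which is what makes the comparison with ``largest edge in the cut'' come out right. Conversely, when $\varepsilon$ is internally semi-active, one shows $T\prec T'$ (so the facet is not yet covered when $T$ is processed), and moreover that no \emph{other} earlier Jaeger tree can cover $\mathcal Q_{T-\varepsilon}$, because $T'$ is the unique spanning tree other than $T$ whose simplex contains that facet (two distinct maximal unimodular simplices sharing a facet in a dissection into top-dimensional simplices).

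Once these correspondences are in place, the shelling property follows formally. For a fixed Jaeger tree $T$, the intersection $\mathcal Q_T\cap\big(\bigcup_{T''\prec T}\mathcal Q_{T''}\big)$ is, on one hand, contained in the union of those facets $\mathcal Q_{T-\varepsilon}$ with $T'\prec T$ (where $T'$ is the exchange partner across that facet), since any point of $\mathcal Q_T$ lying in an earlier simplex lies on $\partial\mathcal Q_T$, hence on some facet, and that facet is either a boundary facet of $\mathcal Q_G$ (impossible, as earlier simplices are contained in $\mathcal Q_G$ and two top simplices meeting along a set containing a relatively open piece of a boundary facet would force one of them outside, using interior-disjointness from Proposition \ref{p:trees_interior_disjoint}) or an interior facet whose partner $T'$ must then be $\prec T$; on the other hand it \emph{contains} each such facet, because $\mathcal Q_{T-\varepsilon}\subset\mathcal Q_{T'}$. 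Hence the intersection is exactly the union of the internally semi-passive facets of $T$, and this union is a non-empty proper union of facets for every $T$ except the $\prec$-minimal one (non-emptiness for non-minimal $T$ follows since a non-minimal Jaeger tree has at least one earlier neighbor, forcing at least one internally semi-passive edge). Therefore $\prec$ restricted to Jaeger trees is a shelling order, and the $h$-vector statistic $r_T$ counts internally semi-passive edges of $T$ with respect to $\leq_T$, as claimed.

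\textbf{Main obstacle.}
I expect the delicate point to be step (b) above: translating ``$\varepsilon$ is opposite to the $\leq_T$-\emph{largest} edge of its fundamental cut'' into ``$T'\prec T$'' with complete precision. This requires carefully tracking, in the tour of $T$, where the cut edges of $C^*(T,\varepsilon)$ and the cycle edges of $C(T,\varepsilon')$ fall relative to $\varepsilon$ and $\varepsilon'$, and using the Jaeger property of $T$ (via Lemma \ref{l:char_Jaeger_cuts}) to pin down which endpoint of $\varepsilon'$ is ``current'' when $\varepsilon'$ is first skipped in the tour of $T'$ — exactly the kind of subtle first-difference analysis that already appeared, in embryonic form, at the end of the proof of Theorem \ref{t:J-trees_form_quasitr}. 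Getting the ``largest vs.\ smallest'' bookkeeping consistent (recall the deliberate change from \cite{hyperBernardi}) is where the argument must be handled with care.
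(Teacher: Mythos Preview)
Your plan has a genuine gap at step (a): the exchange tree $T'=T\cup\{\varepsilon'\}\setminus\{\varepsilon\}$ is \emph{not} in general a Jaeger tree, and the surrounding claim that $T'$ is ``the unique spanning tree other than $T$ whose simplex contains that facet'' tacitly assumes a triangulation, whereas the dissection of Theorem \ref{t:J-trees_form_quasitr} may fail to be one (cf.\ Section \ref{sec:diss_vs_triang}). In a mere dissection the facet $\mathcal Q_{T-\varepsilon}$ need not be a full facet of any other Jaeger simplex; it may be covered only by a \emph{union} of several earlier simplices, each meeting it in a lower-dimensional piece. So both the existence of a single Jaeger neighbor across the facet and the ``no other earlier Jaeger tree can cover $\mathcal Q_{T-\varepsilon}$'' uniqueness argument break down.

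The paper handles this as follows. For each internally semi-passive $\varepsilon\in T$, Lemma \ref{l:activities_description} (in the direction \ref{otodik}$\Rightarrow$\ref{elso}) constructs not one exchange tree but a whole family $T'=T_0\cup\{\varepsilon'\}\cup T_1'$, indexed by the Jaeger trees $T_1'$ of the subgraph $G_1$ on the non-base side of the cut (with $\varepsilon'$ the \emph{first}, not the largest, opposite cut edge encountered when continuing the tour). Each such $T'$ is proved to be Jaeger and to satisfy $T'\prec T$. The proof of Theorem \ref{thm:shelling} then shows that the union $\bigcup_{T_1'}\mathcal Q_{T'}$ contains $\mathcal Q_{T-\varepsilon}$, essentially because $\mathcal Q_{G_1}=\bigcup_{T_1'}\mathcal Q_{T_1'}$ (applying Theorem \ref{t:J-trees_form_quasitr} to $G_1$) and $\mathcal Q_{T-\varepsilon}\subset\conv(\mathcal Q_{T_0}\cup\mathcal Q_{G_1})$. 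The reverse inclusion---that $\mathcal Q_T\cap\mathcal R_T$ lies in the union of semi-passive facets---is done, as you anticipate, with the functional $\kappa_{T,\varepsilon}$, but with $\varepsilon$ taken to be the first difference between $T$ and an \emph{arbitrary} earlier Jaeger tree $T''$, not a specific exchange partner.
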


From here on we will often abbreviate `internally semi-passive edges of $T$ with respect to the $T$-order' to just `semi-passive edges.'

\begin{example}
Figure \ref{fig:dissect} lists the Jaeger trees 
in the order 
$\prec$.
The 
semi-passive edges of each tree are thickened.
(The status of an edge is easiest to determine by using condition \ref{elso} of the Lemma below.) Regarding the facets of each associated simplex, the facets along which the 
simplex attaches to the earlier maximal simplices are exactly the ones whose opposite vertices correspond to 
semi-passive edges. 
\end{example}

We start the proof of Theorem \ref{thm:shelling} with the following  characterization of semi-passive edges.
We note that even for general directed graphs (that is, not necessarily semi-balanced or even bipartite), its parts \ref{masodik} and \ref{otodik} are equivalent, and follow from \ref{elso}. 

\begin{lemma}
	\label{l:activities_description}
	Let $G$ be a semi-balanced 
	graph with a fixed ribbon structure and basis.
	For a Jaeger tree $T$ of $G$ and edge $\varepsilon \in T$, the following statements are equivalent.
	\begin{enumerate}[label=(\roman*)]
		\item \label{elso}
		$\varepsilon$ arises as the first difference between $T$ and some Jaeger tree preceding $T$ in $\prec$, i.e., there exists a  Jaeger tree $T'$ such that $\varepsilon\not\in T'$ but the tours of $T$ and $T'$ coincide until reaching $\varepsilon$.
		\item \label{masodik}
		$\varepsilon$ is internally semi-passive in the spanning tree T with respect to the $T$-order of the edges of $G$. 
		\item \label{otodik} 
		$\varepsilon$ has its tail in the base component of $T-\varepsilon$ and there exists an edge in the fundamental cut $C^*(T,\varepsilon)$ that stands opposite to $\varepsilon$.
	\end{enumerate}
\end{lemma}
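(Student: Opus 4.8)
The plan is to prove the cycle of implications $\ref{elso}\Rightarrow\ref{otodik}\Rightarrow\ref{masodik}\Rightarrow\ref{elso}$ (or a convenient variant), leaning heavily on the machinery already assembled: the characterization of fundamental cuts of Jaeger trees in Lemma \ref{l:char_Jaeger_cuts}, the tree-reordering construction from the proof of Theorem \ref{t:J-trees_form_quasitr}, and Lemma \ref{l:smallest_tree_is_Jaeger}. Throughout, write $T_0$ for the base component of $T-\varepsilon$ and $T_1$ for the other component, and recall that the $T$-order times an edge $\overrightarrow{th}$ by the first moment $(t,th)$ is current in the tour.

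First, $\ref{elso}\Rightarrow\ref{otodik}$. Suppose $T'\prec T$ is a Jaeger tree whose tour agrees with that of $T$ up to $\varepsilon$, with $\varepsilon\notin T'$. Since the first difference $\varepsilon$ lies in $T$ and not in $T'$, Definition \ref{def:treeorder} forces that $\varepsilon$ is skipped in the tour of $T'$ in accordance with the Jaeger rule, i.e.\ from its tail. But the tour of $T$ traverses $\varepsilon$ at this same moment (from the same current vertex, since the tours agree up to here), so the tail of $\varepsilon$ is the current vertex at that step, which means the tail of $\varepsilon$ lies in the base component $T_0$ --- this gives the first half of \ref{otodik}. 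For the second half, consider the fundamental cycle $C=C(T',\varepsilon)$ in $T'\cup\{\varepsilon\}$; since $G$ is semi-balanced, $C$ is semi-balanced, so $C$ contains at least one edge $\eta$ opposite to $\varepsilon$ along $C$. All edges of $C$ other than $\varepsilon$ lie in $T'$; I claim one such opposite edge also lies in $C^*(T,\varepsilon)$. Indeed, $T$ and $T'$ differ only in that $T=T'\cup\{\varepsilon\}\setminus\{\varepsilon''\}$ for the edge $\varepsilon''\in C$ removed when passing from $T'$ to $T$ (this is how the reordering step is reversed); the edges of $C^*(T,\varepsilon)$ are exactly those non-tree edges of $T$ whose fundamental cycle in $T\cup\{\cdot\}$ contains $\varepsilon$, and a short path-in-a-tree argument identifies the relevant opposite edge of $C^*(T,\varepsilon)$. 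The orientation bookkeeping --- that the opposite relation in $C$ translates to the ``tails in different components'' formulation in the definition of internal semi-passivity --- is the delicate but routine part here.

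Next, $\ref{otodik}\Rightarrow\ref{masodik}$. Assume $\varepsilon$ has its tail in $T_0$ and some edge of $C^*(T,\varepsilon)$ is opposite to $\varepsilon$. By Lemma \ref{l:char_Jaeger_cuts}, writing $\varepsilon_1$ for an edge of the cut with tail in $T_0$ and $\varepsilon_2$ for one with head in $T_0$, we have $\varepsilon_1\le_T\varepsilon<_T\varepsilon_2$; in particular every edge of $C^*(T,\varepsilon)$ with its tail in $T_1$ (equivalently, head in $T_0$) is strictly larger than $\varepsilon$ in the $T$-order, while every edge with tail in $T_0$ is at most $\varepsilon$. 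Now an edge opposite to $\varepsilon$ in the cut is precisely one whose tail is in $T_1$ (since $\varepsilon$ has its tail in $T_0$); by hypothesis such an edge exists, so the \emph{largest} edge $\varepsilon'$ of $C^*(T,\varepsilon)$ has its tail in $T_1$ --- it is one of the edges $\varepsilon_2$, all of which exceed $\varepsilon$, and $\varepsilon$ itself has its tail in $T_0$. Thus $\varepsilon$ and $\varepsilon'$ have tails in different components of $T-\varepsilon$, which is exactly the definition of $\varepsilon$ being internally semi-passive. (Here I should double-check the degenerate possibility that $\varepsilon$ itself is the largest edge of its cut; Lemma \ref{l:char_Jaeger_cuts} rules this out once an opposite edge is known to exist.)

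Finally, $\ref{masodik}\Rightarrow\ref{elso}$. Suppose $\varepsilon$ is internally semi-passive in $T$, so the largest edge $\varepsilon'\in C^*(T,\varepsilon)$ has its tail in the component of $T-\varepsilon$ not containing $\varepsilon$'s tail. I first argue $\varepsilon$ has its tail in the base component: if not, by the last sentence of Lemma \ref{l:char_Jaeger_cuts} we would have $\varepsilon_2\le_T\varepsilon$ for the cut edges $\varepsilon_2$ with head in $T_0$, forcing the largest cut edge to be on $\varepsilon$'s side and contradicting semi-passivity (one must check $\varepsilon$ can't be its own largest cut edge, again via the Jaeger structure). So $\varepsilon$'s tail is in $T_0$ and $\varepsilon'$'s tail is in $T_1$, i.e.\ $\varepsilon$ and $\varepsilon'$ are opposite in the semi-balanced fundamental cycle $C(T-\varepsilon+\varepsilon',\varepsilon)$ --- here I use that $T-\varepsilon+\varepsilon'$ is again a spanning tree and the two-element combination of $\varepsilon,\varepsilon'$ inside this cycle is what the reordering lemma of Theorem \ref{t:J-trees_form_quasitr} acts on, but with roles reversed (we are ``moving down'' from $T$). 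Apply that construction with $T'=T-\varepsilon+\varepsilon'$: one checks the tours of $T$ and $T'$ agree up to $\varepsilon$ --- this is where Lemma \ref{l:char_Jaeger_cuts}\ref{egyegy} is used, to see that every cut edge decided before $\varepsilon'$ in the tour of $T$ is decided identically in $T'$ --- and that $T'\prec T$ with first difference $\varepsilon$, and that $T'$ is still a Jaeger tree. For the last point, since $\varepsilon\notin T'$ and $\varepsilon$ is skipped from its tail (which is in the base component), $T'$ satisfies the Jaeger condition at $\varepsilon$; for the other non-tree edges of $T'$ one invokes Lemma \ref{l:smallest_tree_is_Jaeger} applied to any point in the relative interior of $\mathcal Q_{T'}$, or argues directly that the skip/traverse pattern outside the cycle is unchanged.

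The main obstacle I anticipate is the orientation bookkeeping that converts between the three formulations of ``opposite'' (opposite in a cycle, tails in different components of $T-\varepsilon$, cut from the head direction) and keeping straight which component is the base component at each step; combined with verifying that $T'$ remains a Jaeger tree after the swap, this is the only genuinely non-mechanical content. Everything else is a direct appeal to Lemma \ref{l:char_Jaeger_cuts} and to the reordering argument already carried out in the proof of Theorem \ref{t:J-trees_form_quasitr}.
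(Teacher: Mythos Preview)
Your proposal has two genuine gaps.

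\textbf{In \ref{elso}$\Rightarrow$\ref{otodik}.} You claim that $T$ and $T'$ differ by a single edge swap, writing ``$T=T'\cup\{\varepsilon\}\setminus\{\varepsilon''\}$ for the edge $\varepsilon''\in C$ removed when passing from $T'$ to $T$.'' This is false: condition \ref{elso} only says the tours agree up to $\varepsilon$, and two Jaeger trees with the same first difference can differ by arbitrarily many edges afterwards. The correct argument is simpler and does not use the fundamental cycle of $T'$ at all. Since the tours of $T$ and $T'$ coincide until $\varepsilon$, every edge of $C^*(T,\varepsilon)$ skipped before that point is absent from both trees. By Lemma \ref{l:char_Jaeger_cuts} applied to $T$, every edge of $C^*(T,\varepsilon)$ with tail in $T_0$ has already been skipped. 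But $T'$ is a spanning tree, so it contains some edge of $C^*(T,\varepsilon)$; that edge must therefore have its head in $T_0$, i.e.\ stand opposite to $\varepsilon$.

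\textbf{In \ref{masodik}$\Rightarrow$\ref{elso}.} Your candidate $T'=T-\varepsilon+\varepsilon'$, with $\varepsilon'$ the \emph{largest} cut edge, is generally not a Jaeger tree. After skipping $\varepsilon$ from its tail, the tour of $T'$ continues inside $T_0$ and will encounter, at their heads, any other edges of $C^*(T,\varepsilon)$ opposite to $\varepsilon$ that lie between $\varepsilon$ and $\varepsilon'$; these are non-edges of $T'$ being skipped from the wrong side. Your fallback via Lemma \ref{l:smallest_tree_is_Jaeger} does not repair this: that lemma produces \emph{some} Jaeger tree containing a given point, but you would still need to show its first difference with $T$ is exactly $\varepsilon$, which is not automatic and comes close to assuming the shelling theorem this lemma is meant to support. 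The paper's construction is more delicate: it takes $\varepsilon'$ to be the \emph{first} opposite edge encountered after skipping $\varepsilon$, and replaces $T_1$ by an arbitrary Jaeger tree $T_1'$ of the induced subgraph $G_1$ (with basis determined by where $\varepsilon'$ lands), setting $T'=T_0\cup\{\varepsilon'\}\cup T_1'$. Both modifications are needed to guarantee $T'$ is Jaeger.
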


\begin{proof}
	\ref{elso} $\Rightarrow$ \ref{otodik}.
	As the tours of $T$ and $T'$ coincide until reaching $\varepsilon$, they both reach $\varepsilon$ at the same endpoint. As $\varepsilon\notin T'$, this endpoint must be the tail of $\varepsilon$, which is thus in the base component.
	Those edges of $C^*(T,\varepsilon)-\varepsilon$ that had already been skipped in the tour before reaching $\varepsilon$, are not in $T'$. By Lemma \ref{l:char_Jaeger_cuts}, the rest of the edges of $C^*(T,\varepsilon)-\varepsilon$ 
	have their heads in the base component. 
	This latter set is non-empty because $T'$ must contain at least one of its elements.
	
	\smallskip
	\noindent 
	\ref{otodik} $\Rightarrow$ \ref{elso}. Let $T_0$ and $T_1$ be the two subtrees of $T-\varepsilon$, with $T_0$ containing the base node. Let $G_0$ be the subgraph of $G$ spanned by the node set of $T_0$, and $G_1$ be the subgraph spanned by the node set of $T_1$. We build up a Jaeger tree $T'$ together with its tour, such that $\varepsilon$ is the first difference between $T'$ and $T$. To do so, we follow the tour of $T$ until reaching $\varepsilon$, but at that moment, we do not include $\varepsilon$ into $T'$. We are allowed to do this by the assumption that $\varepsilon$ has its tail in $T_0$. Instead, we stay in $T_0$ and continue with the part of the tour of $T$ that would follow the second traversal of $\varepsilon$. We stop at the first moment when an edge $\varepsilon' \in C^*(T,\varepsilon)$, standing opposite to $\varepsilon$, becomes current edge (we will include this edge into $T'$). 
	
	By the assumption \ref{otodik} there exists such an edge and \cite[Lemma 5]{Bernardi_first} guarantees that our process will find it. Let $t'$ be the endpoint of $\varepsilon'$ from $G_1$, which is its tail. If $G_1$ is not just a point, then take the first edge $t'h''\in G_1$ following $\varepsilon'$ in the cyclic order around $t'$. Take an arbitrary Jaeger tree $T'_1$ of $G_1$ with base point $t'$ and base edge $t'h''$. (Here the ribbon structure for $G_1$ is the restriction of the one for $G$.) Such a Jaeger tree exists since $G_1$ is connected and semi-balanced, whence Jaeger trees dissect the root polytope of $G_1$, which is not empty if $G_1$ is not a single point. If $G_1$ is a single point then we take $T_1'$ to also be that point.
	
	We claim that $T'=T_0\cup\{\varepsilon'\}\cup T'_1$ is a Jaeger tree. Once we prove this, it becomes immediate from the construction that the first difference between $T$ and $T'$ is $\varepsilon$.
	
	Until reaching $\varepsilon$, the tours of $T$ and $T'$ coincide. Then $\varepsilon$ is cut (at the allowed endpoint) in the tour of $T'$. Next we continue the traversal of $T_0$ until we arrive at $\varepsilon'$. During this time, any edges that we cut are edges of $G_0$ that are cut, at the same endpoint, in the tour of $T$ as well. As $\varepsilon'$ is traversed, the tour of $T'$ thus far has not cut any edge at its prohibited endpoint. If there are any edges (necessarily of $C^*(T,\varepsilon)$) between $\varepsilon'$ and $t'h''$ at $t'$, for which $t'$ is their head, then
by Lemma \ref{l:char_Jaeger_cuts}, they have already been cut at their tails.
	 Next we start the traversal of $T'_1$, which is a Jaeger tree in $G_1$. Compared to the tour of $T'_1$ with regard to $G_1$, the only difference in the tour of $T'$ is that any edges in $C^*(T,\varepsilon)$ that we encounter have to be skipped. But again by Lemma \ref{l:char_Jaeger_cuts}, all the edges from $C^*(T,\varepsilon)$ that have their head in $G_1$ have already been cut, hence none of them gets cut at its prohibited endpoint. Note that if $G_1$ is a single point, then we only have the edges of $C^*(T,\varepsilon)$ (which, in that case, is the star-cut of $t'$) to deal with, but the same reasoning applies.
	 
	 Then when we arrive back at $h'$ after traversing $\varepsilon'$ for the second time, the set of node-edge pairs that have not been current is the same as in the tour of $T$ after $\varepsilon'$ is current for the second time. Moreover, the orders in which these remaining pairs become current are the same, too. Hence during the remainder of the tour of $T'$, each edge that is cut is still cut at the allowed endpoint.
	
	\smallskip
	\noindent 
	\ref{otodik} $\Rightarrow$ \ref{masodik}. 
	By Lemma \ref{l:char_Jaeger_cuts}, those edges in $C^*(T,\varepsilon)$ that have their tails in the base component all precede, in the $T$-order, those which have their heads there. Since now there exists an edge in $C^*(T,\varepsilon)$ that has its head in the base component, necessarily the largest edge $\delta$ of $C^*(T,\varepsilon)$, according to the $T$-order, is of this kind. As $\varepsilon$ has its tail in the base component, it stands opposite to $\delta$, which means that $\varepsilon$ is internally semi-passive.
	
	\smallskip
	\noindent 
	\ref{masodik} $\Rightarrow$ \ref{otodik}.  Internal semipassivity for $\varepsilon$ implies that there must be an edge $\delta$ in $C^*(T,\varepsilon)$ standing opposite to $\varepsilon$, moreover, $\delta$ comes later than $\varepsilon$ in the $T$-order. By Lemma \ref{l:char_Jaeger_cuts}, this implies that $\varepsilon$ must have its tail in the base component. 
\end{proof}

\begin{proof}[Proof of Theorem \ref{thm:shelling}] 
	Let us fix a Jaeger tree $T$ of our semi-balanced ribbon graph $G=(V,E)$. Let $\mathcal R_T = \bigcup_{T' \prec T}\mathcal Q_{T'}$ be the union of the simplices corresponding to Jaeger trees $T'$ preceding $T$ in $\prec$. First we will show that for an internally semi-passive edge $\varepsilon\in T$, 
	the facet $\mathcal Q_{T-\varepsilon}$ of the simplex $\mathcal Q_T$ lies in  $\mathcal R_T$. Then 
	we will show that if $T$ is not the first Jaeger tree then $\mathcal Q_T$ is not disjoint from $\mathcal R_T$, 
	moreover, if $\mathbf{p}\in \mathcal Q_T\cap\mathcal R_T$, then $\mathbf p$ is in $\mathcal Q_{T-\varepsilon}$ for an internally semi-passive edge $\varepsilon$.
	
	Take an edge $\varepsilon
	\in T$ 
	such that $\varepsilon$ is internally semi-passive in $T$ with respect to the $T$-order. 
	We will use the notation of the part \ref{otodik} $\Rightarrow$ \ref{elso} of the proof of Lemma \ref{l:activities_description}. Recall that a Jaeger tree was constructed there in the form 
	\begin{equation}
	\label{eq:sokfa}
T'=T_0\cup\{\varepsilon'\}\cup T_1',
\end{equation}
	where $T_0$ is the base component of $T-\varepsilon$, the edge $\varepsilon'$ is a particular element of $C^*(T,\varepsilon)$, and $T_1'$ is a Jaeger tree in the subgraph $G_1$ determined by the fundamental cut. We will show that as $T_1'$ ranges over the set $\mathcal T_1$ of Jaeger trees for $G_1$ (with basis specified as in the proof of Lemma \ref{l:activities_description}), the union of the simplices $\mathcal Q_{T'}$ is large enough to contain $\mathcal Q_{T-\varepsilon}$. This will complete the first part of the proof since $T'\prec T$ for all choices of $T_1'$.
	
We will denote the set of Jaeger trees of the form \eqref{eq:sokfa}, for $T_1'\in\mathcal T_1$, by $\mathcal T$.
	
	If $G_1$ is a single node, then the only element of $\mathcal T_1$ is the empty set, furthermore $T-\varepsilon=T_0$, which makes our claim trivial. Otherwise take an arbitrary point $\mathbf p$ from $\mathcal Q_{T - \varepsilon}$. This means that
	\[\mathbf p=\sum_{\eta\in T - \varepsilon}\lambda_{\eta} \mathbf x_\eta=\sum_{\eta\in T_0}\lambda_{\eta} \mathbf x_\eta + \sum_{\eta\in T_1}\lambda_{\eta} \mathbf x_\eta,\]
	where $\lambda_{\eta}\geq 0$ for all $\eta$ and $\sum_{\eta\in T - \varepsilon}\lambda_{\eta}=1$. Let $\lambda_0=\sum_{\eta\in T_0}\lambda_{\eta}$ and $\lambda_1=\sum_{\eta\in T_1}\lambda_{\eta}$. If $\lambda_0=0$ then $\mathbf p\in\mathcal Q_{G_1}$, which by Theorem \ref{t:J-trees_form_quasitr} is dissected by the faces $\mathcal Q_{T_1'}$ of the simplices $\mathcal Q_{T'}$, for $T'\in\mathcal T$ as in \eqref{eq:sokfa}, making $\mathbf p\in \bigcup_{T'\in \mathcal{T}}\mathcal Q_{T'}$ obvious. If $\lambda_1=0$, then $\mathbf p\in\mathcal Q_{T_0}$, which in turn is part of all the $\mathcal Q_{T'}$. Otherwise, write
	\[\mathbf p=\lambda_0\cdot\sum_{\eta\in T_0}\frac{\lambda_{\eta}}{\lambda_0}\,\mathbf x_\eta + \lambda_1\cdot \sum_{\eta\in T_1}\frac{\lambda_{\eta}}{\lambda_1}\,\mathbf x_\eta\]
	and let $\mathbf p_0=\sum_{\eta\in T_0}\frac{\lambda_{\eta}}{\lambda_0}\,\mathbf x_\eta$ and $\mathbf p_1=\sum_{\eta\in T_1}\frac{\lambda_{\eta}}{\lambda_1}\,\mathbf x_\eta$. That is, $\mathbf p=\lambda_0\mathbf p_0 + \lambda_1\mathbf p_1$, where $\lambda_0,\lambda_1>0$ and $\lambda_0 + \lambda_1=1$. Furthermore, we have $\mathbf p_1\in\mathcal Q_{T_1}\subset\mathcal Q_{G_1}$ and $\mathbf p_0\in\mathcal Q_{T_0}$. Combining this with the convexity of $\mathcal Q_{T_0}$ and $\mathcal Q_{G_1}=\bigcup_{T'_1\in \mathcal{T}_1}\mathcal Q_{T'_1}$, we obtain
	\begin{multline*}
	\bigcup_{T'\in \mathcal{T}}\mathcal Q_{T'}\supset \bigcup_{T'\in \mathcal{T}}\mathcal Q_{T'-\varepsilon'}\\
	=\bigcup_{T'_1\in \mathcal{T}_1} \conv(\mathcal Q_{T_0}\cup\mathcal Q_{T'_1})
	=\conv\left(\mathcal Q_{T_0}\cup\bigcup_{T'_1\in \mathcal{T}_1}\mathcal Q_{T'_1}\right)
	=\conv(\mathcal Q_{T_0} \cup\mathcal Q_{G_1})\ni \mathbf p.
	\end{multline*}

	As to the second part of our proof, to show that $\mathcal Q_T$ is not disjoint from 
	$\mathcal R_T$ if the latter is nonempty, note that by Lemma \ref{l:activities_description}, for any Jaeger tree $T'\prec T$, the first difference between $T'$ and $T$ is an edge $\varepsilon\in T$ that is internally semi-passive in $T$. Hence $\mathcal Q_{T-\varepsilon}\subset\mathcal R_T$,
	which implies that the two sets are not disjoint.
	
	Finally, take a point $\mathbf{p}\in\mathcal Q_T\cap\mathcal R_T$.
	We may suppose that $\mathbf{p}\in\mathcal Q_T\cap\mathcal Q_{T'}$ for some Jaeger tree $T'\prec T$, where the first difference between $T$ and $T'$ is 
	$\varepsilon\in T\setminus T'$. Now the functional $\kappa_{T,\varepsilon}$, constructed in the proof of Proposition \ref{p:trees_interior_disjoint}, shows that $\mathcal Q_T\cap\mathcal Q_{T'}\subset\mathcal Q_{T-\varepsilon}$. 
	Here by Lemma \ref{l:activities_description}, the edge $\varepsilon$ is internally semi-passive in $T$. Thus indeed, our arbitrary intersection point $\mathbf p$ lies in a facet of $\mathcal Q_T$ whose opposite vertex corresponds to a semi-passive edge, as claimed.
\end{proof}

Let us summarize again our strategy to compute the $h^*$-vector of $\mathcal Q_G$, i.e., the interior polynomial of the semi-balanced graph $G$. We fix an arbitrary ribbon structure and basis. We generate all associated Jaeger trees $T_i$ and the corresponding numbers $r_i$ of semi-passive edges. (If we find the $T_i$ by a certain natural in-depth search, then condition \ref{elso} of Lemma \ref{l:activities_description} makes it easy to keep track of the $r_i$.)  We let $h_j$ denote the number of indices $i$ so that $r_i=j$. Then, according to \eqref{eq:generator}, we have
\begin{equation}
\label{eq:intash}
I_G(x)=\sum_jh_j\,x^j.
\end{equation}

\begin{example}
Figure \ref{fig:K23} shows three semi-balanced graphs.
Example \ref{ex:dissect} dealt with the one on the right, and we found that
its Jaeger trees had $0$, $1$, $1$, and $2$ internally semi-passive edges, respectively. Therefore $h_0=1$, $h_1=2$, $h_2=1$, and the interior polynomial of the graph in question is $1+2x+x^2$.

The other two 
graphs of Figure \ref{fig:K23} share the same interior polynomial $1+2x$.

For the graphs of Figure \ref{fig:K34}, similar but lengthier computations yield the interior polynomials (from left to right) $1+6x+3x^2$, $1+6x+9x^2+4x^3$, $1+6x+6x^2$, and $1+6x+7x^2$, respectively.
\end{example}

\section{Algorithmic aspects}
\label{sec:computing_the_Jaeger_tree}
\label{sec:alg}

Suppose that we are given a semi-balanced 
graph with a ribbon structure and a basis. In Theorem \ref{t:J-trees_form_quasitr} we saw that the root polytope is dissected into simplices indexed by Jaeger trees. 
In this section we look at algorithmic aspects of this fact,
namely, we give a simple, greedy algorithm for finding a Jaeger tree containing a given point of the root polytope in its simplex. The algorithm relies on, as a subroutine, the decision whether a given point lies in the convex hull of some other points. If the coordinates of the points are rational, then this can be done in polynomial time via linear programming. Altogether, our algorithm runs in polynomial time if the input point has rational coordinates.

In Subsection \ref{ss:Bernardi_alg_well_def}, we point out that the correctness of the Bernardi algorithm of \cite{hyperBernardi} (that finds the Jaeger tree realizing a given hypertree) follows from the correctness of our algorithm.
Thereby we give a simpler proof for the correctness of the Bernardi algorithm.

\subsection{Finding a Jaeger tree whose simplex contains a given point}

We propose the following process. 

\smallskip

\noindent\textbf{Algorithm (Jaeger-tree-for-point):} \\
INPUT: a connected semi-balanced 
graph $G$, 
a ribbon structure for $G$, a basis $(b_0,b_0b_1)$, and a point $\mathbf{p}\in\mathcal Q_G$. \\
OUTPUT: a Jaeger tree (with respect to the given ribbon structure and basis) $T\subset G$ such that $\mathbf{p}\in\mathcal Q_T$.

We maintain a current graph $G_{curr}$, a current node $x$ of $G_{curr}$ and a current edge $xy$ of $G_{curr}$ incident to our current node. At the beginning, $G_{curr}=G$, the current node is $b_0$ and the current edge is $b_0b_1$. 

As a step of the algorithm, if the current node $x$ is the tail of the current edge $xy$, then
we check whether $G_{curr}-xy$ is connected and $\mathbf{p}$ is in the root polytope of $G_{curr}-xy$. If both answers are yes, then we take $G_{curr}-xy$ as the next current graph, $x$ as the next current vertex and $xy^+_{G_{curr}}$ as the next current edge.
If $\mathbf{p}\notin\mathcal Q_{G_{curr}-xy}$ or $G_{curr}-xy$ is not connected, then the current graph stays $G_{curr}$, the current node becomes $y$ and the current edge becomes $yx^+_{G_{curr}}$.

If the current node $x$ is the head of the current edge $xy$, then
$G_{curr}$ stays the current graph, the next current node is $y$ and the next current edge is $yx^+_{G_{curr}}$.
We stop when an edge becomes current with its tail as current node for the second time. The output is the current graph at the end of the algorithm.

\begin{thm}
	For any input as above, the algorithm produces a Jaeger tree $T$ of $G$ such that $\mathbf{p}\in\mathcal Q_T$. Moreover, it returns the smallest tree, with respect to the order $\prec$ of Definition \ref{def:treeorder}, among those spanning trees $F$ of $G$ that satisfy $\mathbf{p}\in\mathcal Q_{F}$. 
\end{thm}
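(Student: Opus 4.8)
The statement has two parts: first, that the algorithm terminates with a spanning tree $T$ that is a Jaeger tree and satisfies $\mathbf p\in\mathcal Q_T$; second, that $T$ is in fact the $\prec$-minimal spanning tree among all $F$ with $\mathbf p\in\mathcal Q_F$. I would prove both by matching the algorithm step-by-step against the tour of the tree it produces, exploiting the combinatorial skeleton already built in the proof of Theorem \ref{t:J-trees_form_quasitr}.

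\textbf{Step 1: the algorithm outputs a spanning tree and its run is exactly the tour of that tree.} The subtlety is that $G_{curr}$ changes during the run, so one must argue that the ``walk'' the algorithm performs coincides with Bernardi's tour of the final graph $T$. I would first observe that at every moment $G_{curr}$ is connected and spanning (connectedness is enforced explicitly; spanning is automatic since we only delete edges). When the current node is the tail of the current edge $xy$ and we choose to delete $xy$, the edge leaves $G_{curr}$ forever and is ``skipped''; when we keep it (or when the current node is the head), the edge is retained and, the second time it becomes current, it will be traversed. The key bookkeeping lemma, essentially \cite[Lemma 5]{Bernardi_first} applied on the fly, is that each edge of $T$ becomes current exactly twice and each deleted edge exactly twice, so the stopping rule fires correctly and $T$ is reached with every edge processed; moreover the resulting $T$ is acyclic because every edge that would close a cycle in $G_{curr}$ at the moment it is examined as tail-current would leave $G_{curr}$ still connected, hence gets deleted --- here one needs that an edge whose removal disconnects $G_{curr}$ is a bridge of $G_{curr}$, so the retained edges form a connected bridge-only subgraph, i.e.\ a tree. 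Finally, replaying this same sequence of node-edge pairs as the tour of $T$ in $G$ gives literally the run of the algorithm, because deleting non-tree edges from $G$ one at a time does not change which edge follows a given one in the cyclic order among the edges that remain. Thus the algorithm's trace \emph{is} the tour of $T$.

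\textbf{Step 2: $T$ is a Jaeger tree and contains $\mathbf p$.} Jaeger-ness is immediate from the construction: an edge $\varepsilon$ is deleted (becomes a non-edge) only while the current node is its \emph{tail}, hence in the tour of $T$ every non-edge is first seen at its tail. For $\mathbf p\in\mathcal Q_T$ I would maintain the invariant ``$\mathbf p\in\mathcal Q_{G_{curr}}$'' throughout. It holds initially; it is preserved at a deletion step by the explicit test $\mathbf p\in\mathcal Q_{G_{curr}-xy}$; and it is trivially preserved at non-deletion steps since $G_{curr}$ is unchanged. At termination $G_{curr}=T$, giving $\mathbf p\in\mathcal Q_T$.

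\textbf{Step 3: minimality with respect to $\prec$.} This is the part I expect to be the main obstacle, since it requires comparing $T$ with an arbitrary competitor $F$ satisfying $\mathbf p\in\mathcal Q_F$. Let $F$ be any spanning tree with $\mathbf p\in\mathcal Q_F$, $F\ne T$, and let $\varepsilon$ be the first difference between $T$ and $F$, so the tour of $T$ (= the algorithm's run) and the tour of $F$ agree up to the current pair $(x,\varepsilon)$. I claim this difference always orders $T$ below $F$, i.e.\ $T\prec F$. Two cases, according to whether $\varepsilon\in T$ or $\varepsilon\in F$. If $\varepsilon\notin T$ but $\varepsilon\in F$: then at that step the algorithm was at the tail of $\varepsilon$ (a non-edge is removed only from its tail), and it chose to delete $\varepsilon$, which by the algorithm's rule means $G-\,(\text{edges deleted so far})\,-\varepsilon$ is connected and still contains $\mathbf p$ in its root polytope. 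So $\varepsilon$ is skipped in the tour of $T$ from the tail, i.e.\ in accordance with the Jaeger rule, while $\varepsilon\in F$; by the first bullet of Definition \ref{def:treeorder}, $T\prec F$. If $\varepsilon\in T$ but $\varepsilon\notin F$: I must show $\varepsilon$ is skipped in the tour of $F$ from its \emph{head}, i.e.\ violating the Jaeger rule; equivalently, that the common current node $x$ at step $(x,\varepsilon)$ is the head of $\varepsilon$. Suppose instead $x$ were the tail of $\varepsilon$. Then at this step the algorithm examined $\varepsilon$ as tail-current and \emph{retained} it, so either $G_{curr}-\varepsilon$ is disconnected or $\mathbf p\notin\mathcal Q_{G_{curr}-\varepsilon}$. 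In the first subcase $\varepsilon$ is a bridge of $G_{curr}$; but $G_{curr}$ still contains all of $F$'s not-yet-deleted edges --- indeed every edge deleted so far by the algorithm is a non-edge of $T$, and since the tours agree up to here, it is a non-edge of $F$ too --- so $\varepsilon$ would be a bridge in a graph containing the spanning tree $F\setminus\{\varepsilon\}\cup(\text{stuff})$... this needs care, but the clean way is: every edge deleted so far is skipped in both tours, hence lies outside $T$ and outside $F$, so $G_{curr}\supseteq F$; if $\varepsilon$ were a bridge of $G_{curr}$ it would be a bridge of $F\cup G_{curr}=G_{curr}$ separating the two components of $F-\varepsilon$... wait, $\varepsilon\notin F$, contradiction with $\varepsilon$ bridge and $F$ connected inside $G_{curr}-\varepsilon$. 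So the first subcase cannot occur. In the second subcase, $\mathbf p\notin\mathcal Q_{G_{curr}-\varepsilon}$; but $F\subseteq G_{curr}-\varepsilon$ (as $\varepsilon\notin F$ and $F\subseteq G_{curr}$), so $\mathbf p\in\mathcal Q_F\subseteq\mathcal Q_{G_{curr}-\varepsilon}$, again a contradiction. Hence $x$ must be the head of $\varepsilon$, $\varepsilon$ is skipped in the tour of $F$ from the head, $\varepsilon\in T$, and by the second bullet of Definition \ref{def:treeorder}, $T\prec F$. In both cases $T\prec F$, so $T$ is the $\prec$-minimum of $\{F:\mathbf p\in\mathcal Q_F\}$, which also re-confirms via Lemma \ref{l:smallest_tree_is_Jaeger} that $T$ is Jaeger. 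This completes the proof. The delicate point throughout is the bookkeeping that the sequence of deleted edges is a common prefix of ``non-edges of $T$'' and ``non-edges of $F$'', which is what makes $G_{curr}\supseteq F$ available at the critical moment.
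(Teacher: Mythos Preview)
Your Step~3 is the right argument and is essentially what the paper does; the gap is in Step~1, and it is a real one. Your acyclicity claim---``every edge that would close a cycle in $G_{curr}$ at the moment it is examined as tail-current would leave $G_{curr}$ still connected, hence gets deleted''---ignores the second deletion criterion, namely $\mathbf p\in\mathcal Q_{G_{curr}-xy}$. An edge lying on a cycle of $G_{curr}$ may well be retained because removing it pushes $\mathbf p$ out of the root polytope; nothing you have written rules this out. Your assertion that ``the retained edges form a connected bridge-only subgraph'' is therefore unjustified. This is not merely cosmetic: once you do not know the output is a tree, the phrase ``tour of $T$'' is undefined, so Step~1 becomes circular and Steps~2--3, which rely on comparing against that tour, lose their footing. (Your termination argument, invoking \cite[Lemma 5]{Bernardi_first} ``on the fly'' for a graph that is changing and not yet known to be a tree, has the same circularity.)

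The paper avoids the problem by reversing your order of attack. It first fixes the target: let $T$ be the $\prec$-minimal spanning tree with $\mathbf p\in\mathcal Q_T$, which exists and is Jaeger by Lemma~\ref{l:smallest_tree_is_Jaeger}. It then shows by induction along the tour of $T$ that the algorithm's sequence of current node--edge pairs is exactly the tour of $T$ with the second occurrences of non-edges removed, and that the algorithm deletes precisely $E(G)\setminus T$. The contradiction at a first deviation splits into your two cases. If the algorithm keeps an edge $\varepsilon\notin T$: since $T\subseteq G_{curr}$, both deletion criteria hold, so the algorithm should have deleted $\varepsilon$. If the algorithm removes an edge $\varepsilon\in T$: then $G':=G_{curr}-\varepsilon$ is connected with $\mathbf p\in\mathcal Q_{G'}$, and a Jaeger tree $T'\subseteq G'$ containing $\mathbf p$ (which exists by Theorem~\ref{t:J-trees_form_quasitr}) is shown to satisfy $T'\prec T$, contradicting minimality. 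Note that this second case needs more than your Step~3 supplies: you must manufacture a competitor $T'$ and argue that its tour agrees with $T$'s up to $\varepsilon$, which takes an extra paragraph. Once the algorithm is shown to track $T$'s tour, termination, tree-ness, the Jaeger property, $\mathbf p\in\mathcal Q_T$, and $\prec$-minimality all follow at once. Your Step~3 contains the kernel of this, but the reorganization is what makes it a proof.
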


\begin{proof}
	Let $T$ be the smallest tree with respect to $\prec$ in 
	the set of spanning trees $F$ of $G$ so that $\mathbf{p}\in\mathcal Q_{F}$.
	Then $T$ is a Jaeger tree by Lemma \ref{l:smallest_tree_is_Jaeger}.
	Take the incident node-edge pairs of $G$ in the order as they become current in the tour of $T$, and remove the pairs $(h,\varepsilon)$ where $\varepsilon\notin T$ and $h$ is the head of $\varepsilon$. Let $S$ be the sequence of the remaining node-edge pairs.
	We will show that the Jaeger-tree-for-point algorithm removes precisely the edges in $E(G)-T$, hence it returns $T$, and 
	its sequence of current node-edge pairs is exactly $S$.
	
	Suppose for a contradiction that this is not true, and take the last step before the sequence of current node-edge pairs of the algorithm differs from $S$. Let the last common node-edge pair be $(v,\varepsilon)$. 
	There are two ways for the two processes to part: either the algorithm does not remove $\varepsilon$ and $\varepsilon\notin T$, or it removes $\varepsilon$ and $\varepsilon\in T$.
	
	Regarding the first case, 
	as $T$ is a Jaeger tree and $\varepsilon\notin T$, in the tour of $T$, the edge $\varepsilon$ is first reached from the tail direction. Since so far all the edges of $T$ were kept, before this step, we have $T\subseteq G_{curr}$. But then $T\subseteq G_{curr}-\varepsilon$, in particular $G_{curr}-\varepsilon$ is connected and $\mathbf{p}\in \mathcal Q_T\subseteq \mathcal Q_{G_{curr}-\varepsilon}$, whence the algorithm would need to remove $\varepsilon$, which is a contradiction.
	
	Now suppose that the second case happens. 
	Let $G'$ be the current graph of the algorithm after removing $\varepsilon$. By our assumption, $S$ agrees with the sequence of node-edge pairs of the algorithm up to $(v,\varepsilon)$, 
	hence in the tour of $T$, until reaching $(v, \varepsilon)$, all the edges of $T$ are in $G'$, and all the non-edges of $T$ are not in $G'$. Let $u$ be the other endpoint of $\varepsilon$. (Note that as the algorithm was able to remove $\varepsilon$ while $(v,\varepsilon)$ was current, we know that $v$ is the tail and $u$ is the head of $\varepsilon$.)
	We claim that $(u,\varepsilon)$ cannot precede $(v,\varepsilon)$ in $S$.
	Otherwise, $\varepsilon$ (from $u$ to $v$), and the component of $T-\varepsilon$ containing $v$ would be traversed by the algorithm and the tour of $T$ alike, removing all the edges of $C^*(T,\varepsilon)-\varepsilon$ in the process.
	Hence removing $\varepsilon$ would make the graph disconnected, which contradicts the Jaeger-tree-for-point algorithm. We conclude that $(v,\varepsilon)$ precedes $(u,\varepsilon)$ in $S$.
	
	The algorithm maintains the properties that the current graph is connected and its root polytope contains $\mathbf{p}$. Hence, according to Theorem \ref{t:J-trees_form_quasitr}, there is a tree $T'\subseteq G'$ that is a Jaeger tree of $G'$ such that $\mathbf{p}\in \mathcal Q_{T'}$. (Here $T'$ is a spanning tree for $G$ as well, but we do not yet know if $T'$ is Jaeger for $G$.)
	Let us compare $T$ to $T'$.
	The first difference between the tours of $T$ and $T'$ can be of two types: Either the tour of $T'$ differs from the tour of $T$ before reaching $(v,\varepsilon)$, or $\varepsilon$ is the first difference.
	In the first case (since $G'$ does not contain any non-edges of $T$ that are seen in the tour of $T$ before $(v,\varepsilon)$) the first difference is an edge that $T'$ does not contain but $T$ contains.
	All the edges of $T$ that are traversed up to this point from the tail direction must be in $T'$ since these edges were needed either for keeping the (then) current graph connected or for having $\mathbf{p}$ in the root polytope of the (then) current graph, and this remains true also for the potentially smaller graph $G'$. Hence the first difference between $T$ and $T'$ is that $T'$ does not contain some edge of $T$ (and of $G'$) that is first seen from the head direction. 
	This cannot happen since $T'$ is a Jaeger tree within $G'$. 
	
	Finally, if $\varepsilon$ is the first difference, then $T' \prec T$. This cannot happen as $T$ is the smallest tree containing $\mathbf{p}$ in its root polytope. Thus the sequence $S$ describes the Jaeger-tree-for-point algorithm as we claimed, and the proof is complete.
\end{proof}

\subsection{A short proof for the correctness of the Bernardi process}
\label{ss:Bernardi_alg_well_def}

This subsection is a small detour in which we recall the Bernardi algorithm of \cite{hyperBernardi}, and point out that its 
convergence
follows from the correctness of the Jaeger-tree-for-point algorithm. This approach significantly shortens the proof given in \cite[Section 4]{hyperBernardi}. In this subsection we deal with bipartite graphs with the standard orientation. 
Concretely, let us orient each edge toward the partite class $W$.

We will introduce the required notions as briefly as possible; for a more detailed explanation, see \cite
{alex,hiperTutte,KP_Ehrhart,hyperBernardi}.

\begin{defn}\cite{hiperTutte}
	\label{def:hypertree}
	Let $G$ be an undirected bipartite graph with vertex classes $U$ and $W$. We say that the vector $\mathbf h\colon U\to\Z_{\ge0}$ is a  \emph{hypertree} on $U$ if there exists a spanning tree $T$ of $G$ that has degree $d_T(v)=\mathbf h(v)+1$ at each node $v\in U$. 
    In this case, we also say that the spanning tree $T$ \emph{realizes} the hypertree $\mathbf h$.
\end{defn}

Hypertrees are a generalization of spanning trees. Indeed, if the bipartite graph $G$ is obtained from a graph $H$ by subdividing each edge with a new point, and $U$ is the color class of these new points, then hypertrees on $U$ are exactly the characteristic vectors of the spanning trees of $H$.

It is proved in \cite{hyperBernardi} that (given a ribbon structure and basis for $G$) for each hypertree on $U$, there is a unique Jaeger tree in $G$ that realizes it. The Bernardi algorithm of \cite{hyperBernardi} is a process that computes this Jaeger tree for a given hypertree.
Clearly, by symmetry, the same can be said for hypertrees on $W$.

Let the base node be $b_0$ and the base edge be $b_0b_1$. 

\begin{defn}[Bernardi algorithm] \label{d:Bernardi,em,vi}
	Given is a hypertree $\mathbf f$ on $U$ (or on $W$). The process maintains a current incident node-edge pair and a current graph at any moment. At the beginning, the current graph is $G$, the current node is $b_0$ and the current edge is $b_0b_1$.
	
	In a step, if the current node $x$ is in $U$, then we check whether for the current graph $G_{curr}$ and the current edge $xy$, the vector $\mathbf f$ is a hypertree on $U$ (or on $W$) in the graph $G_{curr}-xy$. If the answer is yes, let the current graph of the next step be $G_{curr}-xy$ and the current node-edge pair be $(x, xy_{G_{curr}}^+)$. If the answer is no, let the current graph of the next step be $G_{curr}$, and let the current node-edge pair of the next step be $(y,yx_{G_{curr}}^+)$. 
	
	If the current node $x$ is in $W$, and the current edge is $xy$, then let the current graph of the next step be $G_{curr}$, and let the current node-edge pair of the next step be $(y, yx_{G_{curr}}^+)$.
	
	The process stops when a node-edge pair becomes current for the second time.
	The output is the current graph at the end of the process.
\end{defn}

This is called the (ht:$U$,cut:$U$) Bernardi process if the hypertree is given on $U$ and the (ht:$W$,cut:$U$) Bernardi process if the hypertree is given on $W$. Clearly, by symmetry, one can also define the algorithms 
(ht:$W$,cut:$W$) and (ht:$U$,cut:$W$). 

The key observation is that the Bernardi algorithm is an instance of the Jaeger-tree-for-point algorithm, run on an appropriate point, which we introduce next. 

\begin{defn}[marker of a hypertree] 
	For a hypertree $\mathbf h\in\R^U$, 
	let its \emph{marker} be the point 
	$\frac{-1}{|W|}\mathbf h
	+ \frac{-1}{|U|\cdot|W|}\mathbf{1}_U + \frac{1}{|W|}\mathbf{1}_W$ of $\R^V=\R^U\oplus\R^W$.
	The \emph{marker} of a hypertree $\mathbf h\in\R^W$ is 
	$\frac{1}{|U|}\mathbf h
	+ \frac{1}{|U|\cdot|W|}\mathbf{1}_W + \frac{-1}{|U|}\mathbf{1}_U$.
\end{defn} 

The following proposition, as explained in \cite[section 3.4]{KP_Ehrhart}, is a direct consequence of some of Postnikov's observations \cite{alex}.

\begin{prop}\cite{alex}\label{p:marker_pont_akkor_van_benne_ha_realizal}
	For any hypertree $\mathbf h$ either on $U$ or on $W$, and any spanning tree $T$ of $G$, the marker of $\mathbf h$ is in $\mathcal Q_T$ if and only if $T$ realizes $\mathbf h$, and in this case the marker is an interior point of $\mathcal Q_T$. 
\end{prop}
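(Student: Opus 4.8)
The plan is to reduce everything to the explicit coordinates of the marker and of the vertices of $\mathcal Q_T$, and then to solve a single linear system. Fix a spanning tree $T$ of $G$ and, say, a hypertree $\mathbf h\in\R^U$ (the case $\mathbf h\in\R^W$ is identical by the symmetry $U\leftrightarrow W$, compare the two formulas in the definition of marker). Under the standard orientation each edge of $G$ points from $U$ to $W$, so the vertex of $\mathcal Q_G$ attached to an edge $uw$ (with $u\in U$, $w\in W$) is $\mathbf w-\mathbf u=\mathbf x_{uw}$. A point $\mathbf p$ lies in $\mathcal Q_T$ exactly when there are coefficients $\lambda_{uw}\ge 0$, summing to $1$, with $\mathbf p=\sum_{uw\in T}\lambda_{uw}(\mathbf w-\mathbf u)$; by Lemma \ref{lem:unimodularitas} these coefficients, if they exist, are unique, and $\mathbf p$ is an interior point of $\mathcal Q_T$ precisely when all $\lambda_{uw}>0$.

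First I would compute, for an arbitrary spanning tree $T$, the $U$-coordinates and $W$-coordinates of $\sum_{uw\in T}\lambda_{uw}(\mathbf w-\mathbf u)$ in terms of the $\lambda$'s: the coefficient of $\mathbf u$ ($u\in U$) is $-\sum_{w:\,uw\in T}\lambda_{uw}$, and the coefficient of $\mathbf w$ ($w\in W$) is $\sum_{u:\,uw\in T}\lambda_{uw}$. Matching these against the coordinates of the marker $\frac{-1}{|W|}\mathbf h+\frac{-1}{|U||W|}\mathbf 1_U+\frac{1}{|W|}\mathbf 1_W$, the equation on the $u$-coordinate reads
\[
\sum_{w:\,uw\in T}\lambda_{uw}=\tfrac{1}{|W|}\bigl(\mathbf h(u)+\tfrac1{|U|}\bigr)\qquad(u\in U),
\]
and the equation on the $w$-coordinate reads $\sum_{u:\,uw\in T}\lambda_{uw}=\tfrac1{|W|}$ for each $w\in W$. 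Note the $U$-side equations already say that at the node $u$ the ``weighted degree'' is proportional to $\mathbf h(u)+\tfrac1{|U|}$; since $T$ is a tree, solving these linear equations inward from the leaves yields a unique real solution $\{\lambda_{uw}\}$ (this is exactly the unimodularity/solvability statement of Lemma \ref{lem:unimodularitas}), and a short computation confirms $\sum_{uw\in T}\lambda_{uw}=\frac{1}{|W|}\sum_{u}(\mathbf h(u)+\tfrac1{|U|})=\frac{1}{|W|}(|W|-|U|+1+\tfrac{|U|}{|U|})$—here I would use that any spanning tree of $G$ has $|U|+|W|-1$ edges and that $\sum_u\mathbf h(u)=|W|-1$ for a hypertree, which makes the sum equal $1$ as required for $\mathbf p$ to be a genuine convex combination. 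So the marker automatically lies in the \emph{affine} hull of the vertices of $\mathcal Q_T$ for \emph{every} spanning tree $T$; the content of the proposition is entirely about the sign of the $\lambda_{uw}$.

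The heart of the argument, then, is the equivalence
\[
\lambda_{uw}>0\text{ for all }uw\in T
\iff
T\text{ realizes }\mathbf h,\text{ i.e. }d_T(u)=\mathbf h(u)+1\ \forall u\in U,
\]
together with the observation that $\lambda_{uw}\ge 0$ for all $uw\in T$ already forces $d_T(u)=\mathbf h(u)+1$ (so there is no ``boundary'' case: membership in $\mathcal Q_T$ at all already pins down the degree sequence, and then interiority is automatic). For the direction that realization implies positivity, I would argue by downward induction on the tree structure: a leaf of $T$ lying in $W$ forces its unique incident $\lambda$ to equal $\frac1{|W|}>0$; peeling that edge reduces $T$ to a smaller tree and decrements the degree bookkeeping consistently, and a leaf in $U$ with $d_T(u)=\mathbf h(u)+1=1$ means $\mathbf h(u)=0$, forcing its incident $\lambda$ to equal $\frac1{|W|}(0+\tfrac1{|U|})>0$; in general one shows inductively that every partial sum arising in the leaf-stripping process is strictly positive. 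Conversely, if some $d_T(u)\ne \mathbf h(u)+1$, the same leaf-stripping produces, at some node, a forced equation of the form (sum of already-determined positive quantities) $=$ (something too small), yielding a negative $\lambda$; I would make this precise by induction, using that $\sum_u\mathbf h(u)=|W|-1=\sum_u(d_T(u)-1)$ forces a deficit somewhere whenever the degree sequences differ. The main obstacle I anticipate is organizing this leaf-stripping induction cleanly so that the bookkeeping (which equations have been used, what the running partial sums are) stays transparent; once the right inductive invariant is chosen—``after removing a set of leaves, the remaining system is again of the same type for the induced subtree and sub-hypertree, with all determined $\lambda$'s positive iff the degrees match so far''—the rest is routine. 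Finally, since the proposition is attributed to Postnikov \cite{alex} and is used only as a black box here, I would in practice just cite \cite[section 3.4]{KP_Ehrhart} and \cite{alex} rather than rederive it, but the computation above is exactly what underlies it.
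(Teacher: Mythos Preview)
The paper does not prove this proposition at all; it simply attributes it to Postnikov \cite{alex} and refers to \cite[Section~3.4]{KP_Ehrhart} for the explanation. You correctly anticipated this in your final sentence, so in that sense your ``approach'' matches the paper's exactly.

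Your direct derivation is essentially correct and is indeed what underlies the cited result, but two comments are in order. First, there is an arithmetic slip in the displayed sum check: $\sum_{u}(\mathbf h(u)+\tfrac1{|U|})=(|W|-1)+1=|W|$, not the expression $|W|-|U|+1+\tfrac{|U|}{|U|}$ you wrote (your conclusion that the sum equals $1$ is nevertheless right). Second, your leaf-stripping induction for positivity is workable but unnecessarily heavy. A cleaner route is to solve for each $\lambda_e$ directly: for $e\in T$, let $A_e$ be the component of $T-e$ \emph{not} containing the $U$-endpoint of $e$ if that endpoint lies on the ``root'' side, and sum the marker equations over $A_e$. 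When $T$ realizes $\mathbf h$ one obtains the closed form $\lambda_e=\dfrac{|U\setminus A_e|}{|U|\,|W|}$ or $\lambda_e=\dfrac{|A_e\cap U|}{|U|\,|W|}$ depending on which side of $e$ lies in $U$; both are visibly positive. For the converse, your inequality argument is actually already enough and can be stated crisply: the $W$-equations force $\lambda_{uw}\le\tfrac1{|W|}$, whence the $U$-equation gives $\mathbf h(u)+\tfrac1{|U|}\le d_T(u)$, i.e.\ $\mathbf h(u)\le d_T(u)-1$; summing and using $\sum_u\mathbf h(u)=|W|-1=\sum_u(d_T(u)-1)$ forces equality everywhere.
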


We now give a new proof that the Bernardi algorithm gives the desired result.

\begin{thm}
	\label{thm:Bernardi_well_def}
	For any hypertree $\mathbf h$ on $U$ (respectively, on $W$), the (ht:$U$, cut:$U$) Bernardi process (respectively, the (ht:$W$, cut:$U$) Bernardi process)
	returns the unique Jaeger tree that realizes $\mathbf h$.
\end{thm}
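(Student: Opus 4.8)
The plan is to show that the Bernardi process of Definition \ref{d:Bernardi,em,vi}, run on the hypertree $\mathbf h$, is literally the Jaeger-tree-for-point algorithm run on the marker $\marker(\mathbf h)$ of Definition (marker of a hypertree), and then invoke the theorem of the previous subsection together with Proposition \ref{p:marker_pont_akkor_van_benne_ha_realizal}. First I would set $\mathbf p=\marker(\mathbf h)$, which by Proposition \ref{p:marker_pont_akkor_van_benne_ha_realizal} lies in $\mathcal Q_G$ (taking any spanning tree $T$ that realizes $\mathbf h$, such $T$ exists by definition of a hypertree). So the Jaeger-tree-for-point algorithm is applicable with input $(G,\text{ribbon structure},(b_0,b_0b_1),\mathbf p)$, and by the theorem of Subsection \ref{ss:Bernardi_alg_well_def} (the one immediately preceding) it outputs the $\prec$-smallest spanning tree $F$ with $\mathbf p\in\mathcal Q_F$, which moreover is a Jaeger tree.

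The key step is to match the two processes step by step. Both maintain a current graph, current node, current edge, start from the same data $(G,b_0,b_0b_1)$, and stop by the same rule (an edge becoming current for the second time, equivalently with its tail as current node for the second time since the standard orientation points each edge toward $W$ and the `head' steps are forced). When the current node $x$ lies in $W$, i.e.\ $x$ is the head of the current edge, both algorithms make the identical forced move $(x,xy)\mapsto(y,yx^+_{G_{curr}})$ with $G_{curr}$ unchanged. When $x\in U$, i.e.\ $x$ is the tail, the Jaeger-tree-for-point algorithm deletes $xy$ iff $G_{curr}-xy$ is connected and $\mathbf p\in\mathcal Q_{G_{curr}-xy}$, while the Bernardi algorithm deletes $xy$ iff $\mathbf h$ is still a hypertree (on $U$, resp.\ on $W$) in $G_{curr}-xy$. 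So the whole proof reduces to the claim: for any connected subgraph $G_{curr}\subseteq G$ containing all the vertices and the tree edges discovered so far, and any edge $xy$ with $x\in U$,
\[
\text{$G_{curr}-xy$ is connected and }\marker(\mathbf h)\in\mathcal Q_{G_{curr}-xy}
\quad\Longleftrightarrow\quad
\text{$\mathbf h$ is a hypertree in }G_{curr}-xy.
\]
This equivalence is exactly Proposition \ref{p:marker_pont_akkor_van_benne_ha_realizal} applied to $G_{curr}-xy$: that proposition says $\marker(\mathbf h)\in\mathcal Q_F$ iff $F$ realizes $\mathbf h$, for any spanning tree $F$ of the graph in question; since $\marker(\mathbf h)\in\mathcal Q_{G'}$ for a connected graph $G'$ iff some spanning tree of $G'$ realizes $\mathbf h$ iff $\mathbf h$ is a hypertree of $G'$, and since `$\mathbf h$ a hypertree of $G'$' already forces $G'$ to be connected (a realizing spanning tree exists only in a connected graph), the two sides of the displayed equivalence agree. (One should double-check the harmless degenerate case where $G_{curr}-xy$ is disconnected: then it has no spanning tree, hence no realizing tree, hence $\mathbf h$ is not a hypertree of it, matching the `no' branch on both sides.)

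Having matched the two runs, the two algorithms produce the same output tree $T$, which by the theorem of Subsection \ref{ss:Bernardi_alg_well_def} is a Jaeger tree with $\marker(\mathbf h)\in\mathcal Q_T$; by Proposition \ref{p:marker_pont_akkor_van_benne_ha_realizal} this last condition is equivalent to $T$ realizing $\mathbf h$. Uniqueness is then automatic: if $T_1,T_2$ are Jaeger trees both realizing $\mathbf h$, then by Proposition \ref{p:marker_pont_akkor_van_benne_ha_realizal} the marker is an \emph{interior} point of both $\mathcal Q_{T_1}$ and $\mathcal Q_{T_2}$, which by Proposition \ref{p:trees_interior_disjoint} (interiors of Jaeger-tree simplices are disjoint) forces $T_1=T_2$. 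The symmetric statement for a hypertree on $W$ is handled identically, using the $W$-version of the marker. The main obstacle I anticipate is purely bookkeeping: verifying carefully that the stopping conditions coincide and that the inductive invariant ``$G_{curr}$ is connected and contains all so-far-traversed tree edges'' is maintained by both processes, so that Proposition \ref{p:marker_pont_akkor_van_benne_ha_realizal} can be invoked at every deletion test; there is no genuinely hard analytic or combinatorial content beyond that.
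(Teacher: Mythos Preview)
Your proposal is correct and follows essentially the same approach as the paper: reduce the Bernardi process to the Jaeger-tree-for-point algorithm on the marker of $\mathbf h$ via Proposition \ref{p:marker_pont_akkor_van_benne_ha_realizal}, then invoke the correctness theorem for the latter. You spell out more detail than the paper (in particular the step-by-step matching and the uniqueness argument via Proposition \ref{p:trees_interior_disjoint}), but the idea is identical.
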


\begin{proof}
	Consider the marker of the hypertree $\mathbf h$. 
	Notice that by Proposition \ref{p:marker_pont_akkor_van_benne_ha_realizal}, the (ht:$U$,cut:$U$) (resp., the (ht:$W$,cut:$U$)) Bernardi algorithm on input $\mathbf h$ runs exactly as the Jaeger-tree-for-point algorithm on the marker of $\mathbf h$. 
	Hence the correctness of the Jaeger-tree-for-point algorithm implies the well definedness of the Bernardi algorithm.
\end{proof}

\section{Semi-balanced plane 
graphs and the greedoid polynomial}
\label{sec:planar}


In this section we take a closer look at the special case where the ribbon structure of our 
graph is induced by an embedding into the plane. 
More precisely, we will consider semi-balanced graphs $G$ embedded into the plane
(i.e., edges are not allowed to cross), with the cyclic order around each vertex given by
the positive orientation of the plane. We will show that in this case Jaeger trees give a triangulation of the root polytope $\mathcal Q_G$, moreover, the interior polynomial of $G$ (i.e., the $h^*$-vector of $Q_G$) is a simple transformation of the greedoid polynomial of the branching greedoid of the planar dual $G^*$ of $G$. We will also see that the triangulation of the root polytope by Jaeger trees is a geometric embedding of the dual complex of the branching greedoid of $G^*$.

\subsection{Jaeger trees and spanning arborescences}

As usual for plane graphs, duality will play an important role. 

\begin{defn}[Dual of a directed plane graph] 
Let $G$ be a directed plane graph. Its \emph{dual} $G^*$ is the directed graph whose underlying undirected graph is the dual of the underlying undirected graph of $G$, and each edge of $G^*$ is oriented so that the edge of $G$ has to be turned in the positive direction to obtain the corresponding edge of $G^*$.
\end{defn}

See Figure \ref{f:Jaeger_tree_and_duality} for an example. We note that with this notion, the usual involutive rule gets refined to $(G^*)^*=-G$.
For an edge $\varepsilon$ of $G$, we denote by $\varepsilon^*$ the edge of the dual graph corresponding to $\varepsilon$. It is well known that for any spanning tree $T$ of $G$, the edges of $G^*$ corresponding to the non-edges of $T$ form a spanning tree of $G^*$, and vice versa. We denote this spanning tree of $G^*$ by $T^*$.

Let us point out several useful properties.

\begin{claim}
\label{cl:dual_Euler}
A directed plane 
graph is semi-balanced if and only if its dual is an Eulerian directed graph.
\end{claim}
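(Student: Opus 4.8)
The plan is to relate the semi-balanced condition on cycles of $G$ to the Eulerian condition (in-degree equals out-degree at every vertex) on $G^*$, using the standard correspondence between cycles of $G$ and cuts of $G^*$. Recall that the cycles of a plane graph $G$ are exactly the cuts of the dual $G^*$, in the following refined sense: a cycle $C$ in $G$ that bounds a region on the plane corresponds to the cut in $G^*$ separating the dual vertices inside that region from those outside, and an edge $\varepsilon$ of $C$ traversed in a given direction corresponds to an edge $\varepsilon^*$ of $G^*$ crossing the cut in a direction determined by the $90^\circ$ rotation rule in the definition of $G^*$. In particular, the two ``directions'' of edges around the cycle $C$ correspond precisely to the two directions in which dual edges cross the associated cut. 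So $C$ is semi-balanced (equal numbers of edges each way around $C$) if and only if the corresponding cut of $G^*$ has equally many edges crossing each way.

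First I would make the rotation bookkeeping precise: fix an orientation of the plane and a cycle $C$ in $G$ bounding a disk $D$; for $\varepsilon \in C$, the edge $\varepsilon^*$ points from the face of $G$ on one specified side of $\varepsilon$ to the face on the other, and one checks that $\varepsilon$ ``points clockwise around $\partial D$'' precisely when $\varepsilon^*$ points into $D$ (or out of $D$ — whichever the convention gives). Hence $|C^+| = |C^-|$ for the cycle iff exactly half of the dual edges of the associated cut point into the region bounded by $D$. Next I would invoke the elementary fact that a directed graph is Eulerian (meaning $d^+(v) = d^-(v)$ for all $v$) if and only if every cut is ``balanced'' in the sense that the number of edges crossing in each direction is equal; one direction is immediate by summing degrees over one side of the cut, and the reverse direction follows by taking stars of single vertices as cuts. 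Combining these two equivalences with the fact that in a connected plane graph the cuts of $G^*$ that arise from cycles of $G$ are exactly all the (bonds, hence all) cuts — more carefully, it suffices to test the star-cuts, which correspond to the facial cycles of $G$ — gives the claim: $G$ is semi-balanced (all cycles semi-balanced, equivalently all facial cycles semi-balanced) iff $G^*$ is Eulerian.

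The main obstacle I anticipate is getting the orientation conventions exactly right, i.e., verifying carefully that ``opposite edges around a cycle $C$'' in $G$ translate into ``edges crossing the dual cut in opposite directions'' in $G^*$, consistently with the $90^\circ$-rotation definition of $G^*$ used in the paper (and keeping track of the sign twist that makes $(G^*)^* = -G$ rather than $G$). Once that dictionary is pinned down, the rest is the routine linear-algebra-free observation that Eulerianness is equivalent to all star-cuts being balanced, together with the remark that it is enough to check semi-balancedness of $G$ on facial cycles because the facial cycles generate the cycle space and semi-balancedness (being a linear condition $|C^+| - |C^-| = 0$ on the $\{-1,0,+1\}$ incidence encoding of a cycle) is preserved under the symmetric-difference operation on cycles, exactly as in the proof of Theorem \ref{thm:char_semibalanced}.
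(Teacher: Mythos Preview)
Your proposal is correct and follows essentially the same approach as the paper: use the cycle--cut duality for plane graphs to translate ``every cycle of $G$ is semi-balanced'' into ``every cut of $G^*$ is balanced,'' and observe that the latter is equivalent to $G^*$ being Eulerian. The paper's proof is a three-line version of exactly this argument, omitting the orientation bookkeeping and the reduction to facial cycles/star-cuts that you spell out more carefully.
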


\begin{proof}
	Recall that $G$ is semi-balanced if and only if each cycle has equal numbers of edges going in both directions. In the dual, this means that each cut has the same number of edges going in both directions, which is equivalent to the directed graph being Eulerian.
\end{proof}

Also, similarly to the case of the standard orientation \cite{KM}, Jaeger trees correspond to nice objects in the dual. 

\begin{defn}
	An \emph{arborescence} of a directed graph, rooted at a node $r_0$, is a subgraph containing $r_0$ 
	that is a tree in the undirected sense, with the property that each edge is directed away from $r_0$. If the underlying tree is a spanning tree, then we talk of a \emph{spanning arborescence}.
\end{defn}

\begin{claim}\label{cl:Jaeger_dual_arborescence}
	Let $G$ be a semi-balanced 
	plane graph, and let $(b_0,b_0b_1)$ be the basis of the ribbon structure. Let $r_0$ be the tail of $b_0b_1^*$ if $b_0$ is the tail of $b_0b_1$ and let $r_0$ be the head of $b_0b_1^*$ if $b_0$ is the head of $b_0b_1$.
	
	Then $T$ is a Jaeger tree in $G$ (with basis $(b_0,b_0b_1)$) if and only if $T^*$ is a spanning arborescence of $G^*$ rooted at $r_0$. 
\end{claim}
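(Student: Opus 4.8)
\medskip

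The plan is to translate the defining condition of a Jaeger tree — that in the tour of $T$ every non-edge is first met at its tail — directly into a statement about the orientations of the corresponding dual tree edges, using the fact that Bernardi's tour of $T$ in $G$ is simultaneously the tour of $T^*$ in $G^*$ (for the appropriate dual ribbon structure and basis). This is the classical ``tree/cotree tour'' duality underlying Bernardi's work, and it is the engine of the whole argument. Concretely, I would first set up the dual ribbon structure: the cyclic order of the dual edges around a face-vertex of $G^*$ is the one read off from traversing the boundary of that face in $G$, and the choice of $r_0$ in the statement is exactly the face on the correct side of $b_0b_1$ so that the two tours start in sync. With this matched data, I would invoke (or quickly re-derive from \cite{Bernardi_first}) the fact that the sequence of edges encountered in the tour of $T$ in $G$ coincides, edge for edge, with the sequence encountered in the tour of $T^*$ in $G^*$; an edge $\varepsilon\in T$ is ``traversed'' by the $G$-tour precisely when $\varepsilon^*\notin T^*$ is ``skipped'' by the $G^*$-tour, and vice versa.

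\medskip

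The second step is the local dictionary between ``first seen at the tail'' in $G$ and ``oriented away from the root'' in $G^*$. Fix a non-edge $\varepsilon=\overrightarrow{xy}$ of $T$, so $\varepsilon^*\in T^*$. When the $G$-tour skips $\varepsilon$ for the first time, the current vertex of the synchronized $G^*$-tour is traversing $\varepsilon^*$ for the first time, moving from one of its dual endpoints to the other; by the convention $(G^*)^*=-G$ relating the rotation of $\varepsilon$ to that of $\varepsilon^*$, the endpoint of $\varepsilon^*$ from which this first traversal departs is determined by whether the $G$-tour meets $\varepsilon$ at its head or at its tail. The key computation — which I would do once, carefully, with a picture like Figure \ref{f:Jaeger_tree_and_duality} — is that ``$G$-tour meets $\varepsilon$ first at its tail $x$'' corresponds to ``$G^*$-tour first traverses $\varepsilon^*$ starting from the tail of $\varepsilon^*$'', i.e. $\varepsilon^*$ points away from the base side. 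Since in a spanning tree $T^*$ the tour traverses each tree edge exactly twice, once in each direction, and the ``first traversal'' of a tree edge in a Bernardi tour is always in the direction away from the root along the unique path from $r_0$, the condition ``every dual tree edge is first traversed in the direction of its own orientation'' is exactly the condition that $T^*$ is an arborescence rooted at $r_0$: every edge of $T^*$ points away from $r_0$ along the tree.

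\medskip

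Assembling these: $T$ is Jaeger $\iff$ for every non-edge $\varepsilon$ the $G$-tour meets $\varepsilon$ first at its tail $\iff$ for every $\varepsilon^*\in T^*$ the synchronized $G^*$-tour first traverses $\varepsilon^*$ in the direction of its orientation in $G^*$ $\iff$ every edge of $T^*$ is oriented away from $r_0$ along the path from $r_0$ in $T^*$ $\iff$ $T^*$ is a spanning arborescence of $G^*$ rooted at $r_0$. The one subtlety to get exactly right — and the step I expect to be the main obstacle — is the bookkeeping of the root and of the orientation conventions: one must verify that the chosen $r_0$ (tail of $b_0b_1^*$ when $b_0$ is the tail of $b_0b_1$, head otherwise) is precisely the face-vertex at which the $G^*$-tour begins, and that Bernardi's ``first traversal of a tree edge is always away from the start vertex'' lemma applies verbatim to $T^*$ in $G^*$ with this start. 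Both follow from \cite[Lemma 5]{Bernardi_first} and the standard planar-duality description of the tour, but the sign/side conventions need to be checked against the definition of $G^*$ given above (the rule that $\varepsilon$ is rotated in the positive direction to obtain $\varepsilon^*$); comparing with the known standard-orientation case in \cite{KM} is a useful consistency check.
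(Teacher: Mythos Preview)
Your proposal is correct and follows essentially the same approach as the paper's proof: both arguments rest on the synchronization of the Bernardi tours of $T$ in $G$ and of $T^*$ in $G^*$ (the paper notes explicitly that the dual tour uses the \emph{negative} orientation of the plane, which is the concrete form of the ``dual ribbon structure'' you describe), and then translate ``non-edge first seen at its tail'' into ``dual tree edge first traversed from its tail,'' which is precisely the arborescence condition. The bookkeeping subtlety you flag about $r_0$ is exactly what the paper handles by specifying the dual basis $(r_0, b_0b_1^*)$.
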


For an illustration of the statement, see Figure \ref{f:Jaeger_tree_and_duality}.

\begin{proof} 
Note that the tour of a spanning tree $T$ of $G$, with respect to the basis $(b_0,b_0b_1)$, corresponds naturally to the tour of $T^*$ with respect to the basis $(r_0, b_0b_1^*)$ and the \emph{negative} orientation of the plane. At the beginning, the current edges in the two tours are dual pairs, and one can show by a trivial induction that this remains so throughout the process: An edge of $G$ will be traversed if and only if the corresponding edge of $G^*$ is skipped, and one can check that both in the case when the current edge is traversed in $G$ and when it is skipped, the next current edges in the tours 
are again dual pairs. 

Now $T$ is a Jaeger tree if and only if each non-edge of $T$ is first seen at its tail in the tour. This is equivalent to the requirement that when touring $T^*$, each edge be first traversed in the direction from tail to head. That, in turn, is equivalent to $T^*$ being a spanning arborescence rooted at $r_0$.
\end{proof}

\begin{figure}
	\begin{center}
		\begin{tikzpicture}[-,>=stealth',auto,scale=0.5,
		thick]
		\tikzstyle{e}=[{circle,draw,font=\sffamily\small}]
		\tikzstyle{v}=[{circle,draw,fill,font=\sffamily\small}]
		\tikzstyle{r}=[{circle,draw,fill,color=light-gray,font=\sffamily\small}]
		\node[e,label=below:$b_1$] (0) at (0, 0.8) {};
		\node[v] (1) at (3, 2.3) {};
		\node[v,label=left:$b_0$] (2) at (-3, 2.3) {};
		\node[e] (3) at (3, 4.7) {};
		\node[e] (4) at (-3, 4.7) {};
		\node[v] (5) at (0, 6.2) {};
		\node[e] (6) at (0, 3.6) {};
		\node[r,label=right: \ $r_0$] (7) at (5, 4.2) {};
		\node[r] (8) at (1.5, 4.2) {};
		\node[r] (9) at (-1.5, 4.2) {};
		\node[r] 
		(10) at (0, 2.2) {};
		\draw[->,color=gray,line width=0.7mm,rounded corners=10pt] (7) |- (1, 7.2) -- (0, 7.2)  -|  (9);
		\draw[->,color=gray,line width=0.7mm] (7) to[bend left=30] (8);
		\draw[->,color=gray,line width=0.7mm] (8) to[bend left=30] (10);
		\draw[->,color=gray, dashed, rounded corners=10pt] (8) |- (3, 6) --  (7);
		\draw[->,color=gray, dashed] (9) to[bend right=30] (10);
		\draw[->,color=gray, dashed] (9) to[bend left=30] (8);
		\draw[->,color=gray, dashed, rounded corners=15pt] 
		(7) -- (6.2, 4.9) |- (0, 8.2) -| (-4.5, 3.3) -- (9);
		\draw[->,color=gray, dashed, rounded corners=20pt] (10) -- (5, 0) --  (7);
		\draw[->,color=gray, dashed, rounded corners=18pt] (10) -- (-2.5, -0.8) -- (6.2, -1) -- (6.2, 3) -- (7);
		\path[->,every node/.style={font=\sffamily\small},dashed]
		(5) edge node {} (4)
		(1) edge node {} (3)
		(1) edge node {} (6);
		\path[->,every node/.style={font=\sffamily\small}, line width=0.7mm]
		(0) edge node {} (2)
		(1) edge node {} (0)
		(4) edge node {} (2)
		(5) edge node {} (3)
		(6) edge node {} (2)
		(5) edge node {} (6);
		\end{tikzpicture}
	\end{center}
	\caption{A semi-balanced 
	plane graph (in black) and its dual graph (in gray). 
	The thick black edges form a Jaeger tree for the basis $(b_0, b_0b_1)$. The thick gray edges form the corresponding spanning arborescence in the dual.
	\label{f:Jaeger_tree_and_duality}}
\end{figure}
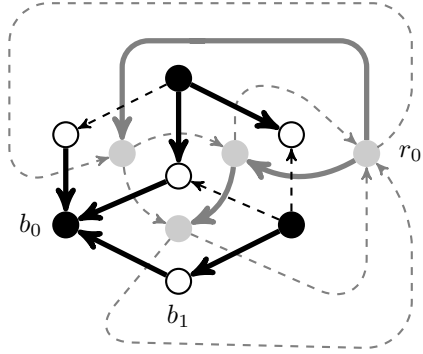

The previous claims imply that in the special case of plane graphs, the statement that the number of Jaeger trees of a semi-balanced 
graph is independent of the basis is equivalent to the well-known statement that in an Eulerian directed graph the number of spanning arborescences 
is independent of the root.

\subsection{Jaeger trees triangulate in the planar case}
\label{ss:planar_triangulation}

Recall that a triangulation is the special case of a dissection where the intersection of any pair of simplices is their
(possibly empty)
common face. In this subsection, we show the following generalization of \cite[Theorem 1.1]{KM} (which addressed the standard orientation).

\begin{prop}\label{p:planar_triangulation}
	For a semi-balanced plane graph $G$, Jaeger trees (for any basis) induce a triangulation of $\mathcal Q_G$.
\end{prop}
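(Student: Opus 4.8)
The plan is to upgrade the dissection supplied by Theorem~\ref{t:J-trees_form_quasitr} to a genuine triangulation, i.e.\ to show that for any two Jaeger trees $T,T'$ (for the fixed basis) the intersection $\mathcal Q_T\cap\mathcal Q_{T'}$ is a common face of the two simplices. My first move is a reduction: it suffices to prove the inclusion $\mathcal Q_T\cap\mathcal Q_{T'}\subseteq\mathcal Q_{T\cap T'}$. Indeed, $T\cap T'$ is a subforest of each of $T$ and $T'$, so $\mathcal Q_{T\cap T'}$ is automatically a face of each simplex $\mathcal Q_T$, $\mathcal Q_{T'}$ and is contained in both; together with the inclusion this forces $\mathcal Q_T\cap\mathcal Q_{T'}=\mathcal Q_{T\cap T'}$, a common face. (Conversely, a common face is of the form $\mathcal Q_S$ with $S\subseteq T\cap T'$, so the inclusion is in fact equivalent to the triangulation property.)

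To prove the inclusion, suppose $\mathbf p=\sum_{\eta\in T}\lambda_\eta\mathbf x_\eta=\sum_{\eta\in T'}\mu_\eta\mathbf x_\eta$ with all weights nonnegative and each family summing to $1$; then $\sum_\eta(\lambda_\eta-\mu_\eta)\mathbf x_\eta=\mathbf 0$ is an affine dependence of $\{\mathbf x_\eta\mid\eta\in T\cup T'\}$. Since $G$ is semi-balanced, Proposition~\ref{prop:aff_indep_in_root_polytope} combined with Lemma~\ref{l:dependence_of_semibalanced_cycle} shows that the affine dependences of this vector system have the fundamental-cycle relations $C(T,\zeta)$, $\zeta\in T'\setminus T$, as a basis: there are $|T'\setminus T|$ of them, each is an affine dependence by semi-balancedness, and they are linearly independent because in the relation $C(T,\zeta)$ the coefficient of $\zeta$ is $\pm1$ while the coefficients of the other $\zeta'\in T'\setminus T$ vanish. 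Expanding our dependence in this basis and reading off coordinates, each $\mu_\zeta$ ($\zeta\in T'\setminus T$) equals, up to sign, the coefficient of the basic relation $C(T,\zeta)$, and each $\lambda_\eta$ ($\eta\in T\setminus T'$) is a signed sum of these coefficients over the fundamental cycles through $\eta$; all $\lambda_\eta,\mu_\zeta$ are nonnegative. What must be shown is that this homogeneous, sign-constrained linear system has only the trivial solution.

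This is where planarity enters, through Claims~\ref{cl:dual_Euler} and~\ref{cl:Jaeger_dual_arborescence}. Dualizing, $A=T^*$ and $A'=(T')^*$ are spanning arborescences of the Eulerian digraph $G^*$ rooted at the \emph{same} node $r_0$, and $T\setminus T'$, $T'\setminus T$, $T\cap T'$ correspond to $A'\setminus A$, $A\setminus A'$, $E(G^*)\setminus(A\cup A')$. For $\eta\in T$ the fundamental cut $C^*(T,\eta)$ is dual to the fundamental cycle of $\eta^*$ in the arborescence $A$, which (since $A$ is rooted) has a canonical ``up to the least common ancestor, then down'' shape; the analogous statement holds for $T'$ and $A'$. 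The combinatorial heart of the argument is that, processing the edges of $A'\setminus A$ in order of the $A'$-distance of their heads from $r_0$, the nonnegativity constraints on the $\mu$'s and then on the $\lambda$'s cascade and collapse to zero one after another. Equivalently, this is the assertion that the simplices $\mathcal Q_T$ together with their faces realize geometrically the honest simplicial complex $\{S\subseteq E(G^*)\mid\text{every vertex is reachable from }r_0\text{ in }G^*-S\}$ — the dual complex of the branching greedoid of $G^*$ — and the accessibility and exchange axioms of that greedoid are exactly what make the cascade go through.

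I expect this last lemma about two arborescences rooted at a common vertex of an Eulerian digraph to be the main obstacle; the bookkeeping of signs coming from cycle orientations is delicate, and it is essential that both $A$ and $A'$ are rooted at the \emph{same} $r_0$, which is precisely where the shared base $(b_0,b_0b_1)$ is used. A route that avoids the greedoid formalism is induction on $\dim\mathcal Q_G$ via the first-difference edge: if $T'\prec T$ with first difference $\varepsilon\in T\setminus T'$, the functional $\kappa_{T,\varepsilon}$ from the proof of Proposition~\ref{p:trees_interior_disjoint} already places $\mathcal Q_T\cap\mathcal Q_{T'}$ in the hyperplane $\{\kappa_{T,\varepsilon}=0\}$, which meets $\mathcal Q_T$ in the facet $\mathcal Q_{T-\varepsilon}$ and meets $\mathcal Q_{T'}$ in a face (as $\mathcal Q_{T'}$ lies weakly on one side of it); one then restricts attention to the two sides of the cut $C^*(T,\varepsilon)$, where planarity guarantees that the induced pieces of $T$ and $T'$ are again Jaeger trees of smaller plane graphs, and iterates until a common face is reached. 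Either way, the planar structure is indispensable, consistent with the later examples showing that in non-planar cases the dissection need not be a triangulation.
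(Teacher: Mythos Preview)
Your proposal identifies the right geometric reduction (to $\mathcal Q_T\cap\mathcal Q_{T'}\subseteq\mathcal Q_{T\cap T'}$) and correctly singles out the duality with arborescences rooted at a common $r_0$ as the planar ingredient, but neither of your two suggested routes is carried to completion. The ``cascade'' argument is left as a black box that you yourself flag as the main obstacle, and the inductive approach via $\kappa_{T,\varepsilon}$ asserts without proof that the pieces of $T$ and $T'$ on either side of the cut $C^*(T,\varepsilon)$ are again Jaeger trees of smaller plane graphs --- this is not obvious (the shelling proof in Section~\ref{sec:shelling} needs a nontrivial construction to produce a Jaeger tree on the non-base side, and there is no reason the relevant piece of $T'$ should already be one). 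As written, the proposal is a plan rather than a proof.

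The paper closes the gap by first establishing a clean criterion valid for \emph{any} semi-balanced graph (Lemma~\ref{l:compatibility_of_trees}): $\mathcal Q_{T_1}$ and $\mathcal Q_{T_2}$ fail to meet in a common face if and only if there is a cycle $C$ with $C^+\subset T_1$ and $C^-\subset T_2$. This is exactly the statement that your ``homogeneous, sign-constrained linear system has only the trivial solution,'' packaged so that planarity can be applied in one stroke: such an incompatible cycle dualizes to a cut in $G^*$ with $T_1^*$ containing only edges crossing it one way and $T_2^*$ only the other, which is impossible since both arborescences are rooted at the same $r_0$ and must reach the far side of the cut. You are circling this idea but never isolate the incompatible-cycle lemma; that lemma is the missing key step.
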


Theorem \ref{t:J-trees_form_quasitr} tells us that the Jaeger trees dissect $\mathcal Q_G$, hence we only need to show that 
the simplices corresponding to 
any two Jaeger trees meet in a common face. 
To do this, we first establish a necessary and sufficient condition for the simplices corresponding to two spanning trees, in a general semi-balanced directed graph, to meet in a common face.
We achieve this by generalizing \cite[Lemma 12.6]{alex} that deals with the case of the standard orientation.

Let $G$ be a semi-balanced graph and let $T_1$, $T_2$ be two spanning trees of $G$.
For a cycle $C$ of $G$ let us pick an edge $\varepsilon_0$, and call $C^+$ the set of edges of $C$ oriented consistently with $\varepsilon_0$; call also $C^-$ the set of edges of $C$ standing opposite to $\varepsilon_0$. 

\begin{defn}
\label{def:incompatible}
We say that $C$ is an \emph{incompatible cycle} for the trees $T_1$ and $T_2$ if, for some $\varepsilon_0\in T_1\cap C$, we have $C^+ \subset T_1$ and $C^- \subset T_2$.
\end{defn}

\begin{lemma}\label{l:compatibility_of_trees}
	For two spanning trees $T_1$ and $T_2$ of the semi-balanced graph $G$, the simplices $\mathcal Q_{T_1}$ and $\mathcal Q_{T_2}$ intersect in a common face if and only if there is no incompatible cycle for $T_1$ and $T_2$. 
\end{lemma}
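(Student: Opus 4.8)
The plan is to adapt \cite[Lemma 12.6]{alex} to the semi-balanced setting. I would begin with a reduction. By Corollary \ref{cor:max_simplices:of_root_polytope}, each $\mathcal Q_{T_i}$ is a simplex whose vertex set is $\{\mathbf x_\varepsilon\mid\varepsilon\in T_i\}$, and for any subforest $S\subseteq T_i$ the polytope $\mathcal Q_S$ is a face of $\mathcal Q_{T_i}$ (the convex hull of a set of vertices of a simplex is always a face). Consequently any set that is simultaneously a face of $\mathcal Q_{T_1}$ and of $\mathcal Q_{T_2}$ has the form $\mathcal Q_S$ with $S\subseteq T_1\cap T_2$, so it is contained in $\mathcal Q_{T_1\cap T_2}$; since $\mathcal Q_{T_1\cap T_2}\subseteq\mathcal Q_{T_1}\cap\mathcal Q_{T_2}$ always holds, the two simplices meet in a common face if and only if $\mathcal Q_{T_1}\cap\mathcal Q_{T_2}=\mathcal Q_{T_1\cap T_2}$. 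It thus remains to show that this equality fails precisely when $T_1$ and $T_2$ have an incompatible cycle.

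For the implication ``incompatible cycle $\Rightarrow$ equality fails'', let $C$ be incompatible, with $\varepsilon_0\in T_1\cap C$, $C^+\subseteq T_1$ and $C^-\subseteq T_2$. By Lemma \ref{l:dependence_of_semibalanced_cycle} we have $|C^+|=|C^-|=:m$, and the point $\mathbf q=\tfrac1m\sum_{\varepsilon\in C^+}\mathbf x_\varepsilon=\tfrac1m\sum_{\varepsilon\in C^-}\mathbf x_\varepsilon$ lies in $\mathcal Q_{T_1}\cap\mathcal Q_{T_2}$. Since $C$ is a cycle it is not contained in the tree $T_2$, so not all of $C^+$ lies in $T_2$; pick $\varepsilon_1\in C^+\setminus T_2\subseteq T_1\setminus T_2$. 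In the simplex $\mathcal Q_{T_1}$ the barycentric coordinate of $\mathbf q$ at the vertex $\mathbf x_{\varepsilon_1}$ equals $1/m>0$, whereas $\mathcal Q_{T_1\cap T_2}$ is exactly the face of $\mathcal Q_{T_1}$ on which all barycentric coordinates indexed by $T_1\setminus T_2$ vanish. Hence $\mathbf q\notin\mathcal Q_{T_1\cap T_2}$, so the equality fails.

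For the converse, suppose $\mathbf p\in(\mathcal Q_{T_1}\cap\mathcal Q_{T_2})\setminus\mathcal Q_{T_1\cap T_2}$ and write $\mathbf p=\sum_{\varepsilon\in T_1}\lambda_\varepsilon\mathbf x_\varepsilon=\sum_{\varepsilon\in T_2}\mu_\varepsilon\mathbf x_\varepsilon$ with nonnegative coefficients summing to $1$. Define $\nu_\varepsilon=\lambda_\varepsilon$ for $\varepsilon\in T_1\setminus T_2$, $\nu_\varepsilon=-\mu_\varepsilon$ for $\varepsilon\in T_2\setminus T_1$, $\nu_\varepsilon=\lambda_\varepsilon-\mu_\varepsilon$ for $\varepsilon\in T_1\cap T_2$, and $\nu_\varepsilon=0$ otherwise; subtracting the two expressions for $\mathbf p$ gives $\sum_\varepsilon\nu_\varepsilon\mathbf x_\varepsilon=\mathbf 0$. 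Because $\mathbf p\notin\mathcal Q_{T_1\cap T_2}$, some $\varepsilon\in T_1\setminus T_2$ has $\lambda_\varepsilon>0$, so $\nu$ is not identically zero; moreover $\nu$ is supported on $T_1\cup T_2$, nonnegative on $T_1\setminus T_2$, and nonpositive on $T_2\setminus T_1$. The crux is now that, writing $\mathbf x_\varepsilon=\mathbf h-\mathbf t$, the identity $\sum_\varepsilon\nu_\varepsilon\mathbf x_\varepsilon=\mathbf 0$ says precisely that $\nu$ is a circulation on $G$ (in-flow equals out-flow at each vertex). I would then form the digraph $D$ on $V$ containing, for each $\varepsilon$ with $\nu_\varepsilon\neq0$, the edge $\varepsilon$ oriented as in $G$ when $\nu_\varepsilon>0$ and reversed when $\nu_\varepsilon<0$; the support of a nonzero circulation always contains a directed cycle, so $D$ contains one, say $\vec C$. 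Reading $\vec C$ as a cycle $C\subseteq T_1\cup T_2$ of $G$ together with the traversal it induces, every edge of $C$ agreeing with the traversal has $\nu_\varepsilon>0$, hence (not being in $T_2\setminus T_1$) lies in $T_1$, while every edge disagreeing with the traversal has $\nu_\varepsilon<0$, hence lies in $T_2$. A cycle is contained in neither tree, so both classes are nonempty; taking $\varepsilon_0$ in the first class and $C^+$ to be the edges of $C$ consistent with $\varepsilon_0$ exhibits $C$ as an incompatible cycle for $T_1$ and $T_2$.

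The face-lattice bookkeeping of the reduction and the barycentric-coordinate computation of the first implication are routine. I expect the main obstacle to be the converse, and within it the step of extracting from the affine dependence a genuine cycle whose membership pattern is one-sided in exactly the way Definition \ref{def:incompatible} requires; reinterpreting the dependence as a circulation and pulling out a sign-consistent directed cycle is what makes this go through, and everything else falls out once that observation is in place.
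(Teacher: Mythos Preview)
Your proof is correct. The forward direction (incompatible cycle $\Rightarrow$ not a common face) is essentially identical to the paper's, just phrased via barycentric coordinates rather than minimal faces. The converse, however, is genuinely different.

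The paper proves the converse directly: assuming no incompatible cycle, it constructs a separating linear functional $f$. The key observation is that the digraph $T_1\cup\overleftarrow{T}_2$ (with the edges of $T_2$ reversed) can have no directed cycle of length greater than two, because such a cycle would translate back to an incompatible cycle in $G$. Contracting the length-two cycles (one for each edge of $T_1\cap T_2$) yields an acyclic digraph, and a topological-sort potential on it pulls back to the desired $f$ with $f(\mathbf x_\varepsilon)>0$ on $T_1\setminus T_2$, $f(\mathbf x_\varepsilon)<0$ on $T_2\setminus T_1$, and $f(\mathbf x_\varepsilon)=0$ on $T_1\cap T_2$.

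You instead prove the contrapositive: from a point $\mathbf p$ in the intersection but outside $\mathcal Q_{T_1\cap T_2}$, you subtract the two barycentric representations to obtain a nonzero circulation $\nu$ supported on $T_1\cup T_2$, then extract a directed cycle from its sign-adjusted support. The two arguments are dual in the linear-programming sense: the paper exhibits a separating potential (a feasible dual), while you exhibit a circulation obstructing separation (a primal witness). Your route avoids the auxiliary digraph and the contraction step; the paper's route is more constructive and makes the face structure explicit via $f$. Both are short and rely only on standard facts, so neither has a clear advantage, but it is worth knowing that the circulation viewpoint works here just as cleanly as in Postnikov's original setting.
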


\begin{proof}
	The proof of this statement is completely analogous to that of \cite[Lemma 12.6]{alex}.
	First we show that if there is an incompatible cycle $C$, say of length $2k$, for $T_1$ and $T_2$, then $\mathcal Q_{T_1}$ and $\mathcal Q_{T_2}$ do not meet in a common face. 
	If they did then for any point $\mathbf p\in \mathcal{Q}_{T_1}\cap \mathcal{Q}_{T_2}$, the minimal faces of $\mathcal Q_{T_1}$ and $\mathcal Q_{T_2}$, containing $\mathbf p$, would also coincide.
	Now by Lemma \ref{l:dependence_of_semibalanced_cycle}, for the cycle $C$ we have
	$$\frac1k\sum_{\varepsilon\in C^+}\mathbf x_\varepsilon =\frac1k\sum_{\varepsilon\in C^-}\mathbf x_\varepsilon=:\mathbf p.$$ 
	By our assumption, $\frac1k\sum_{\varepsilon\in C^+}\mathbf x_\varepsilon$ 
	is in $\mathcal Q_{T_1}$ while $\frac1k\sum_{\varepsilon\in C^-}\mathbf x_\varepsilon$ 
	is in $\mathcal Q_{T_2}$, thus $\mathbf p\in\mathcal Q_{T_1}\cap \mathcal Q_{T_2}$. 
	However, the minimal face containing $\mathbf p$ in $\mathcal Q_{T_1}$ is $\mathcal Q_{C^+}$, while the minimal face containing $\mathbf p$ in $\mathcal Q_{T_2}$ is $\mathcal Q_{C^-}$, which cannot coincide because $C^+\cap C^-=\varnothing$. Hence indeed $\mathcal Q_{T_1}$ and $\mathcal Q_{T_2}$ do not meet in a common face.
	
	Conversely, if there is no incompatible cycle $C$, then it suffices to provide a linear functional $f$ such that $f(\mathbf{p})=0$ for $\mathbf{p}\in \mathcal Q_{T_1}\cap \mathcal Q_{T_2}$, $f(\mathbf{p})>0$ for $\mathbf{p}\in \mathcal Q_{T_1}-\mathcal Q_{T_2}$, and $f(\mathbf{p})<0$ for $\mathbf{p}\in \mathcal Q_{T_2}-\mathcal Q_{T_1}$. 
	Let us define $f$ on standard basis vectors, and extend it linearly. Let $\overleftarrow{T}_2$ be the tree where we reverse the orientation of the edges of $T_2$, and consider $T_1\cup \overleftarrow{T}_2$. This graph might have some cycles of length two (one for each edge of $T_1\cap T_2$) but the non-existence of incompatible cycles implies that it has no directed cycle of length more than two. 
	
	By contracting the cycles of length two, $T_1\cup \overleftarrow{T}_2$ becomes an acyclic graph $A$. By the acyclicity we can assign weights $f$ to the nodes of $A$ so that each edge has a larger weight on its head than on its tail. Now undo the contractions, and define weights on the original vertex set $V$ by the obvious pull-back operation.
	Then every edge of $T_1\cap T_2$ has equal weights on its head and tail, each edge of $T_1-T_2$ has larger weight on its head, and each edge of $T_2-T_1$ has larger weight on its tail. Hence $f(\mathbf x_\varepsilon)=0$ for $\varepsilon\in T_1\cap T_2$, $f(\mathbf x_\varepsilon)>0$ for $\varepsilon\in T_1 - T_2$, and $f(\mathbf x_\varepsilon)<0$ for $\varepsilon\in T_2 - T_1$, just as we desired. 
\end{proof}

\begin{proof}[Proof of Proposition \ref{p:planar_triangulation}]
Suppose for a contradiction that there are two Jaeger trees $T_1$ and $T_2$ such that $\mathcal Q_{T_1}$ and $\mathcal Q_{T_2}$ do not meet in a common face. Then by Lemma \ref{l:compatibility_of_trees}, there is a cycle $C$ in $G$ such that $C^+\subset T_1$ and $C^- \subset T_2$. By Claim \ref{cl:Jaeger_dual_arborescence}, the duals $T^*_1$ and $T^*_2$ are spanning arborescences rooted at the same node $r_0$. 
The cycle $C$ yields a cut in the dual graph $G^*$, with $C^+$ corresponding to edges going in one direction, and $C^-$ to edges going in the other direction. For $T^*_1$ and $T^*_2$, the incompatibility means that $T^*_1$ only contains edges going in one of the directions, while $T^*_2$ only contains edges going in the other direction. But $r_0$ is on one of the two sides of the cut and both spanning arborescences should contain an edge leading from the side of $r_0$ to the other side. That is a contradiction.
\end{proof}

\subsection{Greedoid polynomial}\label{ss:greedoid_poly}
Korte and Lovász defined the notion of a greedoid \cite{KorteLovasz} as a generalization of matroids where the greedy algorithm still works.
\begin{defn}\cite{greedoid}\label{def:greedoid_def}
A \emph{greedoid} on a set $E$ is a set system $\mathcal{F}\subseteq 2^E$ whose elements are called \emph{accessible sets}. Here $\mathcal{F}$ needs to satisfy the following axioms.
\begin{enumerate}
    \item[(i)] $\varnothing \in \mathcal{F}$.
    \item[(ii)] If $X\in\mathcal{F}$ then there exists some $x\in X$ such that $X-\{x\}\in\mathcal{F}$.
    \item[(iii)] If $X,Y\in \mathcal{F}$ and $|X|=|Y|+1$, then there exists an element $x\in X-Y$ such that $Y\cup \{x\} \in \mathcal{F}$.
\end{enumerate}
The maximal accessible sets are called \emph{bases}.
\end{defn}

One important example of greedoids is the so-called directed branching greedoid.
Suppose that $G$ is a directed graph with a fixed node $r_0$, which is such that every vertex is reachable along a directed path from $r_0$.
The ground set of the \emph{branching greedoid} of $G$ is the set $E(G)$ of edges. 
The accessible sets are the arborescences rooted at $r_0$, and the bases are the spanning arborescences. It is easy to check that arborescences do indeed satisfy the axioms in Definition \ref{def:greedoid_def}.

Björner, Korte, and Lov\'asz introduced the dual complex of a greedoid as the abstract simplicial complex whose maximal simplices are the complements of the bases of the greedoid \cite{greedoid}. They 
defined the \emph{greedoid polynomial} \cite{greedoid} as 
\[\lambda(t)=\sum_{i=0}^{m-r}h_i t^{m-r-i},\] 
where $(h_0, h_1,\dots)$ is the $h$-vector of the dual complex of the greedoid, $m$ is the cardinality of the ground set, and $r$ is the rank (the common cardinality of the bases).
In the special case when the greedoid is a matroid, the greedoid polynomial agrees with $T(1,y)$, where $T(x,y)$ is the Tutte polynomial.

In this subsection we show that for an Eulerian plane directed graph $G$, the triangulation of the root polytope $\mathcal Q_{G^*}$, induced by Jaeger trees, is a geometric realization of the dual complex of the branching greedoid of $G$. We conclude that the greedoid polynomial of $G$ can be obtained as a simple transformation of the interior polynomial of $G^*$. In other words, we obtain the greedoid polynomial as a transformation of the Ehrhart series of $\mathcal Q_{G^*}$.

We saw in Claim \ref{cl:dual_Euler} that for any Eulerian plane directed graph $G$, its dual $G^*$ is a semi-balanced plane graph (note that a directed graph $H$ is semi-balanced if and only if $-H$ is).
As Claim \ref{cl:Jaeger_dual_arborescence} says, for any fixed vertex $r_0$, the complements of the spanning arborescences of $G$ rooted at $r_0$ are exactly the Jaeger trees for the basis $(b_0,b_0b_1)$, where $r_0$ is the tail (head) of $b_0b_1^*$ if $b_0$ is the tail (head) of $b_0b_1$.

By Proposition \ref{p:planar_triangulation}, Jaeger trees induce a triangulation of the root polytope $\mathcal{Q}_{G^*}$. Thus we may view our construction (of $\mathcal Q_{G^*}$ and simplices within) as an abstract simplicial complex, and we automatically
obtain the following.

\begin{thm}\label{thm:root_poly_geom_real_of_dual_complex}
	For an Eulerian plane directed graph $G$, the triangulation of $\mathcal Q_{G^*}$ by Jaeger trees (with basis $(b_0,b_0b_1)$) is 
	isomorphic to 
	the dual complex of the branching greedoid of $G$ with root $r_0$, where $r_0$ is the tail (head) of $b_0b_1^*$ if $b_0$ is the tail (head) of $b_0b_1$. 
\end{thm}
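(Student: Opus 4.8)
The plan is to assemble the statement from ingredients already in place, the key observation being that a \emph{triangulation} of a polytope is nothing but the geometric realization of an abstract simplicial complex, so once the triangulation of $\mathcal Q_{G^*}$ is in hand there is only combinatorial bookkeeping left. First I would record that an Eulerian directed graph is strongly connected, hence every vertex is reachable from $r_0$ along a directed path; so the branching greedoid of $G$ rooted at $r_0$ is well-defined, its bases are the spanning arborescences of $G$ rooted at $r_0$, and its dual complex $\mathcal D$ is — by definition — the family of all subsets of $E(G)$ contained in some complement $E(G)\setminus B$ of such a basis $B$. Next, Claim \ref{cl:dual_Euler} says that $G^*$ is a semi-balanced plane graph, so Theorem \ref{t:J-trees_form_quasitr} applies to it and, crucially, Proposition \ref{p:planar_triangulation} upgrades the dissection to a triangulation: the maximal simplices $\mathcal Q_T$ indexed by the Jaeger trees $T$ of $G^*$ (for the given basis $(b_0,b_0b_1)$) are interior-disjoint, cover $\mathcal Q_{G^*}$, and any two of them meet in a common face. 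Therefore the collection of all faces of all the $\mathcal Q_T$ is a geometric simplicial complex; let $\Delta$ be the abstract simplicial complex underlying it.

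The second step is to describe $\Delta$ concretely. Its vertices are the vertices of $\mathcal Q_{G^*}$, one for every edge of $G^*$, and the bijection $\varepsilon\leftrightarrow\varepsilon^*$ between $E(G)$ and $E(G^*)$ identifies the vertex set of $\Delta$ with $E(G)$. By Corollary \ref{cor:max_simplices:of_root_polytope}, every full-dimensional simplex inside $\mathcal Q_{G^*}$ is of the form $\mathcal Q_T$ for a spanning tree $T$ of $G^*$, so the facets of $\Delta$ are exactly the Jaeger trees of $G^*$, each read (via $\varepsilon\leftrightarrow\varepsilon^*$) as a subset of $E(G)$; and since a simplicial complex is the downward closure of its facets, $\Delta$ is determined by this facet set.

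The third step is to match $\Delta$ with $\mathcal D$. By the discussion preceding the theorem (an application of Claim \ref{cl:Jaeger_dual_arborescence} to the semi-balanced plane graph $G^*$, whose dual is $(G^*)^*=-G$, with the root $r_0$ situated relative to $(b_0,b_0b_1)$ as in the statement), a spanning tree $T$ of $G^*$ is Jaeger precisely when, under $\varepsilon^*\leftrightarrow\varepsilon$, it corresponds to the complement $E(G)\setminus B$ of a base $B$ of the branching greedoid of $G$ rooted at $r_0$. Hence, under the identification of vertex sets with $E(G)$, the facets of $\Delta$ are exactly the facets of $\mathcal D$; and since both complexes are pure of the same dimension $|E(G)|-|V(G)|$ and equal to the downward closures of their facet sets, $\Delta=\mathcal D$, which is the claimed isomorphism. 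I expect no genuine obstacle: the proof is bookkeeping, and the only points demanding care are (a) using Proposition \ref{p:planar_triangulation} rather than merely Theorem \ref{t:J-trees_form_quasitr}, so that the word ``triangulation'' really yields a complex closed under passing to sub-simplices; (b) tracking the plane duality correctly, including the reversal $(G^*)^*=-G$ and the exact placement of $r_0$ with respect to $b_0b_1^*$; and (c) observing that both complexes, being pure of equal dimension, are determined by their facet sets.
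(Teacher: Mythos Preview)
Your proposal is correct and follows essentially the same approach as the paper: the paper treats the theorem as an immediate consequence of the preceding discussion (Claim \ref{cl:dual_Euler}, Claim \ref{cl:Jaeger_dual_arborescence}, and Proposition \ref{p:planar_triangulation}), stating only that ``we may view our construction \ldots\ as an abstract simplicial complex, and we automatically obtain the following.'' Your write-up simply spells out this automatic deduction in detail, correctly flagging the points that need care (the upgrade from dissection to triangulation via Proposition \ref{p:planar_triangulation}, the duality bookkeeping involving $(G^*)^*=-G$, and the fact that both complexes are determined by their facet sets).
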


Let us denote the greedoid polynomial of the branching greedoid of an Eulerian plane graph $G$, with root $r_0$, by $\lambda_{G,r_0}$. Note that the dimension of $\mathcal Q_{G^*}$ is $|V(G^*)|-2=|E(G)|-|V(G)|$ by Euler's formula.
Then, using \eqref{eq:intash}, we obtain
\begin{multline*}
\lambda_{G,r_0}(t)=t^{|E|-|V|+1} \sum_{i=0}^{|E|-|V|+1}h_i\cdot t^{-i} = 
t^{|E|-|V|+1} I_{G^*}(t^{-1})\\
=t^{|E|-|V|+1} 
(1-t^{-1})^{|E|+|V|-1} \ehr_{\mathcal Q_{G^*}}(t^{-1}),
\end{multline*}
that is, we can write the greedoid polynomial $\lambda_{G,r_0}$ as a transformation of the interior polynomial of $G^*$, or equivalently, as a transformation of the Ehrhart series of $\mathcal Q_{G^*}$.
Note that the definition of $I_{G^*}$ is independent of the choice of $r_0$.
This shows that the greedoid polynomial of the branching greedoid of an Eulerian plane graph is independent of the root vertex. 

We note that the root-independence of the greedoid polynomial has been known in greater generality:

\begin{thm}[\cite{SweeHong_parking,PP16}]
	For any Eulerian directed graph, the greedoid polynomial of the branching greedoid is independent of the chosen root vertex.
\end{thm}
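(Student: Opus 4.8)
The plan is to establish the graded statement by reducing it to a root-independence phenomenon for the abelian sandpile model. Write $\lambda_{G,r_0}(t)=\sum_i h_i\,t^{|E|-|V|+1-i}$, so that the assertion is precisely that the $h$-vector $(h_0,h_1,\dots)$ of the dual complex of the branching greedoid is the same for every admissible root $r_0$. The ungraded shadow of this, namely $\lambda_{G,r_0}(1)=\#\{\text{spanning arborescences rooted at }r_0\}$, is already root-independent by the BEST theorem; the whole difficulty is in tracking the grading.

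First I would identify $\lambda_{G,r_0}$ with a sandpile generating function. It is known \cite{greedoid} that the dual complex of a branching greedoid is shellable, so $h_i$ is the number of facets with restriction set of size $i$ in any shelling. I would then exhibit a shelling whose restriction statistic matches, under a level-preserving bijection, the level statistic on recurrent configurations of the abelian sandpile of $G$ with sink $r_0$; such a bijection can be produced by reading Dhar's burning algorithm along a tour, in the spirit of the Cori--Le Borgne bijection but carried out in the directed setting. This would give
\[
\lambda_{G,r_0}(t)=\sum_{c\text{ recurrent}}t^{\,\mathrm{level}(c)},
\]
where $\mathrm{level}(c)$ is an affine function of $\sum_v c(v)$ with the normalization that makes the identity hold. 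Next I would exploit the feature that distinguishes Eulerian digraphs --- indegree equals outdegree at every vertex --- to move the sink from $r_0$ to an adjacent vertex $r_0'$ by an explicit, reversible abelian-network (rotor-router) move, and check that the induced bijection between the two sets of recurrent configurations shifts $\sum_v c(v)$, hence $\mathrm{level}$, by exactly the constant $\deg^+(r_0)-\deg^+(r_0')$ by which the two normalizations differ. Iterating along a directed path from $r_0$ to $r_0'$ (one exists since every vertex is reachable from the root) then yields equality of the two generating functions, hence of the two $h$-vectors.

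The main obstacle is exactly this last step: the cardinality match is classical, but propagating the \emph{level} through a change of sink needs the reversibility that the Eulerian hypothesis supplies, and one must verify that the grading shift is a global constant rather than configuration-dependent. An alternative route that bypasses sandpiles altogether would be to construct, for each edge $r_0r_0'$ of $G$, a pair of shelling orders of the two dual complexes whose restriction statistics agree term by term --- and again it is the Eulerian condition that makes such compatible shellings exist. Finally, in the planar case none of this machinery is required: the result is immediate from Theorem \ref{thm:root_poly_geom_real_of_dual_complex} together with the fact that the interior polynomial $I_{G^*}$, being an invariant of the polytope $\mathcal Q_{G^*}$, does not involve the choice of $r_0$ at all.
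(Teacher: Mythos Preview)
The paper does not prove this theorem at all: it is quoted as a known result, with citations to \cite{SweeHong_parking,PP16}, immediately after the authors observe that their own methods yield only the planar special case. So there is no ``paper's own proof'' to compare against. What the paper does say is that in \cite{SweeHong_parking}, Chan uses the recurrent configurations of the sinkless sandpile model to give a canonical (root-free) definition of the greedoid polynomial --- which is precisely the strategy you outline. Your final paragraph, deriving the planar case directly from Theorem~\ref{thm:root_poly_geom_real_of_dual_complex} and the root-independence of $I_{G^*}$, matches exactly what the paper establishes on its own.

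As for the proposal itself: it is an outline rather than a proof. The two load-bearing steps --- (i) a level-preserving bijection between shelling restriction sets and recurrent configurations for a \emph{directed} branching greedoid, and (ii) the claim that moving the sink along an edge in an Eulerian digraph induces a bijection on recurrent configurations that shifts the level by a configuration-independent constant --- are both asserted but not carried out. Step (i) in particular is delicate: the Cori--Le Borgne style bijections you invoke are developed for undirected graphs, and extending them to arbitrary Eulerian digraphs with the grading intact is essentially the content of the cited papers, not something one can wave through. So the plan is in the right spirit and points at the right literature, but the gaps you yourself flag (``the main obstacle is exactly this last step'') are real and substantial.
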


In \cite{SweeHong_parking}, 
Chan uses the recurrent configurations of the sinkless sandpile model to give a canonical definition for the greedoid polynomial. 
We wonder if one can give a geometric canonical definiton in the non-planar case.

\begin{question}
	Is there a canonical (root-independent) geometric embedding for the dual complex of the branching greedoid in the case of non-planar Eulerian directed graphs?
\end{question}

Björner, Korte, and Lov\'asz proved that the dual complex of a greedoid is always shellable \cite[Lemma 5.5]{greedoid}. Their shelling order is a lexicographic order of bases derived from a fixed linear order of the ground set. In the special case of planar branching greedoids, Theorem \ref{thm:shelling} also yields a shelling order for the dual complex. This is typically different from the shelling order given in \cite{greedoid}.

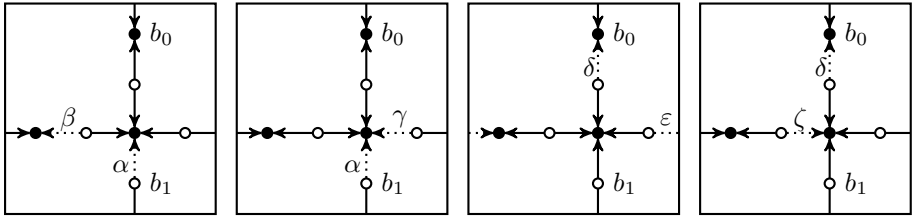
\begin{figure}
	\begin{center}
	  \begin{tikzpicture}[-,>=stealth',auto,scale=1.4,thick]
	    \tikzstyle{b}=[circle,scale=0.4,fill,draw]
	    \tikzstyle{w}=[circle,scale=0.4,draw]
	    \begin{scope}[shift={(-3.3,0)}]
	    \draw  (0, 0) rectangle (2,2);
	    \node[b] (0) at (0.29, 0.77) {};
	    \node[w] (1) at (0.77, 0.77) {};
	    \node[b] (2) at (1.23, 0.77) {};
	    \node[w] (3) at (1.71, 0.77) {};
	    \node[w,label=right:{$b_1$}] (4) at (1.23, 0.29) {};
	    \node[w] (5) at (1.23, 1.23) {};
	    \node[b,label=right:{$b_0$}] (6) at (1.23, 1.71) {};
	    \draw[->, dotted] (1) -- (0);
        \draw[->] (1) -- (2);
        \draw[->] (3) -- (2);
        \draw[->, dotted] (4) -- (2);
        \draw[->] (5) -- (2);
        \draw[->] (5) -- (6);
        \draw[->] (0,0.77) -- (0);
        \draw[-] (3) -- (2, 0.77);
        \draw[-] (4) -- (1.23, 0);
        \draw[->] (1.23, 2) -- (6);
        \node at (1.1,0.45) {$\alpha$};
        \node at (0.6,0.9) {$\beta$};
        \end{scope}
	    \begin{scope}[shift={(-1.1,0)}]
	    \draw  (0, 0) rectangle (2,2);
	    \node[b] (0) at (0.29, 0.77) {};
	    \node[w] (1) at (0.77, 0.77) {};
	    \node[b] (2) at (1.23, 0.77) {};
	    \node[w] (3) at (1.71, 0.77) {};
	    \node[w,label=right:{$b_1$}] (4) at (1.23, 0.29) {};
	    \node[w] (5) at (1.23, 1.23) {};
	    \node[b,label=right:{$b_0$}] (6) at (1.23, 1.71) {};
	    \draw[->] (1) -- (0);
        \draw[->] (1) -- (2);
        \draw[->, dotted] (3) -- (2);
        \draw[->, dotted] (4) -- (2);
        \draw[->] (5) -- (2);
        \draw[->] (5) -- (6);
        \draw[->] (0,0.77) -- (0);
        \draw[-] (3) -- (2, 0.77);
        \draw[-] (4) -- (1.23, 0);
        \draw[->] (1.23, 2) -- (6);
        \node at (1.1,0.45) {$\alpha$};
        \node at (1.55,0.9) {$\gamma$};
	    \end{scope}
	    \begin{scope}[shift={(1.1,0)}]
	    \draw  (0, 0) rectangle (2,2);
	    \node[b] (0) at (0.29, 0.77) {};
	    \node[w] (1) at (0.77, 0.77) {};
	    \node[b] (2) at (1.23, 0.77) {};
	    \node[w] (3) at (1.71, 0.77) {};
	    \node[w,label=right:{$b_1$}] (4) at (1.23, 0.29) {};
	    \node[w] (5) at (1.23, 1.23) {};
	    \node[b,label=right:{$b_0$}] (6) at (1.23, 1.71) {};
	    \draw[->] (1) -- (0);
        \draw[->] (1) -- (2);
        \draw[->] (3) -- (2);
        \draw[->] (4) -- (2);
        \draw[->] (5) -- (2);
        \draw[->, dotted] (5) -- (6);
        \draw[->, dotted] (0,0.77) -- (0);
        \draw[-,dotted] (3) -- (2, 0.77);
        \draw[-] (4) -- (1.23, 0);
        \draw[->] (1.23, 2) -- (6);
        \node at (1.15,1.4) {$\delta$};
        \node at (1.88,0.9) {$\varepsilon$};
	    \end{scope}
	    \begin{scope}[shift={(3.3,0)}]
	    \draw  (0, 0) rectangle (2,2);
	    \node[b] (0) at (0.29, 0.77) {};
	    \node[w] (1) at (0.77, 0.77) {};
	    \node[b] (2) at (1.23, 0.77) {};
	    \node[w] (3) at (1.71, 0.77) {};
	    \node[w,label=right:{$b_1$}] (4) at (1.23, 0.29) {};
	    \node[w] (5) at (1.23, 1.23) {};
	    \node[b,label=right:{$b_0$}] (6) at (1.23, 1.71) {};
	    \draw[->] (1) -- (0);
        \draw[->, dotted] (1) -- (2);
        \draw[->] (3) -- (2);
        \draw[->] (4) -- (2);
        \draw[->] (5) -- (2);
        \draw[->, dotted] (5) -- (6);
        \draw[->] (0,0.77) -- (0);
        \draw[-] (3) -- (2, 0.77);
        \draw[-] (4) -- (1.23, 0);
        \draw[->] (1.23, 2) -- (6);
	    \node at (1.15,1.4) {$\delta$};
        \node at (0.95,0.9) {$\zeta$};
	    \end{scope}
	  \end{tikzpicture}
	\end{center}
	\caption{The four Jaeger trees of a graph embedded into a torus. 
	\label{f:non-greedoid_dual}}
\end{figure}

\subsection{Some more thoughts on 
Jaeger trees and greedoids}

One might wonder if for general ribbon structures, Jaeger trees are the complements of the bases of a greedoid. In this section we give an example where the answer is negative, then we 
pose a question. 

Figure \ref{f:non-greedoid_dual} shows a graph (with a ribbon structure and a basis) where the complements of the Jaeger trees do not constitute the bases of any greedoid.
This is a semi-balanced graph embedded into the torus. The ribbon structure induced by the embedding has
four Jaeger trees, as shown in the figure. Here the basis is $(b_0,b_0b_1)$ and we refer to the figure for notation.
The complements of the Jaeger trees are $\{\alpha,\beta\}$, $\{\alpha,\gamma\}$, $\{\delta,\varepsilon\}$, and $\{\delta,\zeta\}$, respectively. Now if these sets were the bases of a greedoid, then either $\{\alpha\}$ or $\{\beta\}$ would have to be an accessible set, since $\{\alpha,\beta\}$ is accessible. But then either $\{\alpha,\delta\}$, $\{\alpha,\varepsilon\}$, $\{\beta,\delta\}$, or $\{\beta,\varepsilon\}$ would need to be accessible, since $\{\delta,\varepsilon\}$ is accessible. As none of them is the complement of a Jaeger tree, indeed, $\{\alpha,\beta\}$, $\{\alpha,\gamma\}$, $\{\delta,\varepsilon\}$, and $\{\delta,\zeta\}$ do not constitute the bases of any greedoid.

\begin{question}\label{q:triang_greedoid_relationship}
    Suppose that a semi-balanced graph is given with a ribbon structure and a basis.
    Is it true that the simplices corresponding to
    Jaeger trees form a triangulation of the root polytope if and only if there exists a greedoid on the edge set such that the Jaeger trees are exactly the complements of the bases?
\end{question}

We have checked some examples, and for those, the two phenomena always coincided.
We note that both the problem of determining if some sets constitute the bases of a greedoid (see \cite{greedoid}), and determining if Jaeger trees form a triangulation, 
seem to be computationally
hard ones. In Section \ref{sec:diss_vs_triang}, we will examine the question of whether Jaeger trees form a triangulation from a more geometric point of view.






\section{Complete and layer-complete bipartite graphs}
\label{sec:layer_complete}

It is known \cite{GGP} that for (undirected) complete bipartite graphs drawn on the plane with the two partite classes on two parallel lines, the so-called non-crossing trees induce a triangulation of the root polytope. Moreover, for a certain ribbon structure and the standard orientation, 
non-crossing trees are exactly the Jaeger trees \cite{hyperBernardi}; in particular, Jaeger trees induce a triangulation in this case.


One of the aims of this section is to show that this picture can be generalized for all other semi-balanced orientations of a complete bipartite graph, too (cf.\ Example \ref{ex:K34}). Namely, for any semi-balanced graph where the underlying graph is complete bipartite, we give a ribbon structure and basis with respect to which the Jaeger trees provide a triangulation of the root polytope. Moreover, these Jaeger trees have a simple geometric description. 

Let $G$ be a complete bipartite graph oriented in a semi-balanced way.
By Theorem \ref{thm:char_semibalanced}, there is a layering $l\colon V \to \Z$ such that $l(h)-l(t)=1$ for each edge $\overrightarrow{th}$ of $G$. As any two nodes in different 
color classes are connected, $l$ can have at most $3$ values. We may suppose that these are $0$, $1$, and $2$. 
Let us draw $G$ in the plane so that the nodes with $l(v)=i$ are on the line with second coordinate equal to $i$, 
and all edges are straight line segments.
For an example, see Figure \ref{fig:complete_bipartite_embedding}.
Clearly, this representation will have many crossing edges. In fact, we will see two complete bipartite graphs ``on top of each other'': we have a complete bipartite graph between nodes having $l=0$ and nodes having $l=1$ and another between nodes having $l=1$ and nodes having $l=2$. Our ribbon structure will be the one induced by the positive orientation of the plane. The base node is the leftmost node with $l=0$, and the base edge is the edge connecting it to the rightmost node with $l=1$.

    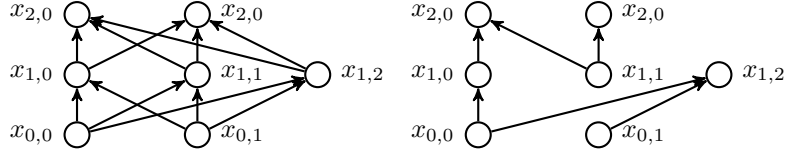
\begin{figure}
    	\begin{center}    
    		\begin{tikzpicture}[-,>=stealth',auto,scale=0.8,
    		thick]
    		\tikzstyle{o}=[circle,draw]
    		\node[o, label=left:$x_{0,0}$] (1) at (0, 0) {};
    		\node[o, label=right:$x_{0,1}$] (2) at (2, 0) {};
    		\node[o, label=left:$x_{1,0}$] (3) at (0, 1) {};
    		\node[o, label=right:$x_{1,1}$] (4) at (2, 1) {};
    		\node[o, label=right:$x_{1,2}$] (5) at (4, 1) {};
    		\node[o, label=left:$x_{2,0}$] (6) at (0, 2) {};
    		\node[o, label=right:$x_{2,0}$] (7) at (2, 2) {};
    		\path[->,every node/.style={font=\sffamily\small}]
    		(1) edge node [above] {} (3)
    		(1) edge node [below] {} (4)
    		(1) edge node [above] {} (5)
    		(2) edge node [below] {} (3)
    		(2) edge node [below] {} (4)
    		(2) edge node [below] {} (5)
    		(3) edge node [below] {} (6)
    		(3) edge node [below] {} (7)
    		(4) edge node [below] {} (6)
    		(4) edge node [below] {} (7)
    		(5) edge node [below] {} (6)
    		(5) edge node [above] {} (7);
    		\end{tikzpicture}
    		\begin{tikzpicture}[-,>=stealth',auto,scale=0.8,
    		thick]
    		\tikzstyle{o}=[circle,draw]
    		\node[o, label=left:$x_{0,0}$] (1) at (0, 0) {};
    		\node[o, label=right:$x_{0,1}$] (2) at (2, 0) {};
    		\node[o, label=left:$x_{1,0}$] (3) at (0, 1) {};
    		\node[o, label=right:$x_{1,1}$] (4) at (2, 1) {};
    		\node[o, label=right:$x_{1,2}$] (5) at (4, 1) {};
    		\node[o, label=left:$x_{2,0}$] (6) at (0, 2) {};
    		\node[o, label=right:$x_{2,0}$] (7) at (2, 2) {};
    		\path[->,every node/.style={font=\sffamily\small}]
    		(1) edge node [above] {} (3)
    		(1) edge node [above] {} (5)
    		(2) edge node [below] {} (5)
    		(3) edge node [below] {} (6)
    		(4) edge node [below] {} (6)
    		(4) edge node [below] {} (7);
    		\end{tikzpicture}
    	\end{center}
    	\caption{Left: A semi-balanced complete bipartite graph (in particular, a layer-complete directed graph with $k=2$). 
    	The basis of the ribbon structure 
    	is $(x_{0,0},x_{0,0}x_{1,2})$.
    		Right: A non-crossing comb-tree of the graph, 
    		with $U_1=\{x_{1,1}\}$ and $D_1=\{x_{1,2}\}$.}
    	\label{fig:complete_bipartite_embedding}
    \end{figure} 

We state our results in a slightly more general setting. 

\begin{defn}
A \emph{layer-complete directed graph} is a connected
semi-balanced graph $G$ with a layering $l\colon V(G)\to\Z$ so that, for each $i\in \Z$ with $l^{-1}(i)\neq\varnothing$ and $l^{-1}(i+1)\neq\varnothing$, we have a complete bipartite subgraph in $G$ between $l^{-1}(i)$ and $l^{-1}(i+1)$, with all edges oriented from $l^{-1}(i)$ to $l^{-1}(i+1)$.
\end{defn}

We suppose, without loss of generality, that $l^{-1}(0), \dots, l^{-1}(k)$ are the nonempty layers. 
We denote, for each $0\leq i\leq k$, the nodes in $l^{-1}(i)$ by $x_{i,0}, \dots, x_{i,s_i}$, and draw these on the line with second coordinate equal to $i$ in this order from left to right.

We define the ribbon structure and the basis the same way as in the special case above, that is, cyclic orders come from the positive orientation of the plane, $x_{0,0}$ is the base node, and $x_{0,0}x_{1,s_1}$ is the base edge.

For complete bipartite graphs with the standard orientation (i.e., 
layer-complete directed graphs with $k=1$),
it is known that the Jaeger trees for the above described ribbon structure and basis are exactly the so-called non-crossing trees \cite{hyperBernardi}, and that they yield a triangulation of the root polytope \cite{GGP}. Here \emph{non-crossing trees} are those spanning trees whose edges do not cross each other in the above drawing. We generalize them as follows.

\begin{defn}[non-crossing comb-tree]
For a layer-complete directed graph represented in the plane as explained above, we call a spanning tree $T$ a \emph{non-crossing comb-tree}, if for each layer $0\leq i \leq k$, there is a partition $l^{-1}(i)- \{x_{i,0}\}=D_i \sqcup U_i$, with $D_0=\varnothing$ and $U_k=\varnothing$, such that $T=T_1\cup \dots \cup T_k$ with $T_i$ being a non-crossing tree of the complete bipartite graph with vertex classes $\{x_{i-1,0}\}\cup U_{i-1}$ and $\{x_{i,0}\}\cup D_i$. 
\end{defn}

See Figure \ref{fig:complete_bipartite_embedding} for an example.
Notice that non-crossing comb-trees will always contain the edges $x_{i-1,0}x_{i,0}$ for each $i=1, \dots ,k$. This is the ``backbone'' of the comb that explains the name.

\begin{prop}\label{p:layer_complete_Jaeger}
	For a layer-complete 
	graph with plane-induced ribbon structure and basis $(x_{0,0},x_{0,0}x_{1,s_1})$, the Jaeger trees are exactly the non-crossing comb-trees. 
\end{prop}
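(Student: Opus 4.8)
The plan is to prove both inclusions by carefully tracing the tour of a spanning tree in the plane-induced ribbon structure. First I would establish some general structure of the tour. Because the base node is $x_{0,0}$ and the base edge is $x_{0,0}x_{1,s_1}$, the cyclic (positive-orientation) order around $x_{0,0}$ forces a Jaeger tree to contain $x_{0,0}x_{1,s_1}$ is \emph{not} quite right — rather, I would first show that any Jaeger tree must contain the backbone edges $x_{i-1,0}x_{i,0}$. One way: observe that in the drawing the edge $x_{i-1,0}x_{i,0}$ together with the leftmost boundary of the picture `seals off' a region, and if $x_{i-1,0}x_{i,0}$ were a non-edge it would first be encountered at its head $x_{i,0}$, because the tour, coming up the left side, reaches $x_{i,0}$ before it can come around to $x_{i-1,0}$ along that edge. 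This is the kind of planar `first-seen-at-the-outer-endpoint' argument that appears in the standard-orientation case \cite{hyperBernardi}; I would adapt it layer by layer, using that the two consecutive layers $l^{-1}(i-1),l^{-1}(i)$ together with the complete bipartite graph between them behave exactly as in the $k=1$ situation once we restrict attention to that strip.

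Next, for the inclusion that non-crossing comb-trees are Jaeger trees: given a non-crossing comb-tree $T = T_1\cup\cdots\cup T_k$ with the partitions $l^{-1}(i)-\{x_{i,0}\}=D_i\sqcup U_i$, I would describe the tour of $T$ explicitly and check that every non-edge is first seen at its tail. Since all edges point from layer $i$ to layer $i+1$, the tail of a non-edge is always in the lower of the two layers it joins. The key point is that in a non-crossing tree of a complete bipartite graph drawn with the classes on two parallel lines, the tour (with plane ribbon structure) visits the non-edges from the side of the lower class first — this is precisely the content of the identification of non-crossing trees with Jaeger trees in the standard-oriented case, which I would invoke within each strip $i$. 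The only new issue is the interaction \emph{between} strips: a non-edge joining layer $i-1$ to layer $i$ whose upper endpoint lies in $D_i$ versus $U_{i-1}\setminus$(something), and non-edges joining $\{x_{i,0}\}\cup D_i$ across to the strip above. Here I would use Lemma \ref{l:char_Jaeger_cuts} together with the backbone structure: once the tour enters the region above backbone vertex $x_{i,0}$ it handles strip $i+1$ as a self-contained sub-tour (basis $(x_{i,0}, x_{i,0}x_{i+1,s_{i+1}})$), so an inductive argument on $k$ applies.

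For the reverse inclusion, suppose $T$ is a Jaeger tree. We already know it contains the backbone. Removing the backbone splits $T$ into pieces $T_i$, each a spanning tree of the complete bipartite graph between $l^{-1}(i-1)$ and $l^{-1}(i)$ restricted to whichever vertices $T_i$ actually touches; I must show $T_i$ touches all of layer $i-1$ `from below' and all of layer $i$ `from above' in a way compatible with a partition $l^{-1}(i)-\{x_{i,0}\}=D_i\sqcup U_i$, and that each $T_i$ is non-crossing on its relevant vertex set. Non-crossingness of $T_i$ follows from the $k=1$ result applied to the strip, once we know $T_i$ is a Jaeger tree of that strip with the induced basis — and that in turn follows from Lemma \ref{lem:atcsuszo_kezdopont} and the fact that the sub-tour of $T$ within a strip is itself a tree-tour. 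The genuinely delicate step, which I expect to be the main obstacle, is showing that the Jaeger condition \emph{forces} the upper-layer vertices to split cleanly into a `down' set $D_i$ (those attached to the strip below, i.e.\ to $T_i$) and an `up' set $U_i$ (those attached to the strip above, i.e.\ to $T_{i+1}$), with no vertex orphaned and no vertex attached to both strips in a crossing way: a priori a Jaeger tree could route a layer-$i$ vertex's single tree-edge down into strip $i$ while a neighbour routes up into strip $i+1$ in an order that creates a crossing across the backbone. I would rule this out by a careful analysis of when, in the tour, the current point sits on the backbone vertex $x_{i,0}$ and in which cyclic slot the pending edges to $D_i$ versus $U_i$ lie, using the planarity (the edges to $D_i$ emanate to one side of the backbone, those to $U_i$ to the other) together with the Jaeger rule to force the down-edges to be resolved before the tour climbs past $x_{i,0}$. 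Once this `clean split' lemma is in place, the decomposition $T=T_1\cup\cdots\cup T_k$ into non-crossing trees on the prescribed vertex classes is immediate, completing the proof.
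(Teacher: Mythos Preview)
Your forward direction (non-crossing comb-trees are Jaeger) matches the paper's: both trace the tour strip by strip, invoke the $k=1$ identification of non-crossing trees with Jaeger trees within each strip, and check that inter-strip non-edges are first seen at their tails because the tour handles lower strips before higher ones.

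For the reverse inclusion the paper takes a genuinely different route. Instead of directly analysing the tour of an arbitrary Jaeger tree and extracting a comb structure, it argues \emph{geometrically}: it shows that every point $\mathbf p\in\mathcal Q_G$ already lies in $\mathcal Q_T$ for some non-crossing comb-tree $T$. Since comb-trees are Jaeger (forward direction) and Jaeger-tree simplices are interior-disjoint (Proposition~\ref{p:trees_interior_disjoint}), any additional Jaeger tree would yield a maximal simplex whose interior is disjoint from a family already covering $\mathcal Q_G$, which is impossible. The covering itself is established by an iterative modification: starting from an arbitrary convex combination for $\mathbf p$, first reduce within each strip to a non-crossing tree (using the $k=1$ result), then, whenever some non-leftmost $x_{i,j}$ still has both in- and out-edges in the support, push weight around the cycle through $x_{i,0}$ via Lemma~\ref{l:dependence_of_semibalanced_cycle} and swap an edge; a slope monovariant forces termination.

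What this buys is precisely the avoidance of your ``clean split'' lemma, which in your sketch remains a genuine gap. You propose to use that at a backbone vertex $x_{i,0}$ the down-edges and up-edges lie on opposite sides, but the obstruction is at \emph{non}-backbone vertices $x_{i,j}$ with $j>0$: nothing you have written rules out a Jaeger tree having tree-edges from such a vertex into both adjacent layers, and the tour's behaviour at $x_{i,j}$ depends on which tree-edge first brings the tour there. A direct argument along your lines may well be completable, but it would require a more careful case analysis than you have outlined; the paper's covering argument sidesteps the issue entirely.
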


\begin{proof} 
For each $i=1, \dots, k$, let $G_i$ be the subgraph of $G$ induced by the vertices in $l^{-1}(i-1)\cup l^{-1}(i)$, and let $G'_i$ be the subgraph of $G$ (and of $G_i$) induced by $\{x_{i-1,0},x_{i,0}\} \cup U_{i-1}\cup D_i$.

	It is easy to see that the tour of a non-crossing comb-tree $T=T_1\cup \dots \cup T_k$ first traverses $T_1$, then moves on to $T_2$ and so forth. 
	Here $T_i$ is a Jaeger tree within the subgraph $G'_i$ (this claim from \cite[Section 9]{hyperBernardi} is easy to check directly). 
	To show that $T$ is a Jaeger tree of $G$, it is enough to check that for each $i$, the edges of $G_i-G'_i$ are first seen at their tails. 
	An edge of $G_i-G'_i$ either has its tail in $T_{i-1}$ and its head in $T_{i}$ or it has its tail in $T_{i-1}$ and its head in $T_{i+1}$, or its tail in $T_{i}$ and its head in $T_{i+1}$. In each case, it is first seen at its tail.
	
	For the converse, that is, that non-crossing comb-trees exhaust all Jaeger trees, we apply the same strategy as in \cite[Section 9]{hyperBernardi}. Namely, it is enough to prove that any point $\mathbf{p}$ of the root polytope $\mathcal Q_G$ is in $\mathcal Q_T$ for some non-crossing comb tree $T$. As the simplices corresponding to Jaeger trees are interior-disjoint, this implies that there cannot be a Jaeger tree that is not a non-crossing comb-tree.
	
	Let $\mathbf{p}\in \mathcal Q_G$ be an arbitrary point. Then there is a convex combination $\mathbf{p}=\sum_{\varepsilon \in E(G)} \lambda_\varepsilon \mathbf x_\varepsilon$.
	We can interpret this sum as $\mathbf{p}$ being the convex combination of some points $\mathbf{p}_1, \dots, \mathbf{p}_k$ with $\mathbf p_i\in \mathcal Q_{G_i}$. As $G_i$ is complete bipartite, we know that $\mathcal Q_{G_i}$ is dissected by non-crossing trees, hence we may suppose that in the sum $\mathbf{p}=\sum_{\varepsilon \in E(G)} \lambda_\varepsilon \mathbf x_\varepsilon$, the support $\{\varepsilon\in E(G)\mid \lambda_\varepsilon\neq 0\}$ consists of (subgraphs of) non-crossing trees in each level. If necessary, let us add some edges to the support to obtain  a non-crossing tree $H_i$ in each level, and call the union of the $H_i$'s the graph $H$. 
	
	Next we are going to reduce $H$ by removing edges one by one, while preserving the properties that $H$ is connected, $\mathbf{p}\in \mathcal Q_H$, moreover, that each level $H_i$ is non-crossing and connected except for some isolated points (that is, the connected components of $H_i$ are isolated points except for one). Furthermore, at each stage of the reduction we require that neither $x_{i-1,0}$ nor $x_{i,0}$ is isolated in $H_i$ for any $1\leq i\leq k$. If at the end of the reduction process we achieve that each vertex $x_{i,j}$, for $j\neq 0$, either has only in-edges or only out-edges incident to it, then the $H$ of the final stage is a non-crossing comb-tree.
	
	We aim to remove edges in such a way that eliminates (non-leftmost) vertices with both in- and out-edges. We will do so layer-by-layer, starting from the bottom. 
	Suppose that $i$ is the smallest number such that there is a vertex $x_{i,j}$ with $j\neq 0$ that has both in- and out-edges incident to it. Choose such a vertex $x_{i,j}$ arbitrarily. 
	As $x_{i,j}$ is not isolated in $H_i$, there is a path from $x_{i,0}$ to $x_{i,j}$ in $H_i$. Similarly, $x_{i,j}$ is not isolated in $H_{i+1}$, hence there is a path from $x_{i,0}$ to $x_{i,j}$ in $H_{i+1}$. Altogether, there is a cycle in $H_i\cup H_{i+1}$ containing an in-edge and an out-edge of $x_{i,j}$. Let us call this cycle $C$, and suppose that the in- and the out-edge of $x_{i,j}$ are in $C^-$. Notice that an edge of $C\cap H_i$ is in $C^-$ if and only if on the path from $x_{i,0}$ to $x_{i,j}$ we traverse it according to its orientation. On the other hand, an edge of $C\cap H_{i+1}$ is in $C^-$ if and only if on the path from $x_{i,0}$ to $x_{i,j}$ we traverse it opposite to its orientation.
	
	Let $\mu = \min_{\varepsilon\in C^-} \lambda_\varepsilon$. Define the new coefficients 
	$$\lambda'_\eta = \left\{\begin{array}{cl} 
	\lambda_\eta & \text{if $\eta\notin C$,}  \\
	\lambda_\eta - \mu  & \text{if $\eta\in C^-$},\\
	\lambda_\eta + \mu  & \text{if $\eta\in C^+$}
	\end{array} \right.
	$$
	(it is possible that $\mu=0$, in which case the coefficients do not change). 
	Then the new coefficients are still non-negative and sum to 1 by the semi-balanced property. By Lemma \ref{l:dependence_of_semibalanced_cycle}, we have $\mathbf{p}=\sum_{\eta\in E(H)} \lambda'_\eta \mathbf x_\eta$.
	For the new coefficients, $\min_{\eta\in C^-} \lambda'_\eta=0$. Let $\varepsilon\in C^-$ be an edge with $\lambda'_\varepsilon=0$. We note that $\varepsilon$ cannot be incident to $x_{i,0}$ since 
	(if $x_{i,0}$ lies along $C$ at all)
	such edges are in $C^+$. From here on we separate three cases.
	
	\smallskip
	
	\noindent
	Case 1: 
	If $H-\varepsilon$ satisfies all required properties, then we remove $\varepsilon$ from $H$ and our reduction step is complete. 
	
	\smallskip
	
	\noindent
	Case 2: Suppose that $\varepsilon\in H_i$, and $H-\varepsilon$ violates some property.
	 Clearly $H-\varepsilon$ is connected since $\varepsilon$ was part of $C$.
	Thus the only deficiency that $H-\varepsilon$ can have is that $H_i-\varepsilon$ has two connected components with more than one point. 

     Denote the endpoints of $\varepsilon$ with $x_{i-1,a}$ and $x_{i,b}$. Since $\varepsilon\in C^-$, on the path from $x_{i,0}$ to $x_{i,j}$, the edge $\varepsilon$ is reached at $x_{i-1,a}$. As $x_{i,0}$ is to the left of $x_{i,b}$, and $H_i$ is non-crossing, the edge of the path 
     preceding $\varepsilon$ (which clearly also belongs to $C$) needs to be an edge $x_{i-1,a}x_{i,b''}$ for some $b''<b$. Let $x_{i-1,a}x_{i,b'}$ be the out-edge of $x_{i-1,a}$ in $H$ such that $b'< b$ is maximal. (See the left panel of Figure \ref{fig:modification_of_H}.) 
    Similarly, let $x_{i-1,a'}x_{i,b}$ be the in-edge of $x_{i,b}$ in $H$ such that $a'> a$ is minimal. There must be such an edge, since we supposed that $H_i-\varepsilon$ had two connected components of more than one vertex. Hence $x_{i,b}$ cannot be isolated in $H_i-\varepsilon$, and by the non-crossing property of $H_i$, the other edges incident to it have to come from vertices $x_{i-1,a''}$ with $a''>a$. 
    
    Then for $H'=H-\varepsilon + x_{i-1,a'}x_{i,b'}$, the subgraph $H'_i$ is once again non-crossing, and has only one connected component with more than one vertex. Note that $x_{i-1,a'}$ has an out-edge in $H_i$, thus when we pass from $H$ to $H'$, we do not introduce vertices in layer $i-1$ that have both in- and out-edges. Moreover, $H'$ is connected, and by setting $\lambda'_{x_{i-1,a'}x_{i,b'}}=0$, we have $\mathbf{p}=\sum_{\eta\in H'}\lambda'_\eta \mathbf x_\eta$.
    
    To see why $H'$ is an improvement over $H$, let us introduce the new function (akin to slope) $m(\eta)=j'-j$ on edges  $\eta=x_{i-1,j}x_{i,j'}$ of $G$. 
    Then, note that 
    \begin{multline*}
    \sum_{\eta\in H'_i} m(\eta)=\left[\sum_{\eta\in H_i} m(\eta)\right]-m(\varepsilon) + m(\varepsilon')=\left[\sum_{\eta\in H_i} m(\eta)\right] - (b-a) + (b'-a') \\
    \leq \left[\sum_{\eta\in H_i} m(\eta)\right]-2. 
    \end{multline*}
    
    \smallskip
    
    \noindent
    Case 3: When $\varepsilon\in H_{i+1}$ and $H-\varepsilon$ lacks one of the desired properties, that again can only mean that $H_{i+1}-\varepsilon$ has two connected components with more than one point.
    Completely analogously to Case 2, in this case too we can substitute $\varepsilon$ with a different edge $\varepsilon'$ so that this time, $\sum_{\eta\in H_{i+1}} m(\eta)$ increases by at least 2. 
    
    \smallskip
    
Notice that Case 2 only modifies $H_i$ and Case 3 only modifies $H_{i+1}$, furthermore that there are obvious lower and upper bounds, say 
\[\sum_{\eta\in H_i} m(\eta)\geq (s_{i-1}+1)(s_i+1)(-s_{i-1})\text{ and }\sum_{\eta\in H_{i+1}} m(\eta)\leq (s_i+1)(s_{i+1}+1)s_{i+1}.\]
Therefore Cases 2 and 3 can only happen finitely many consecutive times. Hence after a while we need to have Case 1, which decreases the number of edges. This can only happen at most finitely many times as well, hence after finitely many steps, we will have no more vertices in layer $i$ with both in- and out-edges. Repeating this for all layers $i=1$ through $k-1$ completes the proof.
    \end{proof}
    
    \begin{figure}
	\begin{center}    
	\begin{tikzpicture}[-,>=stealth',auto,scale=1,
	thick]
	\tikzstyle{o}=[circle,draw]
	\node[o, label=below:$x_{i-1,a}$] (1) at (0, 0) {};
	\node[o, label=below:$x_{i-1,a''}$] (2) at (3, 0) {};
	\node[o, label=above:$x_{i,b''}$] (3) at (-1, 1) {};
	\node[o, label=above:$x_{i,b'}$] (4) at (0.5, 1) {};
	\node[o, label=below:$x_{i-1,a'}$] (5) at (1.5, 0) {};
	\node[o, label=above:$x_{i,b}$] (6) at (2, 1) {};
	\path[->,every node/.style={font=\sffamily\small}]
	(1) edge node [above] {} (3)
	(2) edge node [below] {} (6)
	(1) edge node [above] {$\varepsilon$} (6);
	\path[->,every node/.style={font=\sffamily\small}]
	(1) edge node [below] {} (4)
	(5) edge node [above] {} (6);
	\end{tikzpicture}
	\begin{tikzpicture}[-,>=stealth',auto,scale=1,
	thick]
	\tikzstyle{o}=[circle,draw]
	\node[o, label=below:$x_{i-1,a}$] (1) at (0, 0) {};
	\node[o, label=below:$x_{i-1,a''}$] (2) at (3, 0) {};
	\node[o, label=above:$x_{i,b''}$] (3) at (-1, 1) {};
	\node[o, label=above:$x_{i,b'}$] (4) at (0.5, 1) {};
	\node[o, label=below:$x_{i-1,a'}$] (5) at (1.5, 0) {};
	\node[o, label=above:$x_{i,b}$] (6) at (2, 1) {};
	\path[->,every node/.style={font=\sffamily\small}]
	(1) edge node [above] {} (3)
	(2) edge node [below] {} (6)
	(5) edge node [above right] {$\varepsilon'$} (4);
	\path[->,every node/.style={font=\sffamily\small}]
	(1) edge node [below] {} (4)
	(5) edge node [above] {} (6);
	\end{tikzpicture}
\end{center}
\caption{Illustration of the proof of Proposition \ref{p:layer_complete_Jaeger}.}
		\label{fig:modification_of_H}
\end{figure}
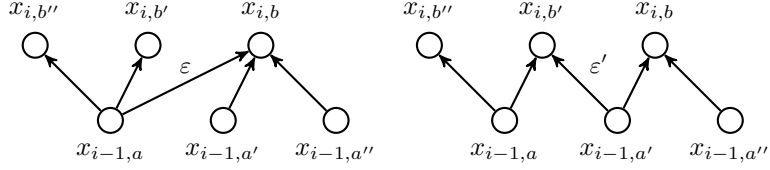 

\begin{remark}
	Notice that by Lemma \ref{lem:atcsuszo_kezdopont}, if we keep the same ribbon structure but change the basis to $(x_{i,0},x_{i,0}x_{i-1,0})$ for some $i>0$, then the Jaeger trees will once again be the non-crossing comb-trees. 
\end{remark}

\begin{remark}
In \cite{arithm_symedgepoly}, Higashitani et al.\ constructed triangulations for the facets of the symmetric edge polytope of a complete bipartite graph using Gröbner basis techniques. The facets of the symmetric edge polytope of a bipartite graph are exactly the root polytopes of the semi-balanced orientations of the graph \cite{arithm_symedgepoly}. 
The triangulation obtained in \cite{arithm_symedgepoly} agrees with the dissection by Jaeger trees that we described above for those cases. 
\end{remark}

Furthermore, the dissection by non-crossing comb-trees is in fact always a triangulation, regardless of the number of layers.

\begin{thm}
\label{t:combtrees_triangulate}
	For a layer-complete 
	graph with 
	a planar presentation as above,
	non-crossing comb-trees induce a triangulation of the root polytope.
\end{thm}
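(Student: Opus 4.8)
The plan is to apply the criterion of Lemma~\ref{l:compatibility_of_trees}. By Proposition~\ref{p:layer_complete_Jaeger} the non-crossing comb-trees are exactly the Jaeger trees, so Theorem~\ref{t:J-trees_form_quasitr} already gives that they dissect $\mathcal Q_G$; it remains to show that for any two non-crossing comb-trees $T_1$ and $T_2$ there is no incompatible cycle in the sense of Definition~\ref{def:incompatible}, as Lemma~\ref{l:compatibility_of_trees} then upgrades the dissection to a triangulation. So suppose, for a contradiction, that $C$ is a cycle with $C^+\subseteq T_1$ and $C^-\subseteq T_2$, where $C^+$ consists of the edges of $C$ that point toward higher layers when $C$ is traversed in an appropriate direction, and $C^-$ of those pointing toward lower layers. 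Since $G$ is semi-balanced, along this traversal the layering $l$ increases by one on each $C^+$-edge and decreases by one on each $C^-$-edge, so the sequence of layers visited by $C$ is a closed $\pm1$ walk on $\Z$.

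The first key step is to show that \emph{every non-leftmost vertex on $C$ is a local extremum of this layer sequence.} Indeed, if some vertex $x_{i,j}$ with $j\neq 0$ were a ``pass-through'' (the walk going $i-1\to i\to i+1$ or $i+1\to i\to i-1$ at it), then both of its incident $C$-edges would lie in the same tree --- both in $T_1$ if the walk ascends through $x_{i,j}$, both in $T_2$ if it descends --- while one of the two is an in-edge of $x_{i,j}$ and the other an out-edge. This contradicts the structural property of comb-trees recorded in the proof of Proposition~\ref{p:layer_complete_Jaeger}, namely that every non-leftmost vertex has either only in-edges or only out-edges in the tree. A vertex in layer $0$ or layer $k$ is automatically a local extremum, so the only pass-throughs of $C$ can occur at the ``backbone'' vertices $x_{1,0},\dots,x_{k-1,0}$, each of which is visited by $C$ at most once.

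The second step deduces that $C$ is confined to two consecutive layers. Between two consecutive pass-throughs the walk merely zig-zags between two adjacent layers, so the layer walk of $C$ can be encoded as a closed walk on the path whose vertices are the ``bands'' $\{i-1,i\}$ visited by $C$, where a step between the bands $\{i-1,i\}$ and $\{i,i+1\}$ (in either direction) is realized by the pass-through at $x_{i,0}$. As $x_{i,0}$ is used at most once, this band walk traverses each edge of the path at most once; being a closed walk on a tree, it must traverse each edge an even number of times, hence not at all. So $C$ lies inside a single band, i.e.\ inside the complete bipartite subgraph $G_i$ between $l^{-1}(i-1)$ and $l^{-1}(i)$ for some $i$. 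There $C^+\subseteq E(T_1)\cap E(G_i)$, which is precisely the $i$-th comb-piece $T^{(1)}_i$ of $T_1$, a non-crossing tree; similarly $C^-\subseteq T^{(2)}_i$. But a cycle of a complete bipartite graph drawn with its two classes on parallel lines cannot have one of its two edge-classes consisting of pairwise non-crossing segments while the other does so as well (a short planar argument: the position order of the lower vertices along the cycle would have to coincide with its own cyclic shift, which is impossible for distinct positions). Equivalently, extending $T^{(1)}_i$ and $T^{(2)}_i$ to non-crossing spanning trees of $G_i$ would produce an incompatible cycle for two non-crossing trees of a complete bipartite graph, contradicting \cite{GGP} --- equivalently Proposition~\ref{p:planar_triangulation} applied to $G_i$ --- via Lemma~\ref{l:compatibility_of_trees}. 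This contradiction finishes the argument.

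The main obstacle is the bookkeeping of the second step: verifying carefully that the comb-tree dichotomy between in- and out-edges forces the layer walk of $C$ to factor through the ``band path'' and therefore to be trivial, and supplying the elementary but slightly delicate planar lemma that no bipartite cycle laid out on two parallel lines can have both of its edge-classes non-crossing. Everything else reduces to statements already established (Proposition~\ref{p:layer_complete_Jaeger}, Theorem~\ref{t:J-trees_form_quasitr}, Lemma~\ref{l:compatibility_of_trees}, and the complete bipartite case).
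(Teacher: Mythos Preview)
Your proof is correct and follows essentially the same route as the paper's: reduce via Lemma~\ref{l:compatibility_of_trees} to excluding an incompatible cycle $C$, use the comb-tree dichotomy at non-leftmost vertices to force $C$ into a single bipartite layer $G_i$, and then derive a contradiction from non-crossingness inside $G_i$. Your band-path/parity argument simply makes explicit the count that the paper leaves to the reader (namely that passing between non-adjacent layers would require at least two pass-throughs at some intermediate layer, only one of which can occur at the backbone vertex $x_{i,0}$), and your permutation sketch for the final step is equivalent to the paper's ``edges always go to the right'' argument.

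One small correction: your alternative appeal to Proposition~\ref{p:planar_triangulation} for $G_i$ is misplaced, since $G_i\cong K_{m,n}$ need not be planar; the citation of \cite{GGP} is the right shortcut there (and if you take that route, the extension of the comb-pieces $T^{(1)}_i,T^{(2)}_i$ to non-crossing spanning trees of $G_i$ deserves a word of justification, though it is indeed routine).
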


\begin{proof}
	By Theorem \ref{t:J-trees_form_quasitr} and Proposition \ref{p:layer_complete_Jaeger}, non-crossing comb-trees induce a dissection of the root polytope. Hence we only need to prove that the simplices corresponding to any two non-crossing comb-trees meet in a common face.
	
	Suppose for a contradiction that two non-crossing comb-trees, $T_1$ and $T_2$, do not satisfy this condition. Then by Lemma \ref{l:compatibility_of_trees} there is an incompatible cycle $C$, which means that for some orientation of $C$, we have $C^+ \subset T_1$ and $C^- \subset T_2$. We claim that $C$ is a subset of $G_i$, that is the subgraph of $G$ induced by the vertices along the adjacent levels $l^{-1}(i-1)$ and  $l^{-1}(i)$, for some $i$. Indeed, otherwise there would be two edges $xy$ and $yz$ along $C$ such that $l(x)+2=l(y)+1=l(z)$ and $y$ is not leftmost in its layer. This implies that $xy$ and $yz$ cannot be from the same comb-tree. On the other hand, as both edges point upward, $xy$ and $yz$ are either both in $C^+$ or they are both in $C^-$. This contradicts the assumption that $C^+$ and $C^-$ are both contained within either $T_1$ or $T_2$.
	
	Now when $C$ is a subset of $G_i$ for some $i$, 
	take an arbitrary edge $uv \in C^+$ with, say, $v\in l^{-1}(i)$. 
	Let $vw$ be the other edge of $C$ incident to $v$ (in this case, $vw\in C^-$). Suppose (without loss of generality) that $w$ is to the right of $u$. The next edge $wx$ of $C$ is again in $C^+$, hence it cannot intersect $uv$ (as they are both in $T_1$). In other words, $x$ needs to be to the right of $v$. We can continue this argument, alternately using the non-crossing property of $T_1$ and $T_2$. We obtain that edges of $C$ always ``go to the right,'' which is impossible for a cycle. This provides our final contradiction.
\end{proof}

The dissection by non-crossing comb-trees allows one to produce a formula for the interior polynomial of a layer-complete directed graph. 
We will keep using the same ribbon structure and basis as in the rest of this section.

We recall an elementary fact about non-crossing trees. Consider the complete bipartite graph $K$ with `lower' color class $\{p_0,p_1,\ldots,p_m\}$ and `upper' color class $\{q_0,q_1,\ldots,q_n\}$. Non-crossing spanning trees of $K$ are uniquely determined by a \emph{zigzag}, that is a non-crossing path whose first edge is $p_0q_0$ and whose last edge is $p_mq_n$. The zigzag has \emph{up edges}, namely those that are first reached at their lower endpoint, and \emph{down edges} that are first reached at their upper endpoint. 
The status of $p_0q_0$ is decided so that it is the opposite of that of the second edge of the zigzag.
Now the zigzag is actually part of the corresponding tree and the extension happens in an obvious way, see Figure \ref{fig:non-crossing_tree}. The extra edges may have an up edge of the zigzag on right and a down edge on the left, or vice versa. The first kind will also be called \emph{up edges} and the other kind will be called \emph{down edges}.

\begin{figure}
    \centering
    \begin{tikzpicture}[scale=.2]
    \node [circle,fill,scale=.8,draw] (1) at (0,10) {};
		\node [circle,fill,scale=.8,draw] (2) at (6,10) {};
		\node [circle,fill,scale=.8,draw] (3) at (12,10) {};
		\node [circle,fill,scale=.8,draw] (4) at (18,10) {};
		\node [circle,fill,scale=.8,draw] (5) at (24,10) {};
		\node [circle,fill,scale=.8,draw] (6) at (30,10) {};
		\node [circle,fill,scale=.8,draw] (7) at (36,10) {};
		\node [thick,circle,scale=.8,draw] (8) at (3,0) {};		
		\node [thick,circle,scale=.8,draw] (9) at (9,0) {};		
		\node [thick,circle,scale=.8,draw] (10) at (15,0) {};
		\node [thick,circle,scale=.8,draw] (11) at (21,0) {};
		\node [thick,circle,scale=.8,draw] (12) at (27,0) {};
		\node [thick,circle,scale=.8,draw] (13) at (33,0) {};
		\path [lightgray,ultra thick,->,>=stealth] (8) edge node {} (1);
		\path [ultra thick,->,>=stealth] (8) edge node {} (2);
		\path [lightgray,thick,->,>=stealth] (9) edge node {} (2);
		\path [lightgray,thick,->,>=stealth] (10) edge node {} (2);
		\path [lightgray,ultra thick,->,>=stealth] (11) edge node {} (2);
		\path [thick,->,>=stealth] (11) edge node {} (3);
		\path [thick,->,>=stealth] (11) edge node {} (4);
		\path [thick,->,>=stealth] (11) edge node {} (5);
		\path [ultra thick,->,>=stealth] (11) edge node {} (6);
		\path [lightgray,thick,->,>=stealth] (12) edge node {} (6);
		\path [lightgray,ultra thick,->,>=stealth] (13) edge node {} (6);
		\path [ultra thick,->,>=stealth] (13) edge node {} (7);
    \end{tikzpicture}
    \caption{A non-crossing tree. The zigzag inside the tree is indicated by thick lines. Up edges are shown in black and down edges are shown in gray.}
    \label{fig:non-crossing_tree}
\end{figure}
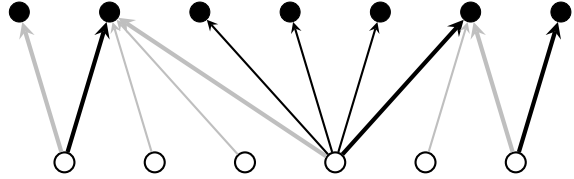

A similar description applies to each `tooth' $T_i$ of a non-crossing comb-tree $T$. We will use it to sort out the internal semi-passivity of the edges of $T$, as follows. First, the edges $x_{0,i-1}x_{0,i}$ are never internally semi-passive, per condition \ref{elso} of Lemma \ref{l:activities_description}. We will put them aside and concentrate on the rest of $T$. 

Now in the tour of $T$, down edges are first reached at their heads and thus they are not internally semi-passive by condition \ref{otodik} of Lemma \ref{l:activities_description}. Up edges are first reached at their tails and if they belong to one of the layers $i=1,\ldots,k-1$ (i.e., any layer but the top) then their fundamental cuts are not directed. (If the upper endpoint of the edge is $x_{i,j}$, where $j\ge1$, then $x_{i,j}x_{i+1,0}$ is in the fundamental cut.) That is, by part \ref{otodik} of Lemma \ref{l:activities_description}, such edges are internally semi-passive. Note that the internally semi-passive edges found so far are in a one-to-one correspondence with the set of their upper endpoints, that is, with $D_1\cup\cdots\cup D_{k-1}$.

In the top layer, 
internally semi-passive edges are exactly those 
up edges that are also part of the corresponding zigzag, except for the first and last edges of the zigzag. (In such cases, the fundamental cut of $x_{k-1,a}x_{k,b}$ contains the edge $x_{k-1,a+1}x_{k,b-1}$.) 
In sum, any fixed system of partitions into sets $U_i$ and $D_i$ (as used earlier in this section) contributes
\[\left[\prod_{j=1}^{k-1}{|D_j|+|U_{j-1}| \choose |D_j|}\right]x^{|D_1|}\,x^{|D_2|}\,\cdots\, x^{|D_{k-1}|}\sum_{i=0}^{\min\{s_k,|U_{k-1}|\}}{s_k \choose i}{|U_{k-1}| \choose i}\,x^i\]
to the interior polynomial. Here
\begin{itemize}
    \item ${|D_j|+|U_{j-1}| \choose |D_j|}$ is the number of non-crossing trees in the $j$'th layer;
    \item $x^{|D_1|+\cdots+|D_{k-1}|}$ is the contribution of all layers below the top;
    \item the last sum is the contribution of the top layer.
\end{itemize}
Indeed, regarding the last bullet point above, $s_k$ is the size of $D_k$ and $|U_{k-1}|$ can be understood as the size of the set obtained by replacing the rightmost element of $U_{k-1}$ with $x_{k-1,0}$.
One may independently choose subsets of size $i$ from the two named sets, consider their unique non-crossing perfect matching, and extend it uniquely to a zigzag in which exactly the $i$ edges of the matching (and possibly the first and last edges) are up edges. Let us note that this explanation 
can be viewed as an alternative computation (to \cite[Example 7.2]{hiperTutte}, where hypertrees were used) of the interior polynomial of a complete bipartite graph.

Thus the interior polynomial of our layer-complete directed graph is 
\begin{multline*}
    \sum_{i_1=0}^{s_1}\sum_{i_2=0}^{s_2}\cdots\sum_{i_{k-1}=0}^{s_{k-1}}{s_1\choose i_1}{s_2\choose i_2}\cdots{s_{k-1}\choose i_{k-1}}
    {i_1+s_0\choose i_1}{i_2+s_1-i_1\choose i_2}\cdots\\
    {i_{k-1}+s_{k-2}-i_{k-2}\choose i_{k-1}}
    \cdot x^{i_1+i_2+\cdots i_{k-1}}
    \sum_{i_k=0}^{s_k}{s_k\choose i_k}{s_{k-1}-i_{k-1}\choose i_k}\,x^{i_k}\\
    =\sum_{i_1=0}^{s_1}\sum_{i_2=0}^{s_2}\cdots\sum_{i_{k-1}=0}^{s_{k-1}}\sum_{i_k=0}^{s_k}
    {s_1\choose i_1}{s_2\choose i_2}\cdots{s_{k-1}\choose i_{k-1}}{s_k\choose i_k}\\
    \cdot{i_1+s_0\choose i_1}{i_2+s_1-i_1\choose i_2}\cdots
    {i_{k-1}+s_{k-2}-i_{k-2}\choose i_{k-1}}{s_{k-1}-i_{k-1}\choose i_k}\,x^{i_1
    +\cdots
    +i_k}.
\end{multline*}

\section{When do Jaeger trees induce a triangulation?}
\label{sec:diss_vs_triang}

In Theorem \ref{t:J-trees_form_quasitr} we saw that simplices corresponding to the Jaeger trees of a semi-balanced graph dissect the root polytope. This dissection might not be a triangulation, that is, the intersection of two simplices might fail to be a face in either simplex.

This section contains some partial results about when simplices of Jaeger trees form a triangulation.

We have already seen two special cases when Jaeger trees do give a triangulation: if the semi-balanced graph is planar and the ribbon structure comes from an embedding into the plane (see Proposition \ref{p:planar_triangulation}); moreover, if the semi-balanced graph is layer-complete, and the ribbon structure and basis are as specified in Section \ref{sec:layer_complete} (see Theorem \ref{t:combtrees_triangulate}).


Recall also the
necessary and sufficient condition 
of Lemma \ref{l:compatibility_of_trees} 
for the simplices of two trees to meet in a common face:
a cycle $C$ is called incompatible for two trees $T_1$ and $T_2$ if $C^+ \subset T_1$ and $C^- \subset T_2$,
and the condition is that such a cycle should not exist.

Fix a ribbon structure and a basis for the semi-balanced graph $G$. Let us call a cycle $C$ an \emph{incompatible cycle for the ribbon structure and basis} if there are two Jaeger trees 
for which $C$ is an incompatible cycle. By Lemma \ref{l:compatibility_of_trees}, Jaeger trees induce a triangulation of the root polytope if and only if there is no incompatible cycle for the given ribbon structure and basis.

Let $C$ be a cycle in a ribbon 
graph $G$. Let us denote its vertices by $v_0, \dots,v_{r-1}$, with edges $v_{i}v_{i+1}$ for each $i=0,\dots,r-1$ modulo $r$. 
In the ribbon structure,
the edges incident to each $v_i$ are separated by $v_iv_{i-1}$ and $v_iv_{i+1}$ into two intervals. Specifically, let us say that the edges strictly after $v_iv_{i-1}$ and 
up to and including 
$v_iv_{i+1}$ are to the \emph{right} of $v_i$, while 
the rest
are to the \emph{left} of $v_i$. The cycle $C$ is called \emph{non-separating} if there is any (potentially closed) path $P=u_0,\varepsilon_1,u_1,\varepsilon_2,\dots,\varepsilon_k, u_k$ (given as a sequence of vertices and edges) such that $k\ge1$,
$u_0$ and $u_k$ lie along $C$, otherwise $P$ is vertex-disjoint and edge-disjoint from $C$, and $\varepsilon_1$ is to the right of $u_0$ while $\varepsilon_k$ is to the left of $u_k$. 
We call $C$ \emph{separating} if there is no such path. A ribbon graph is planar if any only if it has no separating cycle.

\begin{prop}\label{prop:incompatible_cycle_is_nonseparating}
For a semi-balanced ribbon graph, a separating cycle of the ribbon structure
cannot be an incompatible cycle (for any basis).
\end{prop}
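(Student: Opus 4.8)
The plan is to argue by contradiction: suppose $C$ is a separating cycle that is nevertheless incompatible, so there are Jaeger trees $T_1, T_2$ (for a common ribbon structure and basis) with $C^+ \subset T_1$ and $C^- \subset T_2$. We want to extract from this a path of the kind forbidden by the definition of "separating," thereby contradicting separability. The first step is to set up notation carefully: write $C$ with vertices $v_0, \dots, v_{r-1}$ and edges $\varepsilon_i = v_i v_{i+1}$, so that every other $\varepsilon_i$ lies in $C^+$ and the interleaving ones in $C^-$ (recall $C$ is semi-balanced since $G$ is, so $|C^+| = |C^-| = r/2$). Since $C^+ \subset T_1$ but $T_1$ is a tree, $C^+$ cannot contain all of $C$; pick an edge $\varepsilon_j \in C^-$, so $\varepsilon_j \notin T_1$. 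Because $T_1$ is a Jaeger tree, in the tour of $T_1$ the non-edge $\varepsilon_j$ is first encountered at its tail; similarly some edge of $C^+$ is a non-edge of $T_2$ and is first seen at its tail in the tour of $T_2$.

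The key geometric input I expect to need is Lemma \ref{l:char_Jaeger_cuts} (or rather the hyperplane/cut reasoning behind Proposition \ref{p:trees_interior_disjoint}), combined with the ribbon-structure "left/right" bookkeeping around the cycle. The idea: look at $\varepsilon_j \in C^- \setminus T_1$ and consider its fundamental cycle $C(T_1, \varepsilon_j) \subseteq T_1 \cup \{\varepsilon_j\}$. This is a cycle all of whose edges except $\varepsilon_j$ lie in $T_1$, and one checks — using that the other edges of $C$ adjacent to $\varepsilon_j$ lie in $C^+ \subset T_1$ — that its union with a piece of $C$ produces exactly the kind of closed walk that, after locating how its edges sit to the left vs. right of the shared vertices with $C$ (governed by the Jaeger "first seen at the tail" property, which pins down whether an incident non-tree edge is on the left or the right at a given vertex), realizes the path $P$ in the definition of non-separating. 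Concretely: the Jaeger condition for $\varepsilon_j$ at its tail, say $v_{j}$, forces $\varepsilon_j$ to lie on one prescribed side (to the right of $v_j$) in the tour; tracing the fundamental cycle of $\varepsilon_j$ in $T_1$ back to another vertex of $C$ and using the analogous side-constraint there gives an endpoint on the left. This furnishes a path $P$ with $u_0, u_k$ on $C$, internally disjoint from $C$, with $\varepsilon_1$ to the right of $u_0$ and $\varepsilon_k$ to the left of $u_k$ — contradicting that $C$ is separating.

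The main obstacle, and where the real work lies, is the careful verification that the fundamental cycle (or an appropriate subpath of it) is genuinely vertex- and edge-disjoint from $C$ except at its two endpoints, and that the left/right sidedness at those endpoints comes out as claimed rather than the reverse. Degenerate possibilities must be handled: the fundamental path in $T_1$ of the non-edge $\varepsilon_j$ might re-enter $C$ at intermediate vertices, in which case one truncates it at the first such re-entry to get a genuinely disjoint path, and one must check the sidedness survives the truncation; and one should also handle the case where the path has length exactly one (a single chord of $C$), which still fits the definition since $k \geq 1$ is allowed. I would organize the sidedness analysis around a single lemma-style observation: at a vertex $v$ of $C$, whether a tree-edge or non-tree edge incident to $v$ is "to the left" or "to the right" of $v$ is determined by the order in which $(v, \cdot)$ pairs become current in the tour, which in turn is controlled by the Jaeger property; pushing this through for both $T_1$ and $T_2$ (one giving a "right" end, the other reflected through $C^-$ vs. $C^+$ giving a "left" end, or both ends coming from $T_1$ alone) completes the contradiction. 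Once the disjointness and sidedness bookkeeping is pinned down, the conclusion is immediate from the definition of separating.
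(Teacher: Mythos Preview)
Your approach has a genuine gap in the sidedness bookkeeping, and I do not see how to close it along the lines you sketch. First, a minor error: the edges of $C^+$ and $C^-$ need not alternate around $C$ (semi-balanced only means $|C^+|=|C^-|$), so the assumption that the neighbours of $\varepsilon_j\in C^-$ along $C$ lie in $C^+$ is false in general. More seriously, your key step — ``the Jaeger condition for $\varepsilon_j$ at its tail $v_j$ forces $\varepsilon_j$ to lie on one prescribed side (to the right of $v_j$)'' — does not make sense as stated: $\varepsilon_j$ is one of the two edges of $C$ at $v_j$ that \emph{define} left and right, so it has no side. What you actually need is that the first edge of the $T_1$-path leaving $v_j$ is on one side and the last edge (where it re-enters $C$) is on the other. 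The Jaeger property controls the order in which node--edge pairs become current, not directly which side of $C$ an arbitrary tree edge sits on; turning one into the other requires knowing that the tour arrives at $v_j$ from a definite side, which is exactly the content you are trying to prove. In addition, the fundamental $T_1$-path between the endpoints of $\varepsilon_j$ may stay entirely on $C$ (nothing forbids $T_1$ from containing many edges of $C^-$ beyond those of $C^+$), so there may be no off-$C$ subpath to extract.

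The paper avoids all of this by arguing in the other direction and proving something stronger: for a separating cycle $C$, \emph{no} Jaeger tree can contain all of $C^+$. Because $C$ separates, every node--edge pair can be labelled ``exterior'' or ``interior''; assuming the basis is exterior, one checks (a short case analysis of tour transitions) that the tour of any Jaeger tree $T$ stays among exterior pairs until it first meets an edge of $C\setminus T$. By the definition of ``right'', that first encounter must be a pair of the form $(v_i,v_iv_{i+1})$; the Jaeger rule then forces $v_i$ to be the tail, i.e.\ the edge is $\overrightarrow{v_iv_{i+1}}\in C^+$. Hence every Jaeger tree misses some edge of $C^+$, so $C$ can never be incompatible. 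This route bypasses fundamental cycles and the delicate sidedness analysis entirely.
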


\begin{proof} 
Suppose that $C$ is a separating cycle in the semi-balanced ribbon graph $G$. Then for any vertex $v \notin C$, either each path $P$ from $v$ to $C$ reaches $C$ from the left, or each path reaches $C$ from the right. Let us call the latter set of vertices 
the \emph{exterior} of $C$. 
Let us also say that an incident node-edge pair $(v,\varepsilon)$ 
is in the exterior of $C$ if either 
$v$ is in the exterior of $C$, or 
$v$ lies along $C$ and $\varepsilon$ is to the right of $v$.

By symmetry we may suppose that the basis $(b_0,b_0b_1)$ is in the exterior of $C$ in the above sense.
Let $T$ be an arbitrary Jaeger tree. As $T$ is a tree, there needs to be an edge in $C-T$. Using the notation of the paragraph above the Proposition, we claim that the first current node-edge pair in the tour of $T$, so that the current edge is from $C-T$, will be of the form $(v_i,\overrightarrow{v_iv_{i+1}})$ for some $i\in \{0, \dots, r-1\}$. 

Firstly, the tour of $T$ stays in the exterior of $C$ until reaching the first current edge from $C-T$. This follows by a trivial case-by-case analysis of the position of the current node-edge pair and the types of transition steps in the tour of $T$. For example, if the current node-edge pair is $(v_j,v_jv_{j+1})$ for some $v_j$ along $C$ and $v_jv_{j+1}\in T$, then the next current node-edge pair is $(v_{j+1},v_{j+1}v_j^+)$, where the edge $v_{j+1}v_j^+$ lies to the right of $v_{j+1}$. We leave the rest of the cases to the reader.

Hence the first current edge from $C-T$ is 
in a current pair of the form $(v_i,v_iv_{i+1})$. As $T$ is a Jaeger tree, the orientation of the edge needs to be $\overrightarrow{v_iv_{i+1}}$, as claimed. In other words, $v_iv_{i+1}\in C^+$; in particular, $C-T$ needs to contain an edge from $C^+$. That is, no Jaeger tree can contain the entire set $C^+$ and therefore $C$ cannot be an incompatible cycle.
\end{proof}

We note that Proposition \ref{prop:incompatible_cycle_is_nonseparating} implies 
Proposition \ref{p:planar_triangulation}.

An equivalent formulation of Proposition \ref{prop:incompatible_cycle_is_nonseparating} 
is that incompatible cycles of semi-balanced ribbon graphs are non-separating.
The converse of this is not true: non-separating does not imply incompatible. Indeed, we saw that for complete bipartite graphs certain ribbon structures yield triangulations (i.e., no incompatible cycles), even though these ribbon structures cannot be planar (i.e., there will be non-separating cycles) if both vertex classes have size at least $3$.

We exhibit a special case 
where
topology does
nevertheless sufficiently control incompatible cycles. 

\begin{lemma} \label{l:incomp_cycle_non_planar_standard_graph}
Let $G=(U,W,E)$ be a directed bipartite graph where each vertex of $W$ either has indegree $0$ and outdegree $2$ or indegree $2$ and outdegree $0$.
Let us fix an arbitrary ribbon structure and basis $(b_0,b_0b_1)$ such that $b_0\in U$. Let $C$ be a 
cycle that does not pass through $b_0$.
If there is a path leading from $b_0$ to $C$ that reaches $C$ from the left, and there is also a path leading from $b_0$ to $C$ that reaches $C$ from the right, then $C$ is an incompatible cycle for the ribbon structure and basis.
\end{lemma}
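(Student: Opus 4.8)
I want to build two Jaeger trees $T_1, T_2$ for which $C$ is an incompatible cycle, i.e.\ (after fixing an orientation reference on $C$) $C^+\subset T_1$ and $C^-\subset T_2$. The hypothesis that $C$ can be reached from $b_0$ both from the left and from the right is exactly saying $C$ is non-separating; the key structural input I get for free is the degree condition on $W$, which means that at every white vertex of $C$, the two $C$-edges are either both outgoing or both incoming. This is what makes $C$ not already ``un-semibalanced-forced'': along $C$ the orientation alternates in a controlled way, and I should first record what $C^+$ and $C^-$ look like. Concretely, pick a white vertex $w_0$ on $C$ that is a source (outdegree $2$) — such a vertex exists because $C$ is a cycle in a semi-balanced graph, so it has edges in both directions — and use one of its two outgoing $C$-edges to fix the reference orientation of $C$; then trace around $C$ and observe that $C^+$ and $C^-$ are each a union of $U$–$W$ edges, and that every white vertex has both its $C$-edges in the same class (both in $C^+$ or both in $C^-$), while the black vertices on $C$ have one edge in each class.

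**Construction of $T_1$ (containing $C^+$).** I would build $T_1$ by a greedy/tour argument in the spirit of the proof of Theorem~\ref{t:J-trees_form_quasitr} and of Lemma~\ref{l:activities_description}: run the Bernardi tour starting from $(b_0,b_0b_1)$ and, whenever the current edge is in $C^-$ and reached at the correct (tail) endpoint, decline to include it; whenever the current edge is in $C^+$, include it. The point of using the left-reaching path from $b_0$ is that it lets the tour first ``enter'' $C$ on one designated side, so that every edge of $C^-$ is genuinely encountered tail-first and may legally be cut; meanwhile the edges of $C^+$ are encountered tail-first too (by the source structure at white vertices), so including all of them is consistent with the Jaeger condition. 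After committing to $C^+$ and discarding $C^-$, extend to a full spanning tree on both ``sides'' of $C^+$ using existing Jaeger trees of the induced subgraphs (as in the $\text{\ref{otodik}}\Rightarrow\text{\ref{elso}}$ argument of Lemma~\ref{l:activities_description}, where a Jaeger subtree on each component is grafted in with an appropriately chosen base edge). Symmetrically, $T_2$ is built using the right-reaching path from $b_0$, this time committing to all of $C^-$ and discarding all of $C^+$; the right-reaching path is what guarantees the edges of $C^+$ can be cut tail-first in the tour of $T_2$, while the edges of $C^-$ get absorbed into the tree.

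**Main obstacle.** The hard part will be verifying that these greedily-built trees really are Jaeger trees \emph{globally} — i.e.\ that declining an edge of $C^-$ (resp.\ $C^+$) never forces some \emph{other} non-edge, later in the tour, to be first seen at its head. This is exactly the delicate bookkeeping that Lemma~\ref{l:char_Jaeger_cuts} is designed to handle: I would argue that once we have entered from the prescribed side, the non-edges of $C$ we skip have their heads ``on the far side'', so by Lemma~\ref{l:char_Jaeger_cuts} they're cut tail-first before the relevant cut edge is traversed, and subsequent portions of the tour behave exactly as in a bona fide Jaeger tree of the induced subgraphs (again mirroring the paragraph-by-paragraph check in $\text{\ref{otodik}}\Rightarrow\text{\ref{elso}}$ of Lemma~\ref{l:activities_description}). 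A secondary subtlety is to make sure that the existence of \emph{both} a left- and a right-reaching path is what's really needed: one path handles $T_1$ and the other handles $T_2$, and the degree hypothesis on $W$ is what rules out the ``separating, so $C-T$ must contain a $C^+$ edge'' obstruction of Proposition~\ref{prop:incompatible_cycle_is_nonseparating}. Once both trees are shown to be Jaeger and to contain $C^+$ resp.\ $C^-$, Lemma~\ref{l:compatibility_of_trees} immediately yields that $C$ is an incompatible cycle, completing the proof.
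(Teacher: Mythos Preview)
Your plan has the right overall shape --- build two Jaeger trees, one containing $C^+$ and one containing $C^-$, using the left- and right-reaching paths respectively --- but the execution rests on a false structural claim. You assert that ``every white vertex has both its $C$-edges in the same class (both in $C^+$ or both in $C^-$), while the black vertices on $C$ have one edge in each class.'' The opposite is true. At a white sink $w$ on $C$, both $C$-edges point toward $w$; as the cycle is traversed through $w$, the entering edge agrees with its orientation (hence lies in $C^+$) and the leaving edge disagrees (hence $C^-$). The source case is symmetric. So every white vertex of $C$ has exactly one $C$-edge in $C^+$ and one in $C^-$; it is the black vertices that can have both in the same class. Since your greedy tour argument (``edges of $C^+$ are encountered tail-first by the source structure at white vertices'') is built on this claim, the construction as written does not go through. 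You also needlessly handicap yourself by trying to exclude \emph{all} of $C^-$ from $T_1$; incompatibility only requires $C^+\subset T_1$, not $C^-\cap T_1=\varnothing$.

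The paper's route is quite different and exploits the degree condition more directly. It passes to the graph $\tilde G$ on vertex set $U$ obtained by suppressing the degree-$2$ white vertices, so that $G$ is the edge-subdivision of $\tilde G$. The cycle $C$ corresponds to a cycle $\tilde C$ in $\tilde G$, and the two given paths give $\tilde P_l,\tilde P_r$. One picks a single edge $\tilde\varepsilon\in\tilde C$, extends each of $\tilde P_l\cup(\tilde C-\tilde\varepsilon)$ and $\tilde P_r\cup(\tilde C-\tilde\varepsilon)$ to an arbitrary spanning tree of $\tilde G$, and lifts these to trees in $G$. At each white vertex $w$ corresponding to a non-edge of the $\tilde G$-tree, one is free to attach $w$ by \emph{either} of its two edges; because both edges at $w$ point the same way, one choice always satisfies the Jaeger rule. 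The resulting Jaeger trees each contain all of $C$ except one of the two edges subdividing $\tilde\varepsilon$; since the left- and right-reaching paths force opposite choices there, one tree misses a $C^+$-edge and the other a $C^-$-edge, yielding the incompatible pair. The key idea you are missing is this ``choose-the-right-leaf'' flexibility at white vertices, which makes producing Jaeger trees almost free once you have a spanning tree of $\tilde G$.
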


Note that the directed graphs of Lemma \ref{l:incomp_cycle_non_planar_standard_graph} are all semi-balanced, with a potential that maps all of $U$ to $0$, the sinks in $W$ to $1$, and the sources to $-1$.

\begin{remark}
The case of bipartite graphs $G$ with each indegree in $W$ equal to $2$ and each outdegree in $W$ equal to $0$
(which is just a standard orientation)
plays an important role in the preliminary papers \cite{hiperTutte,KP_Ehrhart,hyperBernardi}.
In those works, bipartite graphs are thought of as hypergraphs with $U$ corresponding to vertices and $W$ corresponding to hyperedges. In that model, the case where each degree in $W$ is $2$ corresponds to (ordinary) graphs $\tilde G$. In that case, the interior polynomial of $G$
(i.e., 
the $h^*$-vector of $\mathcal Q_G$) 
is equivalent to $T(x,1)$, where $T(x,y)$ is the Tutte polynomial of $\tilde G$.
\end{remark}

\begin{proof}[Proof of Lemma \ref{l:incomp_cycle_non_planar_standard_graph}]
Take the undirected graph $\tilde{G}$ whose vertex set is $U$, and we connect two vertices $u_1, u_2 \in U$ if in $G$ they have a common 
neighbor. Then $G$ can be obtained from $\tilde{G}$ by subdividing each edge with a point and either orienting both new edges toward the middle, or both of them away from the middle. Endow $\tilde{G}$ with the ribbon structure inherited from $G$, and let the basis be $(b_0,b_0u)$, where $u$ is the other neighbor of $b_1$ in $G$. Let $\tilde{C}$ be the subgraph of $\tilde{G}$ corresponding to $C$ (that is, we obtain $C$ by subdividing each edge of $\tilde{C}$). As $C$ is non-separating in $G$, we have that $\tilde{C}$ is also a non-separating cycle in $\tilde G$. 

Let $P_l$ be a path in $G$ leading from $b_0$ to $C$ and reaching $C$ from the left, and let $P_r$ be a path in $G$ leading from $b_0$ to $C$ and reaching $C$ from the right. Let $\tilde{P_l}$ and $\tilde{P_r}$, respectively, be the corresponding paths in $\tilde{G}$.

Consider an arbitrary edge $\tilde{\varepsilon}$ of $\tilde{C}$. 
Then $\tilde{P_l}\cup \tilde{C} - \tilde{\varepsilon}$ is a cycle-free subgraph of $\tilde{G}$, and the same holds for $\tilde{P_r}\cup \tilde{C} - \tilde{\varepsilon}$. Extend $\tilde{P_l}\cup \tilde{C} - \tilde{\varepsilon}$ in an arbitrary way to a spanning tree $\tilde{T}_l$ of $\tilde{G}$. Similarly, extend $\tilde{P_r}\cup \tilde{C} - \tilde{\varepsilon}$ in an arbitrary way to a spanning tree of $\tilde{G}$ and call it $\tilde{T}_r$.

Let $T_r$ and $T_l$ be the corresponding trees in $G$. These are not spanning trees, because they do not contain those vertices of $W$ that subdivide the non-edges of $\tilde{T}_l$ and $\tilde{T}_r$, respectively. Let $u_1w$ and $u_2w$ be two edges of $G$ such that the corresponding edge $u_1u_2$ of $\tilde{G}$ is not in, say, $\tilde{T}_r$. Suppose that in the tour of $\tilde{T}_r$, the pair $(u_1, u_1u_2)$ appears before $(u_2,u_2u_1)$. Then we can add $u_1w$ to $T_r$ if $u_1w$ and $u_2w$ are oriented away from $w$ and we can add $u_2w$ to $T_r$ if $u_1w$ and $u_2w$ are oriented towards $w$. This way we still have a tree, 
which now contains
$w$ as well, and we do not violate the Jaeger rule at the remaining incident non-edge. Moreover, as $w$ is added as a new leaf, the tour of the extended tree remains essentially the same as before the addition. 

Hence we can extend both $T_r$ and $T_l$ to Jaeger trees $T'_r$ and $T'_l$, respectively. For both $\tilde{T}_r$ and $\tilde{T}_l$, we have that $\tilde{\varepsilon}$ is the only edge of $\tilde{C}$ missing from the tree. As the tour of $T_r$ reaches $C$ for the first time from the right, and the tour of $T_l$ reaches $C$ for the first time from the left, $T'_r$ and $T'_l$ will contain a different one of the two edges of $G$ corresponding to $\tilde{\varepsilon}$. Since one of these edges is in $C^+$ and the other is in $C^-$, we conclude that one of $T'_r$ and $T'_l$ contains $C^+$ and the other one contains $C^-$. In other words, $C$ is an incompatible cycle for $T'_r$ and $T'_l$.
\end{proof}


\begin{thm}\label{thm:exist_graph_with_no_nice_ribbon_str}
There exists a semi-balanced graph so that for any ribbon structure and basis, the dissection given by Jaeger trees is not a triangulation.
\end{thm}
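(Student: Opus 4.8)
The plan is to exhibit an explicit semi-balanced graph $G$ and argue that every ribbon structure and basis produces an incompatible cycle, so that by Lemma \ref{l:compatibility_of_trees} the Jaeger-tree dissection is never a triangulation. The natural candidate is a bipartite graph of the special shape considered in Lemma \ref{l:incomp_cycle_non_planar_standard_graph}: take an underlying graph $\tilde G$ that is \emph{non-planar} — say $K_5$ or $K_{3,3}$ — and subdivide each edge by a new vertex in $W$, orienting the two halves of each subdivided edge either both toward or both away from the middle vertex (it does not matter which, since all such $G$ are semi-balanced with the stated potential). Then $\tilde G$ has the property that for \emph{any} ribbon structure, some cycle $\tilde C$ is non-separating, and in a non-planar graph one can moreover always find a non-separating cycle $\tilde C$ together with two paths from the base vertex reaching $\tilde C$ from opposite sides. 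This is exactly the hypothesis of Lemma \ref{l:incomp_cycle_non_planar_standard_graph}, which then hands us an incompatible cycle.

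The key steps, in order, are as follows. First I would fix $\tilde G$ non-planar and, observing that $b_0 \in U$ automatically (choose the base vertex among the original, non-subdivision vertices — one may always reduce to this by Lemma \ref{lem:atcsuszo_kezdopont}), reduce the whole question to a purely combinatorial statement about the ribbon graph $\tilde G$: for every ribbon structure on $\tilde G$ there is a cycle $\tilde C$ not through $b_0$ and two $b_0$–$\tilde C$ paths, one reaching $\tilde C$ from the left and one from the right. Second I would prove this statement. Since $\tilde G$ is non-planar, it has no \emph{separating} cycle in the sense defined before Proposition \ref{prop:incompatible_cycle_is_nonseparating}; so pick any cycle $\tilde C$ avoiding $b_0$ (non-planarity of $\tilde G$ — in fact connectivity plus enough edges — guarantees such a cycle exists once $G$ is large enough, and $K_5$ or $K_{3,3}$ has many). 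If $\tilde C$ were reached from only one side by all paths from $b_0$, then by the definition of ``exterior'' used in the proof of Proposition \ref{prop:incompatible_cycle_is_nonseparating} the cycle would behave like a separating cycle with respect to the component of $b_0$; one must then promote $\tilde C$ to a genuinely two-sided cycle by rerouting. Concretely: a non-separating cycle, by definition, admits a path $P$ from one point of $\tilde C$ to another that leaves from the right and arrives from the left; splicing $P$ into $\tilde C$ produces a new cycle $\tilde C'$ and, after tracking how the left/right labels transfer, one of the two $b_0$-to-cycle approaches ends up on each side. Third, feed $\tilde C$ (or $\tilde C'$) into Lemma \ref{l:incomp_cycle_non_planar_standard_graph} to conclude that it is an incompatible cycle for the given ribbon structure and basis; by Lemma \ref{l:compatibility_of_trees} the dissection is not a triangulation. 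Since the ribbon structure and basis were arbitrary, the theorem follows.

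I expect the main obstacle to be the second step — showing that for \emph{every} ribbon structure one can locate a non-separating cycle avoiding $b_0$ that is simultaneously approachable from both sides out of $b_0$. The ``every ribbon structure'' quantifier is the crux: one must argue topologically. The cleanest route is probably to use the genus interpretation of a ribbon structure: a ribbon structure on $\tilde G$ is a cellular embedding into some closed orientable surface $\Sigma$, and $\tilde G$ being non-planar means that for \emph{no} ribbon structure is $\Sigma$ the sphere. But the ribbon structure we are handed could still be a non-cellular or high-genus embedding in which the particular chosen $\tilde C$ happens to be separating on $\Sigma$; one then needs that \emph{some} cycle is non-separating (true since $\tilde G$ is not planar, so its embedding surface has positive genus or the embedding is non-cellular, either way a non-separating cycle exists), and one needs to arrange that it avoids $b_0$ and is two-sidedly reachable. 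The avoidance of $b_0$ is handled by $3$-connectivity of $K_5$/$K_{3,3}$ (one can route around any single vertex), and the two-sided reachability is exactly what non-separating means once we expand $\tilde C$ by a chord path $P$ as in the definition. Writing this carefully, and checking that the left/right conventions of the excerpt propagate correctly through the splicing, is where the real work lies; everything else is bookkeeping and appeals to Lemmas \ref{l:compatibility_of_trees} and \ref{l:incomp_cycle_non_planar_standard_graph}.
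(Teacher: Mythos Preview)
Your overall strategy matches the paper's: reduce to $b_0\in U$ via Lemma~\ref{lem:atcsuszo_kezdopont} and then invoke Lemma~\ref{l:incomp_cycle_non_planar_standard_graph}. The genuine gap is exactly where you flagged it, in the ``second step,'' and the missing idea is the choice of $\tilde G$.

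The paper takes $\tilde G=K_{16}$, not $K_5$ or $K_{3,3}$, and this is what makes the argument close. With your choice, deleting $b_0$ from $K_5$ leaves $K_4$, which is planar, so the restricted ribbon structure may well have every cycle separating; your appeal to $3$-connectivity does not help here, since $3$-connectivity says nothing about the existence of non-separating cycles after a vertex deletion. Even when a $4$-cycle $\tilde C\subset K_5-b_0$ happens to be non-separating in $K_5$, its witness path $P$ can be a single diagonal edge with no interior vertex, while all four edges out of $b_0$ could land on the same side of $\tilde C$; your ``splicing'' sketch does not visibly repair this. The paper's trick is to take $\tilde G$ large enough that both obstacles vanish: inside $K_{16}-b_0=K_{15}$ one selects a subgraph $H$ isomorphic to a \emph{subdivision} of $K_5$ (using all $5+\binom{5}{2}=15$ remaining vertices). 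Any ribbon structure restricted to $H$ is non-planar, so $H$ contains a non-separating cycle $\tilde C$ together with a witness path $P$, and $\tilde C$ automatically avoids $b_0$. Crucially, because $H$ is a subdivision, $P$ must pass through a degree-$2$ vertex $v\notin\tilde C$; since $b_0v$ is an edge of $K_{16}$, concatenating $b_0v$ with the two halves of $P$ yields paths from $b_0$ reaching $\tilde C$ from the left and from the right, respectively. Lemma~\ref{l:incomp_cycle_non_planar_standard_graph} then applies directly.

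The paper itself remarks that $K_{16}$ is likely not optimal, so your smaller candidates might ultimately work --- but not via the outline you gave.
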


\begin{proof}
We will show that the required condition holds for the directed graph $G$ obtained by subdividing each edge of the complete graph $K_{16}$ by a new vertex, and orienting each edge towards the subdividing vertex (the standard orientation). Let us denote the vertex set of $K_{16}$ by $U$, and the set of the subdividing vertices by $W$.

Fix an arbitrary ribbon structure for $G$. We need to show that for any basis $(b_0,b_0b_1)$, there will be an incompatible cycle.

If $b_0\in W$ then $b_0$ is the head of $b_0b_1$, hence by Lemma \ref{lem:atcsuszo_kezdopont}, the Jaeger trees agree with those for basis $(b_1,b_1b_0^+)$. Therefore we may suppose that $b_0\in U$, that is, a vertex of $K_{16}$.

By Lemma \ref{l:incomp_cycle_non_planar_standard_graph} it is enough to show that 
there is a non-separating cycle $C$ in $G$ such that $b_0\notin C$, with a path from $b_0$ to $C$ that reaches $C$ from the left, as well as  
a path from $b_0$ to $C$ that reaches $C$ from the right. 
Since the ribbon structure at a degree $2$ vertex is unique,
our ribbon structure of $G$ is equivalent to one of $K_{16}$, 
with an obvious one-to-one correspondence between
non-separating cycles of the two.
Hence it suffices to guarantee the existence of a cycle $\tilde C$ in $K_{16}$ with the stated properties.
Notice also that if we remove some vertices and edges from a graph, a ribbon structure is 
retained on the rest,
and if there is a cycle that is non-separating in the subgraph, then it will also be a non-separating cycle in the original graph.

Let us now remove $b_0$ from $K_{16}$, which leaves us with a copy of $K_{15}$.
Remove further edges to get a subgraph $H$ isomorphic to a $K_5$ where each edge is subdivided by a vertex.
As $K_5$ is not planar, $H$ necessarily contains a non-separating cycle $\tilde C$ (with respect to the ribbon structure induced on it).
Let $P$ be a path in $H$ that `witnesses' this, i.e., $P$ connects two vertices along $\tilde C$ so that its initial and final edges are on opposite sides of $\tilde C$.
Then $P$ has at least one vertex $v$ that is not along $\tilde C$ (indeed, there must be such a degree $2$ vertex). Now $\tilde C$ is also non-separating in $K_{16}$, in which $b_0v$ is an edge, thus by adding to $b_0v$ the portion of $P$ from $v$ to the right side of $\tilde C$ we get a path from $b_0$ to $\tilde C$ that reaches $\tilde C$ from the right, and 
similarly there also exists
a path from $b_0$ to $\tilde C$ that reaches $\tilde C$ from the left.

By further `doubling' the edges of $\tilde C$ (and the edges of $P$ and the edge $b_0v$), we obtain a cycle $C$ in $G$ that satisfies all the conditions of Lemma \ref{l:incomp_cycle_non_planar_standard_graph}. Thus an incompatible cycle does exist for our arbitrary ribbon structure and basis for $G$, which implies that Jaeger trees never induce a triangulation.
\end{proof}

We did not optimize the above proof to get a small example, and it is likely that much smaller graphs may be used to establish
the Theorem.

It can also happen that for a semi-balanced graph, each ribbon structure and basis yield a triangulation.
For example, take an even length cycle with a semi-balanced orientation.
Such graphs have a unique ribbon structure, which is planar so that Proposition \ref{p:planar_triangulation} applies. 

In our limited experience, the two extremes mentioned above are rare. That is, whether or not we get a triangulation typically depends not only on the graph but also on its ribbon structure, and even the basis. 

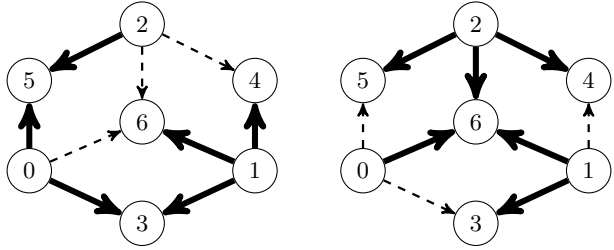
\begin{figure}[ht]
	\begin{center}
		\begin{tikzpicture}[-,>=stealth',auto,scale=0.5]
		\tikzstyle{e}=[{circle,draw}]
		\tikzstyle{v}=[{circle,draw}]
		\node[e] (3) at (0, 0.9) {{\small 3}};
		\node[v] (1) at (3, 2.3) {{\small 1}};
		\node[v] (0) at (-3, 2.3) {{\small 0}};
		\node[e] (4) at (3, 4.7) {{\small 4}};
		\node[e] (5) at (-3, 4.7) {{\small 5}};
		\node[v] (2) at (0, 6.2) {{\small 2}};
		\node[e] (6) at (0, 3.6) {{\small 6}};
		\path[->, every node/.style={font=\sffamily\small},line width=0.8mm]
		(0) edge node {} (3)
		(0) edge node {} (5)
		(1) edge node {} (3)
		(1) edge node {} (4)
		(1) edge node {} (6)
		(2) edge node {} (5);
		\path[->,dashed,every node/.style={font=\sffamily\small},line width=0.3mm]
		(0) edge node {} (6)
		(2) edge node {} (4)
		(2) edge node {} (6);
		\end{tikzpicture}
		\hspace{0.6cm}
		\begin{tikzpicture}[-,>=stealth',auto,scale=0.5]
		\tikzstyle{e}=[{circle,draw}]
		\tikzstyle{v}=[{circle,draw}]
		\node[e] (3) at (0, 0.9) {{\small 3}};
		\node[v] (1) at (3, 2.3) {{\small 1}};
		\node[v] (0) at (-3, 2.3) {{\small 0}};
		\node[e] (4) at (3, 4.7) {{\small 4}};
		\node[e] (5) at (-3, 4.7) {{\small 5}};
		\node[v] (2) at (0, 6.2) {{\small 2}};
		\node[e] (6) at (0, 3.6) {{\small 6}};
		\path[->, every node/.style={font=\sffamily\small},line width=0.8mm]
		(0) edge node {} (6)
		(1) edge node {} (3)
		(1) edge node {} (6)
		(2) edge node {} (4)
		(2) edge node {} (5)
		(2) edge node {} (6);
		\path[->,dashed,every node/.style={font=\sffamily\small},line width=0.3mm]
		(0) edge node {} (3)
		(0) edge node {} (5)
		(1) edge node {} (4);
		\end{tikzpicture}
	\end{center}
	\caption{Two incompatible Jaeger trees. 
	The incompatible cycle is the one with vertices $\{0,3,1,6\}$. 
	}
	\label{f:incomp_Jaeger_trees}
\end{figure}

\begin{prop}
For a fixed ribbon structure, whether the Jaeger trees induce a triangulation may depend on the choice of basis. 
\end{prop}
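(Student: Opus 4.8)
The plan is to rely on the explicit example of Figure \ref{f:incomp_Jaeger_trees}: the $7$-vertex bipartite graph $G$ drawn there (the triangle $012$ with each side subdivided and an apex $6$ joined to $0,1,2$, with its standard orientation), equipped with a single fixed ribbon structure which is \emph{not} the planar one, and to compare two choices of basis against it. Note first that the same underlying graph is planar, so for its planar ribbon structure Jaeger trees triangulate $\mathcal Q_G$ for every basis by Proposition \ref{p:planar_triangulation}; the point is that this breaks for the non-planar ribbon structure of Figure \ref{f:incomp_Jaeger_trees}.

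First I would treat the basis $(b_0,b_0b_1)$ indicated in Figure \ref{f:incomp_Jaeger_trees}. Running the tour (Definition \ref{def:tour_of_a_tree}) of each of the two displayed spanning trees $T_1$ (left) and $T_2$ (right) one checks directly that both are Jaeger trees for this basis, i.e.\ every non-tree edge is first visited at its tail. Taking the $4$-cycle $C$ on $\{0,3,1,6\}$ and fixing $\varepsilon_0$ to be the edge $03$, one reads off $C^+=\{03,16\}$ and $C^-=\{13,06\}$, and then that $C^+\subset T_1$ while $C^-\subset T_2$. Hence $C$ is an incompatible cycle for $T_1$ and $T_2$ in the sense of Definition \ref{def:incompatible}, so by Lemma \ref{l:compatibility_of_trees} the simplices $\mathcal Q_{T_1}$ and $\mathcal Q_{T_2}$ do not meet in a common face, and the Jaeger-tree dissection of $\mathcal Q_G$ for this basis is not a triangulation.

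Next I would exhibit a second basis $(b_0',b_0'b_1')$, keeping the same ribbon structure, for which the dissection \emph{is} a triangulation. By Lemma \ref{l:compatibility_of_trees} it suffices to show that no cycle of $G$ is incompatible for any pair of Jaeger trees for this new basis, and by Proposition \ref{prop:incompatible_cycle_is_nonseparating} the only candidates are the non-separating cycles of the fixed ribbon structure. A short topological inspection of the ribbon structure shows that, apart from $C$, every cycle of $G$ is separating, so $C$ is the only cycle that could be incompatible; I would then choose $b_0'$ so that, in the notation of the proof of Proposition \ref{prop:incompatible_cycle_is_nonseparating}, every path from $b_0'$ to $C$ reaches $C$ from the same side (equivalently, so that the sufficient condition of Lemma \ref{l:incomp_cycle_non_planar_standard_graph} fails). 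The argument of the proof of Proposition \ref{prop:incompatible_cycle_is_nonseparating} then applies verbatim to show that no Jaeger tree for $(b_0',b_0'b_1')$ contains all of $C^+$, so $C$ is not incompatible for this basis, and the dissection is a triangulation. Since $G$ is small, an alternative finish is simply to enumerate all Jaeger trees for the new basis and verify pairwise via Lemma \ref{l:compatibility_of_trees} that no incompatible cycle occurs.

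The main obstacle is bookkeeping, in two places: pinning down the non-planar ribbon structure precisely enough to certify that $T_1,T_2$ really are Jaeger for the first basis, and verifying with the second base node that every cycle other than $C$ is separating while $C$ is reached from a single side. Both reduce to careful but routine tracing of Bernardi tours; all of the conceptual content is supplied by Lemma \ref{l:compatibility_of_trees} together with Proposition \ref{prop:incompatible_cycle_is_nonseparating}.
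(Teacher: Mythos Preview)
Your treatment of the first basis is essentially the paper's: exhibit the two Jaeger trees of Figure~\ref{f:incomp_Jaeger_trees}, observe that the $4$-cycle on $\{0,3,1,6\}$ is incompatible for them, and invoke Lemma~\ref{l:compatibility_of_trees}. (The paper specifies the ribbon structure as clockwise at $0$ and counterclockwise elsewhere, and takes basis $(5,\,52)$; you should pin those down rather than refer to ``the basis indicated in the figure.'')

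The gap is in your argument for the second basis. You propose to choose $b_0'$ so that ``every path from $b_0'$ to $C$ reaches $C$ from the same side'' and then claim the proof of Proposition~\ref{prop:incompatible_cycle_is_nonseparating} ``applies verbatim.'' It does not. That proof does not merely use that $b_0$ reaches $C$ from one side; it uses that $C$ is separating to define a global notion of \emph{exterior} (every vertex off $C$ is reached only from one side), and then argues that the tour of any Jaeger tree stays in the exterior until it first meets an edge of $C-T$. If $C$ is non-separating, the exterior is not well-defined for all vertices and the tour argument collapses. Worse, your hypothesis is typically vacuous: since $C$ is non-separating, there is a witnessing path $P$ with endpoints on $C$ whose first and last edges lie on opposite sides; if $P$ has an interior vertex (as it does here), then any $b_0'\notin C$ can be routed to that vertex and from there to either side of $C$, so no such $b_0'$ exists. (And noting that the sufficient condition of Lemma~\ref{l:incomp_cycle_non_planar_standard_graph} fails proves nothing.)

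Your fallback---enumerate all Jaeger trees for the second basis and check pairwise via Lemma~\ref{l:compatibility_of_trees}---is exactly what the paper does: it takes the same ribbon structure with basis $(5,\,50)$ and asserts (after a finite check) that this yields a triangulation. So the correct finish is the direct enumeration, not the attempted structural shortcut. Note also that the paper extracts a slightly sharper conclusion than you do: with base vertex fixed at $5$, changing only the base \emph{edge} already flips the answer.
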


\begin{proof}
As an example, consider the semi-balanced directed graph of Figure \ref{f:incomp_Jaeger_trees}. 
We let the ribbon structure be clockwise around the vertex $0$ and counterclockwise around every other vertex.
With base vertex $5$ and base edge $(5,2)$ we do not get a triangulation, for the figure shows two incompatible Jaeger trees. On the other hand, it is easy to check that the same ribbon structure with base vertex $5$ and base edge $(5,0)$ yields a triangulation.
Hence whether we get a triangulation may depend even on the choice of base edge.
\end{proof}

Note that the example used above also has a plane-induced ribbon structure, for which Proposition \ref{p:planar_triangulation} guarantees a triangulation with any basis. We end this section with recalling that Question \ref{q:triang_greedoid_relationship} 
also addressed the problem of when Jaeger trees induce a triangulation.


\section{Product and recursion formulas}
\label{sec:formula}


The interior polynomial is a meaningful invariant of (undirected) bipartite graphs due to its natural properties. The goal of this section is to extend two of those to all semi-balanced graphs. In the process we will also address interior polynomials for disconnected graphs.

One of the properties is a certain product formula \cite[Theorem 6.7]{hiperTutte}, which turns out to have a verbatim extension to the general semi-balanced case. The statement is independent of ribbon structures, but they (and their Jaeger trees) do come in handy in the proof.

\begin{prop}
\label{p:product}
Let $G_1$ and $G_2$ be connected semi-balanced graphs with $V(G_1)\cap V(G_2)=\{h,t\}$, so that $E(G_1)\cap E(G_2)=\{\overrightarrow{th}\}$.
Then we have $I_{G_1 \cup G_2}=I_{G_1}\cdot I_{G_2}$.
\end{prop}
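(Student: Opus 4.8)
The plan is to deduce the product formula from the shellable-dissection description of the interior polynomial (Theorem~\ref{thm:shelling} together with \eqref{eq:intash}). Fix any ribbon structure and basis $(b_0,b_0b_1)$ for $G_1$ such that $b_0=t$ and $b_1=h$ (so the base edge is the shared edge $\overrightarrow{th}$). Separately, fix any ribbon structure and basis for $G_2$ whose base node is $h$, say, and whose base edge is again $\overrightarrow{th}$. Combine these into a ribbon structure on $G=G_1\cup G_2$ by, at $t$ and at $h$, interleaving the cyclic orders so that around $t$ all edges of $G_1$ come first (ending with $\overrightarrow{th}$) and then all edges of $G_2$, and symmetrically around $h$; use $(b_0,b_0b_1)=(t,\overrightarrow{th})$ as the basis for $G$. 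Every other vertex lies in exactly one of the $G_i$ and keeps its local cyclic order. Because the edge $\overrightarrow{th}$ is a cut-edge of $G$, every spanning tree of $G$ is a disjoint union $T=T_1\cup T_2$ with $T_i$ a spanning tree of $G_i$, and $\overrightarrow{th}\in T$ always (indeed, by the Jaeger condition relative to this basis, $\overrightarrow{th}$ is forced into every Jaeger tree, cf.\ the argument in Lemma~\ref{lem:atcsuszo_kezdopont}).

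The heart of the argument is that, with this setup, the tour of $T=T_1\cup T_2$ decomposes cleanly: starting from $(t,\overrightarrow{th})$, the tour first traverses $\overrightarrow{th}$, then walks entirely within $G_2$ performing exactly the tour of $T_2$ (with respect to basis $(h,\overrightarrow{th})$), eventually traverses $\overrightarrow{th}$ back to $t$, and then walks entirely within $G_1$ performing the tour of $T_1$. This is because the cut-edge $\overrightarrow{th}$ separates $V(G)$ into $V(G_1)$ and $V(G_2)$ sharing only $\{t,h\}$, and the interleaving of cyclic orders at $t$ and $h$ is arranged so that, upon returning to $t$ along $\overrightarrow{th}$, the next edge is the first $G_1$-edge at $t$. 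From this it follows immediately that $T$ is a Jaeger tree of $G$ if and only if $T_1$ is a Jaeger tree of $G_1$ and $T_2$ is a Jaeger tree of $G_2$: each non-tree edge lies in a single $G_i$, and the order in which its two endpoint-incidences appear in the tour of $T$ matches that in the tour of $T_i$. So Jaeger trees of $G$ are exactly $\{T_1\cup T_2\}$ with $T_i$ Jaeger in $G_i$.

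Next I would match up the semi-passivity statistics. The fundamental cut $C^*(T,\varepsilon)$ of an edge $\varepsilon\in T_i\setminus\{\overrightarrow{th}\}$, taken in $G$, equals the fundamental cut of $\varepsilon$ taken in $G_i$ (again because $\overrightarrow{th}$ is a cut-edge, removing $\varepsilon$ from $T$ separates the $G_i$-part and leaves the other $G_j$ hanging off via $\overrightarrow{th}$, which is \emph{not} in the cut). The base component of $T-\varepsilon$ restricted to $G_1$ is the base component of $T_1-\varepsilon$; for $\varepsilon\in T_2$, the ``base component'' of $T-\varepsilon$ is the side containing $t$, hence the side containing $h$ (as $\overrightarrow{th}\in T$), which is exactly the base component of $T_2-\varepsilon$ with respect to the basis $(h,\overrightarrow{th})$. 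Moreover the $T$-order restricted to the edges of $G_i$ coincides with the $T_i$-order, by the tour decomposition above. Using the characterization \ref{otodik} of Lemma~\ref{l:activities_description} (tail in the base component plus an opposite edge in the fundamental cut), an edge $\varepsilon\in T_i$, $\varepsilon\neq\overrightarrow{th}$, is internally semi-passive in $T$ (w.r.t.\ $G$) if and only if it is internally semi-passive in $T_i$ (w.r.t.\ $G_i$). Finally $\overrightarrow{th}$ itself is never internally semi-passive in $T$ for this basis, by condition \ref{elso} of Lemma~\ref{l:activities_description} (as the base edge, it is forced into every Jaeger tree and cannot be a first difference). Therefore the number of semi-passive edges of $T=T_1\cup T_2$ is the sum of those of $T_1$ and of $T_2$.

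Putting it together via \eqref{eq:intash}:
\[
I_{G_1\cup G_2}(x)=\sum_{T\text{ Jaeger in }G}x^{\#\text{semi-passive}(T)}
=\sum_{T_1}\sum_{T_2}x^{\#\text{sp}(T_1)+\#\text{sp}(T_2)}
=\Big(\sum_{T_1}x^{\#\text{sp}(T_1)}\Big)\Big(\sum_{T_2}x^{\#\text{sp}(T_2)}\Big)
=I_{G_1}(x)\,I_{G_2}(x),
\]
where $T_i$ ranges over Jaeger trees of $G_i$ for the chosen ribbon structures and bases; since $I_{G_i}$ is independent of those choices (Corollary~\ref{c:invariance} and Theorem~\ref{thm:shelling}/\eqref{eq:intash}), the formula holds unconditionally. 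The main obstacle I anticipate is making the tour-decomposition lemma fully rigorous — i.e.\ verifying carefully that with the interleaved cyclic orders at $t$ and $h$, the tour of $G$ really does split as ``traverse $\overrightarrow{th}$, do the tour of $T_2$, traverse $\overrightarrow{th}$ back, do the tour of $T_1$,'' with no leakage across the cut-edge at intermediate times; once that is established, the passivity bookkeeping is routine. (The vertex-fusion case $V(G_1)\cap V(G_2)=\{v\}$, if also needed, is handled the same way but is even easier, since then there is no shared edge and the tour simply concatenates after the portion of the tour of $G_1$ returning to $v$; but the statement as given only concerns the edge-fusion case.)
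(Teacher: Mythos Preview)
Your overall strategy matches the paper's (decompose Jaeger trees and their semi-passivity statistics along the shared edge, then multiply the generating functions), but there is a genuine error in your setup that breaks the argument as written.

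The edge $\overrightarrow{th}$ is \emph{not} a cut-edge of $G=G_1\cup G_2$ in general: each $G_i$ may well contain other $t$--$h$ paths. So your claim that ``every spanning tree of $G$ is a disjoint union $T=T_1\cup T_2$ with $T_i$ a spanning tree of $G_i$'' is false without the extra hypothesis $\overrightarrow{th}\in T$. You try to recover this by saying that $\overrightarrow{th}$ is forced into every Jaeger tree, citing Lemma~\ref{lem:atcsuszo_kezdopont}, but that lemma forces the base edge into all Jaeger trees only when the base node is its \emph{head}. You chose the basis $(t,\overrightarrow{th})$ with $t$ the tail; then the tour begins at $(t,\overrightarrow{th})$, and skipping $\overrightarrow{th}$ there is perfectly consistent with the Jaeger rule (non-edges cut at their tail). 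So $\overrightarrow{th}$ is not forced, the decomposition of Jaeger trees fails, and the ``tour decomposition'' you describe (which also leans on the false cut-edge premise) does not go through.

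The fix is exactly what the paper does: take the basis $(h,\overrightarrow{th})$ with the \emph{head} as base node. Then $\overrightarrow{th}$ is forced into every Jaeger tree, and for any spanning tree containing $\overrightarrow{th}$ one checks (using that $t$ and $h$ are joined in each $T_i$ precisely by the edge $\overrightarrow{th}$) that $T_i=T\cap G_i$ is a spanning tree of $G_i$ and that $C^*(T,\varepsilon)=C^*(T_i,\varepsilon)$ for $\varepsilon\in T_i\setminus\{\overrightarrow{th}\}$. Moreover, no special interleaving of cyclic orders at $t$ and $h$ is needed: for an \emph{arbitrary} ribbon structure on $G$, deleting from the tour of $T$ all node-edge pairs whose edge lies outside $G_i$ yields exactly the tour of $T_i$ in $G_i$ with basis $(h,\overrightarrow{th})$. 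Your passivity bookkeeping (via condition~\ref{otodik} of Lemma~\ref{l:activities_description}, plus the observation that $\overrightarrow{th}$ is semi-active since it is first seen at its head) is then correct.
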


It will soon turn out that 
connectedness is unimportant, cf.\ Corollary \ref{cor:product}.
Also, the same formula holds when two graphs are fused at a vertex. Indeed, the root polytope of a tree is a single unimodular simplex and thus its interior polynomial is $1$, cf.\ Example \ref{ex:trees}. Consider a tree of just two edges and fuse one of those edges with an edge of $G_1$ and the other edge with an edge of $G_2$. (If $G_1$ or $G_2$ is a single point then the claim is trivial.)
Applying Proposition \ref{p:product} twice gives the desired result.

\begin{proof}
Let us choose an arbitrary ribbon structure for $G$, and let the basis be $(h, ht)$. For any spanning tree $T$ of $G$, let $T_1$ be the part of $T$ falling into $G_1$, and let $T_2$ be the part of $T$ falling into $G_2$. As the only common vertices of $G_1$ and $G_2$ are $t$ and $h$, if $th\in T$, then both $T_1$ and $T_2$ are connected, hence they are spanning trees of $G_1$ and $G_2$, respectively.

Since $h$ is the head of $ht$, each Jaeger tree of $G$ with basis $(h,ht)$ must contain $ht$. We will show that a tree $T$ of $G$ containing $ht$ is a Jaeger tree (for the basis $(h,ht)$) if and only if $T_1$ is a Jaeger tree of $G_1$ for the basis $(h,ht)$ and $T_2$ is a Jaeger tree of $G_2$ for the basis $(h,ht)$, moreover, in this case the internal semi-passivity of $T$ is the sum of the internal semi-passivities of $T_1$ and of $T_2$, respectively. These assertions imply the statement of the Proposition by Theorem \ref{thm:shelling}.

Consider the tour of $T$ in $G$ with basis $(h,ht)$. If we delete from this the edges that are not in $G_1$, we get the tour of $T_1$ in $G_1$. Likewise, if we delete the edges that are not in $G_2$, then we get the tour of $T_2$ in $G_2$. Hence both of these tours do satisfy the Jaeger property. 

Conversely, if we have two Jaeger trees $T_1$ and $T_2$ of $G_1$ and $G_2$, respectively (for the same basis $(h.ht)$), then both of them have to contain $ht$, and their union $T$ is therefore a tree. Moreover, the restriction of the tour of $T$ to $G_i$ is the tour of $T_i$, and each non-edge of $T$ is either within $G_1$ or $G_2$, hence each non-edge is first seen from its tail. Thus, $T$ is a Jaeger tree of $G$. 

For any Jaeger tree $T$ of $G$, the edge $ht$ is semi-active, because it is first seen at its head. Any other edge $ht \neq \varepsilon\in T$ is such that $\varepsilon$ is in $G_i$ for $i=1$ or $2$, in particular its fundamental cut lies entirely within $G_i$. Hence in this case, the condition \ref{otodik} of Lemma \ref{l:activities_description} for the semi-activity of $\varepsilon$ in $T$ agrees with the condition for the semi-activity of $\varepsilon$ in $T_i$. (Indeed, $\varepsilon$ is reached at its tail in $G$ if and only if it is reached at its tail in $G_i$, and the question whether the fundamental cut is a directed cut is also answered the same way for $G$ and for $G_i$.)

Thus indeed, the internal semi-passivity of $T$ is the sum of the internal semi-passivities of $T_1$ and $T_2$. 
\end{proof}

According to \eqref{eq:h-csillag}, the interior polynomial of a connected semi-balanced graph $G$ with vertex set $V$, is $I_G(t)=(1-t)^{|V|-1}\ehr_{\mathcal Q_G}(t)$. Here $|V|-1$ is indeed $1$ more than the dimension of the root polytope. However when $G$ is no longer connected, then Corollary \ref{cor:max_simplices:of_root_polytope} provides that $\dim\mathcal Q_G$ drops from $|V|-2$ to $|V|-1-c(G)$, where $c(G)$ is the number of connected components. That is, the $h^*$-vector of $\mathcal Q_G$ becomes $(1-t)^{|V|-c(G)}\ehr_{\mathcal Q_G}(t)$. However instead of this, we choose to keep 
\begin{equation}
\label{eq:disconnect}
I_G(t)=(1-t)^{|V|-1}\ehr_{\mathcal Q_G}(t)=(1-t)^{c(G)-1}h^*_{\mathcal Q_G}(t)
\end{equation}
as our definition of the \emph{interior polynomial} of a (not necessarily connected) semi-balanced graph. The extra factor of $(1-t)^{c(G)-1}$ will help keep the formula of Theorem \ref{thm:rekurzio} below simple, and it is also in line with Kato's extension $I'$ \cite{kato_signed} of the interior polynomial to disconnected graphs.

Let us make a few simple observations about our extended invariant $I$.

\begin{lemma}
\label{lem:bridge}
Suppose that the semi-balanced graph $G$ contains a bridge edge $\varepsilon$. Then we have $I_{G-\varepsilon}(t)=(1-t)I_G(t)$.
\end{lemma}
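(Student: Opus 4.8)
## Proof proposal

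\textbf{The plan is to} reduce to the connected case via the product formula and then compute directly. Let $\varepsilon=\overrightarrow{th}$ be a bridge in the semi-balanced graph $G$. Removing $\varepsilon$ disconnects $G$ into two pieces; call them $G_1$ (the component that ends up containing $t$) and $G_2$ (containing $h$), so that $G-\varepsilon=G_1\sqcup G_2$ and $c(G-\varepsilon)=c(G)+1$. Here I should be mildly careful: if $G$ was already disconnected, $\varepsilon$ is a bridge inside one of its components, and $G_1,G_2$ should be taken to include all the other components as well; but since $I$ is multiplicative over disjoint unions up to the $(1-t)$ bookkeeping factor (see below), it does no harm to assume $G$ is connected, handle that case, and then observe the general case follows.

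\textbf{First I would} record the behavior of $I$ under disjoint union, which is implicit in the definition \eqref{eq:disconnect}: if $H=H_1\sqcup H_2$ then $\mathcal Q_H=\mathcal Q_{H_1}\times\mathcal Q_{H_2}$ (the vertex sets are disjoint, so the two families of vectors $\mathbf h-\mathbf t$ live in complementary coordinate subspaces and the convex hull of their union is the product), hence $\ehr_{\mathcal Q_H}(t)=\ehr_{\mathcal Q_{H_1}}(t)\cdot\ehr_{\mathcal Q_{H_2}}(t)$, and therefore
\[
I_H(t)=(1-t)^{|V(H)|-1}\ehr_{\mathcal Q_H}(t)=(1-t)^{-1}\,I_{H_1}(t)\,I_{H_2}(t).
\]
So $I_{G-\varepsilon}(t)=(1-t)^{-1}I_{G_1}(t)I_{G_2}(t)$. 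Now form the graph $G'$ obtained from $G_1$ and $G_2$ by fusing them along the single edge $\varepsilon$, i.e., identifying $t\in V(G_1)$ with $t\in V(G_2)$, identifying $h$ with $h$, and taking $\varepsilon$ as the common edge $E(G_1)\cap E(G_2)=\{\overrightarrow{th}\}$. Then $G'$ has the same root polytope as $G$ (they are literally the same directed graph: fusing along a bridge recreates $G$), so $I_{G'}=I_G$. By Proposition \ref{p:product}, $I_{G'}=I_{G_1}\cdot I_{G_2}$ — but here I must be slightly cautious, since Proposition \ref{p:product} as stated requires $G_1,G_2$ connected, so this is really where the reduction to the connected case is used; for connected $G$ both $G_1$ and $G_2$ are connected and the hypothesis is met.

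\textbf{Then the conclusion drops out:} combining $I_G=I_{G_1}I_{G_2}$ with $I_{G-\varepsilon}=(1-t)^{-1}I_{G_1}I_{G_2}$ gives $I_{G-\varepsilon}(t)=(1-t)^{-1}I_G(t)$, i.e., $(1-t)I_{G-\varepsilon}(t)=I_G(t)$. Wait — that is the reciprocal of the claimed identity, so I need to double-check the direction: the claim is $I_{G-\varepsilon}(t)=(1-t)I_G(t)$. Indeed that is consistent, because $\dim\mathcal Q_{G-\varepsilon}=\dim\mathcal Q_G-1$ (we lose one vertex of the polytope worth of dimension when the bridge disappears and a component splits off), so the extra $(1-t)$ factor in \eqref{eq:disconnect} for $G-\varepsilon$ versus $G$ is exactly what produces the factor $(1-t)$ on the polynomial side; concretely $I_{G-\varepsilon}(t)=(1-t)^{|V|-1}\ehr_{\mathcal Q_{G-\varepsilon}}(t)$ while $I_G(t)=(1-t)^{|V|-1}\ehr_{\mathcal Q_G}(t)$ with $|V|$ the same, and $\ehr_{\mathcal Q_{G-\varepsilon}}=\ehr_{\mathcal Q_{G_1}}\ehr_{\mathcal Q_{G_2}}$ while $\ehr_{\mathcal Q_G}=\ehr_{\mathcal Q_{G'}}$ and $h^*_{\mathcal Q_{G'}}=h^*_{\mathcal Q_{G_1}}h^*_{\mathcal Q_{G_2}}$ up to the right powers of $(1-t)$. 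So the bookkeeping of exponents is the real content, and I would lay it out as a short explicit computation rather than invoking Proposition \ref{p:product} at all if I wanted a self-contained argument: $I_{G-\varepsilon}=(1-t)^{c(G-\varepsilon)-1}h^*_{\mathcal Q_{G-\varepsilon}}=(1-t)^{c(G)}h^*_{\mathcal Q_{G_1}}h^*_{\mathcal Q_{G_2}}$, and since $\mathcal Q_G$ and $\mathcal Q_{G_1}\sqcup\mathcal Q_{G_2}$ differ by fusing along an edge, an easy argument (or Proposition \ref{p:product}) shows $h^*_{\mathcal Q_G}=h^*_{\mathcal Q_{G_1}}h^*_{\mathcal Q_{G_2}}$, whence $I_{G-\varepsilon}=(1-t)^{c(G)}h^*_{\mathcal Q_G}=(1-t)\cdot(1-t)^{c(G)-1}h^*_{\mathcal Q_G}=(1-t)I_G$.

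\textbf{The main obstacle} is not any deep difficulty but rather getting the exponents of $(1-t)$ and the component counts exactly right, and making sure Proposition \ref{p:product} (or its self-contained substitute) is applied to genuinely connected pieces; once the disjoint-union behavior of $I$ from \eqref{eq:disconnect} is nailed down, the rest is formal.
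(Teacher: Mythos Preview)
Your approach differs substantially from the paper's and contains real gaps.

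The paper's proof is a direct geometric observation: a bridge is a one-element cut, so by Remark~\ref{rem:hyperplane_from_cut} there is a hyperplane containing $\mathcal Q_{G-\varepsilon}$ but not $\mathbf x_\varepsilon$. Thus $\mathcal Q_G$ is a lattice pyramid over $\mathcal Q_{G-\varepsilon}$, and coning a unimodular triangulation of the base gives one of the pyramid with the same $h$-vector, hence the same $h^*$. The factor $(1-t)$ then enters solely through the jump in $c(G)$ in definition~\eqref{eq:disconnect}.

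Your route through Proposition~\ref{p:product} and a disjoint-union formula has several concrete problems:
\begin{itemize}
\item The polytope $\mathcal Q_{H_1\sqcup H_2}$ is the \emph{join} $\conv(\mathcal Q_{H_1}\cup\mathcal Q_{H_2})$, not the Cartesian product. Ehrhart series do multiply for this join (one checks it using the potentials to split lattice points by level), but they do \emph{not} multiply for Cartesian products, so your stated justification is wrong even though the conclusion $\ehr_{\mathcal Q_H}=\ehr_{\mathcal Q_{H_1}}\ehr_{\mathcal Q_{H_2}}$ is correct.
\item Your exponent is wrong: the computation yields $I_{H_1\sqcup H_2}=(1-t)\,I_{H_1}I_{H_2}$, not $(1-t)^{-1}$. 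You noticed the discrepancy but did not actually resolve it. Note also that in the paper's logical order this disjoint-union identity is Lemma~\ref{lem:disjoint_union}, proved \emph{using} Lemma~\ref{lem:bridge}; your instinct to derive it independently is right, but the execution is faulty.
\item Proposition~\ref{p:product} requires both pieces to contain the shared edge $\overrightarrow{th}$. Your $G_1,G_2$, being the components of $G-\varepsilon$, contain neither $\varepsilon$ nor both endpoints, so the proposition does not apply as you wrote it. The honest fix is to set $\tilde G_i=G_i\cup\{\varepsilon\}$ (adding the missing endpoint as a pendant), obtain $I_G=I_{\tilde G_1}I_{\tilde G_2}$, and then argue $I_{\tilde G_i}=I_{G_i}$. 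But ``removing a pendant edge preserves $I$'' is a special case of the very lemma under proof.
\end{itemize}

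There is a non-circular rescue---prove pendant-invariance by applying Proposition~\ref{p:product} with a two-edge path as one of the pieces, so that $I_{\tilde G_i}=I_{G_i}\cdot I_{\text{path}}=I_{G_i}$---but once all the bookkeeping is corrected you have done considerably more work than the paper's one-step cone argument.
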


\begin{proof}
A bridge $\varepsilon$ can be viewed as a one-element cut. As explained in Remark \ref{rem:hyperplane_from_cut}, there is a hyperplane that contains $\mathcal Q_{G-\varepsilon}$ but does not contain $\mathbf x_\varepsilon$; in other words, $\mathcal Q_G$ is a cone over $\mathcal Q_{G-\varepsilon}$. Any triangulation of $\mathcal Q_{G-\varepsilon}$ has the same $h$-vector as the triangulation of $\mathcal Q_G$ obtained by coning. Here if we triangulate without introducing new vertices, then Proposition \ref{prop:aff_indep_in_root_polytope} and Corollary \ref{cor:unimod} guarantee that all simplices are unimodular. Thus by \eqref{eq:generator}, the $h^*$-vectors of the two polytopes also coincide. The difference in interior polynomials is then only due to the difference in the number of connected components, in the way we claimed.
\end{proof}

\begin{lemma}
\label{lem:disjoint_union}
If $G_1,\ldots,G_k$ are the connected components of the semi-balanced graph $G$, then we have
\[I_G(t)=(1-t)^{k-1}I_{G_1}(t)\cdots I_{G_k}(t).\]
Consequently, if $G=G_1\sqcup G_2$ is the disjoint union of the (not necessarily connected) semi-balanced graphs $G_1$ and $G_2$, then we have $I_G(t)=(1-t)I_{G_1}(t)I_{G_2}(t)$.
\end{lemma}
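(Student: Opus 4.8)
The plan is to prove Lemma \ref{lem:disjoint_union} by reducing the disjoint-union case to the connected case via a tree-of-bridges construction, using Lemma \ref{lem:bridge} and Proposition \ref{p:product} (or rather the vertex-fusion consequence of it) as the main tools.

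First I would treat the case $k=2$ and then induct. So suppose $G=G_1\sqcup G_2$ with $G_1,G_2$ both connected; the general case will follow by iterating. The idea is to build a connected semi-balanced graph $\hat G$ from $G_1$ and $G_2$ by adding a single new edge $\varepsilon$ joining a vertex of $G_1$ to a vertex of $G_2$, oriented arbitrarily. Then $\varepsilon$ is a bridge in $\hat G$, and $\hat G$ is still semi-balanced: any cycle of $\hat G$ avoids $\varepsilon$ (since $\varepsilon$ is a bridge), hence lies entirely in $G_1$ or in $G_2$, so it is semi-balanced. By Lemma \ref{lem:bridge} applied to $\hat G$ and its bridge $\varepsilon$, we get $I_{\hat G - \varepsilon}(t) = (1-t) I_{\hat G}(t)$, i.e.\ $I_G(t) = (1-t) I_{\hat G}(t)$. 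So it remains to show $I_{\hat G}(t) = I_{G_1}(t) I_{G_2}(t)$.

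For that I would invoke the vertex-fusion version of Proposition \ref{p:product}, which is stated in the paragraph following that proposition: if a connected semi-balanced graph is obtained by fusing two connected semi-balanced graphs at a single vertex, the interior polynomial multiplies. Concretely, view $\varepsilon$ as a graph consisting of one edge and two vertices; it is a tree, so $I_\varepsilon = 1$ by Example \ref{ex:trees}. Fuse one endpoint of $\varepsilon$ with the chosen vertex of $G_1$ and the other endpoint with the chosen vertex of $G_2$; the result is exactly $\hat G$. Applying the fusion formula twice (first fuse $\varepsilon$ with $G_1$, obtaining a graph with interior polynomial $I_{G_1}\cdot 1 = I_{G_1}$, then fuse the result with $G_2$ along the remaining endpoint of $\varepsilon$, obtaining $I_{G_1}\cdot I_{G_2}$) gives $I_{\hat G}(t) = I_{G_1}(t) I_{G_2}(t)$, as desired. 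Hence $I_G(t) = (1-t) I_{G_1}(t) I_{G_2}(t)$, proving the "Consequently" sentence for connected $G_1, G_2$; the fully general (disconnected $G_i$) version then follows by a straightforward induction on the total number of components, using the factor bookkeeping in \eqref{eq:disconnect}.

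For the main statement with $k$ components, I would induct on $k$: writing $G = G_1 \sqcup (G_2 \sqcup \cdots \sqcup G_k)$ and applying the two-component case gives $I_G(t) = (1-t) I_{G_1}(t) I_{G_2 \sqcup \cdots \sqcup G_k}(t)$, and then the inductive hypothesis $I_{G_2 \sqcup \cdots \sqcup G_k}(t) = (1-t)^{k-2} I_{G_2}(t) \cdots I_{G_k}(t)$ yields the claimed formula. The one point requiring a little care — and the step I expect to be the only real obstacle — is checking that the vertex-fusion form of Proposition \ref{p:product} is legitimately applicable here, i.e.\ that fusing at a vertex genuinely reduces (via a two-edge tree) to the edge-fusion statement of the proposition and that all intermediate graphs are connected and semi-balanced; but this is exactly the reduction already spelled out in the paragraph after Proposition \ref{p:product}, so it is available to us. Everything else is formal manipulation of the definition \eqref{eq:disconnect} and of component counts.
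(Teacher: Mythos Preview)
Your proposal is correct and follows essentially the same approach as the paper: connect two components by a single bridge, use Lemma \ref{lem:bridge} to pick up the factor $(1-t)$, and use Proposition \ref{p:product} (in its vertex-fusion form) together with $I_\varepsilon=1$ to factor the interior polynomial of the bridged graph; then induct on the number of components. The only cosmetic difference is that the paper phrases the product step as attaching a pendant edge to each $G_i$ and applying the edge-fusion statement of Proposition \ref{p:product} directly, whereas you invoke the vertex-fusion consequence twice --- these are equivalent maneuvers.

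One small organizational point: in your inductive step you write $G=G_1\sqcup(G_2\sqcup\cdots\sqcup G_k)$ and ``apply the two-component case'', but the second factor is disconnected, while your two-component argument (via Proposition \ref{p:product}) was only established for connected pieces. This is harmless --- the clean fix is to add the bridge between $G_{k-1}$ and $G_k$, obtaining a graph with $k-1$ connected components, apply the inductive hypothesis to that, and use Lemma \ref{lem:bridge} (which does not require connectedness) --- but as written the logic is slightly circular. The paper's wording of the induction is similarly terse; both arguments are easily made rigorous in the way just indicated.
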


\begin{proof}
The first claim follows by induction on $k$. Indeed it is trivial for $k=1$. 
Let us now assume that $k=2$. For $i=1,2$, construct the graph $G'_i$ by adding a new vertex to $G_i$ and connecting it to an arbitrary old vertex. The resulting graphs $G_1'$ and $G_2'$ satisfy $I_{G_1}=I_{G_1'}$ and  $I_{G_2}=I_{G_2'}$ by Proposition \ref{p:product}. If we fuse $G'_1$ and $G'_2$ along the new edges so that the new graph $G_{12}$ is $G_1$ and $G_2$ joined by a single edge, then again by Proposition \ref{p:product}, we have $I_{G_{12}}=I_{G_1'}I_{G_2'}=I_{G_1}I_{G_2}$. By Lemma \ref{lem:bridge} this implies that 
\[I_{G_1\sqcup G_2}(t)=(1-t)I_{G_{12}}(t)=(1-t)I_{G_1}(t)I_{G_2}(t),\]
just as we claimed. Applying the above formula to the graphs $G_1\sqcup\cdots\sqcup G_i$ and $G_{i+1}$ provides the general inductive step. Finally, the second claim of the Lemma is an obvious application of the first.
\end{proof}

\begin{cor}
\label{cor:product}
The formula of Proposition \ref{p:product} (multiplicativity of the interior polynomial when fusing graphs at an edge or vertex) holds without assuming that $G_1$ and $G_2$ are connected.
\end{cor}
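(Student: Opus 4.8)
The goal is to remove the connectedness hypothesis from Proposition~\ref{p:product}, i.e.\ to show $I_{G_1\cup G_2}=I_{G_1}\cdot I_{G_2}$ when $G_1,G_2$ are semi-balanced graphs (not necessarily connected) sharing exactly the two vertices $h,t$ and exactly the edge $\overrightarrow{th}$. The plan is to bootstrap from the connected case, which is already proved, using the two lemmas immediately above (Lemma~\ref{lem:bridge} and Lemma~\ref{lem:disjoint_union}) to handle the bookkeeping of connected components.

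First I would set up notation: write $G=G_1\cup G_2$ and let $a=c(G_1)$, $b=c(G_2)$ be the numbers of connected components. The edge $\overrightarrow{th}$ lies in $G_1$, so $h$ and $t$ are in the same component of $G_1$; likewise in $G_2$. Hence when we glue, that one shared component of $G_1$ and that one shared component of $G_2$ merge into a single component of $G$, while all other components are untouched. This gives $c(G)=a+b-1$. Next, decompose: let $H_1$ be the component of $G_1$ containing $\overrightarrow{th}$ and let $H_2$ be the component of $G_2$ containing $\overrightarrow{th}$; these are connected and meet exactly in $\{h,t\}$ with common edge $\overrightarrow{th}$, so Proposition~\ref{p:product} (the connected case) gives $I_{H_1\cup H_2}=I_{H_1}\cdot I_{H_2}$. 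The remaining components of $G_1$ (there are $a-1$ of them, call their union $R_1$) and of $G_2$ (there are $b-1$ of them, union $R_2$) are disjoint from everything they are not supposed to meet, so $G = (H_1\cup H_2)\sqcup R_1\sqcup R_2$ as a disjoint union of semi-balanced graphs.

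Then I would simply apply Lemma~\ref{lem:disjoint_union} repeatedly. On the one hand, applying it to $G_1=H_1\sqcup R_1$ (where $R_1$ has $a-1$ components) and again to $G_2=H_2\sqcup R_2$ yields
\begin{align*}
I_{G_1}(t)&=(1-t)^{a-1}I_{H_1}(t)\,\big[\text{(product over components of }R_1\text{)}\big],\\
I_{G_2}(t)&=(1-t)^{b-1}I_{H_2}(t)\,\big[\text{(product over components of }R_2\text{)}\big].
\end{align*}
On the other hand, applying Lemma~\ref{lem:disjoint_union} to the decomposition $G=(H_1\cup H_2)\sqcup R_1\sqcup R_2$, whose total number of summand-components is $1+(a-1)+(b-1)=a+b-1=c(G)$, gives
\[
I_G(t)=(1-t)^{(a+b-1)-1}\,I_{H_1\cup H_2}(t)\,\big[\text{(product over components of }R_1\text{ and }R_2\text{)}\big].
\]
Substituting $I_{H_1\cup H_2}=I_{H_1}I_{H_2}$ and comparing the powers of $(1-t)$ — namely $(a-1)+(b-1)=(a+b-1)-1$ — shows the two sides match term by term, so $I_G=I_{G_1}\cdot I_{G_2}$.

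There is essentially no hard obstacle here; the one thing to be careful about is the exact accounting of the $(1-t)$ factors, i.e.\ checking that $c(G)=c(G_1)+c(G_2)-1$ (which uses that the glued edge forces $h,t$ into a single component of each $G_i$) and that the exponents coming out of the three applications of Lemma~\ref{lem:disjoint_union} reconcile. A subtlety worth a sentence is that Lemma~\ref{lem:disjoint_union} is stated for "the connected components'' but its corollary form ("if $G=G_1\sqcup G_2$ is a disjoint union of not-necessarily-connected graphs then $I_G=(1-t)I_{G_1}I_{G_2}$'') is exactly what licenses splitting off $R_1$ and $R_2$ as single (possibly disconnected) pieces, so I would invoke that corollary form. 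With those checks in place the proof is a two-line computation, which is why I expect the write-up to be short.
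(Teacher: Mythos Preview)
Your proof is correct and follows essentially the same approach as the paper: decompose $G_1$, $G_2$, and $G_1\cup G_2$ via Lemma~\ref{lem:disjoint_union}, apply the connected case of Proposition~\ref{p:product} to the components $H_1$ and $H_2$ containing the shared edge, and verify the $(1-t)$ exponents match. The paper compresses this into a single sentence, but the content is identical.
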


\begin{proof}
Apply Lemma \ref{lem:disjoint_union} to $G_1$ and $G_2$ separately on the right hand side, then Proposition \ref{p:product} to the two components that are being merged, finally Lemma \ref{lem:disjoint_union} again, this time to $G_1\cup G_2$, to arrive at the left hand side.
\end{proof}

Finally, let us turn to the other remarkable property that we alluded to earlier. 
That is, our last claim is a generalization of Kato's recursion formula \cite[Corollary 1.3]{kato_signed}. The proof follows his argument closely.

\begin{thm}
\label{thm:rekurzio}
Let $G$ be a semi-balanced graph and $C$ a cycle in $G$. If $C^+$ denotes the half of the edges of $C$ that point in one of the cyclic directions, then we have
\[\sum_{S\subset C^+}(-1)^{|S|}I_{G-S}=0.\]
\end{thm}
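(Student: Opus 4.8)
\textbf{Proof plan for Theorem \ref{thm:rekurzio}.}

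The plan is to fix an arbitrary ribbon structure and basis for $G$ and to interpret every term $I_{G-S}$ via Theorem \ref{thm:shelling}, i.e., as the generating function $\sum_T x^{\#\{\text{semi-passive edges of }T\}}$ summed over Jaeger trees $T$ of $G-S$. The key observation is that since $S\subset C^+$ only deletes edges of a single semi-balanced cycle $C$, a Jaeger tree of $G-S$ is the same thing as a Jaeger tree $T$ of $G$ with $T\cap S=\varnothing$; moreover, because $T$ is cycle-free it cannot contain all of $C$, so $C\not\subset T$ and the relevant combinatorics is governed by the position of $C\setminus T$. So I would rewrite the alternating sum as a double sum over pairs $(S,T)$ with $S\subset C^+$, $T$ a Jaeger tree of $G$, and $S\cap T=\varnothing$, weighted by $(-1)^{|S|}x^{r(T)}$, where $r(T)$ counts semi-passive edges. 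Here one has to be careful that deleting edges of $C$ can change the $T$-order of the remaining edges and hence the semi-passivity count — this is the point where I expect the main difficulty.

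The cleaner route, mirroring Kato's argument, is to avoid recomputing semi-passivities after deletion by instead fixing the Jaeger tree first. For a fixed Jaeger tree $T$ of $G$ with $C^+\cap T =: C^+_T$ and $C^+\setminus T =: C^+_{\bar T}$, the inner sum over admissible $S$ (those $S\subset C^+$ with $S\cap T=\varnothing$, i.e., $S\subset C^+_{\bar T}$) of $(-1)^{|S|}$ equals $\sum_{S\subset C^+_{\bar T}}(-1)^{|S|} = [\,C^+_{\bar T}=\varnothing\,]$, which is $1$ precisely when $C^+\subset T$ and $0$ otherwise. But $C^+\subset T$ is impossible for a spanning tree, since $T$ is cycle-free and would then have to avoid all of $C^-$, whence $T\cup C^-$ would contain the cycle $C$ only if $T\supset C^+$ does not already close a cycle — more carefully, $|C^+|=|C|/2 \ge 1$ and if $C^+\subset T$ then consider the fundamental cycle structure: $T$ contains a path between the two endpoints of any $\varepsilon\in C^-$ using only edges of $C^+\subset T$, but the only such path within $C$ is $C^+$ itself which is not a path unless $|C|=2$, and even in the $|C|=2$ case $C^+$ and $C^-$ are single parallel edges so $C^+\subset T$ forces $C^-\not\subset$ anything problematic — I will need to phrase this so it is clean. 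The upshot I am aiming for is: for \emph{every} spanning tree $T$ of $G$ (Jaeger or not) one has $C^+\not\subset T$, so the inner signed sum vanishes identically, and summing $x^{r(T)}$ against these zero coefficients gives $0$.

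Concretely the steps are: (i) fix ribbon structure and basis; (ii) using Theorem \ref{thm:shelling} and \eqref{eq:intash}, write $I_{G-S}(x)=\sum_{T}x^{r_{G-S}(T)}$ over Jaeger trees $T$ of $G-S$; (iii) swap the order of summation to $\sum_{T}\big(\sum_{S\subset C^+,\, S\cap T=\varnothing}(-1)^{|S|}\big)x^{r(T)}$, \emph{provided} $r_{G-S}(T)=r_G(T)$ whenever $S\cap T=\varnothing$ and $T$ is Jaeger in both — this equality, the technical heart, should follow from Lemma \ref{l:activities_description}\ref{otodik}: semi-passivity of $\varepsilon\in T$ depends only on the fundamental cut $C^*(T,\varepsilon)$ and on whether it is a directed cut, and since $S$ consists of edges not in $T$, the cut $C^*(T,\varepsilon)$ is unchanged as a set after deleting $S$ unless $S$ meets it, in which case one must check that removing some edges of $C^+$ from a cut does not create or destroy the directedness relevant to $\varepsilon$ (this uses that $C$ is semi-balanced, so $C^*(T,\varepsilon)$ meets $C$ in pairs of oppositely oriented edges); (iv) observe the inner sum is $[C^+\setminus T=\varnothing]$; (v) prove $C^+\not\subset T$ for any spanning tree $T$, as sketched above, so the coefficient of $x^{r(T)}$ is $0$ for every $T$; (vi) conclude $\sum_{S\subset C^+}(-1)^{|S|}I_{G-S}=0$. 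The main obstacle is step (iii): verifying that Jaeger-ness and the semi-passivity statistic of a tree $T$ are genuinely unaffected by deleting non-tree edges of $C^+$; if the statistic is \emph{not} invariant I would fall back to Kato's inclusion–exclusion at the level of the Ehrhart series, writing $I_{G-S}(t)=(1-t)^{|V|-1}\ehr_{\mathcal Q_{G-S}}(t)$ and using that $\mathcal Q_{G-S}=\conv\{\mathbf x_\varepsilon : \varepsilon\in E\setminus S\}$ together with the affine dependence of Lemma \ref{l:dependence_of_semibalanced_cycle} to cancel lattice-point contributions pairwise.
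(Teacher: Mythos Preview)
Your main combinatorial route fails at step (v), and the earlier steps (ii)--(iii) also have gaps that you flag but cannot close.

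For (v): the claim that $C^+\not\subset T$ for every spanning tree $T$ is simply false. Take $G$ to be a semi-balanced $4$-cycle on vertices $a,b,c,d$ with edges $\overrightarrow{ab},\overrightarrow{cb},\overrightarrow{cd},\overrightarrow{ad}$, so that $C^+=\{ab,cd\}$ and $C^-=\{cb,ad\}$. Then $T=\{ab,cb,cd\}$ is a spanning tree containing all of $C^+$. Your sketch (``$T$ contains a path between the endpoints of any $\varepsilon\in C^-$ using only edges of $C^+$'') presupposes that $C^+$ is a path, which it is not: in a semi-balanced cycle the edges of $C^+$ are interspersed with those of $C^-$. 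So the inner alternating sum $\sum_{S\subset C^+\setminus T}(-1)^{|S|}=[C^+\subset T]$ is not identically zero, and the cancellation you need does not happen tree-by-tree.

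For (ii)--(iii): the identification ``Jaeger tree of $G-S$'' $\Leftrightarrow$ ``Jaeger tree of $G$ with $T\cap S=\varnothing$'' only holds in one direction. If $T$ is Jaeger in $G$ and avoids $S$, then yes, $T$ is Jaeger in $G-S$; but a Jaeger tree of $G-S$ may violate the Jaeger rule at an edge of $S$ when viewed inside $G$. So your double sum does not range over the correct set of pairs $(S,T)$. Likewise, the semi-passivity of $\varepsilon\in T$ is governed (Lemma~\ref{l:activities_description}\ref{otodik}) by whether $C^*(T,\varepsilon)$ contains an edge opposite to $\varepsilon$; deleting edges of $S\subset C^+$ from this cut can remove all such opposite edges, so $r_{G-S}(T)\ne r_G(T)$ in general. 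Your parenthetical (``$C^*(T,\varepsilon)$ meets $C$ in pairs of oppositely oriented edges'') confuses two different dichotomies: any cut meets any cycle in an even number of edges, half crossing each way \emph{across the cut}, but that is unrelated to the $C^+/C^-$ split, which is about cyclic direction. Finally, $G-S$ can be disconnected for $|S|\ge 2$, in which case Theorem~\ref{thm:shelling} does not even apply and $I_{G-S}$ carries the extra factor $(1-t)^{c(G-S)-1}$ from \eqref{eq:disconnect}.

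Your fallback is in fact the paper's proof: work directly with Ehrhart series. The paper realizes the inclusion--exclusion geometrically, building an auxiliary complex $\Delta\subset\R^E$ from facets of the standard simplex opposite the vertices indexed by $C^+$, pushing it to $\R^V$ by $\mathbf y_\varepsilon\mapsto\mathbf x_\varepsilon$, and using the affine dependence of Lemma~\ref{l:dependence_of_semibalanced_cycle} to see that the image is all of $\mathcal Q_G$; then an identity of indicator functions (via \cite[Theorem~3.1]{barvinok}) gives the Ehrhart-series identity, and multiplying by $(1-t)^{|V|-1}$ finishes. If you want to rescue your write-up, abandon the Jaeger-tree bookkeeping and develop this line instead.
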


We call this a recursion because if we isolate the $S=\emptyset$ term on one side, we obtain an expression of the interior polynomial of $G$ in terms of polynomials attached to smaller graphs.

\begin{proof}
Let $E$ be the set of edges of $G$. We start by considering a fairly obvious 
simplicial complex $\Delta$ made up of some facets of the standard simplex in $\R^E$, namely those that are opposite to vertices representing elements of $C^+$. 
More precisely, if $\varepsilon_1,\ldots,\varepsilon_k$ are the edges in $C^+$ and $\delta_1,\ldots,\delta_n$ denote the rest of the edges of $G$, then we write $\mathbf y_{\varepsilon_i}$ and $\mathbf y_{\delta_j}$ for the corresponding generators of $\R^E$ and put
\[\Delta=\bigcup_{i=1}^k\conv\{\mathbf y_{\varepsilon_1},\ldots,\mathbf y_{\varepsilon_{i-1}},\mathbf y_{\varepsilon_{i+1}},\ldots,\mathbf y_{\varepsilon_k},\mathbf y_{\delta_1},\ldots,\mathbf y_{\delta_n}\}.\]
Let us denote the various intersections of the maximal simplices of $\Delta$, indexed by sets $\emptyset\ne S\subset C^+$, with
\[\Delta_S=\conv\left(\{\mathbf y_\varepsilon\mid\varepsilon\in C^+\setminus S\}\cup\{\mathbf y_{\delta_1},\ldots,\mathbf y_{\delta_n}\}\right).\]
Then by simple inclusion-exclusion, we get an equation of characteristic functions
\begin{equation}
\label{eq:szita}
[\Delta]=\sum_{\emptyset\ne S\subset C^+}(-1)^{|S|-1}[\Delta_S].
\end{equation}
Here all the $\Delta_S$ are simplices but $\Delta$ itself is not, although it is contractible.

Next we map this `abstract' construction $\Delta$ into our usual space $\R^V$, by linearly extending the natural association of the vertex $\mathbf x_\varepsilon$ of $\mathcal Q_G$ to the free generator $\mathbf y_\varepsilon$, where $\varepsilon\in E$ is arbitrary. Let us denote this mapping by $\pi\colon\R^E\to\R^V$. A crucial point (in fact, the only time in the proof that the cycle $C$ plays a role) is that the image of $\Delta$ coincides with $\mathcal Q_G$. Indeed $\pi(\Delta)\subset\mathcal Q_G$ is obvious and conversely, given any convex combination of the $\mathbf x_\varepsilon$, the coefficients can be modified by subtracting the left hand side of the equation \eqref{eq:kombinacio} where $\lambda$ is chosen as the minimal coefficient attached to some element of $C^+$. This shows that any point of $\mathcal Q_G$ is part of $\pi(\Delta_S)$ for some singleton $S\subset C^+$.

Now by \cite[Theorem 3.1]{barvinok}, \eqref{eq:szita} and the obvious $\pi(\Delta_S)=\mathcal Q_{G-S}$ imply
\[[\mathcal Q_G]=\sum_{\emptyset\ne S\subset C^+}(-1)^{|S|-1}[\mathcal Q_{G-S}].\]
By the definition of Ehrhart series, from this we immediately obtain 
\[\ehr_{\mathcal Q_G}(t)=\sum_{\emptyset\ne S\subset C^+}(-1)^{|S|-1}\ehr_{\mathcal Q_{G-S}}(t).\]
Multiplying both sides of this relation by $(1-t)^{|V|-1}$ completes the proof.
\end{proof}

\bibliographystyle{plain}
\bibliography{Bernardi}

\end{document}